\documentclass[12pt,reqno]{amsart}
\usepackage {amssymb}
\usepackage {amsmath}
\usepackage{amsthm}
\usepackage{mathtools}
\usepackage{graphicx}
\usepackage[alphabetic]{amsrefs}
\usepackage[colorlinks, citecolor=blue]{hyperref}
\usepackage{verbatim,color,geometry}
\geometry{a4paper,top=3.5cm,bottom=3.8cm,left=2.5cm,right=2.5cm}
\usepackage[all]{xy}

\newtheorem{prop}{Proposition}[section]
\newtheorem{thm}[prop]{Theorem}
\newtheorem{defn-thm}[prop]{Theorem-Definition}
\newtheorem{defn-lem}[prop]{Lemma-Definition}
\newtheorem{cor}[prop]{Corollary}
\newtheorem{conj}[prop]{Conjecture}
\newtheorem{lem}[prop]{Lemma}

\theoremstyle{definition}

\newtheorem{defn}[prop]{Definition}
\newtheorem{expl}[prop]{Example}
\newtheorem{rem}[prop]{\it Remark}

\newtheorem{notation}[prop]{Notation}

\newtheorem*{claim*}{Claim}

\newcommand{\bP}{\mathbb{P}}
\newcommand{\PP}{\mathbb{P}}
\newcommand{\bC}{\mathbb{C}}
\newcommand{\bR}{\mathbb{R}}
\newcommand{\bA}{\mathbb{A}}
\newcommand{\bQ}{\mathbb{Q}}

\newcommand{\bZ}{\mathbb{Z}}
\newcommand{\bN}{\mathbb{N}}

\newcommand{\bG}{\mathbb{G}}

\newcommand{\bfJ}{\mathbf{J}}

\newcommand{\oY}{\overline{Y}}
\newcommand{\oT}{\overline{T}}

\newcommand{\oS}{\overline{S}}

\newcommand{\tcI}{\widetilde{\mathcal{I}}}

\newcommand{\tI}{\widetilde{I}}
\newcommand{\tdelta}{{\delta_T}}
\newcommand{\tv}{\tilde{v}}
\newcommand{\la}{\langle}
\newcommand{\ra}{\rangle}

\newcommand{\tbP}{\widetilde{\mathbb{P}}}

\newcommand{\cX}{\mathcal{X}}
\newcommand{\cY}{\mathcal{Y}}
\newcommand{\cD}{\mathcal{D}}
\newcommand{\cO}{\mathcal{O}}
\newcommand{\cL}{\mathcal{L}}
\newcommand{\cI}{\mathcal{I}}
\newcommand{\cR}{\mathcal{R}}
\newcommand{\cM}{\mathcal{M}}
\newcommand{\cF}{\mathcal{F}}
\newcommand{\cG}{\mathcal{G}}

\newcommand{\cE}{\mathcal{E}}

\newcommand{\fa}{\mathfrak{a}}

\newcommand{\fX}{\mathfrak{X}}
\newcommand{\fD}{\mathfrak{D}}

\newcommand{\Supp}{\mathrm{Supp}}
\newcommand{\Proj}{\mathrm{Proj}}
\newcommand{\Hom}{\mathrm{Hom}}
\newcommand{\mult}{\mathrm{mult}}
\newcommand{\lct}{\mathrm{lct}}

\newcommand{\vol}{\mathrm{vol}}
\newcommand{\ord}{\mathrm{ord}}

\newcommand{\Gr}{\mathrm{Gr}}

\newcommand{\wt}{\mathrm{wt}}
\newcommand{\Val}{\mathrm{Val}}
\newcommand{\Aut}{\mathrm{Aut}}
\newcommand{\Fut}{\mathrm{Fut}}
\newcommand{\HN}{\mathrm{HN}}
\newcommand{\rd}{\mathrm{d}}
\newcommand{\DNA}{\mathbf{D}^{\mathrm{NA}}}
\newcommand{\JNA}{\mathbf{J}^{\mathrm{NA}}}
\newcommand{\LNA}{\mathbf{L}^{\mathrm{NA}}}
\newcommand{\Sym}{\mathrm{Sym}}
\newcommand{\rk}{\mathrm{rank}}
\newcommand{\CM}{\mathrm{CM}}

\numberwithin{equation}{section}

\begin{document}

\title{On positivity of the CM line bundle on K-moduli spaces}

\author{Chenyang Xu}
\address{Current Address: Department of Mathematics, Princeton University, Princeton, NJ, 08544, USA.}
\email{chenyang@princeton.edu}

\address{Department of Mathematics, MIT, Cambridge, MA, 02139, USA.}
\email{cyxu@math.mit.edu}

\address{Beijing International Center for Mathematical Research, Beijing, 100871, China}
\email{cyxu@math.pku.edu.cn}

\author{Ziquan Zhuang}
\address{Department of Mathematics, MIT, Cambridge, MA, 02139, USA.}
\email{ziquan@mit.edu}

\date{}

\maketitle

{\let\thefootnote\relax\footnotetext{CX is partially supported by the NSF (No. 1901849 and No. 1952531). Both authors also received support
from the grant DMS-1440140 and CX also received support from Chern Professorship while in residence at MSRI during the Spring 2019 semester.  


}
\marginpar{}
}

\begin{abstract}{
In this paper, we consider the CM line bundle on the K-moduli space, i.e., the moduli space parametrizing K-polystable Fano varieties. We prove it is ample on any proper subspace parametrizing reduced uniformly K-stable Fano varieties which conjecturally should be the entire moduli space. As a corollary, we prove that the moduli space parametrizing smoothable K-polystable Fano varieties is projective.  

During the course of proof, we develop a new invariant for filtrations which can be used to test various K-stability notions of Fano varieties. }
\end{abstract}

\setcounter{tocdepth}{1}
\tableofcontents

\section{Introduction}
Throughout the paper, we work over an algebraically closed field $k$ with ${\rm char}(k)=0$. 
\subsection{Positivity of CM line bundle}

The aim of the current paper is to study the positivity of CM line bundle on the moduli space parametrizing K-polystable Fano varieties. 
The CM line bundle was introduced algebraically in \cite{Tia97} on the base of a family of (possibly singular) projective polarized varieties, as a generalization of a well studied Hermitian line bundle on the moduli space parametrizing polarized manifolds with constant scalar curvature (see \cites{Koi83, Schu83, Tian87, FS90} etc.). Its current formulation using the Knudsen-Mumford expansion was introduced in \cite{PT09} (see also \cites{FR06, PRS08} etc.). In many moduli problems, the CM line bundle is expected to give a natural polarization of the corresponding moduli spaces. In particular, \cite{PX17} shows that this holds on the KSB moduli spaces, i.e., on the moduli space parametrizing canonically polarized varieties with semi-log-canonical singularities, the CM line bundle is ample. 

In the Fano case, however, positivity of the CM line bundle has been a long standing question. By recent works \cites{Jia17, BX19, ABHX19, BLX19, Xu19}, we know that the good moduli space $M^{\rm Kps}_{n,v}$ parametrizing $n$-dimensional K-polystable Fano varieties with a given volume $v$ exists as a separated algebraic space. Conjecturally it is also proper, and the ultimate goal is to show that the CM line bundle is ample on the (conjecturally proper) space $M^{\rm Kps}_{n,v}$, which endows it with a projective scheme structure. 

The K-moduli space $M^{\rm Kps}_{n,v}$ contains an interesting `main component' $\overline{M}^{\rm sm, Kps}_{n,v}$ (see \cites{LWX19, Oda15}) parametrizing smoothable K-polystable Fano varieties, which is known to be proper (see e.g. \cite{DS14}). In \cite{LWX18}, the CM line bundle $\Lambda_{\CM}$ was proved to be big and nef on $\overline{M}^{\rm sm, Kps}_{n,v}$ and ample on the open locus $M^{\rm sm, Kps}_{n,v}$ parametrizing smooth K-polystable Fano varieties. However, the argument is of an analytic nature, as it uses the positivity of Weil-Petersson metric and heavily depends on the fact that the generic point of the moduli space parametrizes a Fano manifold. Also see its log version extension in \cite{ADL19}.

An important progress towards the positivity of CM line bundle, using only algebro-geometric arguments, is achieved in \cite{CP18}. There they show that for a general family $f\colon X\to B$ of (possibly singular) Fano varieties over a proper base, the CM line bundle $\lambda_{f}$ on $B$ is nef if the fibers are K-semistable, and big if the family has maximal variation (Definition \ref{defn:max var}) and the fibers are uniformly K-stable. In their proof, the characterization of K-stability using the log canonical thresholds with respect to basis type divisors, as developed in \cites{FO18, BJ17}, plays an essential role.  

\medskip

In this paper, we aim to prove positivity results of the CM line bundle which conjecturally gives the full projectivity of $M^{\rm Kps}_{n,v}$. Our first main statement goes as follows.

\begin{thm}\label{t-main2}
Let $M\subset M^{\rm Kps}_{n,v}$ be a proper algebraic subspace such that every geometric point $s\in M$ parametrizes a reduced uniformly K-stable Fano variety $X_{s}$.
Then the CM $(\bQ$-$)$line bundle $\Lambda_{\CM}|_{M}$ is ample. \end{thm}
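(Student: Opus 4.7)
The natural strategy is to verify a Nakai--Moishezon criterion for ampleness on the proper algebraic space $M$: it suffices to show $(\Lambda_{\CM}|_Z)^{\dim Z}>0$ for every integral closed subspace $Z\subset M$ of positive dimension. Every point of $M$ parametrises a K-semistable Fano, so by \cite{CP18} the line bundle $\Lambda_{\CM}|_M$ is already nef; consequently this top self-intersection equals $\vol(\Lambda_{\CM}|_Z)$, and the problem reduces to showing that $\Lambda_{\CM}|_Z$ is big for every such $Z$.

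Fixing $Z$, I would use Chow's lemma for algebraic spaces, together with a finite base change along the universal family, to produce a normal projective variety $B$ and a generically finite morphism $B\to Z$ carrying a flat family $f\colon X\to B$ of reduced uniformly K-stable Fano varieties whose CM $\bQ$-line bundle $\lambda_f$ coincides with the pullback of $\Lambda_{\CM}|_Z$. Since $B\to Z\hookrightarrow M$ is generically finite, the family $f$ automatically has maximal variation in the sense of Definition \ref{defn:max var}, and bigness descends along finite surjective morphisms. So the theorem is reduced to the following statement: if $f\colon X\to B$ is a maximal variation family over a normal projective base whose geometric fibers are reduced uniformly K-stable, then $\lambda_f$ is big.

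To establish this bigness, I would extend the basis-type divisor argument of Codogni--Patakfalvi \cite{CP18}. They rewrite the CM class on $B$ in terms of log canonical thresholds of basis-type $\bQ$-divisors and, via the $\delta$-invariant characterisation of uniform K-stability from \cite{FO18, BJ17}, extract an inequality of the form $\lambda_f\ge\varepsilon\cdot(\text{nef class depending on }\JNA)$ on the generic fiber. Under only \emph{reduced} uniform K-stability, the classical comparison $\DNA\ge\varepsilon\JNA$ fails along one-parameter subgroups of $\Aut^0(X_s)$, so such an inequality must be replaced by a reduced version. The plan is to associate to the family $f$ a canonical filtration $\cF$ of the section ring of a general fiber (coming from the relative divisorial data of $f$), introduce a new filtration invariant that intertwines $\lambda_f^{\dim B}$ with a reduced $J$-type functional $\mathbf{J}^{\mathrm{red}}(\cF)$, and invoke reduced uniform K-stability to obtain a uniform positive lower bound $\lambda_f^{\dim B}\ge c\cdot\mathbf{J}^{\mathrm{red}}(\cF)$.

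The principal obstacle, and the technical heart of the argument, is to show that $\mathbf{J}^{\mathrm{red}}(\cF)>0$: one has to rule out that the filtration coming from a family can be entirely absorbed into the action of a one-parameter subgroup of $\Aut^0(X_\eta)$ on the generic fiber, which is precisely where maximal variation of $f$ must be converted into a concrete non-vanishing statement for the reduced $J$-functional. Once this positivity is in hand, the desired bigness of $\lambda_f$ follows, hence bigness of $\Lambda_{\CM}|_Z$ for every positive-dimensional integral subspace $Z\subset M$, and Nakai--Moishezon for proper algebraic spaces concludes that $\Lambda_{\CM}|_M$ is ample.
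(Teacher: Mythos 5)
Your first reduction is essentially the same as the paper's: pass from $M$ to a normal projective variety $B$ with a generically finite map to $M$ carrying a family with maximal variation, and, since nefness is already known from \cite{CP18}, reduce ampleness to bigness of $\lambda_f$ on $B$. The divergence—and the gap—begins at the step where you try to produce the positivity. You propose a uniform bound of the shape $\lambda_f^{\dim B}\ge c\cdot\mathbf{J}^{\mathrm{red}}(\cF)$ and then to ``rule out that the filtration coming from a family can be entirely absorbed into a one-parameter subgroup.'' There are two problems here. First, the filtration invariant in question (the HN filtration of $\bigoplus_m f_*\cO_X(-mr(K_{X/B}+\Delta))$ restricted to a fiber) is a curve phenomenon: the paper's inequality $\deg(\lambda_f|_C)\ge (n+1)v\,\eta\,\JNA_T(\cF_{\HN,C})$ holds for the pullback to a curve $C$, not for the self-intersection $\lambda_f^{\dim B}$ on a higher-dimensional base, and over such a base $\cF_{\HN}$ depends on the choice of a covering family of curves. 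Second, and more seriously, even over a generic curve $C$ in a covering family, non-vanishing of $\JNA_T(\cF_{\HN,C})$ is not what you need. To apply \cite{BDPP} you need a \emph{quantitative} estimate $\deg(\lambda_f|_C)\ge c_0(H\cdot C)$ uniformly over covering families of curves, with $H$ a fixed ample class; a mere inequality $\JNA_T(\cF_{\HN,C})>0$ gives no control as $(H\cdot C)\to\infty$, and your proposal offers no mechanism to produce such a lower bound from maximal variation.

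The paper's solution avoids trying to bound $\JNA_T$ directly. The heart of the argument is geometric: one shows that the torus twist $\xi$ achieving $\beta_\delta((\cF_{\HN})_\xi)\ge 0$ for a uniform $\delta>1$ can be taken rational (Proposition \ref{prop:tbeta>=0 rat'l twist}), realizes it as an honest \emph{twisted family} over a finite cover of $C$ (Section \ref{sec:twisted family}, an elementary-transformation-type construction which leaves the CM degree unchanged by Corollary \ref{cor:CM deg after twist}), and then deduces from $\beta_\delta\ge 0$ a uniform nef threshold $-(K_{X/C}+\Delta)+\gamma f^*\lambda_f$ nef on that twisted family (Proposition \ref{prop:nef threshold}). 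The bridge from maximal variation to the required $c_0(H\cdot C)$ estimate is supplied not by any positivity of $\JNA_T$, but by a \emph{birationally enhanced ampleness lemma} (Theorem \ref{thm:ampleness lemma}) combined with the Koll\'ar--Patakfalvi product trick (Lemma \ref{lem:deg L>=deg H}), plus a boundary-perturbation step (Lemma \ref{lem:CM>=deg L}) to relate the product-trick quantity back to $\deg\lambda_f$. None of these ingredients—twisted families, the rationality of the optimal twist, the birational form of the ampleness lemma, the product/perturbation comparison—appear in your plan, and without them the step ``once this positivity is in hand, bigness follows'' does not go through.
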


Our proof of Theorem \ref{t-main2} is purely algebraic. Here the algebro-geometric concept of  {\it reduced uniform K-stability} was introduced in \cite{His16} and developed systematically in \cite{Li19}. It serves as an analogue of uniform K-stability when ${\rm Aut}(X)$ is of positive dimension. Recall that a Fano variety $X$ is said to be uniformly K-stable if there exists some $\eta>0$ such that $\DNA(\cX,\cL)\ge \eta\cdot \JNA(\cX,\cL)$ for any test configuration $(\cX,\cL)$ of $X$, where $\DNA(\cX,\cL)$ is the Ding-invariant and $\JNA(\cX,\cL)$ is a norm on the space of test configurations that vanishes  exactly on (almost) trivial test configurations (see \cites{Fuj19, Li17} or Definition \ref{defn:D^NA}). To define reduced uniform K-stability, we simply replace the $\JNA$-functional by a reduced version  $\JNA_T$ which is the infimum of $\JNA$ among all test configurations that can be  obtained by `twisting' the given one (this way we get a norm functional on the space of test configurations that vanishes exactly on product test configurations). We refer to Section \ref{ss-ruk} for the precise definition. 

By the recent work \cite{Li19} (see also \cites{BBJ15, LTW19}), reduced uniform K-stability is equivalent to the existence of (singular) K\"ahler-Einstein metric. As all smoothable K-polystable Fano varieties admit K\"ahler-Einstein metrics by \cite{CDS, Tia15} (see also \cites{LWX19, SSY16}), we get the following immediate consequence, which affirmatively answers a question asked by many people (see e.g.  \cite{Don18a},  \cite[\S4.2 the last paragraph]{Don18} or  \cite[\S 4]{Sun18}).

\begin{cor} \label{c-smkps}
Let $k=\bC$. The CM $\bQ$-line bundle $\Lambda_{\rm CM}$ is ample on the proper moduli space $\overline{M}^{\rm sm, Kps}_{n,v}$. 
\end{cor}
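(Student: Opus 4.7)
The plan is to deduce Corollary \ref{c-smkps} directly from Theorem \ref{t-main2} by verifying its two hypotheses for the subspace $M := \overline{M}^{\rm sm, Kps}_{n,v}$.

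First, I would invoke the fact, recalled in the introduction and established in \cite{LWX19} (building on the analytic compactness results in \cite{DS14} and \cite{CDS, Tia15}), that $\overline{M}^{\rm sm, Kps}_{n,v}$ is a proper algebraic subspace of $M^{\rm Kps}_{n,v}$. Thus the properness hypothesis of Theorem \ref{t-main2} is satisfied for free.

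Next, and this is the heart of the argument, I would verify that every geometric point $s \in \overline{M}^{\rm sm, Kps}_{n,v}$ parametrizes a reduced uniformly K-stable Fano variety. Fix such an $s$ with associated Fano variety $X_s$. By the definition of this main component, $X_s$ is a K-polystable degeneration of smooth Fano manifolds, so by the Chen--Donaldson--Sun/Tian theorem \cite{CDS, Tia15} (together with the K-polystable implies KE direction in the singular setting, cf.\ \cite{LWX19, SSY16}), $X_s$ admits a (singular) Kähler--Einstein metric. Now I would apply the main result of \cite{Li19} (see also \cite{BBJ15, LTW19}), which asserts that for $k = \bC$ the existence of a Kähler--Einstein metric on a $\bQ$-Fano variety is equivalent to reduced uniform K-stability. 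Therefore $X_s$ is reduced uniformly K-stable, as needed.

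With both hypotheses of Theorem \ref{t-main2} in place for $M = \overline{M}^{\rm sm, Kps}_{n,v}$, the theorem yields that $\Lambda_{\CM}|_{\overline{M}^{\rm sm, Kps}_{n,v}}$ is ample, which is precisely the statement of the corollary. I do not anticipate any real obstacle here: the entire argument is a bookkeeping combination of Theorem \ref{t-main2} with two cited inputs (properness of the smoothable component and the KE $\Longleftrightarrow$ reduced uniform K-stability equivalence); the only substantive point to check is that the version of the KE/reduced uniform K-stability equivalence in \cite{Li19} applies to the singular Fano varieties that appear as points of $\overline{M}^{\rm sm, Kps}_{n,v}$, which it does since they are klt $\bQ$-Fano.
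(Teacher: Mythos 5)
Your proposal is correct and follows exactly the route the paper takes: properness of $\overline{M}^{\rm sm, Kps}_{n,v}$ from \cite{LWX19} (Theorem \ref{t-smoothablelwx}), existence of a singular K\"ahler--Einstein metric on every smoothable K-polystable $\bQ$-Fano via \cite{CDS, Tia15} (and \cite{LWX19, SSY16} for the singular limits), the equivalence of KE existence with reduced uniform K-stability from \cite{Li19}, and then a direct application of Theorem \ref{t-main2}. This is precisely the chain of implications the authors sketch just before stating the corollary, and it matches the explicit proof they give for the log version (Theorem \ref{t-logample}).
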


In general, all K-polystable Fano varieties are expected to be reduced uniformly K-stable (see Conjecture \ref{c-kpoly}), just as K-stability is conjectured to be equivalent to uniform K-stability (see e.g. \cite[Conjecture 1.5]{BX19}). 
Therefore, at least conjecturally, the assumption of Theorem \ref{t-main2} should be satisfied for all proper algebraic subspaces of $M^{\rm Kps}_{n,v}$. 
Theorem \ref{t-main2} and Corollary \ref{c-smkps} can also be extended to the corresponding log version (see Theorem \ref{t-main2log} and \ref{t-logample}), where the log counterpart of  $\overline{M}^{\rm sm, Kps}_{n,v}$ is constructed in \cite{ADL19}*{Theorem 1.1} (see Theorem \ref{t-logsmoothable}). 

\subsection{Non-negativity of $\beta_{X,\Delta,\delta}(\cF)$}

During the course of the proof, we develop some new invariants that characterize various K-stability notions (including reduced uniform K-stability in particular), which we believe merit independent interests.

To explain the results, we fix some notation. Let $(X,\Delta)$ be a log Fano pair and $r$ a positive integer such that $-r(K_X+\Delta)$ is Cartier. Let $\cF$ be a (possibly non-finitely generated) linearly bounded multiplicative filtration of 
$$R:=\bigoplus_{m\in \bN} H^0(X,-rm(K_X+\Delta)).$$
We define a family of invariants $\beta_{X,\Delta,\delta}(\cF)$ parametrized by  ${\delta}>0$.

\begin{defn}[= Definition \ref{d-beta}]\label{d-beta2}
Given a filtration $\cF$ of $R$ and some $\delta\in\bR_+$, we define the \emph{$\delta$-log canonical slope} (or simply \emph{log canonical slope} when $\delta=1$) $\mu_{X,\Delta,\delta}(\cF)$ as
\begin{equation}
\mu_{X,\Delta,\delta}(\cF) = \sup \left\{t\in\bR\,|\,\lct(X,\Delta;I^{(t)}_\bullet)\ge \frac{\delta}{r}\right\}
\end{equation}
where $I_{m,\lambda}(\cF):={\rm Im} \left(  \cF^\lambda R_m \otimes \cO_{X}(rm(K_X+\Delta)) \to \cO_X\right)$ are the base ideals of $\cF$
and $I^{(t)}_\bullet$ is the graded sequence of ideals given by $I^{(t)}_m := I_{m, tm}(\cF)$. Then we define $$\beta_{X,\Delta,\delta}(\cF):= \frac{\mu_{X,\Delta,\delta}(\cF)-S(\cF)}{r}$$
where $S(\cF)$ is the $S$-invariant of the filtration $\cF$ (see \S\ref{ss-filt}).
\end{defn}

In \cites{Fuj19,Li17}, valuative criteria of K-stability and related notions have been developed, which are further extended in \cites{BJ17, BX19} etc. The analogous criterion for reduced uniform K-stability  is systematically studied in \cite{Li19}. Our second main theorem extends these previous works and provides a new criterion.

\begin{thm}\label{t-main1}
Let $(X,\Delta)$ be a log Fano pair. Then
\begin{enumerate}
\item $(X,\Delta)$ is K-semistable if and only if 
$$\beta_{X,\Delta}(\cF)(:=\beta_{X,\Delta,1}(\cF))\ge 0$$ for any linearly bounded multiplicative filtration $\cF$. 
\item $(X,\Delta)$ is uniformly K-stable if and only if there exists a constant $\delta>1$, such that
$\beta_{X,\Delta,\delta}(\cF)\ge 0$ for any linearly bounded multiplicative filtration  $\cF$. 
\item Fix a maximal torus $T\subset \Aut(X,\Delta)$. Then $(X,\Delta)$ is reduced uniformly K-stable if and only if there exists a constant $\delta>1$ such that  for any linearly bounded multiplicative filtration $\cF$,  
$$\beta_{X,\Delta,\delta}(\cF_{\xi})\ge 0\qquad \mbox{for some vector field }\xi \in \Hom(\bG_m,T)\otimes_\bZ {\bR}.$$ 
\end{enumerate}
\end{thm}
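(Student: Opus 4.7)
The plan is to deduce Theorem~\ref{t-main1} from the known valuative characterizations of K-semistability (Fujita--Li), uniform K-stability (Fujita, Blum--Jonsson), and reduced uniform K-stability (\cite{Li19}), by translating them through the base ideals $I_{m,\lambda}(\cF)$ of a filtration.

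For the easy direction $(\Leftarrow)$ in (1), I would apply the hypothesis to the filtration $\cF_v$ induced by an arbitrary divisorial valuation $v$, namely $\cF_v^\lambda R_m:=\{s\in R_m\,:\,v(s)\ge\lambda\}$. Then $S(\cF_v)=r\cdot S_{X,\Delta}(v)$ follows from the definitions, and every local generator of $I_{m,tm}(\cF_v)$ has $v$-value at least $tm$; hence $v(I^{(t)}_\bullet(\cF_v))\ge t$ and $\lct(X,\Delta;I^{(t)}_\bullet(\cF_v))\le A_{X,\Delta}(v)/t$, giving $\mu_{X,\Delta}(\cF_v)\le r\cdot A_{X,\Delta}(v)$. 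Combined with $\beta_{X,\Delta}(\cF_v)\ge 0$ this forces $A_{X,\Delta}(v)\ge S_{X,\Delta}(v)$ for every divisorial $v$, i.e.\ K-semistability by Fujita--Li. For (2) the same calculation, but with $\lct\ge\delta/r$, gives $\mu_{X,\Delta,\delta}(\cF_v)\le r\cdot A_{X,\Delta}(v)/\delta$ and thus $A\ge\delta S$. For (3), after matching the filtration twist $\cF_\xi$ with a corresponding valuative twist, the hypothesis becomes the valuative reduced uniform K-stability criterion of \cite{Li19}.

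For the harder direction $(\Rightarrow)$ in (1), fix $\cF$ and $\epsilon>0$; the aim is $\lct(X,\Delta;I^{(S(\cF)-\epsilon)}_\bullet(\cF))\ge 1/r$, i.e.\ $v(I^{(S(\cF)-\epsilon)}_\bullet(\cF))\le r\cdot A_{X,\Delta}(v)$ for every divisorial $v$. I would work on the Okounkov body $\Delta(L)$ with $L=-r(K_X+\Delta)$, using the concave transforms $c_\cF,c_v\colon\Delta(L)\to\bR$: by Boucksom--Chen one has $S(\cF)=|\Delta(L)|^{-1}\int c_\cF$ and $v(I^{(t)}_\bullet(\cF))=\inf\{c_v(x)\,:\,c_\cF(x)\ge t\}$. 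Taking bases of $R_m$ simultaneously adapted to $\cF$ and to $v$, the Fujita--Odaka basis-type divisors $D^{(m)}\sim_\bQ -(K_X+\Delta)$ satisfy $\lct(X,\Delta;D^{(m)})\ge\delta_m\to\delta(X,\Delta)\ge 1$, and an averaging argument over the top $\cF$-strata (combined with the concave transform identity) converts K-semistability into the required bound in the limit $m\to\infty$. Part (2) is the same with $\delta(X,\Delta)>1$ providing the slack $\delta>1$.

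Part (3) runs this template equivariantly: for $\xi\in\Hom(\bG_m,T)\otimes_\bZ\bR$, the twisted filtration $\cF_\xi$ has $S$-invariant shifting linearly in $\xi$ while its $\mu$-invariant is governed by $T$-invariant valuations, so that $\inf_\xi\beta_{X,\Delta,\delta}(\cF_\xi)$ plays the role of a reduced slope. Combining the $T$-equivariant basis-type argument with the reduced uniform K-stability criterion from \cite{Li19} --- which produces an optimal twist $\xi$ minimizing $\JNA_T$ --- yields the required vector field. The main obstacle is the $(\Rightarrow)$ direction: converting the valuation-level inequality $A_{X,\Delta}(v)\ge\delta\cdot S_{X,\Delta}(v)$ into the filtration-level bound $\lct(X,\Delta;I^{(t)}_\bullet(\cF))\ge\delta/r$ for $t$ approaching $S(\cF)$, which requires a delicate asymptotic Okounkov-body analysis coupling $\cF$ with arbitrary divisorial valuations, and in (3) the additional equivariant bookkeeping of the torus twist.
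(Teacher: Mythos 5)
Your $(\Leftarrow)$ directions in (1) and (2) are correct and agree with the paper: applying the hypothesis to $\cF_{v}$ and showing $\mu_{X,\Delta,\delta}(\cF_v)\le rA_{X,\Delta}(v)/\delta$ (via $v(I^{(t)}_\bullet(\cF_v))\ge t$) recovers the valuative criterion, exactly as in Propositions~\ref{prop:tbeta property} and \ref{prop:delta twisted beta>=0}. The $(\Rightarrow)$ directions, however, have real gaps.

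For $(\Rightarrow)$ in (1)–(2), the identity you invoke, $v(I^{(t)}_\bullet(\cF))=\inf\{c_v(x):c_\cF(x)\ge t\}$, is not true for a single fixed Okounkov body: the concave transforms $c_\cF$ and $c_v$ are computed separately and do not see how the two filtrations interleave on $R_m$. One can arrange $c_v(x)=x_1$ by putting $v$ first in the flag, but then $\Delta(L)$ and $c_\cF$ change with $v$, and the subsequent ``averaging over the top $\cF$-strata using basis-type divisors'' is not close to a proof — it is precisely the device the paper deliberately avoids. The paper's route is different and more direct: Theorem~\ref{thm:tbeta>=D^NA} proves the unconditional inequality $\beta(\cF)\ge\DNA(\cF)$. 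The argument produces, from $\lct(X,\Delta;\fa_\bullet)\le 1/r$ (with $\fa_m=I_{m,\mu m+\epsilon}$), a valuation $v$ computing that lct (via \cite{JM12}), exploits convexity and monotonicity of $\lambda\mapsto v(I^{(\lambda)}_\bullet)$ to normalize $v$, and then tests the $\DNA$-ideals $\tI_m$ against a quasi-monomial valuation $\tv$ on $X\times\bA^1$. Then (1) reduces to the known $\DNA(\cF)\ge 0$ for K-semistable pairs \cite{Fuj18b,Fuj19}, and (2) is a small variation carried out in Proposition~\ref{prop:delta twisted beta>=0} (showing $\delta(X,\Delta)$ equals the threshold for $\beta_\delta\ge 0$), not merely ``the same with slack''.

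For $(\Rightarrow)$ in (3), your sketch misses the essential technical content. Knowing $\DNA(\cF)\ge\eta\JNA_T(\cF)$ and picking a minimizing twist $\xi$ only gives $\DNA(\cF_\xi)\ge\eta\JNA(\cF_\xi)$; this does \emph{not} directly produce a \emph{uniform} $\delta>1$ with $\beta_\delta(\cF_\xi)\ge 0$ for all $\cF$ simultaneously. Extracting such a $\delta$, depending only on $\eta$, $n$ and $\alpha(X,\Delta)$, is the whole point of Proposition~\ref{prop:perturbed beta>=0}, whose proof requires the quantitative Duistermaat--Heckman estimates (concavity in Lemma~\ref{l-concavedistri}, the lower bound in Lemma~\ref{lem:volume lower bound}), the $\alpha$-invariant volume bound of Lemma~\ref{lem:volume upper bound}, and the interpolation Lemma~\ref{lem:twisted mu} for $\mu_\delta$. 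None of this is hinted at in your plan, and a $T$-equivariant version of the basis-type argument would not supply the needed uniformity either. This is the part the paper singles out as ``harder and requiring a more technical analysis of the Duistermaat--Heckman measure'', and the gap in the proposal is concentrated there.
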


Here $\cF_\xi$ is the `twist' of the filtration $\cF$ and we refer to Section \ref{ss-ruk} for its construction.

\begin{rem}
Our invariant $\beta_{X,\Delta,\delta}$ is a natural extension of the $\beta$-invariant, defined in \cites{Fuj19, Li17} for valuations, to more general filtrations. Indeed, it is not hard to check (see Proposition \ref{prop:tbeta property}) that $\beta_{X,\Delta}(E)\ge\beta_{X,\Delta}(\cF_{\ord_E})$ where $\cF_{\ord_E}$ is the filtration induced by $E$ (Example \ref{ex: filtration from val}). On the other hand, we will show that $\beta_{X,\Delta}(\cF)$ is bounded from below by the Ding-invariant $\DNA(\cF)$ (Theorem \ref{thm:tbeta>=D^NA}). In particular, Theorem \ref{t-main1}(1) follows immediately from the corresponding non-negativity statement for $\DNA(\cF)$ as proved in \cite{Fuj18b, Li17}.  

While Theorem \ref{t-main1}(2) can be established in a similar way, the proof of the last statement Theorem \ref{t-main1}(3) is harder and requires a more technical analysis of the properties of Duistermaat–Heckman measure.
\end{rem}

The connection between Theorem \ref{t-main1} and the positivity of CM line bundle is provided by a special filtration: the {\it Harder-Narashimhan filtration $\cF_{\HN}$} (see $\S$\ref{sec:HN}). Using  the Harder-Narashimhan filtration to study positivity of the CM line bundle is a novel idea initiated in \cite{CP18}. In fact, our definition of $\beta_{X,\Delta,\delta}(\cF)$ in some sense is tailor-made to investigate $\cF_{\HN}$.

\subsection{Overview of proof}

In what follows, we sketch our strategy for the proof of Theorem \ref{t-main2}.

Part of our proof is inspired by the work in \cite{CP18}. Recall that the arguments in \cite{CP18} for the positivity of CM line bundle $\lambda_{f,\Delta}$ for a projective family of log Fano pairs $f\colon (X,\Delta)\to B$ can be divided into several steps:

\begin{enumerate}
\item[Step 1$_{\rm u}$:] one gets the semipositivity of the CM line bundle when the fibers $(X_t,\Delta_t)$ are K-semistable; 
\item[Step 2$_{\rm u}$:] one concentrates on the case that $B$ is a smooth curve and obtains a uniform nef threshold $c$ (i.e. some $c\in\bQ$ such that $-(K_{X/B}+\Delta)+c\cdot f^*\lambda_{f,\Delta}$ is nef) when a general fiber $(X_t,\Delta_t)$ is uniformly K-stable, where $c$ only depends on $\delta(X_t,\Delta_t)$ and $(-(K_{X_t}+\Delta_t))^n$; 
\item[Step 3$_{\rm u}$:] an ampleness lemma is used to get the strict positivity for higher dimensional base $B$, when the fibers are assumed to be uniformly K-stable and $\Delta=0$.
\end{enumerate}

In \cite{Pos19}, complementary arguments are also given to show that

\begin{enumerate}
\item[Step 4$_{\rm u}$:]  Step 3$_{\rm u}$ can be extended to the log case (i.e. when $\Delta\neq 0$). 
\end{enumerate}

\medskip

In our argument, we follow these steps. However,  since we want to treat the more general case when the fibers are only reduced uniformly K-stable, we have to develop a number of new tools to substantially enhance the arguments in \cite{CP18}.

\medskip

\noindent {Step $1$$_{\rm r}$:} Our first new input is going from basis type divisors to filtrations. This has been known as a natural step to extend results from uniformly K-stable Fano varieties to K-polystable ones. See \cite{BX19} for another example. By doing so, we also find a more conceptual proof (in our opinion) of Step $1_{\rm u}$ in \cite{CP18}.

More precisely, a crucial observation in \cite{CP18} is that for a $\mathbb{Q}$-Gorenstein family of log Fano pairs $f\colon (X,\Delta)\to C$ over a smooth projective curve $C$, the subbundles of positive slopes in the Harder-Narasimhan filtration of
$$\mathcal{R}:=\bigoplus_{m\in \bN} f_*\cO_X(-mr(K_{X/C}+\Delta))$$
only give boundary divisors $D\sim_\bQ -(K_{X/C}+\Delta)$ that fail to be log canonical along the general fiber of $f$. Using our $\beta$-invariants, this translates to the following inequality (Proposition \ref{prop:cm>=beta}):
\[
\deg \lambda_{f,\Delta} \ge (n+1)  (-K_{X_t}-\Delta_t)^n  \cdot \beta_{X_t,\Delta_t}(\cF_{\HN})
\]
where $t\in C$ and $\cF_\HN$ is the restriction of the Harder-Narasimhan filtration on $\cR$ to $R_t:=\bigoplus_m H^0(X_t,-mr(K_{X_t}+\Delta_t))$. Combining with Theorem \ref{t-main1}(1), we immediately recover  the nefness of $\lambda_{\rm CM}$ as proved in \cite{CP18}*{Theorem 1.7}. This completes our treatment of the first step. See  \S\ref{ss-tbetaredK} for details.


\medskip

\noindent {Step 2$_{\rm r}$:} Nevertheless, the full power of considering filtrations $\cF_{\HN}$ (as opposed to basis type divisors) and the corresponding invariants $\beta_{X,\Delta,\delta}(\cF_{\HN})$ will only be seen when one attacks the ampleness. The main issue is that nef thresholds as in Step $2_{\rm u}$ may not exist when the general fibers $(X_t,\Delta_t)$ are only reduced uniformly K-stable, e.g. there are families $f\colon (X,\Delta)\to C$ with $\deg \lambda_{f,\Delta}=0$ but $-(K_{X/C}+\Delta)$ is not nef. To overcome this, we need a stronger statement Theorem \ref{t-main1}(3) which guarantees that if a general fiber $(X_t,\Delta_t)$ is reduced uniformly K-stable, then we can find a \emph{rational} vector $\xi \in \Hom(\bG_m,T)\otimes_\bZ \bQ$ (where $T$ is a maximal torus of $\Aut(X_t,\Delta_t)$) such that after twisting $\cF_{\HN}$ by $\xi$, the resulting filtration $(\cF_{\HN})_{\xi}$ satisfies
$\beta_{X_t,\Delta_t,\delta}((\cF_{\HN})_{\xi})\ge 0$ for some uniform $\delta>1$ which only depends on $(X_t, \Delta_t)$ (but not $\cF_{\HN}$ and $\xi$). These results are proved in Sections \ref{ss-tbetaredK} and \ref{sec:twisted family}.

Geometrically, after possibly passing to a finite base change of $C$, we can realize the twisted filtration $(\cF_{\HN})_{\xi}$ as the Harder-Narasimhan filtration of a twisted family $f_{\xi}\colon (X_{\xi},\Delta_{\xi})\to C$ (roughly speaking, it is obtained from the original family $f\colon (X,\Delta)\to C$ via a birational modification that is analogous to elementary transformations on ruled surfaces; see Section \ref{sec:twisted family} for the actual construction). But since $\beta_{X_t,\Delta_t,\delta}((\cF_{\HN})_{\xi})\ge 0$, a similar argument as in Step $1_{\rm r}$ allows us to conclude that a uniform nef threshold (depending only on a general fiber $(X_t,\Delta_t)$) exists on the twisted family. For families with K-semistable fibers, the CM line bundle remains the same after the twist; thus for our ampleness question, we may replace the original family by a twisted one that achieves the nef threshold.



\medskip

\noindent {Step 3$_{\rm r}$:}
Over a higher dimensional base, the Harder-Narasimhan filtration depends on the choice of a covering family of curves and so does the corresponding twisted family. Therefore to proceed, we need to strengthen the key ingredient of Step 3$_{\rm u}$, namely the ampleness lemma (which is originally based on works in \cite{Kol90, KP17}), to a version that incorporates all twists. For this purpose, we carefully track the original argument of the ampleness lemma, and show that it works birationally in a suitably technical sense. This is done in Section \ref{s-ampleness}.

\medskip

\noindent {Step 4$_{\rm r}$:}
Finally to prove the log version of our theorem, we combine the ampleness lemma obtained in the previous step together with an argument  using perturbation of the coefficients of the boundary divisors, similar to the one used in \cite{KP17,Pos19}. This is done in Section \ref{s-CMpositive}. The main observation is that although $(X_t,\Delta_t)$ may no longer be reduced uniformly K-stable after perturbing the boundary coefficients, the divisor $-(K_{X/C}+\Delta)+c\cdot f^*\lambda_{f,\Delta}$ from Step $2_{\rm r}$ remains pseudo-effective and this is enough for our purpose. 

\medskip

Finally, in the Appendix \ref{a-reduced}, we define {\it the $T$-reduced $\delta$-invariant} $\tdelta(X,\Delta)$ for a log Fano pair $(X,\Delta)$ with a torus $T$-action. Then in Theorem \ref{t-redqm}, we show that if $(X,\Delta)$ is K-semistable and $\tdelta(X,\Delta)=1$, it can always be computed by a quasi-monomial valuation which is not of the form $\wt_{\xi}$ $(\xi\in \Hom(\bG_m, T)_{\mathbb R})$. This establishes in the reduced version the analogous result proved in \cite[Theorem 1.5]{BLX19}. 


\medskip

\noindent {\bf Acknowledgement:}  We thank Dan Abramovich, Jarod Alper, Chi Li, Yan Li and Zsolt Patakfalvi for helpful discussions. We are also grateful to the anonymous referees for useful suggestions. The work on this paper was started while the authors enjoyed the hospitality of the MSRI, which is gratefully acknowledged.

\section{Preliminaries}\label{s-preliminary}

In this section, we recall some basic preliminaries related to the study of K-stability questions on log Fano pairs.

\subsection{Notation and conventions}

We work over an algebraically closed field $k$ of characteristic zero. 
We follow the terminologies in \cite{KM98}. A \emph{pair} $(X,\Delta)$ consists of a normal variety $X$ and an effective $\bQ$-divisor $\Delta\subseteq X$ such that $K_X+\Delta$ is $\bQ$-Cartier. See \cite{KM98}*{Definition 2.34} for the definition of \emph{klt} and \emph{log canonical} (\emph{lc}) singularities. 
A projective variety $X$ is $\bQ$-\emph{Fano} if $X$ has klt singularities and $-K_X$ is ample. A pair $(X,\Delta)$ is \emph{log Fano} if $X$ is projective, $-K_X-\Delta$ is $\bQ$-Cartier ample and $(X,\Delta)$ is klt. A big open set $U$ of a variety $X$ is an open set whose complement has codimension at least two.

\subsection{Families of pairs}

\begin{defn}\label{d-genericlog}
A \emph{generic log Fano locally stable family $f\colon (X,\Delta)\to S$ of normal pairs over a normal base} (or abbreviated as \emph{a generic log Fano family $f\colon (X,\Delta)\to S$}) consists of a pair $(X,\Delta)$ and a flat projective morphism $f\colon X\to S$ to a normal variety $S$ such that $f$ has connected and normal fibers, $\Supp(\Delta)$ does not contain any fiber of $f$, $-(K_{X/S}+\Delta)$ is $f$-ample, $(X_s,\Delta_s)$ is log canonical for any $s\in S$ and a general fiber $(X_s,\Delta_s)$ is log Fano. It is called a \emph{$\bQ$-Gorenstein family of log Fano pairs} if every fiber $(X_s,\Delta_s)$ is log Fano.
\end{defn}
The local conditions guarantee that in the above definition $(X,\Delta)\to S$ yields {\it a locally stable family} over the normal base $S$. For the definition of locally stable families over more general bases, see \cite{Kol17, Kol19}. 

\begin{defn}[Base change]
Let $f\colon X\to S$ be a flat projective morphism between normal varieties with normal fibers and let $\Delta$ be an effective Weil $\bQ$-divisor on $X$ whose support does not contain any fiber of $f$.
Let $S'\to S$ be a morphism from another (normal) variety. Let $U\subseteq X$ be the smooth locus of $f$. As in \cite[4.1.5]{Kol17} (see also \cite{CP18}*{\S 2.4.1}), the base change of $f$ to $S'$ is set to be the family $f'\colon (X',\Delta')\to S'$ of normal pairs where $X'=X\times_S S'$ and $\Delta'$ is the unique extension of the pullback of $\Delta|_U$ to $U'=U\times_S S'$ which is a big open subset in $X'$. We call $\Delta'$ {\it the divisorial pull back} of $\Delta$.  If $K_{X/S}+\Delta$ is $\bQ$-Cartier then we have $K_{X'/S'}+\Delta'\sim_\bQ \pi^*(K_{X/S}+\Delta)$ where $\pi\colon X'\to X$ is the induced morphism. In particular, being a generic log Fano family is preserved under base change.
\end{defn}

\begin{defn}[Maximal variation] \label{defn:max var}
Let $f\colon X\to S$ be a flat projective morphism between normal varieties with normal fibers and let $\Delta$ be an effective Weil $\bQ$-divisor on $X$, whose support does not contain any fiber. We say that the family $f\colon (X,\Delta)\to S$ is isotrivial if $(X_s,\Delta_s)\cong (X_t,\Delta_t)$ for any two general points $s,t\in S$; we say that {\it $f$ has maximal variation} if for any curve $C\subseteq S$ containing a general point, the base change of $f$ to $C$ is not isotrivial.
\end{defn}

\begin{lem} \label{lem:max var to reduced bdy}
Let $f\colon X\to S$ be a flat projective morphism between normal varieties with normal fibers, let $\Delta$ be an effective Weil $\bQ$-divisor on $X$ and let $D=\Supp(\Delta)$. Then $f\colon (X,\Delta)\to S$ has maximal variation if and only if $g\colon (X,D)\to S$ has maximal variation.
\end{lem}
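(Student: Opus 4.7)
The plan is to establish the two implications separately. Write $\Delta = \sum_{i=1}^{k} a_i D_i$ with $D_1, \ldots, D_k$ the distinct prime divisors appearing in $\Delta$, so that $D = \bigcup_{i} D_i$ as reduced divisors.

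For the direction ``$f$ has maximal variation $\Rightarrow$ $g$ has maximal variation'', I argue the contrapositive. Suppose $(X, \Delta)|_C \to C$ is isotrivial for some curve $C \subseteq S$ meeting the general locus. Then for general $s, t \in C$ any isomorphism $\phi : X_s \to X_t$ with $\phi_* \Delta_s = \Delta_t$ automatically sends $\Supp(\Delta_s) = D_s$ to $\Supp(\Delta_t) = D_t$, so $(X, D)|_C$ is isotrivial and $g$ fails to have maximal variation.

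For the converse, suppose $(X, D)|_C \to C$ is isotrivial for some such curve $C$; I want to show $(X, \Delta)|_C$ is isotrivial. Fix a general $s_0 \in C$ and set $(Y, D_Y) := (X_{s_0}, D_{s_0})$. For each general $s \in C$, pick any isomorphism $\phi_s : (X_s, D_s) \to (Y, D_Y)$. Then the pushforward $\phi_{s,*}\Delta_s$ is an effective $\bQ$-divisor on $Y$ with support contained in $D_Y$, whose coefficient on each irreducible component of $D_Y$ lies in the finite set $\{a_1, \ldots, a_k\}$. Hence $\phi_{s,*}\Delta_s$ ranges over a finite set of divisors on $Y$. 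Since any two choices of $\phi_s$ differ by an element of $\Aut(Y, D_Y)$, the isomorphism class $[X_s, \Delta_s] = [Y, \phi_{s,*}\Delta_s]$ is independent of the choice of $\phi_s$ and lies in a finite set $\{[Y, \Delta^{(\alpha)}]\}_{\alpha \in A}$.

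For each $\alpha \in A$, let $V_\alpha := \{s \in C : (X_s, \Delta_s) \cong (Y, \Delta^{(\alpha)})\}$; this is the image in $C$ of the relative isomorphism scheme $\mathrm{Isom}_C\bigl((X, \Delta)|_C,\; (Y, \Delta^{(\alpha)}) \times_k C\bigr)$, which is of finite type over $C$ by standard representability results for polarized projective pairs. Hence each $V_\alpha$ is constructible in $C$, and since $\bigcup_{\alpha \in A} V_\alpha$ contains a dense open subset of $C$ while $A$ is finite, the irreducibility of $C$ forces some $V_{\alpha_0}$ to be dense in $C$. This yields the desired isotriviality of $(X, \Delta)|_C$. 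The only substantive ingredient beyond the finiteness bookkeeping for coefficient assignments on the components of $D_Y$ is the finite-type property of the relative Isom scheme, which is the expected potential obstacle but is standard in the projective setting.
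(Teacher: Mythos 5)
Your proof is correct in substance, but note that you have swapped the labels on the two implications: showing ``$(X,\Delta)|_C$ isotrivial $\Rightarrow$ $(X,D)|_C$ isotrivial'' is the contrapositive of ``$g$ has maximal variation $\Rightarrow$ $f$ has maximal variation,'' not the other way around. Both directions are nonetheless established, so this is only a cosmetic slip. The substantive (``converse'') direction is argued by a genuinely different route than the paper's. The paper first passes to a finite base change so that every component of $D$ has geometrically irreducible fibers, and then runs an induction on the number of irreducible components, using the schemes $\mathrm{Isom}_{C\times C}\big((X,D^i)\times C,\; C\times (X,D^j)\big)$ and the density of some $Z_{ij}$ in $C\times C$ to peel off one component at a time. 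You instead fix a reference fiber $(Y,D_Y)$, transport $\Delta_s$ along some isomorphism of the reduced pairs, and observe that the resulting divisor on $Y$ has coefficients drawn from the finite set $\{a_1,\dots,a_k\}$ and is supported on the finitely many components of $D_Y$; hence the isomorphism type of $(X_s,\Delta_s)$ takes only finitely many values, and one of these loci must be dense by a constructibility argument. This avoids both the normalizing base change and the induction, and is arguably the cleaner argument. One small imprecision: the relative $\mathrm{Isom}$ scheme of \emph{unpolarized} projective schemes is a priori only locally of finite type (a countable disjoint union of finite-type pieces), not of finite type as you assert; the paper itself is careful to speak of ``a countable union of constructible subsets.'' This does not affect your argument, since among a finite list of such countable unions covering a dense open subset of the curve $C$, some constructible piece must be dense — but the phrasing should be adjusted accordingly.
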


\begin{proof}
Let $C\subseteq S$ be a curve passing through a general point. It suffices to show that the base change of $f$ to $C$ is isotrivial if and only if the same holds for $g$. For this we may assume that $S=C$ is a curve. Clearly if $(X_s,\Delta_s)\cong (X_t,\Delta_t)$ for any two general points $s,t\in C$, then we also have $(X_s,D_s=\Supp(\Delta_s))\cong (X_t,D_t=\Supp(\Delta_t))$. 

For the reverse implication, after a finite dominant base change of $C$, we can first assume all components of $D$ have geometrically irreducible fibers over the generic point of $C$. Then we use induction on the number of irreducible components of $D$. The statement is clear when $D$ is irreducible. In general, let $D^i$ ($i=1,\cdots,N$) be the irreducible components of $D$. Let $Z_{ij}\subseteq C\times C$ ($1\le i,j\le N$) be the image of 
$${\rm Isom}_{C\times C}\big((X,D^i)\times C, C\times (X,D^j)\big)\to C\times C$$ which is a countable union of constructible subsets such that $(X_s,D^i_s)\cong (X_t,D^j_t)$ if and only if $(s,t)\in Z_{ij}$. Since $g\colon (X,D)\to C$ is isotrivial, $\cup_{i,j} Z_{ij}$ contains a dense open subset of $C\times C$ and thus the same is true for some $Z_{ij}$. It follows that $(X_s,D^i_s)\cong (X_t,D^j_t)$ for two arbitrary general points $s,t\in C$ and therefore $(X,D^i)$ and $(X,D^j)$ are both isotrivial over $C$ (it can happen that $i=j$). The result now follows by induction hypothesis after removing the component $D^i$ from both $\Delta$ and $D$.
\end{proof}

\subsection{Filtrations} \label{ss-filt}
Applying filtrations to study K-stability questions has been explored in many recent works. We recall some basic definitions here. For more background, see e.g.  \cite{BHJ17}. 

\bigskip

Let $L$ be an ample line bundle on a projective variety $X$ and let 
\[
R = R(X,L) :=\bigoplus_{m\in\bN} R_m := \bigoplus_{m\in \bN} H^0(X,mL)
\]
be its section ring.

\begin{defn} \label{defn:filtration}
By a (\emph{linearly bounded multiplicative}) \emph{filtration} $\cF$ of $R$, we mean the data of a family of vector subspaces 
$\cF^\lambda R_m \subseteq R_m$  
for $m \in \bN$ and $\lambda \in \bR$ satisfying
\begin{itemize}
\item[(1)]
$\cF^\lambda R_m  \subseteq \cF^{\lambda'} R_m$ when $\lambda \geq \lambda'$;
\item[(2)] 
$\cF^\lambda R_m = \cap_{\lambda ' < \lambda} \cF^{\lambda'} R_m$ for  all $\lambda$;
\item[(3)]
There exists $e_-,e_+\in \bR$ such that $\cF^{mx}R_m=0$ for all $x\ge e_+$ and $\cF^{mx}R_m=R_m$ for all $x\le e_-$
\item[(4)]
$\cF^\lambda R_m \cdot \cF^{\lambda'} R_{m'} \subseteq \cF^{\lambda +\lambda'} R_{m+m'}$.
\end{itemize}
A filtration $\cF$ of $R$ is a called a \emph{$\bZ$-filtration} if $\cF^\lambda R_m = \cF^{\lceil \lambda \rceil} R_m$ for all $m\in \bZ$ and $\lambda \in \bR$. It is called an $\bN$-filtration if in addition $\cF^0 R_m = R_m$ for all $m$. To give a $\bZ$-filtration $\cF$, it suffices to give a family of subspaces $\cF^p R_m \subseteq R_m$ for $m\in \bN$ and $p\in \bZ$ satisfying (1), (3), and (4). In particular, every filtration $\cF$ on $R$ induces a $\bZ$-filtration $\cF_\bZ$ which satisfies that  $\cF_\bZ^{\lambda}=\cF^{\lceil \lambda \rceil}$. We say that an $\bN$-filtration $\cF$ is finitely generated if the algebra $\bigoplus_{m,i\in\bN}\cF^i R_m$ is finitely generated.
\end{defn}

\begin{expl} \label{ex: filtration from val}
Let $v$ be a valuation on $X$ (i.e. a valuation $k(X)^\times\to \bR$ that is trivial on $k$). Then it induces a filtration $\cF_v$ of $R$ by setting $\cF_v^\lambda R_m = \{ s\in R_m\,|\,v(s)\ge \lambda\}$. It is linearly bounded if $v$ has linear growth (see \cite{BJ17}*{Section 3.1}).
\end{expl}

\begin{expl} \label{expl:filtration from tc}
Let $(X,\Delta)$ be a log Fano pair. By \cite{BHJ17}*{Proposition 2.15}, there exists a one-to-one correspondence between test configurations of $(X,\Delta)$ (see Definition \ref{def:tc}) and finitely generated $\bZ$-filtrations of $R=R(X,-r(K_X+\Delta))$ for some sufficiently divisible positive integer $r$.
\end{expl}

Let $\cF$ be a linearly bounded multiplicative filtration on $R$. Let 
$$\Gr^\lambda_\cF R_m = \cF^\lambda R_m / \bigcup_{\lambda'>\lambda} \cF^{\lambda'} R_m.$$
We define (c.f. \cite{BJ17}*{\S 2.3-2.6})
\[
S_m(\cF):=\frac{1}{m\dim R_m}\sum_{\lambda\in\bR} \lambda \dim\Gr^\lambda_\cF R_m
\]
and $S(\cF)=\lim_{m\to \infty} S_m(\cF)$. Note that the above expression is a finite sum since there are only finitely many $\lambda$ for which $\Gr^\lambda_\cF R_m\neq 0$ and the limit exists by \cite{BC11}. For $x\in \bR$, we set
\[
\vol(\cF R^{(x)})=\lim_{m\to \infty} \frac{\dim \cF^{mx}R_m}{m^n/n!}
\]
where $n=\dim X$ (the limit exists by \cite{LM09}). Then 
$$\nu:=\frac{1}{(L^n)}\frac{\rd}{\rd x}\vol(\cF R^{(x)})$$ is {\it the Duistermaat-Heckman measure} of the filtration (see \cite{BHJ17}*{\S 5}) and we denote by $[\lambda_{\min}(\cF),$ $\lambda_{\max}(\cF)]$ its support. We also have 
\[
S_m(\cF) = e_- + \frac{1}{\dim R_m}\int_{e_-}^{e_+} \dim \cF^{mx}R_m {\rm d}x
\]
and
\[
S(\cF) = \lambda_{\min}(\cF) + \frac{1}{(L^n)} \int_{\lambda_{\min}(\cF)}^{\lambda_{\max}(\cF)} \vol(\cF R^{(x)}) {\rm d}x = \int_{\bR} x\ \rd \nu.
\]
We will need the following result from \cite{BHJ17}.
\begin{lem}[{\cite[Theorem 5.3]{BHJ17}}]\label{l-concavedistri}
The function $x\to \vol(\cF R^{(x)})^{\frac{1}{n}}$ is concave on $(-\infty, \lambda_{\max})$.
\end{lem}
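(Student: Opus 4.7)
The plan is to attach to the filtration $\cF$ a concave function $G_\cF$ on a Newton-Okounkov body of $L$ whose superlevel sets compute the volumes $\vol(\cF R^{(x)})$, and then to invoke the classical Brunn-Minkowski inequality. This is the Boucksom-Chen Okounkov-body strategy, adapted to the filtered setting.

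First I would fix an admissible flag on $X$, obtain a valuation $\nu \colon k(X)^\times \to \bZ^n$ with one-dimensional leaves, and form the Okounkov body $\Delta(L) := \overline{\bigcup_{m\geq 1} \tfrac{1}{m}\nu(R_m \setminus \{0\})} \subset \bR^n$, which is a convex body of Euclidean volume $(L^n)/n!$. To $\cF$ I would associate the concave transform
$$G_\cF(y) := \sup\Bigl\{ t \in \bR \,:\, y \in \overline{\bigcup_{m\geq 1} \tfrac{1}{m}\nu\bigl(\cF^{mt}R_m \setminus \{0\}\bigr)} \Bigr\}, \quad y \in \Delta(L),$$
with the convention $\sup\emptyset := -\infty$. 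Linear boundedness (axiom (3) of Definition \ref{defn:filtration}) forces $G_\cF$ to take values in $[e_-,e_+]$ on a dense subset of $\Delta(L)$.

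The heart of the proof is to verify that $G_\cF$ is concave. Multiplicativity (axiom (4)), together with the additivity of $\nu$ on products, implies that for $s_i \in \cF^{m_i t_i}R_{m_i}$ with $i=1,2$ and any $k_1,k_2 \in \bN_{>0}$, the product $s_1^{k_1} s_2^{k_2}$ lies in $\cF^{k_1 m_1 t_1 + k_2 m_2 t_2} R_{k_1 m_1 + k_2 m_2}$, and its image under $\tfrac{1}{k_1 m_1 + k_2 m_2}\nu$ is the weighted average of $\tfrac{1}{m_i}\nu(s_i)$ with weights $k_i m_i/(k_1 m_1 + k_2 m_2)$. Letting $(k_1,k_2)$ range over $\bN_{>0}^2$, every rational convex combination of two points witnessing $G_\cF \ge t_i$ witnesses $G_\cF$ at the corresponding convex combination of the $t_i$; upper semicontinuity of $G_\cF$ together with density of rationals then yields honest concavity on $\Delta(L)$.

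Next I would establish the volume identity
$$\vol(\cF R^{(x)}) = n! \cdot \vol_{\bR^n}\bigl(\{y \in \Delta(L) : G_\cF(y) \geq x\}\bigr) \qquad \text{for every } x < \lambda_{\max}(\cF),$$
by applying the Lazarsfeld-Mustata Okounkov-body formalism to the graded linear subseries $W_\bullet := \cF^{\bullet x} R_\bullet$. For $x < \lambda_{\max}$ this subseries contains an ample series, so its Okounkov body has the predicted volume and, unwinding the definition of $G_\cF$, coincides up to a set of measure zero with $\{G_\cF \geq x\}$. The concavity of $x \mapsto \vol(\cF R^{(x)})^{1/n}$ on $(-\infty, \lambda_{\max})$ then follows from Brunn-Minkowski: concavity of $G_\cF$ gives the inclusion $\{G_\cF \geq \theta x_1 + (1-\theta)x_2\} \supseteq \theta \{G_\cF \geq x_1\} + (1-\theta)\{G_\cF \geq x_2\}$, and Brunn-Minkowski upgrades this to $1/n$-concavity of the superlevel-set volume. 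The main technical obstacle is the rigorous verification that $G_\cF$ is concave (rather than concave only up to a null set) and that $W_\bullet$ contains an ample series when $x < \lambda_{\max}$; both rest on the interplay of multiplicativity, linear boundedness, and the strict inequality $x < \lambda_{\max}$.
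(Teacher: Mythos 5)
The paper does not prove this lemma directly but cites \cite{BHJ17}*{Theorem 5.3}, whose proof is precisely the Boucksom--Chen concave-transform argument you reproduce: attach the Okounkov-body concave transform $G_\cF$ to the filtration, prove its concavity from multiplicativity and additivity of the flag valuation, identify the superlevel sets with Okounkov bodies of the subseries $\cF^{\bullet x}R_\bullet$ via Lazarsfeld--Musta\c{t}\u{a}, and conclude by Brunn--Minkowski. Your proposal is correct and follows essentially the same route as the cited reference, including the two technical points you flag (pointwise concavity of $G_\cF$ and the need for the subseries to contain an ample series when $x<\lambda_{\max}$); no gap.
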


We also need the notion of translations of filtrations. For $c\in\bR$, we let $\cF_c$ be the filtration on $R$ defined by
\[
\cF_c^\lambda R_m := \cF^{\lambda-cm} R_m
\]
and call it the translation of $\cF$ by $c$.

\begin{lem} \label{lem:S after translation}
We have $S(\cF_\bZ)=S(\cF)$ and $S(\cF_c)=S(\cF)+c$ for any $c\in \bR$. 
\end{lem}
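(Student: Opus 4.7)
The plan is to deduce both identities from the integral representation
\[
S(\cF)=\int_\bR x\, d\nu \;=\; \lambda_{\min}(\cF)+\frac{1}{(L^n)}\int_{\lambda_{\min}(\cF)}^{\lambda_{\max}(\cF)}\vol(\cF R^{(x)})\, dx
\]
recorded just above the statement, by analyzing the effect of each of the two operations on the volume function $x\mapsto \vol(\cF R^{(x)})$ and hence on the Duistermaat--Heckman measure $\nu$.

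For the translation identity $S(\cF_c)=S(\cF)+c$, I would read off directly from $\cF_c^\lambda R_m=\cF^{\lambda-cm}R_m$ that $\vol(\cF_c R^{(x)})=\vol(\cF R^{(x-c)})$, so $\lambda_{\min}(\cF_c)=\lambda_{\min}(\cF)+c$ (and likewise for $\lambda_{\max}$) and $\nu_c$ is the push-forward of $\nu$ under $y\mapsto y+c$. Integrating then gives
\[
S(\cF_c)=\int x\, d\nu_c \;=\; c+\int y\, d\nu \;=\; S(\cF)+c,
\]
equivalently via the change of variables $y=x-c$ in the integral formula.

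For $S(\cF_\bZ)=S(\cF)$ the plan is to prove $\vol(\cF_\bZ R^{(x)})=\vol(\cF R^{(x)})$ for every $x$; since the two DH measures then coincide, so do the values of $S$. Using $\cF_\bZ^{mx}R_m=\cF^{\lceil mx\rceil}R_m$ together with $mx\le\lceil mx\rceil\le mx+1$, the decreasing property of $\cF$ gives the sandwich
\[
\cF^{m(x+1/m)}R_m\;\subseteq\; \cF_\bZ^{mx}R_m\;\subseteq\; \cF^{mx}R_m;
\]
after dividing by $m^n/n!$, the upper bound tends to $\vol(\cF R^{(x)})$ by definition. For the lower bound, I would dominate $\dim\cF^{m(x+1/m)}R_m$ from below by $\dim\cF^{my'}R_m$ for an arbitrary fixed $y'>x$ (valid as soon as $1/m<y'-x$), take $\liminf_{m\to\infty}$ to reach $\vol(\cF R^{(y')})$, and then send $y'\to x^+$, invoking continuity of $y\mapsto\vol(\cF R^{(y)})$ furnished by Lemma~\ref{l-concavedistri}. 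The squeeze then closes the claim.

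The only real subtlety is this continuity step in the $\cF_\bZ$ case, which is precisely why the concavity result Lemma~\ref{l-concavedistri} is recalled immediately before the present lemma; everything else reduces to a direct manipulation of the integral formula for $S(\cF)$.
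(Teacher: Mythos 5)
Your proof is correct, but it follows a different route from the paper's for both halves of the statement. For the translation identity, the paper argues at the finite level: directly from the definitions one has $S_m(\cF_c)=S_m(\cF)+c$, and then one lets $m\to\infty$; you instead pass through the Duistermaat--Heckman measure and the asymptotic integral formula, which is equally valid but slightly more indirect. For $S(\cF_\bZ)=S(\cF)$, the paper reduces to $\bN$-filtrations by a translation and simply cites \cite{BJ17}*{Corollary 2.12}, whereas you give a self-contained sandwich argument showing $\vol(\cF_\bZ R^{(x)})=\vol(\cF R^{(x)})$ for every $x<\lambda_{\max}(\cF)$, relying on the concavity (hence continuity) of $x\mapsto\vol(\cF R^{(x)})^{1/n}$ from Lemma~\ref{l-concavedistri}. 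The one point worth flagging in your $\cF_\bZ$ argument is that Lemma~\ref{l-concavedistri} gives continuity only on the open interval $(-\infty,\lambda_{\max})$, so your squeeze identifies the two volume functions pointwise only there; this is harmless, since the endpoint is a null set and the integral formula for $S$ is insensitive to it. Overall, your approach buys a citation-free, more explicit argument at the cost of a slightly heavier technical detour through the volume function, while the paper's is shorter and leans on an external reference.
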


\begin{proof}
By definition we have $S_m(\cF_c)=S_m(\cF)+c$. Letting $m\to \infty$ we get the second equality. Replacing $\cF$ by a translation we may assume that $\cF^0 R_m = R_m$ for all $m\in \bN$. The first equality then follows from \cite{BJ17}*{Corollary 2.12}.
\end{proof}

\begin{defn}[Base ideals] \label{defn:base ideal}
To a filtration $\cF$ of $R$, we associate a family of graded sequence of base ideals.
For $m\geq 0$ and $\lambda\in\bR$, set 
$$I_{m,\lambda} = I_{m,\lambda}(\cF) := {\rm Im} \left(  \cF^\lambda R_m \otimes \cO_{X}(-mL) \to \cO_X\right)$$
where the map is induced by the natural evaluation $H^0(X,mL)\otimes \cO_X\to \cO_X(mL)$. Let $t\in \bR$ and let $I^{(t)}_m = I^{(t)}_m(\cF) := I_{m, tm}(\cF)$. Then $I^{(t)}_\bullet$ is a graded sequence of ideals on $X$.
\end{defn}

\subsection{Invariants associated to log Fano pairs}In this section, we recall some invariants which are introduced in previous works (e.g. \cite{BHJ17, Fuj18b, Li17}).

Let $(X,\Delta)$ be a log Fano pair and let $r>0$ be an integer such that $L:=-r(K_X+\Delta)$ is Cartier. Let $R=R(X,L)$.

\begin{defn}
Let $v$ be a valuation of linear growth on $X$ (by \cite{BJ17}*{Lemma 3.1}, this is the case if $A_{X,\Delta}(v)<\infty$, where $A_{X,\Delta}(v)$ denotes the log discrepancy). Then we define
\[
S(v)=S_{X,\Delta}(v):=r^{-1}S(\cF_v), \quad\mbox{and}\quad T(v)=T_{X,\Delta}(v):=r^{-1}\lambda_{\max}(\cF_v).
\]
If $E$ is a divisor over $X$ (i.e. $E$ is a prime divisor on some proper birational model $\mu\colon Y\to X$), we set $S_{X,\Delta}(E)=S(\ord_E)$ and $T_{X,\Delta}(E)=T(\ord_E)$ where $\ord_E\colon k(X)^\times \to \bZ$ is the discrete valuation given by the order of vanishing along $E$. It is not hard to check that
\begin{equation}\label{e-sinvariant}
    S_{X,\Delta}(E) = \frac{1}{\vol(-K_X-\Delta)}\int_0^\infty \vol(-\mu^*(K_X+\Delta)-xE) \rd x
\end{equation}
and 
\begin{equation}\label{e-tinvariant}
    T_{X,\Delta}(E) = \sup\{x\in\bR\,|\,\vol(-\mu^*(K_X+\Delta)-xE)>0\}.
\end{equation}
\end{defn}

\begin{defn}[$\beta$-invariant, alpha invariant and stability threshold]
Let $v$ be a valuation with $A_{X,\Delta}(v)<\infty$ on $X$ and let $E$ be a divisor over $X$. We define (c.f. \cite{Fuj19, Li17})
\[
 \beta_{X,\Delta}(v)(=\beta(v)) := A_{X,\Delta}(v)-S_{X,\Delta}(v) \quad \mbox{and} \quad
\beta(E) = \beta(\ord_E).
\]
The alpha invariant $\alpha(X,\Delta)$ and the stability threshold $\delta(X,\Delta)$ of the log Fano pair $(X,\Delta)$ is defined as
\[
\alpha(X,\Delta)=\inf_E \frac{A_{X,\Delta}(E)}{T_{X,\Delta}(E)},\quad \delta(X,\Delta) = \inf_E \frac{A_{X,\Delta}(E)}{S_{X,\Delta}(E)}
\]
where both infima run over all divisors $E$ over $X$ (see \cite{BJ17}).
\end{defn}

\begin{defn}[Non-Archimedean invariants] \label{defn:D^NA}
Let $(X_{\bA^1},\Delta_{\bA^1})=(X,\Delta)\times \bA^1$ and $X_0=X\times \{0\}$. Let $\cF$ be a filtration on $R$ and choose $e_-$ and $e_+$ as in Definition \ref{defn:filtration} such that $e_-,e_+\in\bZ$.  Let $e=e_+-e_-$ and for each $m\in\bN$, set
\[
\tI_m := \tI_m(\cF) := I_{m,me_+} + I_{m,me_+-1}\cdot t + \cdots + I_{m,me_-+1}\cdot t^{me-1} + (t^{me}) \subseteq \cO_{X\times \bA^1}.
\]
It is not hard to verify that $\tI_\bullet$ is a graded sequence of ideals. Let
\begin{align*}
    c_m & = \lct(X_{\bA^1}, \Delta_{\bA^1}\cdot (\tI_m)^\frac{1}{mr};X_0)\\
     & = \sup\{ c\in\bR \,|\, (X_{\bA^1}, (\Delta_{\bA^1}+cX_0)\cdot (\tI_m)^\frac{1}{mr}) \text{ is sub log canonical} \} 
\end{align*}
and $c_\infty = \lim_{m\to \infty} c_m$. We then define (c.f. \cite{BHJ17})
\begin{align*}
    \LNA(\cF) & = c_\infty + \frac{e_+}{r} -1, \\ 
    \DNA(\cF) & = \LNA(\cF) - \frac{S(\cF)}{r},\\
    \JNA(\cF) & = \frac{\lambda_{\max}(\cF)-S(\cF)}{r}.
\end{align*}
It is not hard to see from the definition that 
\[
c_\infty\le 1-\frac{\ord_{X_0}(\tI_\bullet)}{r} \le 1-\frac{e_+ - \lambda_{\max}(\cF)}{r},
\]
hence $\DNA(\cF)\le \JNA(\cF)$.
\end{defn}

\begin{lem}
We have $\LNA(\cF_\bZ)=\LNA(\cF)$ and $\LNA(\cF_c)=\LNA(\cF)+\frac{c}{r}$ for any $c\in \bR$. 
\end{lem}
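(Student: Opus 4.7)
The strategy for both parts is to track how the ideals $\tI_m$ transform under the two operations on the filtration, combined with a preliminary invariance of $\LNA$ under the choice of integer bounds $e_\pm$. For the invariance, I would observe that replacing $e_+$ by $e_++1$ forces the $m$ new low-order coefficients $I_{m, m(e_++1)-j}(\cF)$ (for $j=0,\ldots,m-1$) to vanish, since $\cF^y R_m = 0$ for $y\ge m e_+$. Hence the new $\tI_m$ equals $t^m$ times the old one, and the lct $c_m$ drops by $1/r$, exactly compensating the increase of $e_+/r$ by $1/r$. Changing $e_-$ downward only adds $\cO_X$-coefficient terms that are absorbed into $(t^{me})$, leaving $\tI_m$ unchanged.

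For the first equality $\LNA(\cF_{\bZ})=\LNA(\cF)$: any integer bounds $e_\pm$ valid for $\cF$ are also valid for $\cF_{\bZ}$. The formula for $\tI_m$ only uses base ideals at integer indices $k$, and $\cF_{\bZ}^{k} R_m = \cF^{\lceil k\rceil}R_m = \cF^{k}R_m$ for integer $k$. Therefore $I_{m,k}(\cF_{\bZ})=I_{m,k}(\cF)$, so $\tI_m(\cF_{\bZ})=\tI_m(\cF)$, and consequently $c_\infty$ and $e_+$ agree for both, giving the stated equality.

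For the second equality, I start with the integer case. For $c\in\bZ$, choose $e_\pm' = e_\pm + c \in \bZ$ for $\cF_c$. The identity $I_{m,\lambda}(\cF_c)=I_{m,\lambda-cm}(\cF)$ gives $I_{m,me_+'-j}(\cF_c)=I_{m,me_+-j}(\cF)$ term by term, so $\tI_m(\cF_c)=\tI_m(\cF)$; hence $c_\infty(\cF_c)=c_\infty(\cF)$ and
\[
\LNA(\cF_c)=c_\infty(\cF)+\tfrac{e_++c}{r}-1=\LNA(\cF)+\tfrac{c}{r}.
\]
For general $c\in\bR$, set $\epsilon:=\lceil c\rceil-c\in[0,1)$ and take $e_\pm'=e_\pm+\lceil c\rceil$ for $\cF_c$. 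Using $I_{m,\lambda}(\cF_c)=I_{m,\lambda-cm}(\cF)$ and the vanishing $\cF^y R_m=0$ for $y\ge me_+$, the first $\lfloor m\epsilon\rfloor+1$ low-order coefficients of $\tI_m(\cF_c)$ vanish, producing a factorization $\tI_m(\cF_c)=t^{N_m}\cJ_m$ with $N_m/m\to\epsilon$. A direct reindexing -- cleanest when $\cF$ is a $\bZ$-filtration (where base ideals at non-integer $y$ coincide with those at $\lceil y\rceil$) -- identifies $\cJ_m$ with $\tI_m(\cF)$ up to an asymptotically negligible discrepancy at the boundary. Since the factor $t^{N_m}$ contributes $N_m/(mr)$ to the coefficient of $X_0$ under the lct, this yields $c_m(\cF_c)=c_m(\cF)-N_m/(mr)+o(1)$, hence $c_\infty(\cF_c)=c_\infty(\cF)-\epsilon/r$, and combining with $e_+'=e_++\lceil c\rceil$ gives $\LNA(\cF_c)=\LNA(\cF)+c/r$.

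The main obstacle is making the identification $\cJ_m\approx\tI_m(\cF)$ precise when $c\notin\bZ$ and $\cF$ is not itself a $\bZ$-filtration, since the ideals $I_{m,y}(\cF)$ appearing at non-integer $y$ need not equal $I_{m,\lceil y\rceil}(\cF)$. This can be addressed either by a sandwich argument using the termwise inclusions $I_{m,\lceil y\rceil}(\cF)\subseteq I_{m,y}(\cF)\subseteq I_{m,\lfloor y\rfloor}(\cF)$ to bound the lct discrepancy by $O(1/m)$, or by first establishing the identity for rational $c$ via a Veronese rescaling (choosing $N$ so that $Nc\in\bZ$ and comparing $\LNA$ on $R(X,NL)$ with the integer case applied after rescaling), and then extending to $\bR$ using density together with continuity of $\LNA$ in the translation parameter.
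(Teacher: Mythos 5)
Your handling of the first equality and of the case $c\in\bZ$ is correct and matches the paper's argument. But for general $c\in\bR$ your proof is incomplete: you correctly diagnose the obstruction (the translated indices $me_+'-j-cm$ are non-integral, so $\tI_m(\cF_c)$ does not literally factor as $t^{N_m}\tI_m(\cF)$), and you then propose two patches, but carry out neither. The ``density plus continuity of $\LNA$ in the translation parameter'' route is not self-contained: proving that continuity amounts to essentially the same inclusion estimate you are trying to avoid, so citing it begs the question. The sandwich route is sound in principle, but you do not verify that the termwise inclusions $I_{m,\lceil y\rceil}(\cF)\subseteq I_{m,y}(\cF)\subseteq I_{m,\lfloor y\rfloor}(\cF)$ actually propagate to a two-sided $t$-power comparison of the graded ideals $\tI_m$, nor that the resulting discrepancy in $c_m$ is $O(1/m)$.

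The paper avoids the two-sided analysis and the rational/irrational dichotomy with a cleaner trick that you are missing. Keeping a \emph{common} bound $e_+$ for both $\cF$ and $\cF_c$, one uses only the single one-sided inclusion $I_{m,i}(\cF_c)=I_{m,i-cm}(\cF)\subseteq I_{m,i-\lceil cm\rceil}(\cF)$ (one integer rounding, in one direction), which gives $\tI_m(\cF_c)\subseteq t^{-\lceil cm\rceil}\cdot\tI_m(\cF)$, hence $c_m(\cF_c)\le c_m(\cF)+\frac{\lceil cm\rceil}{mr}$ and, after $m\to\infty$, $c_\infty(\cF_c)\le c_\infty(\cF)+\frac{c}{r}$. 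The reverse inequality then comes for free from the involution $\cF=(\cF_c)_{-c}$: apply the same one-sided estimate to $(\cF_c,-c)$ in place of $(\cF,c)$. This symmetry observation is precisely what allows a single rounded estimate to upgrade to an equality, with no continuity lemma and no separate treatment of irrational $c$. Your sandwich patch, if fleshed out termwise using both $\lceil cm\rceil$ and $\lfloor cm\rfloor$, would morally reduce to this same estimate applied twice rather than once plus symmetry; so the intended argument is recoverable, but as written the general case is not actually proved.
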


\begin{proof}
By definition we have $\tI_m(\cF)=\tI_m(\cF_\bZ)$ hence the first equality follows. Choose a sufficiently large common $e_+$ for $\cF$ and $\cF_c$, we then have 
$$I_{m,i}(\cF_c)=I_{m,i-cm}(\cF)\subseteq I_{m,i-\lceil cm \rceil}(\cF)$$ and hence $\tI_m(\cF_c)\subseteq t^{-\lceil cm \rceil}\cdot \tI_m(\cF)$. It follows that $c_m(\cF_c)\le c_m(\cF)+\frac{\lceil cm \rceil}{mr}$ and therefore $c_\infty(\cF_c)\le c_\infty(\cF)+\frac{c}{r}$. Interchanging the role of $\cF$ and $\cF_c$ (note that $\cF=(\cF_c)_{-c}$) we also obtain $c_\infty(\cF)\le c_\infty(\cF_c)-\frac{c}{r}$. Thus equality holds and $$\LNA(\cF_c)=\LNA(\cF)+\frac{c}{r}$$ as desired.
\end{proof}

Combining with Lemma \ref{lem:S after translation} we immediately see that

\begin{cor} \label{cor:DNA & JNA after translation}
For any $c\in \bR$ we have $$\DNA(\cF)=\DNA(\cF_c)=\DNA(\cF_\bZ)$$ and the same equalities hold with $\DNA$ replaced by $\JNA$. \qed
\end{cor}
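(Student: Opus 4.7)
The corollary is essentially a bookkeeping consequence of the preceding lemma (which controls how $\LNA$ behaves under translation and integer rounding) together with Lemma \ref{lem:S after translation} (which gives the analogous statement for $S$) and a short check of the corresponding behavior of $\lambda_{\max}$. My plan is to simply record the two direct computations.

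For the $\DNA$ statement, I would plug the definition $\DNA(\cF)=\LNA(\cF)-\tfrac{S(\cF)}{r}$ into the identities already established. First, using the preceding lemma, $\LNA(\cF_c)=\LNA(\cF)+\tfrac{c}{r}$, and using Lemma \ref{lem:S after translation}, $S(\cF_c)=S(\cF)+c$; subtracting these gives $\DNA(\cF_c)=\DNA(\cF)$, and the additive $c/r$-terms cancel exactly. For the $\bZ$-rounding, both $\LNA(\cF_\bZ)=\LNA(\cF)$ and $S(\cF_\bZ)=S(\cF)$ already hold, so $\DNA(\cF_\bZ)=\DNA(\cF)$ is immediate.

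For the $\JNA$ statement, the only new input needed is the behavior of $\lambda_{\max}$. From the definition $\cF_c^\lambda R_m=\cF^{\lambda-cm}R_m$, the support of the Duistermaat–Heckman measure shifts by $c$, so $\lambda_{\max}(\cF_c)=\lambda_{\max}(\cF)+c$; combined with $S(\cF_c)=S(\cF)+c$, the numerator $\lambda_{\max}-S$ is unchanged, and hence $\JNA(\cF_c)=\JNA(\cF)$. For the $\bZ$-rounding, $\cF_\bZ^\lambda R_m=\cF^{\lceil\lambda\rceil}R_m$ implies $\lfloor mx\rfloor$-type shifts in $\dim\cF_\bZ^{mx}R_m$ versus $\dim\cF^{mx}R_m$ that are $o(m)$; passing to the limit shows $\lambda_{\max}(\cF_\bZ)=\lambda_{\max}(\cF)$, and combined with $S(\cF_\bZ)=S(\cF)$ this gives $\JNA(\cF_\bZ)=\JNA(\cF)$.

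No real obstacle is expected — everything is an immediate chase of definitions once the preceding lemma and Lemma \ref{lem:S after translation} are in hand. If anything demands a sentence of justification, it is the invariance $\lambda_{\max}(\cF_\bZ)=\lambda_{\max}(\cF)$, which can alternatively be deduced by observing that $\cF_\bZ^\lambda R_m\supseteq \cF^\lambda R_m\supseteq \cF_\bZ^{\lambda+1}R_m$, so the two Duistermaat–Heckman measures have the same support.
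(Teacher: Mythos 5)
Your proof is correct and takes the same approach the paper leaves implicit: combine the $\LNA$ lemma with Lemma \ref{lem:S after translation} for $\DNA$, and track $\lambda_{\max}$ for $\JNA$. One small slip in your final "alternatively" remark: since $\cF_\bZ^\lambda R_m=\cF^{\lceil\lambda\rceil}R_m$ and $\lceil\lambda\rceil\ge\lambda$, the correct sandwich is $\cF^{\lambda+1}R_m\subseteq\cF_\bZ^{\lambda}R_m\subseteq\cF^{\lambda}R_m$ (your first inclusion is reversed), but this does not affect the conclusion that the two Duistermaat--Heckman measures agree and hence $\lambda_{\max}(\cF_\bZ)=\lambda_{\max}(\cF)$.
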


\subsection{K-stability}
In this section, we recall the definitions of various K-stability notions for log Fano pairs, which were first introduced in \cite{Tia97} and algebraically formulated in \cite{Don01}. Here instead of the original definitions, we will use some equivalent forms developed later (see e.g. \cite{Fuj19, Li17, LWX18b, BX19}). 

Let $(X,\Delta)$ be a log Fano pair. 

\begin{defn} \label{def:tc}
A (normal) test configuration $(\cX,\cL)$ of $(X,\Delta)$ consists of the following data:
\begin{itemize}
    \item a normal variety $\cX$ together with a flat projective morphism $\pi:\cX \to \bA^1$ and a $\pi$-ample line bundle $\cL$;
    \item a $\bG_m$-action on $(\cX,\cL)$ lifting the canonical action on $\bA^1$ such that $(\cX,\cL)|_{\pi^{-1}(\bA^1\setminus \{0\})}$ $\cong (X,-r(K_X+\Delta))\times (\bA^1\setminus \{0\})$ for some $r\in\bN_+$.
\end{itemize}
There is a natural $\bG_m$-equivariant compactification $(\overline{\cX},\overline{\cL})\to\bP^1$ of $\pi$ by gluing it with $(X,L)\times (\mathbb{P}^1\setminus\{0\})$. Let $n=\dim X$ and let $\Delta_\cX$ (resp. $\Delta_{\overline{\cX}}$) be the closure of $\Delta\times (\bA^1\setminus\{0\})$ in $\cX$ (resp. $\overline{\cX}$). The {\it generalized Futaki invariant} of $(\cX,\cL)$ is defined to be
\[
\Fut(\cX,\cL):=\frac{1}{(-K_X-\Delta)^n}\left(\frac{n}{n+1}\cdot\frac{(\overline{\cL}^{n+1})}{r^{n+1}}+\frac{(\overline{\cL}^n\cdot (K_{\overline{\cX}/\bP^1}+\Delta_{\overline{\cX}}))}{r^n}\right).
\]
We call $(\cX,\cL)$ a {\it product test configuration} if $(\cX,\Delta_\cX)\cong (X,\Delta)\times \bA^1$. Every one-parameter subgroup $\xi\colon \bG_m\to \Aut(X,\Delta)$ induces a product test configuration and we denote the corresponding generalized Futaki invariant by $\Fut_{X,\Delta}(\xi)$ (or simply $\Fut(\xi)$).

\end{defn}
\begin{rem}In the original definition of K-(semi,poly)stability in \cite{Tia97, Don01}, one looks at the sign of $\Fut(\cX,\cL)$ for all test configurations $(\cX,\cL)$. 
In the below we use a form which fits better for arguments in this paper. 
\end{rem}

\begin{defn-thm}
We say that $(X,\Delta)$ is
    \begin{enumerate}
        \item K-semistable if $\beta_{X,\Delta}(E)\ge 0$ for all divisors $E$ over $X$;
        \item K-stable if $\beta_{X,\Delta}(E)>0$ for all divisors $E$ over $X$;
        \item uniformly K-stable if there exists a constant $c>0$ depending only on $(X,\Delta)$ such that $\beta_{X,\Delta}(E)\ge c\cdot S_{X,\Delta}(E)$ for all divisors $E$ over $X$;
        \item K-polystable if it is K-semistable and any test configuration $(\cX, \Delta_{\cX},\cL)$ of $(X,\Delta)$ with K-semistable central fiber is a product test configuration.
    \end{enumerate}
\end{defn-thm}
\begin{proof} The equivalences of the original definitions of K-semistability, K-stability and uniform K-stability as in \cite{Tia97, Don01, BHJ17}  to (1)-(3) are proved in \cites{Fuj19,Li17, BX19}. And the equivalence of K-polystability with (4) is proved in \cite{LWX18b}.
\end{proof}

It is also known (see \cite{BJ17}) that $(X,\Delta)$ is K-semistable (resp. uniformly K-stable) if and only if $\delta(X,\Delta) \ge 1$ (resp. $>1$). Moreover, it is proved in \cite{BBJ15,Fuj19, Li17, Li19} the various K-stability notions are indeed equivalent to the corresponding Ding-stability notions. 

\subsection{K-moduli} \label{ss-kmoduli}
By combining the works in \cites{Jia17, BX19, ABHX19, BLX19, Xu19}, we have the following theorem.

\begin{thm}[K-moduli]\label{c-kmoduli} 
The moduli functor $\cM^{\rm Kss}_{n,v}$ of $n$-dimensional K-semistable $\mathbb{Q}$-Fano varieties of volume $v$, which sends $S\in {\sf Sch}_k$ to
 \[
\mathcal{M}^{\rm Kss}_{n,v}(S) = 
 \left\{
  \begin{tabular}{c}
\mbox{Flat proper morphisms $X\to S$, whose geometric fibers }\\
\mbox{are $n$-dimensional K-semistable $\bQ$-Fano varieties }\\
\mbox{with volume $v$, satisfying Koll\'ar's condition}
  \end{tabular}
\right\}
\]
is an Artin stack of finite type and admits a good moduli space $\phi\colon\mathcal{M}^{\rm Kss}_{n,v}\to M^{\rm Kps}_{n,v}$ as a separated algebraic space, whose geometric points are in bijection with $n$-dimensional K-polystable $\mathbb Q$-Fano varieties of volume $v$. 
\end{thm}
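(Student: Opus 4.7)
Since the theorem is an aggregate of \cite{Jia17, BX19, ABHX19, BLX19, Xu19}, my plan is to assemble its ingredients from those works rather than reprove any single one. First I would establish boundedness: by \cite{Jia17}, the class of $n$-dimensional K-semistable $\bQ$-Fano varieties of volume $v$ is bounded, so there is a $\bQ$-Gorenstein family $\cZ \to T$ over a finite-type $k$-scheme such that every such variety appears as a geometric fiber. Combined with the openness of K-semistability in $\bQ$-Gorenstein families proved in \cite{BLX19, Xu19}, this realises the moduli functor $\cM^{\rm Kss}_{n,v}$ (including Koll\'ar's condition) as a finite-type open substack of a quotient stack of the form $[T^{\rm Kss}/\mathrm{PGL}_N]$, giving the Artin stack of finite type claim.

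The second and substantive step is constructing the good moduli space. For this I would apply the general existence criterion for good moduli spaces of Artin stacks (Alper--Halpern-Leistner--Heinloth), which reduces the existence of a \emph{separated} good moduli space to two valuative properties of $\cM^{\rm Kss}_{n,v}$, namely $\Theta$-reductivity and S-completeness. Both properties are verified in \cite{ABHX19}, with the finite generation input sharpened in \cite{Xu19} and \cite{BLX19}; geometrically they translate into the statements that any family of K-semistable Fanos over a punctured DVR admits a unique K-semistable extension across the puncture, and that any two K-semistable degenerations of a fixed family are S-equivalent. I expect this step to be the main obstacle, because its backbone is the uniqueness of K-polystable degenerations, which itself rests on the finite generation of the filtered algebra associated to an optimal destabilising valuation.

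Finally, to identify the geometric points of $M^{\rm Kps}_{n,v}$ with K-polystable $\bQ$-Fano varieties, I would combine two facts: by the general theory of good moduli spaces, geometric points of $M^{\rm Kps}_{n,v}$ correspond bijectively to closed points of $\cM^{\rm Kss}_{n,v}$; and, by \cite{LWX18b, BX19}, the closed points of $\cM^{\rm Kss}_{n,v}$ are precisely those parametrising K-polystable Fanos (every K-semistable Fano specially degenerates to a K-polystable one via a test configuration, and distinct K-polystable Fanos lie in distinct S-equivalence classes). Once boundedness, openness, $\Theta$-reductivity, S-completeness and this characterisation of closed points are in hand, the remainder of the statement follows formally from Alper's good moduli space machinery.
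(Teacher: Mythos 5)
Your plan follows the same route the paper itself takes (spelled out in the proof of the log version, Theorem~\ref{c-kmodulipair}): boundedness (\cite{Jia17}) plus openness of K-semistability (\cite{BLX19,Xu19}) exhibit $\cM^{\rm Kss}_{n,v}$ as a finite-type quotient Artin stack, and the good moduli space then comes from the \cite{AHLH18} existence criterion via the $\Theta$-reductivity and S-completeness established in \cite{ABHX19}, with closed points identified as K-polystable Fanos via \cite{LWX18b,BX19}.

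One correction is needed, though. Your geometric gloss of $\Theta$-reductivity --- that ``any family of K-semistable Fanos over a punctured DVR admits a unique K-semistable extension across the puncture'' --- is not $\Theta$-reductivity but the valuative criterion for \emph{properness}, which is precisely what is \emph{not} known (and is why the theorem asserts only that $M^{\rm Kps}_{n,v}$ is a separated algebraic space, with properness left as a conjecture). $\Theta$-reductivity concerns $\Theta_R=[\bA^1_R/\bG_m]$ for a DVR $R$: it asks that every map $\Theta_R\setminus\{0\}\to\cM^{\rm Kss}_{n,v}$ extend to $\Theta_R$, where $0$ is the unique closed point. Concretely, given a $\bQ$-Gorenstein family of K-semistable Fanos over $\Spec R$ together with a test configuration of its generic fiber whose special fiber is K-semistable, that test configuration must spread out over all of $\Spec R$; this is where the finite-generation inputs of \cite{Xu19,BLX19} enter. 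Your reading of S-completeness in terms of common degenerations of two special degenerations (S-equivalence) is the right picture. Since the logical skeleton and the citations are correct, the argument goes through; only this conceptual interpretation needs fixing.
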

Though in general the properness of $M^{\rm Kps}_{n,v}$ remains a challenging problem, we have the following theorem established in \cite{LWX19} (see also \cite{Oda15}), whose proof relies on deep analytic results (c.f. \cite{DS14, CDS, Tia15}).
\begin{thm}\label{t-smoothablelwx}
 Let $k=\bC$. Denote by $\cM^{\rm sm, Kss}_{n,v}$ the open substack of $\cM^{\rm Kss}_{n,v}$ whose geometric points correspond to K-semistable smooth Fano varieties.   
Denote by $\overline{M}^{\rm sm, Kps}_{n,v}$ the closure of the image $\phi(\cM^{\rm sm, Kss}_{n,v})$ in $M^{\rm Kps}_{n,v}$, i.e., geometric points of $\overline{M}^{\rm sm, Kps}_{n,v}$ correspond to $n$-dimensional K-polystable Fano varieties with volume $v$ which are smoothable.
Then  $\overline{M}^{\rm sm, Kps}_{n,v}$  is proper. 
\end{thm}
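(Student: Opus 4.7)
\textbf{Proof plan for Theorem \ref{t-smoothablelwx}.} My approach is to verify the valuative criterion of properness for $\overline{M}^{\rm sm, Kps}_{n,v}$. Since separatedness is already part of Theorem \ref{c-kmoduli}, only the existence of limits needs to be addressed. Let $C$ be the spectrum of a DVR with closed point $0$ and generic point $\eta$, and suppose we are given a morphism $g\colon C\setminus\{0\}\to \overline{M}^{\rm sm, Kps}_{n,v}$. After a finite base change and some birational manipulation, I would reduce to the situation where $g$ lifts to a $\bQ$-Gorenstein flat family $f\colon X^*\to C^*:=C\setminus\{0\}$ of K-polystable $\bQ$-Fano varieties, with smooth (indeed, K-polystable smooth) geometric generic fiber $X_\eta$. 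The task then becomes: produce a K-polystable limit $X_0$ which is smoothable, and realize it as the central fiber of a $\bQ$-Gorenstein extension $\cX\to C$ of $f$.

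First, I would invoke the existence theorem of \cite{CDS, Tia15}: each smooth K-polystable $X_t$ carries a smooth Kähler-Einstein metric $\omega_t\in c_1(-K_{X_t})$. The Ricci lower bound together with Bishop-Gromov gives uniform diameter and volume bounds, so by Gromov compactness one extracts a subsequential Gromov-Hausdorff limit $(Z,d_\infty)$ of the $(X_{t_i},\omega_{t_i})$ as $t_i\to 0$. Next, I would invoke the Donaldson-Sun theory \cite{DS14}: via the partial $C^0$-estimate, the pluri-anticanonical sections embed the $X_{t_i}$ uniformly into a fixed projective space, so after passing to a subsequence one obtains a Hilbert scheme limit $Z'$ that may be identified with $Z$ as polarized varieties. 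This shows $Z$ is a normal projective $\bQ$-Fano variety endowed with a singular Kähler-Einstein current, and Donaldson-Sun further show it is K-polystable. Setting $X_0:=Z$ gives the required geometric point of $\overline{M}^{\rm sm, Kps}_{n,v}$, and the Hilbert scheme family provides the $\bQ$-Gorenstein extension $\cX\to C$.

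Finally, one needs to check that the map $C\to \overline{M}^{\rm sm, Kps}_{n,v}$ so obtained is well-defined at $0$, i.e., that the K-polystable point $[X_0]$ is independent of the chosen subsequence. This follows from the uniqueness of the Gromov-Hausdorff limit of the algebraic family, which in turn follows from separatedness of $M^{\rm Kps}_{n,v}$ combined with the fact that any two such limits are isomorphic as polarized $\bQ$-Fano varieties (by the uniqueness of the Kähler-Einstein metric on a K-polystable Fano up to automorphism). Combining this with separatedness from Theorem \ref{c-kmoduli} yields properness.

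The main obstacle is the algebraization step: transferring the purely metric Gromov-Hausdorff limit $(Z,d_\infty)$ into an honest algebraic flat family over $C$ whose central fiber is isomorphic to $Z$ and whose total space is $\bQ$-Gorenstein. This requires the deep two-step procedure of Donaldson-Sun: uniformly embedding the $X_{t_i}$ by pluri-anticanonical sections using the partial $C^0$-estimate, and then matching the Hilbert scheme limit with the metric limit via a tangent cone analysis to rule out jumps in dimension or algebraic structure. The K-polystability of $Z$ then follows from the existence of its Kähler-Einstein metric and the Berman-type argument identifying K-polystability with existence of a singular KE metric; the smoothability of $X_0$ is automatic from the construction of $\cX\to C$.
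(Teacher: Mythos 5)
The paper does not prove this theorem; it is cited directly to \cite{LWX19} (see also \cite{Oda15}), with the remark that the proof ``relies on deep analytic results (c.f.\ \cite{DS14, CDS, Tia15}).'' Your proposal is a reconstruction of the strategy in those cited works, and at the level of ideas it tracks the literature faithfully: valuative criterion, existence of K\"ahler-Einstein metrics via \cite{CDS,Tia15}, Gromov--Hausdorff compactness, algebraization of the GH limit via Donaldson--Sun \cite{DS14}, K-polystability of the limit, and uniqueness from the already-established separatedness of $M^{\rm Kps}_{n,v}$.

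Two points where your sketch is imprecise. First, Donaldson--Sun do not prove K-polystability of the GH limit; what they show is that the limit is a normal $\bQ$-Fano variety admitting a weak K\"ahler--Einstein metric. K-polystability is then a separate input (Berman's theorem and its singular extensions, e.g.\ in \cite{BBJ15, Li19, LTW19}); you do invoke this correctly in your final paragraph, but your second paragraph attributes it to \cite{DS14}. Second, your reduction to a smooth geometric generic fiber $X_\eta$ is legitimate but not automatic: you are implicitly using the fact that, for a finite-type separated algebraic space over a field, universal closedness can be tested on DVRs whose generic point maps to a generic point of the source, which then lands in the dense open $M^{\rm sm, Kps}_{n,v}$. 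Alternatively, following \cite{SSY16}, one can apply the GH compactness argument directly to families whose fibers are singular but smoothable K-polystable Fano varieties, since these also carry weak KE metrics. Finally, the assertion that the Hilbert scheme family produces a $\bQ$-Gorenstein extension $\cX\to C$ is the technical heart of \cite{LWX19}; it is far from formal, relying on the uniform partial $C^0$-estimate along the family and a careful matching of the metric and algebraic limits.
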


We will also discuss the log version of the above theorems. In fact, Theorem \ref{c-kmoduli} can be extended to the log version, thanks to the recent work \cite{Kol19}.

We call $(X,cD)\to S$ a family of pairs (where $c$ is a rational number) if 
\begin{enumerate}
\item $X\to S$ is proper and flat;
\item $D$ is a K-flat family of divisors on $X$ (see \cite{Kol19}); and
\item $-K_{X/S}-cD$ is $\bQ$-Cariter.
\end{enumerate}
It is called a family of (resp. K-semistable) log Fano pairs if in addition the geometric fibers $(X_s, cD_s)$ are (resp. K-semistable) log Fano pairs.
Since $D$ is integral, the coefficients of $cD_s$ are contained in $I:=\{nc \ |\  n\in \bN\}\cap [0,1]$. 

Denote by $\cM^{\rm Kss}_{n,v,c}$ the functor
  \[
\mathcal M^{\rm Kss}_{n,v,c}(S) = 
 \left\{
  \begin{tabular}{c}
\mbox{ $(X,\Delta:=cD)\to S$ a family of K-semistable log Fano pairs,}\\
\mbox{ with $\dim(X)=n$ and $(-K_{X_s}-\Delta_s)^n=v$}
  \end{tabular}
\right\}.
\]

 \begin{thm}[K-moduli for pairs]\label{c-kmodulipair} 
The moduli functor $\mathcal{M}^{\rm Kss}_{n,v,c}$ is an Artin stack of finite type and admits a good moduli space $\phi\colon\mathcal{M}^{\rm Kss}_{n,v,c}\to M^{\rm Kps}_{n,v,c}$ as a separated algebraic space, whose geometric points are in bijection with $n$-dimensional K-polystable log Fano pairs of volume $v$ whose coefficients are in $I$. 
\end{thm}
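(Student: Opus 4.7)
The plan is to follow exactly the same architecture as the proof of Theorem \ref{c-kmoduli}, with the single new input being Koll\'ar's K-flat framework \cite{Kol19} which provides the right notion of a family of divisors so that the log version of each ingredient goes through. Notice that since $c$ is fixed and the coefficients of $cD_s$ must lie in $I=\{nc\,|\,n\in\bN\}\cap[0,1]$, a finite set, there is no loss in bundling all allowed coefficient profiles together; the essential work is to parametrize pairs $(X,D)$ with a single integral divisor $D$.

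First I would establish that $\mathcal{M}^{\rm Kss}_{n,v,c}$ is an Artin stack of finite type. Boundedness of the underlying Fano varieties $X$ with a fixed upper bound on $-K_X$ comes from the log analogue of Jiang's theorem \cite{Jia17}: the family of $n$-dimensional K-semistable log Fano pairs of volume $v$ with coefficients in the finite set $I$ is bounded, so there exists a uniform $N$ such that $-N(K_X+cD)$ is very ample and embeds every such pair into a fixed projective space. The divisor $D$ is then parametrized by a K-flat Hilbert functor in the sense of \cite{Kol19}, which is representable by an algebraic space of finite type; intersecting with the conditions that $X$ is a $\bQ$-Fano variety satisfying Koll\'ar's condition and that $-K_{X/S}-cD$ is $\bQ$-Cartier cuts out a locally closed subfunctor. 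Openness of K-semistability in such families is the log version of \cite{BLX19, Xu19}, whose arguments use only lower semicontinuity of $\delta$ and constructibility, and apply verbatim once we know K-flat families give a well-behaved base.

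Next I would produce the good moduli space by applying the existence criterion of Alper--Halpern-Leistner--Heinloth, exactly as in \cite{ABHX19}. This requires verifying $\Theta$-reductivity and S-completeness for $\mathcal{M}^{\rm Kss}_{n,v,c}$. Both follow from the log versions of the uniqueness results for K-semistable degenerations proved in \cite{BX19}: given a family over the punctured disk of a DVR with K-semistable generic fiber, any two extensions to K-semistable log Fano pairs over the closed point have isomorphic K-polystable degenerations. The key identifications of the closed point of the limit use the valuative criterion together with the Ding-stability characterization, both of which are insensitive to whether the boundary is $\bQ$-Cartier or merely K-flat. Separatedness of $M^{\rm Kps}_{n,v,c}$ and the bijection between its geometric points and K-polystable log Fano pairs with coefficients in $I$ then follow in the standard way from the good moduli space formalism.

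The main obstacle, and the reason this theorem could not be stated at the level of generality of Theorem \ref{c-kmoduli} until now, is the representability of the moduli of boundary divisors. Without the K-flat framework of \cite{Kol19}, the naive flat Hilbert scheme does not give a separated moduli problem for pairs (different limit divisors can have the same scheme-theoretic closure, and pathologies at non-Cartier loci obstruct the construction of the stack). Granting \cite{Kol19} for this step, every other ingredient is a direct translation of the Fano case, and no new K-stability theory is needed.
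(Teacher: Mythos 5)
Your proposal follows essentially the same architecture as the paper's proof: establish boundedness (from the log analogue of \cite{Jia17}, i.e.\ \cite{Che18, LLX18}), use Koll\'ar's K-flat Hilbert scheme from \cite{Kol19} to parametrize boundary divisors, cut out the locus of $\bQ$-Gorenstein families of klt log Fano pairs with K-semistable fibers (with openness of K-semistability from \cite{BLX19, Xu19}), obtain the quotient stack, and then invoke $\Theta$-reductivity, S-completeness, and the \cite{AHLH18} existence criterion to get the separated good moduli space. This is exactly the paper's route; the only minor omission is the explicit use of the Hassett--Kov\'acs locally-closedness result \cite{HK04} to identify the $\bQ$-Gorenstein locus inside the K-flat Hilbert scheme, and a small attribution difference (the paper cites \cite{ABHX19} directly for $\Theta$-reductivity and S-completeness, though these do ultimately rest on \cite{BX19}).
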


The proof is just putting all known ingredients together, and one can exactly follow the proof of Theorem \ref{c-kmoduli}. In fact,  the boundedness result of \cites{Jia17} can be replaced by the corresponding result in the log version (see \cite{Che18, LLX18}) and other main technical results in \cites{BX19, ABHX19, BLX19, Xu19} are already established for log pairs. The only originally missing ingredient, which is the definition of locally stable family of log pairs over a general base, is now treated in \cite{Kol19}. Thus we can prove Theorem \ref{c-kmodulipair} in the way as the arguments for \cite{BX19}*{Corollary 1.4} and \cite{Xu19}*{Corollary 1.5}, which we include here for reader's convenience.  

\begin{proof}
By \cite{Che18} or \cite{LLX18}*{Corollary 6.14} the set $\mathcal{M}_{n,v,c}^{\rm Kss}(k)$ is bounded. Hence, there exists a positive integer $M$ so that $-M(K_{X}+\Delta)$ is a very ample Cartier divisor for all $[(X,\Delta)] \in \mathcal{M}_{n,v,c}^{\rm Kss}(k)$.
Furthermore, the set of Hilbert functions $m\mapsto \chi\big( \omega_{X}^{[-mM]}(-mM\Delta)\big)$ with $[(X,\Delta)] \in  \mathcal{M}_{n,v,c}^{\rm Kss}(k)$ is finite. Moreover, there are only finitely many possible degrees $d=D\cdot (-K_{X_s}-\Delta_s)^{n-1}$.

For every such Hilbert function $h$, consider the subfunctor $\mathcal{M}_{h}^{\rm Kss} \subset \mathcal{M}_{n,v,c}^{\rm Kss}$
parametrizing K-semistable log Fano pairs with Hilbert function $h$. Note that $\mathcal{M}_{n,v,c}^{\rm Kss}= \coprod_{h} \mathcal{M}_{h}^{\rm Kss}$. Set $N:=h(1)-1$, and let ${\rm Hilb}_h(\PP^N)$ be the Hilbert scheme parametrizing closed subschemes of $\PP^N$ 
with Hilbert polynomial $h$.

Next, let $U \subset  {\rm Hilb}_h(\PP^N)$ denote the open subscheme parameterizing normal, Cohen-Macaulay varieties.
By \cite[Theorem 98]{Kol19}, there is a separated $U$-scheme $W_1$ of finite type which parametrizes K-flat divisors $D$ with degree $d$ for all possible $d$ as above. Write $(\fX,\fD)\to  W_1$ for the corresponding universal family.

By \cite[Theorem 3.11]{HK04}, there is a locally closed subscheme $W_2 \subset W_1$ such that a map
$T\to W_1$ factors through $W_2$ if and only if there is an isomorphism 
$$\omega_{\fX_T/ T}^{[-M]}(-cM\fD_T) \simeq \mathcal{L}_T \otimes \cO_{\fX_T}(1),$$ 
where $\mathcal{L}_T$ is the pullback of a line bundle from $T$ and $\fD_T$ is the divisorial pull back of $\fD$. In particular, $(\fX_{W_2},\fD_{W_2})\to W_2$ is a $\bQ$-Gorenstein family of log Fano pairs. Since being klt is an open condition in $\bQ$-Gorenstein families, there exists an open subscheme $W_3$ of $W_2$ parametrizing log Fano pairs.
By \cite{BLX19} or \cite{Xu19} (applied to the family $(\fX,c\fD)$ of log pairs over the normalization of $W_3$), we see that
$$W: =\{t \in W_3 \, \vert \, (\fX_{\overline{t}},c\fD_{\overline{t}})  \text{ is K-semistable} \}$$
is open in $W_3$. 

As a consequence of the above discussion, $\mathcal{M}^{\rm Kss}_{h}\simeq [W/{\rm PGL}(N+1)]$ is an Artin stack of finite type. Taking the disjoint union over all $h$ yields $\cM^{\rm Kss}_{n,v,c}$. Since it satisfies $\Theta$-reductivity and $S$-completeness by \cite{ABHX19}, it admits a good moduli space by \cite{AHLH18} which is $M^{\rm Kps}_{n,v,c}$.
\end{proof}


The following theorem is a generalization of \cite{LWX19} (see  Theorem \ref{t-smoothablelwx}) to the log case.

\begin{thm}[{\cite[Theorem 1.1]{ADL19}}]\label{t-logsmoothable}
Let $k=\bC$. Denote by $\cM^{\rm sm, Kss}_{n,v,c}\subset  \cM^{\rm Kss}_{n,v,c}$ the open substack whose geometric points correspond to log Fano pairs $(X,cD)$ where $X$ and $D$ are smooth and $D\sim_{\mathbb{Q}} -rK_X$ for some $r\in \bQ_{+}$. Denote by $\overline{M}^{\rm sm, Kps}_{n,v,c}$ the closure of $\phi(\cM^{\rm sm, Kss}_{n,v,c})$ in $M^{\rm Kps}_{n,v,c}$, then  $\overline{M}^{\rm sm, Kps}_{n,v,c}$  is proper. 
\end{thm}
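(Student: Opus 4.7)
The plan is to verify the existence part of the valuative criterion of properness for $\overline{M}^{\rm sm, Kps}_{n,v,c}$, in parallel with the approach to Theorem \ref{t-smoothablelwx} in the non-log case. Since Theorem \ref{c-kmodulipair} already shows that $M^{\rm Kps}_{n,v,c}$ is separated and $\overline{M}^{\rm sm, Kps}_{n,v,c}$ is closed therein, separatedness is automatic. It thus suffices to show that given a DVR $R$ with fraction field $K$, any morphism $\Spec K \to \overline{M}^{\rm sm, Kps}_{n,v,c}$ extends, possibly after a finite base change of $R$, to $\Spec R \to \overline{M}^{\rm sm, Kps}_{n,v,c}$.

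First, using the density of $\phi(\cM^{\rm sm, Kss}_{n,v,c})$ in $\overline{M}^{\rm sm, Kps}_{n,v,c}$ together with boundedness of $\cM^{\rm Kss}_{n,v,c}$ from Theorem \ref{c-kmodulipair}, I would reduce to the case where the punctured family arises as a one-parameter family of smooth K-polystable log Fano pairs $(X_t, cD_t)$ with $X_t$ and $D_t$ smooth and $D_t \sim_\bQ -rK_{X_t}$. One then needs to fill in the central fiber with a K-polystable log Fano pair in the moduli.

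The analytic input, mirroring the approach in \cite{LWX19}, is twofold. First, each smooth K-polystable pair $(X_t,cD_t)$ admits a conical K\"ahler-Einstein metric with cone angle $2\pi(1-c)$ along $D_t$, by the log Yau-Tian-Donaldson correspondence (as developed in works of Jeffres-Mazzeo-Rubinstein, Song-Wang, and others in the conical setting). Second, Gromov compactness yields a pointed Gromov-Hausdorff limit as $t\to 0$, and a log/conical extension of the Donaldson-Sun regularity theorem endows this limit with the structure of a K-polystable log Fano pair $(X_0,cD_0)$ which, by construction, is smoothable. The algebraic family is then produced by matching this metric limit with the Hilbert scheme parametrization already used in the proof of Theorem \ref{c-kmodulipair}, giving a Gorenstein degeneration to $(X_0, cD_0)$, which is K-polystable by its analytic origin.

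The main obstacle is precisely the second analytic step: extending the Donaldson-Sun identification of Gromov-Hausdorff limits with algebraic varieties to the conical setting, and in particular verifying that the divisors $D_t$ converge to a genuine Weil divisor $D_0$ on the limit with $(X_0, cD_0)$ K-polystable in the sense of our moduli. This requires a log version of the partial $C^0$-estimate together with attendant compactness arguments for divisors of bounded degree in a fixed projective embedding, and forms the technical core of \cite{ADL19}.
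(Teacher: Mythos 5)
The statement in question is not proved in the paper: it is imported verbatim as a citation to \cite[Theorem 1.1]{ADL19}, just as the non-log analogue Theorem \ref{t-smoothablelwx} is imported from \cite{LWX19}. So there is no ``paper's own proof'' to compare against; the present paper treats this as a black box.

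Your sketch is a plausible high-level reconstruction of the strategy actually used in \cite{ADL19}, and its main ingredients are all sound: (i) reduce to the existence part of the valuative criterion, since separatedness already follows from Theorem \ref{c-kmodulipair}; (ii) produce conical K\"ahler--Einstein metrics on the smooth fibers via the log Yau--Tian--Donaldson correspondence (in the notation of this paper the relevant analytic existence result is the Tian--Wang theorem \cite{TW19}, already used in the proof of Theorem \ref{t-logample}); (iii) take a pointed Gromov--Hausdorff limit and upgrade it to an algebraic log Fano pair via a log/conical Donaldson--Sun theory with a partial $C^0$ estimate; (iv) match the limit with a point of the Hilbert-scheme parameter space constructed in the proof of Theorem \ref{c-kmodulipair}. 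You also correctly identify the genuine technical bottleneck: establishing the log analogue of the Donaldson--Sun regularity and the convergence of the boundary divisors $D_t$ to a Weil divisor $D_0$ on the limit. As you acknowledge, your argument defers precisely this core to \cite{ADL19} rather than proving it, so the write-up is a correct outline but not a self-contained proof. Given that the paper itself only invokes the theorem, your level of detail is appropriate for the role the statement plays here; just make the citation dependence explicit rather than phrasing the key steps as things you ``would'' do.
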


\subsection{CM line bundle}

In this section we recall the definition and some basic properties of CM line bundles.

\begin{defn}
Let $f\colon (X,\Delta)\to S$ be a family of pairs of relative dimension $n$ such that $-(K_{X/S}+\Delta)$ is $f$-ample. Let $s>0$ be an integer such that $L:=-s(K_{X/S}+\Delta)$ is Cartier. By \cite{MFK-GIT}*{Appendix to Chapter 5, Section D}, we have a Knudsen-Mumford expansion
\[
\det f_*\cO_X(mL) \cong \bigotimes_{i=0}^{n+1} \cM_i^{\otimes\binom{m}{i}}
\]
for all sufficiently large $m\in\bN$ and for some line bundles $\cM_i$ on $S$. The CM ($\bQ$-)line bundle of the family is then defined as
\[
\lambda_{f,\Delta} := -s^{-n-1} \cM_{n+1}.
\]

By \cite{CP18}*{Proposition 3.7}, this is equivalent to the original definition in \cite{PT09}. For generic log Fano families, we also have an intersection formula for the CM line bundle: if $f\colon (X,\Delta)\to S$ is a generic log Fano family $($see Definition \ref{d-genericlog}$)$ over a proper variety $S$, then 
\[
\lambda_{f,\Delta} = -f_*(-(K_{X/S}+\Delta))^{n+1}.
\]
\end{defn}

The formation of CM line bundle is also compatible with base change in the following sense.

\begin{prop}[\cite{CP18}*{Lemma 3.5 and Proposition 3.8}]
Let $f\colon (X,\Delta)\to S$ be a family of log Fano pairs and let $\phi\colon S'\to S$ be a morphism. Let $f'\colon (X',\Delta')\to S'$ be the base change of $f$ to $S'$. Then $\lambda_{f',\Delta'}=\phi^*\lambda_{f,\Delta}$.
\end{prop}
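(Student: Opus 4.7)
The plan is to trace the Knudsen--Mumford construction through pullback and read off the claim from the compatibility of each building block with base change. Write $\pi\colon X'\to X$ for the projection and set $L' := -s(K_{X'/S'}+\Delta')$; by the description of divisorial pull back recalled in the base change paragraph we have $L'\cong \pi^* L$, so that $\cO_{X'}(mL')\cong \pi^*\cO_X(mL)$ for every $m\in\bN$.

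Next I would establish cohomology and base change. Since $f$ is flat and projective with $L$ $f$-ample, Serre vanishing applied fiberwise yields $R^i f_*\cO_X(mL)=0$ for $i>0$ and all $m\gg 0$, hence $f_*\cO_X(mL)$ is locally free of the expected rank, its formation commutes with arbitrary base change (Grauert's criterion), and the same vanishing holds for $f'$ on $S'$. Consequently
\[
f'_*\cO_{X'}(mL') \;\cong\; \phi^* f_*\cO_X(mL)
\]
as locally free sheaves on $S'$ for every sufficiently large $m$, and therefore
\[
\det f'_*\cO_{X'}(mL') \;\cong\; \phi^*\det f_*\cO_X(mL).
\]

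The third step is to invoke the functoriality of the Knudsen--Mumford expansion. The expansion
\[
\det f_*\cO_X(mL)\;\cong\;\bigotimes_{i=0}^{n+1}\cM_i^{\otimes\binom{m}{i}}
\]
is produced in \cite{MFK-GIT} from a bounded complex that represents $Rf_*\cO_X(mL)$ Zariski-locally, and the resulting $\cM_i$ are characterized uniquely by the polynomial identity above (viewed as an equality in the Picard group after substituting sufficiently many values of $m$). Pulling back by $\phi$ and using the previous step, the collection $\{\phi^*\cM_i\}$ satisfies the Knudsen--Mumford expansion for $f'$, so by uniqueness $\phi^*\cM_i\cong \cM_i'$ for each $i$; in particular $\phi^*\cM_{n+1}\cong \cM'_{n+1}$, which after the normalization $\lambda_{\bullet}=-s^{-n-1}\cM_{n+1}$ yields $\lambda_{f',\Delta'}=\phi^*\lambda_{f,\Delta}$.

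The main technical point to be careful with is the uniqueness/functoriality of the Knudsen--Mumford factors: a priori the $\cM_i$ are only defined by an identity that must hold for all large $m$, so one should either run the construction of \cite{MFK-GIT} once on $S$ and verify naturality of that construction under pullback, or extract uniqueness from a Vandermonde argument applied to $(n+2)$ consecutive values of $m$. Either route makes the remaining verification routine; the only other caveat, the case when $\phi$ is not flat, is handled by the Grauert step above since $f_*\cO_X(mL)$ is already locally free for $m\gg 0$.
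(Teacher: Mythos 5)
Your proposal is correct and follows essentially the same route as the cited argument in \cite{CP18} (trace the Knudsen--Mumford expansion through base change via commutation of the pushforwards $f_*\cO_X(mL)$, then conclude by uniqueness of the factors $\cM_i$). Two small precision points worth flagging. First, the invocation of ``Grauert's criterion'' is not quite the right tool here: Grauert requires the base $S$ to be reduced, whereas the statement is for an arbitrary family. The correct argument is cohomology and base change: since the fibers $(X_s,\Delta_s)$ are klt log Fano, $H^i(X_s,\cO_{X_s}(mL_s))=0$ for $i>0$ and every $m\geq 1$ by Kawamata--Viehweg vanishing (you do not even need $m\gg 0$), and this fiberwise vanishing already forces $R^if_*\cO_X(mL)=0$ for $i>0$ and forces $f_*\cO_X(mL)$ to be locally free and compatible with arbitrary base change, over any Noetherian base. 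Second, the isomorphism $\cO_{X'}(L')\cong\pi^*\cO_X(L)$ requires a small justification because $\Delta'$ is defined as a reflexive extension from a big open subset: the two line bundles agree on the big open set $U'$ and both are reflexive, so they agree on all of $X'$ (normal). With these adjustments your argument is complete.
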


As a consequence, we get a well defined CM ($\bQ$-)line bundle $\lambda_{\CM}$ on the moduli stack $\cM^{\rm Kss}_{n,v,c}$ parametrizing K-semistable log Fano pairs with dimension $n$, anti-log-canonical volume $v$ and coefficient set $I=c \bN\cap[0,1]$.

\begin{prop}\label{p-CMLa}
There exists a positive integer $k$, such that $\lambda^{\otimes k}_{\CM}$ descends to the good moduli space $M^{\rm Kps}_{n,v,c}$.
\end{prop}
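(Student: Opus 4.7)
The plan is to apply a standard descent criterion for line bundles from an Artin stack to its good moduli space. By the theory of good moduli spaces, a line bundle $\cL$ on $\cM^{\rm Kss}_{n,v,c}$ descends to $M^{\rm Kps}_{n,v,c}$ precisely when, for every closed geometric point $x=[(X,cD)]\in\cM^{\rm Kss}_{n,v,c}$, the stabilizer group $\Aut(X,cD)$ acts trivially on the fiber $\cL|_x$. Here the closed points correspond to K-polystable pairs, and their stabilizers are reductive (this is part of what makes the good moduli space exist, as used in \cite{ABHX19} via $\Theta$-reductivity and $S$-completeness). The task therefore reduces to analyzing the character by which $\Aut(X,cD)$ acts on $\lambda_{\CM}|_x$.

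The key input is that the action on $\lambda_{\CM}|_x$ encodes the generalized Futaki invariant. More precisely, any one-parameter subgroup $\xi\colon\mathbb{G}_m\to \Aut(X,cD)$ induces a product test configuration whose $\mathbb{G}_m$-weight on $\lambda_{\CM}|_x$ equals (up to a universal positive multiple depending only on $n$ and the chosen $s$ in the Knudsen–Mumford expansion) the number $\Fut_{X,cD}(\xi)$; this is the algebraic content of the Knudsen–Mumford expansion as reformulated in \cite{PT09} and recorded in \cite{CP18}*{Proposition~3.7}. Because $(X,cD)$ is K-polystable, $\Fut_{X,cD}(\xi)=0$ for every such $\xi$, and hence the induced character $\chi\colon \Aut(X,cD)^\circ\to\mathbb{G}_m$ on the identity component of the (reductive) stabilizer is trivial.

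Finally, the component group $\Aut(X,cD)/\Aut(X,cD)^\circ$ is finite, so the restriction of $\chi$ to it is a finite-order character. By the boundedness of the moduli problem (see the proof of Theorem \ref{c-kmodulipair}, which realizes $\cM^{\rm Kss}_{n,v,c}$ as a quotient stack $[W/\mathrm{PGL}(N+1)]$ with $W$ of finite type), the set of possible stabilizer groups at closed points lies in a bounded family, and hence the orders of their finite component groups are uniformly bounded by some integer $k$. For this $k$, the character $\chi^{k}$ is trivial at every closed point, so $\lambda_{\CM}^{\otimes k}$ satisfies the descent criterion and descends to $M^{\rm Kps}_{n,v,c}$.

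The main obstacle is the bookkeeping in the last step: articulating the precise descent criterion in the setting of good moduli spaces of Artin stacks (rather than Kempf's classical descent for GIT quotients), and confirming that the bound on $|\Aut(X,cD)/\Aut(X,cD)^\circ|$ is uniform across all closed points. Both of these are addressed in the literature (Alper's descent lemma for good moduli spaces, together with the uniform boundedness of automorphism groups of log Fano pairs of fixed numerical invariants), and once invoked the proof is immediate from the Futaki-vanishing characterization of K-polystability.
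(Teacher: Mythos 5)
Your argument is correct and is essentially the standard one that the paper delegates to its cited references (\cite{LWX18}*{\S 5} and \cite[Lemma 10.2]{CP18}): invoke Alper's descent criterion for good moduli spaces, use the Futaki-vanishing characterization of K-polystability together with the weight interpretation of the CM line bundle to kill the character on the identity component of each reductive stabilizer, and use boundedness of the moduli problem to control the finite component groups uniformly. One tiny slip at the end: if $N$ is merely an upper bound for the orders $|\Aut(X,cD)/\Aut(X,cD)^\circ|$ over closed points, then $\chi^N$ need not vanish; you should instead take $k$ to be a common multiple of all these orders (for instance $k=N!$), which is still finite by the same boundedness.
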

\begin{proof} 
This is well known. See e.g. \cite{LWX18}*{\S 5} or \cite[Lemma 10.2]{CP18}. 
\end{proof}

Therefore, $\lambda_{\CM}$ descends to a $\bQ$-line bundle on $M^{\rm Kps}_{n,v,c}$, which we denote by $\Lambda_{\CM}$.


\subsection{Harder-Narasimhan filtration} \label{sec:HN}In this section, we introduce some basic facts of the Harder-Narasimhan filtration. A similar study in the setting of Arakelov geometry appeared in \cite{Che10}.

Let $C$ be a smooth projective curve of genus $g$. Given a vector bundle $\cE$ on $C$, its slope is defined to be 
\[
\mu(\cE)=\frac{\deg(\cE)}{\rk(\cE)}.
\]
We also define $\mu_{\max}(\cE)$ to be the maximal slope of nonzero subbundles $\cE'\subseteq \cE$ and $\mu_{\min}(\cE)$ the minimal slope of nonzero quotient bundles $\cE\twoheadrightarrow \cE'$. For any vector bundle $\cE$ on the curve $C$, we can define a Harder-Narasimhan filtration $\cF_\HN$ on $\cE$ by setting 
$$\cF_\HN^\lambda\cE:= \mbox {union of all subbundles $\cE'\subseteq \cE$ with }\mu_{\min}(\cE')\ge \lambda.$$
In other words, $\cF_\HN^\lambda\cE$ is the subbundle $\cE_{i}$ in the Harder-Narasimhan filtration
$$0=\cE_0\subset \cE_1\subset \cdots \cE_{i-1}\subset \cE_{i}\subset \cE_{i+1}\subset \cdots =\cE$$ of $\cE$, such that the semistable vector bundle $\cE_i/\cE_{i-1}$ has slope at least $\lambda$ while the slope of $\cE_{i+1}/\cE_i$ is strictly less than $\lambda$. 

\medskip

Let $f:(X,\Delta)\to C$ be a surjective morphism from a normal projective pair and let $L$ be an $f$-ample Cartier divisor on $X$. Note that $f_*\cO_X(mL)$ is locally free since it is torsion free and $C$ is a smooth curve. We also fix a point $t\in C$ such that the restriction map $f_*\cO_X(mL) \to H^0(X_t, mL_t)$ is surjective for all $m\in \bN$ (this holds when $t\in C$ is general or $L$ is sufficiently ample). 




\begin{defn-lem}\label{l-HNfil}
Assume that the general fibers of $f$ are klt. Then by restricting to the fiber $(X_t, L_t)$, the HN-filtrations $\cF_\HN$ of $\cR_m:=f_*\cO_X(mL)$ $(m\in \bN)$ induce a linearly bounded multiplicative filtration $($which we also denote by $\cF_\HN )$ of
\[
R_t:=\bigoplus_{m\in \bN} R_{t,m} = \bigoplus_{m\in \bN} H^0(X_t, mL_t),
\]
called the Harder-Narasimhan filtration $($HN-filtration$)$ of $R_t$ induced by the family $f$.
\end{defn-lem}

\begin{proof}
Let $\cE$ be the image of the multiplication map
\[
\cF^{\lambda}_{\HN} \cR_m \otimes \cF^{\lambda'}_{\HN} \cR_{m'} \to \cR_{m+m'}.
\]
By \cite[Theorem 3.1.4]{HL00}, we have
$$\mu_{\min}(\cE)\ge \mu_{\min}(\cF^{\lambda}_{\HN} \cR_m \otimes \cF^{\lambda'}_{\HN} \cR_{m'})=\mu_{\min}(\cF^{\lambda}_{\HN} \cR_m)+\mu_{\min}(\cF^{\lambda'}_{\HN} \cR_{m'})\ge \lambda+\lambda',$$ 
hence $\cE\subseteq \cF^{\lambda+\lambda'}_{\HN} \cR_{m+m'}$, which implies that $\cF_\HN$ is multiplicative.

Fixed a point $P\in C$. Since $L$ is $f$-ample, we may choose some $c\in\bQ_{>0}$ such that $M=L+c f^*P$ is ample. Then for $m\gg 1$, $\lfloor mM \rfloor -(K_{X/C}+\Delta)$ is ample, hence by \cite{CP18}*{Proposition 6.4}, $f_*\cO_X(\lfloor mM \rfloor)=\cR_m\otimes \cO_C(\lfloor cmP \rfloor)$ is a nef vector bundle. In particular, $\mu_{\min}(\cR_m)\ge -cm$ for all $m\gg 1$, thus $\cF_\HN$ is linearly bounded from below.

Similarly, let $b\in\bQ_{>0}$ be such that $N=L-bf^*P$ is not pseudo-effective. Then for $m\gg 1$, we have 
$$H^0(C,\cR_m\otimes \cO_C(-\lfloor bmP \rfloor))=H^0(C,f_*\cO_X(\lceil mN \rceil))=H^0(X,\lceil mN \rceil)=0,$$ hence by \cite{CP18}*{Proposition 5.4}, $\mu_{\max}(\cR_m\otimes \cO_C(-\lfloor bmP \rfloor))<2g$; equivalently, we have $\mu_{\max}(\cR_m)<2g+bm$. This shows that $\cF_{\HN}$ is linearly bounded from above.
\end{proof}

Following the above argument, we define
$$\lambda_-(L)=\sup\{c\in\bR\,|\,L-cf^*P \mbox{ is nef}\,\}$$
where $P\in C$ is a closed point and 
$$\lambda_+(L)=\sup\{c\in\bR\,|\,L-cf^*P \mbox{ is pseudo-effective}\,\}.$$
Clearly the definition does not  depend on the choice of $P$.

We will use the following simple observation. 

\begin{lem} \label{l-nefinQ}
$\lambda_-(L)\in\bQ$.
\end{lem}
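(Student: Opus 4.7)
I would prove $\lambda_-(L)\in\bQ$ by reducing nefness to intersection with curves, identifying $\lambda_-(L)$ as a boundary value of the nef cone, and then invoking a rationality theorem.

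\emph{Step 1 (Reduction to curves).} By Kleiman's criterion, $L-cf^*P$ is nef iff $(L-cf^*P)\cdot C'\ge 0$ for every irreducible curve $C'\subset X$. Vertical curves (contained in fibers of $f$) satisfy $f^*P\cdot C'=0$ by the projection formula, while $L\cdot C'>0$ by $f$-ampleness, so such curves pose no constraint. For horizontal curves $C'$ (dominating $C$), $f^*P\cdot C'=\deg(f|_{C'})\ge 1$, giving the constraint $c\le (L\cdot C')/\deg(f|_{C'})\in\bQ$. Hence
\[
\lambda_-(L) = \inf_{C'\text{ horizontal}}\frac{L\cdot C'}{\deg(f|_{C'})}.
\]

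\emph{Step 2 (Boundary analysis).} Since the nef cone is closed, $L-\lambda_-(L)f^*P$ is nef but lies on the boundary of $\mathrm{Nef}(X)$; dually, there is a nonzero $\alpha\in\overline{NE}(X)$ with $(L-\lambda_-(L)f^*P)\cdot\alpha=0$. By the relative Kleiman criterion applied to the $f$-ample divisor $L$, we have $f^*P\cdot\alpha>0$: otherwise $f_*\alpha=0$ (by projection formula), so $\alpha\in\overline{NE}(X/C)\setminus\{0\}$, on which $L$ is strictly positive, contradicting $L\cdot\alpha=0$. Consequently $\lambda_-(L)=(L\cdot\alpha)/(f^*P\cdot\alpha)$, and rationality of $\lambda_-(L)$ reduces to showing that $\alpha$ may be chosen as a rational class (equivalently, that the relevant boundary face of $\mathrm{Nef}(X)$ has a rational extremal direction).

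\emph{Step 3 (Rationality).} For this final step I plan to invoke Kawamata's rationality theorem. After passing to a log resolution $\mu\colon\tilde X\to X$ with an auxiliary klt boundary $\tilde\Delta$, one can rewrite the nef threshold problem for $L-cf^*P$ as one for $K_{\tilde X}+\tilde\Delta$ plus a nef/ample twist depending on $c$, at which point rationality (with controlled denominator) follows from the standard rationality theorem.

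\emph{Main obstacle.} Rationality in Step 3 is the crux: the nef cone of a general projective variety need not be rational polyhedral, so the class $\alpha$ in Step 2 is not automatically rational. The essential structural input we exploit is that $f^*P$ is pulled back from an ample class on a curve, forcing $(f^*P)^2=0$; this makes the intersection numbers $(L-cf^*P)^k\cdot\gamma$ linear (rather than merely polynomial) in $c$, which is precisely the rigidity that allows a Kawamata-type rationality argument to go through and pin down $\lambda_-(L)$ as a rational number.
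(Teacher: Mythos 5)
Your Steps 1 and 2 are sound as far as they go, and the final paragraph correctly isolates the key numerical fact that the paper exploits, namely that $(f^*P)^2=0$ makes every intersection number $((L-cf^*P)^d\cdot Z)$ \emph{linear} in $c$. However, Step 3 is where the argument has a genuine gap. Kawamata's rationality theorem concerns the threshold $\sup\{t\ge 0 : H+t(K_{\tilde X}+\tilde\Delta)\ \text{nef}\}$ for a klt pair and an ample (or nef and big) $H$; the mechanism behind it (length of extremal rays, bend-and-break on $K$-negative curves) is specific to the canonical class. In your problem the direction being scaled is $-f^*P$, which has nothing to do with $K_{\tilde X}+\tilde\Delta$ for any auxiliary klt boundary, and the proposed ``rewrite'' after a log resolution is not spelled out and would not generically produce a valid Kawamata setup. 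Likewise, the dual class $\alpha$ produced in Step 2 lies on the boundary of $\overline{NE}(X)$ but is not $K_X$-negative, so there is no general reason for it (or the face it spans) to be rational; the cone theorem only controls the $K_X$-negative part. In short, Step 3 does not close.

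The correct finish is much more direct and only needs what you already observed. Since $\lambda:=\lambda_-(L)$ is the nef threshold, the $\bR$-divisor $L-\lambda f^*P$ is nef but not ample. By the Nakai--Moishezon criterion for real nef classes (Campana--Peternell), there is an irreducible subvariety $Z\subseteq X$ of dimension $d$ with $((L-\lambda f^*P)^d\cdot Z)=0$. Expanding and using $(f^*P)^2=0$, this is exactly the \emph{linear} equation $(L^d\cdot Z)=d\lambda\,(L^{d-1}\cdot f^*P\cdot Z)$. The coefficient $(L^{d-1}\cdot f^*P\cdot Z)$ cannot vanish: if it did, $Z$ would be contained in a fiber of $f$, but then $(L^d\cdot Z)>0$ because $L$ is $f$-ample, contradicting the equation. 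Hence $\lambda$ is the ratio of two integers. Compared with your Step 1, this has the advantage that Nakai--Moishezon lets you witness the failure of ampleness by a single subvariety of some (any) dimension, whereas passing through curves and the Mori cone, as in your Steps 1--2, merely expresses $\lambda$ as an infimum of rationals (not obviously attained or rational) or as a ratio computed against a not-obviously-rational extremal class.
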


\begin{proof}
Since $\lambda:=\lambda_-(L)$ is the nef threshold of $L$ with respect to $f^*P$, $L-\lambda f^*P$ is nef but not ample. By the Nakai-Moishezon criterion, we have $((L-\lambda f^*P)^d\cdot Z)=0$ for some subvariety $Z\subseteq X$ of dimension $d$, which reduces to $(L^d\cdot Z)=d\lambda (L^{d-1}\cdot f^*P\cdot Z)$ (note that $(f^*P\cdot f^*P)=0$). It is then clear that $\lambda\in\bQ$.
\end{proof}

\begin{prop} \label{p-supp}
The Duistermaat-Heckman measure of the Harder-Narasimhan filtration $\cF_{\HN}$ precisely supports on $[\lambda_-(L),\lambda_+(L)]$.
\end{prop}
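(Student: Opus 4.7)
The plan is to establish $\lambda_{\max}(\cF_{\HN}) = \lambda_+(L)$ and $\lambda_{\min}(\cF_{\HN}) = \lambda_-(L)$ by proving the four relevant inequalities. Two of them are relatively straightforward extensions of the linear boundedness argument in the proof of Definition-Lemma \ref{l-HNfil}. For $\lambda_{\max}(\cF_{\HN}) \le \lambda_+(L)$: if $b > \lambda_+(L)$, then $L - bf^*P$ is not pseudo-effective, so $h^0(X, mL - \lfloor bm \rfloor f^*P) = 0$ for $m$ large; then \cite{CP18}*{Proposition 5.4} gives $\mu_{\max}(\cR_m) < bm + 2g$, forcing $\cF_{\HN}^{bm}\cR_m = 0$ and $\vol(\cF_{\HN} R^{(b)}) = 0$ for $m \gg 0$. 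For $\lambda_{\min}(\cF_{\HN}) \ge \lambda_-(L)$: given $c < \lambda_-(L)$ and $N \gg 0$ such that $L + Nf^*P$ is ample, the convex combination
\[
(1 - \epsilon)(L - cf^*P) + \epsilon(L + Nf^*P) = L - c_\epsilon f^*P, \qquad c_\epsilon := c - \epsilon(c + N),
\]
is ample (being nef plus ample) for every $\epsilon > 0$, since $L - cf^*P$ is nef by assumption. Applying \cite{CP18}*{Proposition 6.4} then yields $\cR_m \otimes \cO_C(-c_\epsilon m P)$ nef for $m \gg 0$, so $\mu_{\min}(\cR_m) \ge c_\epsilon m$; this gives $\cF_{\HN}^{c_\epsilon m}\cR_m = \cR_m$ and thus $\vol(\cF_{\HN} R^{(c_\epsilon)}) = V$, so $\lambda_{\min}(\cF_{\HN}) \ge c_\epsilon$. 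Letting $\epsilon \to 0^+$ and then $c \to \lambda_-(L)^-$ proves the bound.

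The remaining two inequalities are obtained from a key volume identity,
\[
\vol_X(L - xf^*P) = (n+1)\, V \int_x^\infty (y - x)\, d\nu(y), \qquad V := (L_t^n),
\]
valid for all $x \in \bR$, where $n = \dim X_t$. I would derive this identity by computing $h^0(X, m(L - \lfloor xm \rfloor f^*P)) = h^0(C, \cR_m \otimes \cO_C(-\lfloor xm \rfloor P))$ asymptotically via the Harder-Narasimhan filtration of $\cR_m$: each semistable HN piece of slope $\mu_i$ and rank $r_i$, after twisting by $\cO_C(-\lfloor xm \rfloor P)$, contributes $r_i(\mu_i - xm)^+ + O(r_i)$ to $h^0$ by Riemann-Roch on $C$ and the vanishing $h^1 = 0$ for semistable pieces of slope $\ge 2g - 1$. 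Summing and dividing by $m^{n+1}/(n+1)!$ before passing to $m \to \infty$ yields the identity, the empirical measure of slopes $\mu_i/m$ converging to $\nu$. With this identity, $\lambda_{\max}(\cF_{\HN}) \ge \lambda_+(L)$ is immediate: for $x < \lambda_+(L)$, $L - xf^*P$ lies in the big cone (the interior of the pseudo-effective cone), so $\vol_X(L - xf^*P) > 0$, which forces $\nu((x, \infty)) > 0$ and hence $x < \lambda_{\max}(\cF_{\HN})$. For $\lambda_{\min}(\cF_{\HN}) \le \lambda_-(L)$, rewriting the identity via $L^{n+1} = (n+1) V S(\cF_{\HN})$ and $(L - xf^*P)^{n+1} = L^{n+1} - (n+1)xV$ gives
\[
\vol_X(L - xf^*P) - (L - xf^*P)^{n+1} = (n+1)\, V \int_{-\infty}^x (x - y)\, d\nu(y) \ge 0,
\]
with equality iff $\nu((-\infty, x)) = 0$, i.e., $x \le \lambda_{\min}(\cF_{\HN})$. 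On the other hand, for $L - xf^*P$ big, equality holds iff $L - xf^*P$ is nef, i.e., $x \le \lambda_-(L)$; this uses the characterization of nef big divisors via divisorial Zariski decomposition ($\vol(D) = D^{\dim X}$ iff the negative part vanishes). Matching the two equality characterizations yields $\lambda_{\min}(\cF_{\HN}) = \lambda_-(L)$ when $\lambda_-(L) < \lambda_+(L)$; the degenerate case $\lambda_-(L) = \lambda_+(L)$ follows by the sandwich $\lambda_-(L) \le \lambda_{\min}(\cF_{\HN}) \le \lambda_{\max}(\cF_{\HN}) = \lambda_+(L)$.

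The main technical obstacle is the rigorous derivation of the volume identity, specifically controlling the contribution of HN pieces of $\cR_m$ with slopes in the transitional range $[xm, xm + 2g - 1]$ where Riemann-Roch does not directly yield $h^0 = \chi$. These contribute $O(\rk \cR_m) = O(m^n)$ to $h^0$, which is negligible against the leading $m^{n+1}/(n+1)!$ term but requires careful accounting.
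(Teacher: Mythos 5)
Your plan correctly identifies the two easy inequalities $\lambda_{\min}(\cF_{\HN})\ge\lambda_-(L)$ and $\lambda_{\max}(\cF_{\HN})\le\lambda_+(L)$ along the same lines as the paper, and the derivation of $\lambda_{\max}(\cF_{\HN})\ge\lambda_+(L)$ from the volume identity is clean and correct (the paper gets this more directly by noting that a big $L-c'f^*P$ has nonzero sections, hence $\mu_{\max}(\cR_m\otimes\cO_C(-mc'P))\ge 0$; your route is more computational but fine). The volume identity itself, modulo the error terms you flag from HN pieces in the transitional slope range, is also essentially correct. The approach is genuinely different from the paper's, which for the remaining inequality $\lambda_{\min}\le\lambda_-$ instead invokes \cite{CDB13}*{Corollary 4.7} (valid because $(X,\Delta)$ is klt along the general fiber and $L$ is $f$-ample) to produce a horizontal subvariety $Z$ with $\mult_Z\|L-cf^*P\|>0$, then pushes this multiplicity onto the fiber $X_t$ via \cite{CP18}*{Proposition 5.7} to conclude $\vol(\cF_{\HN}R_t^{(c)})<\vol(L_t)$.

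The gap is in your final step. You deduce $\lambda_{\min}\le\lambda_-$ from the assertion that, for a big divisor $D$, one has $\vol(D)=D^{\dim X}$ if and only if $D$ is nef, justified as ``the characterization of nef big divisors via divisorial Zariski decomposition ($\vol(D)=D^{\dim X}$ iff the negative part vanishes).'' This is not a theorem in dimension $\ge 3$, and both halves of the parenthetical are problematic. First, $N_\sigma(D)=0$ only says $D$ is movable; movable does not imply nef in dimension $\ge 3$ (e.g.\ the strict transform of an ample divisor through a flop is big, movable, and not nef, yet $\vol$ is preserved while self-intersection is not). Second, $\vol(D)=\vol(P_\sigma(D))$ but $\vol(P_\sigma(D))\ne P_\sigma(D)^{\dim X}$ in general since $P_\sigma(D)$ is only movable. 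Worse, the underlying inequality $\vol(D)\ge D^{\dim X}$ on which the equality criterion would rest is simply false for big $D$: on the blowup $\mu\colon X\to\bP^3$ of a point with exceptional $E$, the divisor $D=\mu^*H+\epsilon E$ ($0<\epsilon\ll 1$) is big with $\vol(D)=1$, yet $D^3=1+\epsilon^3>\vol(D)$. (Your identity does give $\vol(L-xf^*P)\ge(L-xf^*P)^{n+1}$ in the fibered setting, but that inequality is special to the setting and does not come with an equality-iff-nef statement.) What you actually need is the implication ``$L-xf^*P$ big and not nef $\Rightarrow \nu((-\infty,x))>0$,'' and the volume identity alone cannot deliver it; you still need an input producing a horizontal locus of positive asymptotic multiplicity from the failure of nefness, which is exactly the role that \cite{CDB13}*{Corollary 4.7} plays in the paper's proof.
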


\begin{proof}
Let $\lambda_{\min}=\lambda_{\min}(\cF_{\HN})$ and $\lambda_{\max}=\lambda_{\max}(\cF_{\HN})$. From the proof of Lemma \ref{l-HNfil}, we have seen that $\lambda_{\min}\ge \lambda_-:=\lambda_-(L)$ and $\lambda_{\max}\le \lambda_+:=\lambda_+(L)$, so it suffices to establish the opposite inequalities.

We first prove that $\lambda_{\min}\le \lambda_-$. For any rational number $c > \lambda_-$, $M=L-cf^*P$ is not nef by our choice of $\lambda_-$. Since $(X,\Delta)$ is klt along the general fiber of $f$ and $L$ is $f$-ample, by \cite{CDB13}*{Corollary 4.7} we know that there exists a subvariety $Z\subseteq X$ such that $f(Z)=C$ and
\[
\mult_Z(\|M\|):=\lim_{m\to \infty} \frac{\mult_Z(|mM|)}{m}>0.
\]
Let $Z_t$ be the restriction of $Z$ to $X_t$, then since 
\[
\cF_{\HN}^{cm+2g} \cR_m \subseteq {\rm Im } (H^0(X,mM)\otimes \cO_C \to \cR_m)
\]
for all sufficiently divisible $m$ by \cite{CP18}*{Proposition 5.7}, we have 
$$\mult_{Z_t}(\cF_{\HN}^{cm+2g} R_{t,m})>\epsilon m$$
for some constant $\epsilon>0$ independent of $m$. It follows that $\vol(\cF_{\HN} R_t^{(c)})<\vol(L_t)$ and therefore $\lambda_{\min}<c$ (see e.g. \cite{BHJ17}*{Corollary 5.4}). Letting $c\to \lambda_-$, we obtain $\lambda_{\min}\le \lambda_-$.

\medskip

We next prove that $\lambda_{\max}\ge \lambda_+$. Let $c' \in (\lambda_-, \lambda_+)$ be a rational number. Then $M'=L-c'f^*P$ is big, thus for sufficiently divisible $m$, 
\[
H^0(X,mM')=H^0(C,\cR_m \otimes \cO_C(-mc'P))\neq 0.
\]
In particular, $\mu_{\max}(\cR_m\otimes \cO_C(-mc'P))\ge 0$, which implies that $\lambda_{\max}\ge c'$. Letting $c'\to \lambda_+$, we obtain $\lambda_{\max}=\lambda_+$. The proof is now complete.
\end{proof}

\section{Reduced uniform K-stability}\label{ss-ruk}
In this section, we discuss a relatively more recent notion, the {\it reduced uniform K-stability}. This concept gives a suitable extension of the definition of uniform K-stability to the case when the automorphism group is non-discrete. It was first introduced in \cite{His16}, and systematically developed in \cite{Li19}. In particular, it was shown there that it is equivalent to the existence of a K\"ahler-Einstein metric (see Theorem \ref{thm:Li's valuative T-uKs}). In this section, we will first give the introduction of the basic notion and results, mostly following \cite{Li19}. Then we will establish a few new results, some of which will be needed later to get our main theorems.   We also note that later in Section \ref{s-tbeta}, we will establish a criterion to test reduced uniform K-stability using the $\beta$-invariant (for filtrations) that we are going to introduce. 

\subsection{Definition and characterization} \label{ss-reduks}
In this section, we will recall the definition of reduced uniform K-stability and some related results. Most of them are from \cite{Li19}.

\medskip

Let $(X,\Delta)$ be a log Fano pair  with an action by a torus $T\cong \bG_m^s$. Fix some integer $r>0$ such that $L:=-r(K_X+\Delta)$ is Cartier and as before let $R=R(X,L)$. Let $M=\Hom(T,\bG_m)$ be the weight lattice and $N=M^*=\Hom(\bG_m,T)$ the co-weight lattice. Then $T$ naturally acts on $R$ and we have a weight decomposition $R_m=\bigoplus_{\alpha\in M} R_{m,\alpha}$ where 
\[
R_{m,\alpha}=\{s\in R_m\,|\,\rho(t)\cdot s = t^{\la \rho,\alpha \ra}\cdot s\text{ for all }\rho\in N\text{ and }t\in k^*\}.
\]
Consider a $T$-equivariant filtration $\cF$ on $R=\bigoplus_mH^0(X,mL)$, i.e., $s\in \cF^\lambda R$ if and only if $g\cdot s\in \cF^\lambda R$ for any $g\in T$. We then have a similar weight decomposition 
\[
\cF^\lambda R_m=\bigoplus_{\alpha\in M} (\cF^\lambda R_m)_\alpha
\]
where $(\cF^\lambda R_m)_\alpha := \cF^\lambda R_m \cap R_{m,\alpha}$.

\begin{defn}\label{d-quotient}
For $\xi\in N_{\bR}=N\otimes_\bZ \bR$, we define the $\xi$-twist $\cF_{\xi}$ of the filtration $\cF$ in the following way: for any $s\in  R_{m,\alpha}$, we have
$$s\in \cF^\lambda_{\xi} R_m \mbox{ if and only if } s\in \cF^{\lambda_0} R_m \mbox{ where }\lambda_0=\lambda-\la\alpha, \xi \ra,$$
in other words, 
 $$\cF^\lambda_{\xi} R_m = \bigoplus_{\alpha\in M}  \cF^{\lambda-\la\alpha, \xi \ra}{R}\cap R_{m,\alpha}.$$
\end{defn}

One can easily check that $\cF_{\xi}$ is a linearly bounded multiplicative filtration if $\cF$ is. 

Let $Z=X\!/\!/_{\rm chow}T$ be the Chow quotient (so $X$ is $T$-equivariantly birational to $Z\times T$). Then the function field $k(X)$ is (non-canonically) isomorphic to the quotient field of $$k(Z)[M]=\bigoplus_{\alpha\in M} k(Z)\cdot 1^\alpha.$$
For any valuation $\mu$ over $Z$ and $\xi\in N_\bR$, one can associate a $T$-invariant valuation $v_{\mu,\xi}$ over $X$ such that
\begin{eqnarray}\label{e-valuation}
v_{\mu,\xi}(f)=\min_{\alpha}(\mu(f_{\alpha})+\la \xi, {\alpha}\ra)
\end{eqnarray}
for all $f=\sum_{\alpha\in M} f_{\alpha}\cdot 1^\alpha \in k(Z)[M]$. Indeed, every valuation $v\in \Val^T(X)$ (i.e. the set of $T$-invariant valuations) is obtained in this way (see e.g. the proof of \cite{BHJ17}*{Lemma 4.2}) and we get a (non-canonical) isomorphism $\Val^T(X)\cong \Val(Z)\times N_\bR$. For any $v\in\Val^T(X)$ and $\xi\in N_{\bR}$, we can therefore define the twisted valuation $v_{\xi}$ as follows: if $v=v_{\mu,\xi'}$, then
$$ v_{\xi}:=v_{\mu, \xi'+\xi}. $$
One can check that the definition does not depend on the choice of the birational map $X\dashrightarrow Z\times T$. When $\mu$ is the trivial valuation, the valuations $\wt_\xi:=v_{\mu,\xi}$ are also independent of the birational map $X\dashrightarrow Z\times T$.

\begin{defn}
Let $T$ be a torus acting on a log Fano pair $(X,\Delta)$. For any $T$-equivariant filtration $\cF$ of $R$, its {\it reduced $\bfJ$-norm} is defined as:
$$\JNA_T(\cF):=\inf_{\xi\in N_\bR}\JNA(\cF_{\xi}).$$
The reduced $\bfJ$-norm $\JNA(\cX,\cL)$ of a $T$-equivariant test configuration $(\cX,\cL)$ of $(X,\Delta)$ is defined to be the reduced $\bfJ$-norm of its associated $\bZ$-filtration (Example \ref{expl:filtration from tc}).
\end{defn}

\begin{lem} \label{lem:DNA twist & JNA min}
Let $(X,\Delta)$ be a K-semistable log Fano pair with a torus $T$ action. Then for any filtration $\cF$ of $R=R(X,-r(K_X+\Delta))$, we have:
\begin{enumerate}
    \item $\DNA(\cF_\xi)=\DNA(\cF)$ and $S(\cF_\xi)=S(\cF)$ for any $\xi\in N_\bR$.
    \item The function $\xi\mapsto \JNA(\cF_\xi)$ is continuous and there exists $\xi\in N_\bR$ such that $\JNA_T(\cF)=\JNA(\cF_\xi)$.
\end{enumerate}
\end{lem}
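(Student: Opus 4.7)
The plan is to exploit the weight decomposition $R_m = \bigoplus_{\alpha \in M} R_{m,\alpha}$, which is available because $\cF$ is implicitly $T$-equivariant (so that the twist $\cF_\xi$ makes sense). For part (1), a direct computation using $\Gr^\lambda_{\cF_\xi} R_{m,\alpha} = \Gr^{\lambda - \langle \alpha, \xi\rangle}_\cF R_{m,\alpha}$ yields
\[
S_m(\cF_\xi) = S_m(\cF) + \frac{1}{m \dim R_m} \sum_{\alpha \in M} \langle \alpha, \xi\rangle \dim R_{m,\alpha} = S_m(\cF) + S_m(\wt_\xi),
\]
so $S(\cF_\xi) = S(\cF) + S(\wt_\xi)$ in the limit. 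Since $(X,\Delta)$ is K-semistable, applying $\DNA \ge 0$ to the product test configurations associated with both $\wt_\xi$ and $\wt_{-\xi}$, together with the skew-symmetry $S_m(\wt_{-\xi}) = -S_m(\wt_\xi)$, forces $S(\wt_\xi) = 0$; hence $S(\cF_\xi) = S(\cF)$. For the identity $\DNA(\cF_\xi) = \DNA(\cF)$, I would first handle a finitely generated $T$-equivariant filtration: it corresponds to a $T$-equivariant test configuration $(\cX,\cL)$, and the twist $\cF_\xi$ for $\xi \in N_\bQ$ is realized geometrically by altering the $\bG_m$-action on $(\cX,\cL)$ via a one-parameter subgroup in $T$. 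For such twisted test configurations the formula $\DNA(\cX,\cL)_\xi = \DNA(\cX,\cL) - \Fut(\xi)$ combined with Futaki vanishing gives the identity. The general case is reduced by approximating $\cF$ with its $m$-th truncations $\cF^{(m)}$ and invoking continuity of $\DNA$ under such approximation as in \cite{BHJ17}; the identity then extends from $\xi \in N_\bQ$ to $\xi \in N_\bR$ by continuity.

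For part (2), continuity of $\xi \mapsto \JNA(\cF_\xi) = r^{-1}(\lambda_{\max}(\cF_\xi) - S(\cF_\xi))$ reduces by part (1) to continuity of $\xi \mapsto \lambda_{\max}(\cF_\xi)$. From the weight decomposition,
\[
\lambda_{\max}(\cF_\xi R_m)/m = \max_{\alpha}\bigl\{\lambda_{\max,\alpha}(\cF R_m)/m + \langle \alpha/m, \xi\rangle\bigr\},
\]
where $\lambda_{\max,\alpha}(\cF R_m) := \sup\{\lambda\,|\,\cF^\lambda R_m\cap R_{m,\alpha} \neq 0\}$. Each such function is a maximum of finitely many affine functions in $\xi$, hence continuous and convex in $\xi$. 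The limit $\lambda_{\max}(\cF_\xi)$ remains convex, and the linear boundedness of $\cF$ together with the bound $\|\alpha\| = O(m)$ on the $T$-weights in $R_m$ guarantees that $\lambda_{\max}(\cF_\xi)$ is finite on all of $N_\bR$; convexity then yields continuity.

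For the existence of a minimizer, the plan is to establish coercivity of $\JNA(\cF_\xi)$ as $\|\xi\| \to \infty$. Along any unit direction $\hat\xi \in N_\bR$, the vanishing $S(\wt_{\hat\xi}) = 0$ together with the non-triviality of $\wt_{\hat\xi}$ (which is guaranteed by the faithfulness of the maximal torus $T \subseteq \Aut(X,\Delta)$) forces $\lambda_{\max}(\wt_{\hat\xi}) > 0$; equivalently, there exist weights $\alpha \in M$ with $R_{m,\alpha} \neq 0$ for which $\langle \alpha/m, \hat\xi\rangle$ is bounded below by a positive constant as $m\to\infty$. Combined with the lower bound $\lambda_{\max,\alpha}(\cF R_m)/m \ge e_-(\cF)$ from linear boundedness, we conclude $\lambda_{\max}(\cF_{t\hat\xi}) \ge c(\hat\xi)\,t + e_-(\cF)$ for $t \gg 1$, with $c(\hat\xi) > 0$. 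A compactness argument on the unit sphere, using continuity of $\xi\mapsto\lambda_{\max}(\cF_\xi)$, makes $c(\hat\xi)$ uniformly positive, so $\JNA(\cF_\xi) \to \infty$ as $\|\xi\|\to\infty$; together with continuity, this forces the infimum $\JNA_T(\cF)$ to be attained.

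The main technical obstacle in the whole proof is the $\DNA$-invariance under twisting: unlike $S$, which admits an immediate weight-graded formula, $\LNA$ is defined via log canonical thresholds of base ideals that do not transform as cleanly under the twist. This is why the reduction to finitely generated filtrations (which correspond to test configurations where twisted constructions and the Futaki identity are available), together with a convergence argument, is essential.
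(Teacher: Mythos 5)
The paper's own proof is a two-line citation: it notes that K-semistability forces $\Fut_{X,\Delta}(\xi)=0$ for all $\xi\in N$ and then invokes \cite{Li19}*{Lemmas 3.10, 3.15}. Your proposal tries to reconstruct those two lemmas, so the route is the same in spirit; but the key $S$-computation as written is incorrect, and the error propagates into the coercivity step of Part (2).

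The weight computation correctly yields $S_m(\cF_\xi)-S_m(\cF)=\theta_m(\xi):=\tfrac{1}{m\dim R_m}\sum_\alpha\langle\alpha,\xi\rangle\dim R_{m,\alpha}$, but the further identifications $\theta_m(\xi)=S_m(\wt_\xi)$, $S_m(\wt_{-\xi})=-S_m(\wt_\xi)$, and $S(\wt_\xi)=0$ are all false. In the paper's convention (Example \ref{ex: filtration from val}) the filtration $\cF_{\wt_\xi}$ is normalized so that $\lambda_{\min}(\cF_{\wt_\xi})=0$, whence $S(\wt_\xi)>0$ for every nontrivial $\wt_\xi$. The shift separating $\cF_{\wt_\xi}$ from the twist $(\cF_{\mathrm{triv}})_\xi$ of the trivial filtration is precisely $r\,A_{X,\Delta}(\wt_\xi)$ (\cite{Li19}*{Proposition 3.8}, used e.g.\ in the proof of Lemma \ref{l-twistinvariant}); this shift depends on $\xi$ asymmetrically, so $S(\wt_\cdot)$ is not odd. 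The correct bookkeeping is $\theta(\xi)=-r\,\beta_{X,\Delta}(\wt_\xi)$; since $\xi\mapsto\beta(\wt_\xi)=\Fut_{X,\Delta}(\xi)$ is linear and nonnegative on $N$ by K-semistability, one gets $\theta\equiv 0$ and hence $S(\cF_\xi)=S(\cF)$. As written, though, your derivation rests on the impossible statement $S(\wt_\xi)=0$.

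The same confusion undercuts the coercivity argument. The inequality $\lambda_{\max}(\wt_{\hat\xi})>0$ only records that the asymptotic weight polytope $P\subset M_\bR$ has positive width in direction $\hat\xi$ (faithfulness of $T$); it does not by itself give $\max_{\alpha\in P}\langle\alpha,\hat\xi\rangle>0$, which is what linear growth of $\lambda_{\max}(\cF_{t\hat\xi})$ actually requires. The missing input is again $\theta\equiv 0$: it forces the barycenter of $P$ with respect to the limiting weight measure to be the origin, so $0\in\mathrm{int}(P)$ and $\max_{\alpha\in P}\langle\alpha,\hat\xi\rangle>0$ for every $\hat\xi\neq 0$. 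Once this is invoked explicitly the compactness argument on the unit sphere closes Part (2). Your outline of the $\DNA$-invariance (finitely generated case via Futaki vanishing, then approximation, then density of $N_\bQ$) and the convexity/finiteness argument for continuity of $\xi\mapsto\lambda_{\max}(\cF_\xi)$ are sound.
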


\begin{proof}
Since $(X,\Delta)$ is K-semistable, the Futaki invariant vanishes on all product test configurations, i.e. $\Fut_{X,\Delta}(\xi)=0$ for all $\xi\in N$, thus (1) follows from \cite{Li19}*{Lemma 3.10} and (2) follows from \cite{Li19}*{Lemma 3.15}.
\end{proof}

\begin{defn}[Reduced uniform stability, \cites{His16, Li19}] \label{defn:red uks}
Let $\eta>0$. A log Fano pair $(X,\Delta)$ is called {\it reduced uniformly Ding-stable with slope at least $\eta$} if for some torus $T\subseteq \Aut(X,\Delta)$ and for any $T$-equivariant test configuration $(\cX,\cL)$ of $(X,\Delta)$, we have
\begin{equation} \label{eq:red-uks via tc}
    \DNA(\cX,\cL)\ge \eta\cdot \JNA_T(\cX,\cL).
\end{equation}
A log Fano pair $(X,\Delta)$ is said to be {\it reduced uniformly Ding-stable} if it is reduced uniformly Ding-stable with some slope $\eta>0$. We define the reduced uniform K-stability in a similar way by replacing $\DNA$ with the generalized Futaki invariant.
\end{defn}

\begin{rem} \label{rem:choice of T}
Our definition clearly does not depend on the torus $T$. In fact, from the definition we see that if $(X,\Delta)$ is reduced uniformly Ding-stable (resp. K-stable) with respect to some torus $T\subseteq \Aut(X,\Delta)$, then $(X,\Delta)$ is K-polystable and $T$ has to be a maximal torus of $\Aut(X,\Delta)$. Since any two maximal tori are conjugate to each other, 
to verify the above definition it is equivalent to consider one maximal torus $T\subset \Aut(X,\Delta)$.
\end{rem}
\begin{rem} It can be easily seen that our definition of reduced uniform stability is equivalent to the notion of $G$-uniform stability in \cite{Li19} as long as $G$ contains a maximal torus. There have been other attempts to define `uniform stability' when there is a group action. See \cite[Remark 1.6]{Li19} of the relation between these notions. For our study in this paper, only the concept of reduced uniform K-stability is relevant, which we believe to be the most intrinsic one, since we do not have to specify any group.
\end{rem}

The following theorem is one of the main results of \cite{Li19}.

\begin{thm}[\cite{Li19}*{Theorem 1.2 and 1.3}] \label{thm:Li's valuative T-uKs}
Let $(X,\Delta)$ be a log Fano pair and $T$ a maximal torus of $\Aut(X,\Delta)$. The following are equivalent:
\begin{enumerate}
\item $(X,\Delta)$ is reduced uniformly Ding-stable,
\item $(X,\Delta)$ is reduced uniformly K-stable, 
\item $(X,\Delta)$ is K-semistable, and there exists some $\delta>1$ such that for any $T$-invariant valuation $v$, there exists some $\xi\in N_\bR$, such that 
$A_{X,\Delta}(v_{\xi})\ge \delta \cdot S(v_{\xi})$. 
\end{enumerate}
When the base field $k=\bC$, they are also equivalent to
\begin{enumerate}
\setcounter{enumi}{3}
\item $(X,\Delta)$ admits a K\"ahler-Einstein metric.
\end{enumerate}
\end{thm}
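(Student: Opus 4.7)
The plan is to establish the theorem by breaking it into three equivalences: $(1)\Leftrightarrow (2)$ via the standard Ding-to-K comparison, $(1)\Leftrightarrow (3)$ via an equivariant valuative criterion in the Fujita--Li spirit, and $(1)\Leftrightarrow (4)$ via the non-Archimedean variational approach to the Yau--Tian--Donaldson correspondence. The key structural input throughout is Lemma \ref{lem:DNA twist & JNA min}, which makes $\DNA$ and $\JNA_T$ invariant under twisting by $\xi\in N_\bR$; this is what makes the reduced notions well-defined at the level of filtrations.

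For $(1)\Leftrightarrow (2)$, the inequality $\Fut(\cX,\cL)\ge \DNA(\cX,\cL)$ with equality precisely when the central fiber is klt gives $(2)\Rightarrow (1)$ immediately after taking the infimum over twists. For the converse, I would follow the Fujita--Li strategy: reduced uniform Ding-stability implies that any $T$-equivariant test configuration can be modified (without essentially changing $\DNA$ or $\JNA_T$) to have klt central fibers, which then propagates the stability slope from $\DNA$ back to $\Fut$.

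For $(3)\Rightarrow (1)$, given a $T$-equivariant filtration $\cF$, I would use an equivariant version of the valuative formula for $\LNA$, expressing $\DNA(\cF)$ as an infimum over $v\in\Val^T(X)$ of Fujita--Li-type quantities involving $A_{X,\Delta}(v)$ and a function measuring $v$ against the filtration. The hypothesis applied to each such $v$ furnishes some $\xi=\xi(v)\in N_\bR$ with $A_{X,\Delta}(v_\xi)\ge \delta\cdot S(v_\xi)$, and combining this with the twist-invariance $\DNA(\cF)=\DNA(\cF_\xi)$ and a standard rearrangement yields $\DNA(\cF)\ge (1-\delta^{-1})\JNA_T(\cF)$, i.e.\ reduced uniform Ding-stability with slope $\eta=1-\delta^{-1}$. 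For $(1)\Rightarrow (3)$, to each $T$-invariant valuation $v$ I attach the filtration $\cF_v$ of Example \ref{ex: filtration from val}; under the identification $\Val^T(X)\cong \Val(Z)\times N_\bR$ given by (\ref{e-valuation}), twisting $\cF_v$ by $\xi$ coincides with twisting $v$ by $\xi$, i.e.\ $(\cF_v)_\xi = \cF_{v_\xi}$. Applying reduced uniform Ding-stability to $\cF_v$ and comparing the filtration invariants with their valuative counterparts then produces, for each $v$, the required optimal $\xi\in N_\bR$ witnessing $A_{X,\Delta}(v_\xi)\ge \delta\cdot S(v_\xi)$.

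The main obstacle is the equivalence $(1)\Leftrightarrow (4)$, i.e.\ the reduced Yau--Tian--Donaldson conjecture over $\bC$. Following the non-Archimedean variational program of Berman--Boucksom--Jonsson, adapted $T$-equivariantly in the spirit of Hisamoto and Li--Tian--Wang, reduced uniform K-stability should translate into coercivity of the Mabuchi functional modulo the action of a maximal torus, which via pluripotential-theoretic techniques (Darvas--Rubinstein and Hisamoto) produces a singular K\"ahler--Einstein metric as a minimizer. The algebraic equivalences $(1)\Leftrightarrow (2)\Leftrightarrow (3)$ are largely formal once one has twist-invariance of $\DNA$ and approximation of filtrations by finitely generated $T$-equivariant ones; the genuinely hard input is the analytic existence result, for which no purely algebraic proof is currently available.
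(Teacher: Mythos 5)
This theorem is stated in the paper as a citation to \cite{Li19} (Theorems 1.2 and 1.3 there); the paper itself gives no proof, so there is no in-paper argument to compare against. Your sketch is a reasonable reconstruction of the architecture of Li's proof, and the overall decomposition into $(1)\Leftrightarrow(2)$, $(1)\Leftrightarrow(3)$, and $(1)\Leftrightarrow(4)$ is the right one, but two of the steps as you describe them are problematic.

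First, in your treatment of $(1)\Leftrightarrow(2)$ the two directions are swapped. Since $\DNA(\cX,\cL)\le\Fut(\cX,\cL)$ for all normal test configurations (with equality in the special case), reduced uniform Ding-stability immediately implies reduced uniform K-stability: $\Fut\ge\DNA\ge\eta\JNA_T$. So $(1)\Rightarrow(2)$ is the formal direction, not $(2)\Rightarrow(1)$. The hard implication is $(2)\Rightarrow(1)$, which requires the equivariant MMP reduction to special test configurations (the Li--Xu argument, carried out $T$-equivariantly in \cite{Li19}*{\S 4}, and referenced in this paper's proof of Theorem \ref{t-redqm}) so that the inequality on $\Fut$ can be transported to $\DNA$.

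Second, your $(3)\Rightarrow(1)$ step elides the crux of the argument. Writing $\DNA(\cF)$ as an infimum over $v\in\Val^T(X)$ and applying the hypothesis to the worst $v$ gives a $\xi$ that depends on $v$, whereas $\JNA_T(\cF)=\inf_{\xi}\JNA(\cF_\xi)$ is an infimum over a fixed family of twists of $\cF$. The ``standard rearrangement'' you invoke is precisely what is not standard here: one must show that the $v$-dependent twists can be replaced by a single $\xi$ (or, dually, that the valuative bound is uniform after optimizing $\xi$), and this requires the properness estimate $\JNA(\cF_\xi)\gtrsim|\xi|$ of \cite{Li19}*{Lemma 3.15} together with an equicontinuity/compactness argument, exactly as the paper spells out in Proposition \ref{prop:D>=J any filtration} for the filtration version and as Li does in the valuative setting. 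Without this, the slope $\eta=1-\delta^{-1}$ does not follow. Your description of $(1)\Rightarrow(3)$ and of $(1)\Leftrightarrow(4)$ as the analytic input (BBJ-type coercivity modulo the torus, Darvas--Rubinstein/Hisamoto) is accurate.
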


The following is the main conjecture about reduced uniform K-stability (see e.g. \cite{Li19}*{Remark 1.6(2)}, which is the K-polystable version of the conjecture that K-stability implies uniform K-stability (see e.g. \cite{BX19}*{Conjecture 1.5}). 
\begin{conj}\label{c-kpoly}
If $(X,\Delta)$ is K-polystable, then it is reduced uniformly K-stable. 
\end{conj}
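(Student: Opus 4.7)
The plan is to mimic the strategy of \cite{BLX19} for the analogous statement ``K-stable implies uniformly K-stable'' and adapt it to the torus-equivariant setting. Fix a maximal torus $T \subset \Aut(X,\Delta)$ and recall from Remark \ref{rem:choice of T} that it suffices to verify reduced uniform K-stability with respect to $T$. By Theorem \ref{thm:Li's valuative T-uKs}, the task reduces to showing that there exists $\delta > 1$ such that for every $T$-invariant valuation $v$ on $X$ with $A_{X,\Delta}(v) < \infty$, one can find $\xi \in N_{\bR}$ with $A_{X,\Delta}(v_{\xi}) \ge \delta \cdot S(v_{\xi})$. In the language of the invariant $\tdelta(X,\Delta)$ introduced in Appendix \ref{a-reduced}, this is exactly the statement $\tdelta(X,\Delta) > 1$.

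Since $(X,\Delta)$ is K-polystable and hence K-semistable, we have $\tdelta(X,\Delta) \ge 1$. Arguing by contradiction, I would assume $\tdelta(X,\Delta) = 1$ and aim to produce a non-product test configuration with vanishing Ding invariant, contradicting K-polystability. First I would invoke the reduced version of the main theorem of \cite{BLX19}, namely Theorem \ref{t-redqm} proved in the Appendix, which yields a $T$-invariant quasi-monomial valuation $v_*$ computing $\tdelta(X,\Delta) = 1$ and such that $v_*$ is \emph{not} of the form $\wt_{\xi}$ for any $\xi \in N_{\bR}$. Translating via the definition of $\tdelta$, this means that $A_{X,\Delta}((v_*)_{\xi}) = S((v_*)_{\xi})$ for the optimal twist $\xi$ (which we may absorb into $v_*$), and no twist can strictly improve the ratio.

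The next step is the degeneration: from such a ``polystable $\delta = 1$-computer'' one wants to produce a special test configuration $(\cX,\cL)$ of $(X,\Delta)$ whose central fiber is K-semistable, with $\DNA(\cX,\cL) = 0$ and $\JNA_T(\cX,\cL) > 0$. Morally, one approximates $v_*$ by a sequence of divisorial $T$-invariant valuations $\ord_{E_i}$ with $A/S \to 1$ and extracts a limit degeneration; the twisted $\JNA_T$-positivity follows from the fact that $v_*$ is not of the form $\wt_{\xi}$, so no sequence of twists can make the associated test configurations almost trivial. Such a non-product test configuration with $\DNA = 0$ would violate the K-polystability of $(X,\Delta)$ (via the Ding-polystability characterization, which is equivalent to K-polystability under K-semistability by \cites{Fuj19,Li17,LWX18b}), giving the desired contradiction.

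The hard part is exactly the degeneration step. In \cite{BLX19}, the passage from a quasi-monomial valuation computing $\delta = 1$ to an actual special test configuration uses finite generation of the associated graded algebra together with special degeneration results of \cite{LX14, BX19}. In our setting one additionally needs to ensure that the produced test configuration is $T$-equivariant and remains non-trivial after taking the infimum of $\JNA$ over all twists $\xi \in N_{\bR}$ --- the latter is the genuinely new difficulty, because for polystable objects the relevant space of test configurations is the quotient by product twists, and finite generation results need to be lifted compatibly with the $T$-action. Controlling this equivariant finite generation (or, alternatively, running the full approximation argument of \cite{BLX19} inside the $T$-invariant world while keeping the reduced $\JNA_T$ bounded away from zero) is where I expect the main obstacle to lie, and is presumably why the statement is still formulated as Conjecture \ref{c-kpoly} rather than a theorem.
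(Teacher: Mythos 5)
The statement you are addressing is stated in the paper as \emph{Conjecture} \ref{c-kpoly}, not a theorem; there is no proof of it in the paper to compare against. Your proposal is therefore best read as a strategy outline, and you are correct (and commendably honest) in flagging at the end that the degeneration step is open and that this is precisely why the statement remains a conjecture.

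Within that framing, your outline tracks the paper's own preparatory work closely. The reduction to showing $\tdelta(X,\Delta) > 1$ for a maximal torus $T$ via Theorem \ref{thm:Li's valuative T-uKs}, and the appeal to Theorem \ref{t-redqm} to extract a $T$-invariant quasi-monomial minimizer $v_*$ not of the form $\wt_\xi$, are exactly the content of Appendix \ref{a-reduced}. Where you can be slightly more precise is in localizing the gap: the paper records its own expected route in Conjecture \ref{c-divi} and the remark following it. What is missing is not so much ``a non-product test configuration with $\DNA = 0$'' directly, but the intermediate claim that the $\tdelta$-minimizer can be chosen \emph{divisorial} and distinct from $\wt_\xi$; once one has that, \cite{BX19}*{Theorem 4.1} would supply the K-semistable degeneration that contradicts K-polystability. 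Your description of the obstruction as ``$T$-equivariant finite generation'' is the same bottleneck phrased in terms of the graded ring of $v_*$ rather than in terms of replacing a quasi-monomial valuation by a divisorial one; in the non-reduced case this is the step handled in \cite{BLX19, LX14} that has no reduced analogue yet. So your assessment of where the difficulty lies is accurate, and the proposal as written does not close that gap---nor does the paper.
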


Next, we extend \eqref{eq:red-uks via tc} as well as \cite{Li19}*{Proposition 3.22} to arbitrary filtrations. It  will be needed in later arguments. 

\begin{prop} \label{prop:D>=J any filtration}
Let $(X,\Delta)$ be a log Fano pair and let $T\subseteq \Aut(X,\Delta)$ be a maximal torus. Assume that $(X,\Delta)$ is reduced uniformly K-stable with slope at least $\eta>0$. Then for any $T$-equivariant filtration $\cF$ of $R=R(X,-r(K_X+\Delta))$, we have 
\[
\DNA(\cF)\ge \eta\cdot \JNA_T(\cF).
\]
\end{prop}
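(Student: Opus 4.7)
The plan is to reduce to the test-configuration case by an approximation argument. First, I would form the $m$-th approximation $\cF^{(m)}$ of $\cF$ in the standard way (cf.~\cite{BHJ17}), e.g.\ by declaring $\cF^{(m)}$ to be generated in degree $m$ by the subspaces $\cF^\lambda R_m\subseteq R_m$. Since the $T$-weight decomposition of $R$ is compatible with $\cF$, each $\cF^{(m)}$ is automatically $T$-equivariant, and being finitely generated it corresponds by Example~\ref{expl:filtration from tc} to a $T$-equivariant test configuration $(\cX_m,\cL_m)$ of $(X,\Delta)$. The reduced uniform K-stability hypothesis applied to each $(\cX_m,\cL_m)$ then yields
\[
\DNA(\cF^{(m)}) \ge \eta \cdot \JNA_T(\cF^{(m)}) \qquad \text{for every } m.
\]

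Second, I would invoke the standard convergence results from \cite{BHJ17}: weak convergence of the Duistermaat--Heckman measures $\nu_{\cF^{(m)}}\to \nu_\cF$ together with convergence of the supports, which gives $S(\cF^{(m)})\to S(\cF)$, $\lambda_{\max}(\cF^{(m)})\to\lambda_{\max}(\cF)$, and $\LNA(\cF^{(m)})\to \LNA(\cF)$, and hence $\DNA(\cF^{(m)})\to \DNA(\cF)$. Twisting commutes with the approximation in the sense that $(\cF^{(m)})_\xi$ is the $m$-th approximation of $\cF_\xi$, so the same convergences hold for every fixed $\xi\in N_\bR$. A straightforward inspection of the construction shows that the convergence $\JNA(\cF^{(m)}_\xi)\to \JNA(\cF_\xi)$ is in fact uniform in $\xi$ on compact subsets of $N_\bR$.

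The remaining and main obstacle is to establish $\limsup_m \JNA_T(\cF^{(m)}) \ge \JNA_T(\cF)$, which combined with the previous two steps immediately gives $\DNA(\cF)\ge \eta\cdot \JNA_T(\cF)$. By Lemma~\ref{lem:DNA twist & JNA min}(2), pick $\xi_m\in N_\bR$ realizing $\JNA_T(\cF^{(m)}) = \JNA(\cF^{(m)}_{\xi_m})$. The key claim, which I expect to be the hardest part, is that $\{\xi_m\}$ is bounded. I would prove this via uniform coercivity of the function $\xi\mapsto \JNA(\cF^{(m)}_\xi)$: since $T$ is a maximal torus of $\Aut(X,\Delta)$, the weights of $T$ appearing in $R$ span $M_\bR$, so $\lambda_{\max}(\cF_\xi)-S(\cF_\xi)$ grows linearly in $|\xi|$; for $m\gg 0$ the same rate works for $\cF^{(m)}$ since its extreme weights approach those of $\cF$. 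Combined with the uniform upper bound $\JNA_T(\cF^{(m)})\le \JNA(\cF^{(m)})\to \JNA(\cF)<\infty$, this forces $\{\xi_m\}$ to stay in a fixed compact set. Extracting a convergent subsequence $\xi_m\to\xi_\infty$ and applying the uniform-on-compacts continuity yields
\[
\JNA(\cF^{(m)}_{\xi_m}) \longrightarrow \JNA(\cF_{\xi_\infty}) \ge \JNA_T(\cF),
\]
and passing to the limit in the displayed inequality completes the proof.
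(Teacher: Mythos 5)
Your proposal is correct and follows essentially the same line as the paper's proof: both approximate $\cF$ by finitely generated $T$-equivariant filtrations $\cF_m$, apply the reduced-uniform hypothesis to the corresponding test configurations, prove $\DNA(\cF_m)\to\DNA(\cF)$, and then use equicontinuity/uniform convergence in $\xi$ together with linear coercivity of $\xi\mapsto\JNA(\cF_{m,\xi})$ (from $T$ being maximal, so the moment polytope is full-dimensional) to conclude that the minimizing twists $\xi_m$ lie in a fixed compact set and that $\JNA_T(\cF_m)\to\JNA_T(\cF)$; the paper phrases this via Arzel\`a--Ascoli while you argue via $\limsup$, but the mechanism is identical. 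One small caveat: the claim that $(\cF^{(m)})_\xi$ is literally the $m$-th approximation of $\cF_\xi$ is not quite true for degrees below $m$ (the twist reshuffles weight spaces there), but it is irrelevant to the asymptotic invariants $\lambda_{\max}$, $S$, $\JNA$, so the convergence you invoke still holds; the paper avoids this subtlety by directly comparing $\lambda_{\max}(\cF_{m,\xi})$ and $S(\cF_{m,\xi})$ with those of $\cF_\xi$.
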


\begin{proof}
By Corollary \ref{cor:DNA & JNA after translation}, we may replace $\cF$ by a translation of $\cF_\bZ$ and assume that $\cF$ is an $\bN$-filtration. Let $\cF_m$ be the $m$-th approximating filtration of $\cF$, i.e.,
\begin{enumerate}
    \item for $m'<m$, $\cF_m^\lambda R_{m'} = R_{m'}$ if $\lambda\le 0$ and $\cF_m^\lambda R_{m'}=0$ if $\lambda > 0$,
    \item for $m'=m$, $\cF_m^\lambda R_{m'} =\cF^\lambda R_{m'}$ for all $\lambda$,
    \item for $m'>m$, $\cF_m^\lambda R_{m'} =\sum_{\vec{\mu}} \cF^{\mu_1} R_m \cdots \cF^{\mu_s} R_m\cdot R_{m'-ms}$ where the sum runs through all $s\in \bN_+$ and $\vec{\mu}=(\mu_1,\cdots,\mu_s)\in\bR^s$ such that $ms\le m'$ and $\mu_1+\cdots+\mu_s\ge \lambda$ (it turns out that the sum can be written as a finite sum).
\end{enumerate}
(Roughly speaking, $\cF_m$ is the coarsest filtration of $R$ such that $\cF_m^\lambda R_m = \cF^\lambda R_m$ for all $\lambda$.) 
We claim that (c.f. \cite{Li19}*{Proposition 3.16})
\begin{equation} \label{eq:J_T,m converge to J_T}
    \mathbf{I}:=\lim_{m\to \infty} \JNA_T(\cF_m) = \JNA_T(\cF).
\end{equation}

To see this, first note that $\lim_{m\to \infty}\lambda_{\max}(\cF_m)=\lambda_{\max}(\cF)$ by \cite{BHJ17}*{Corollary 5.4}. Using Fujita's approximation theorem for graded linear series \cite{LM09}*{Theorem 3.5} we also have $\lim_{m\to \infty} \vol(\cF_m R^{(x)}) = \vol(\cF R^{(x)})$ for all $x\in\bR$ and hence $\lim_{m\to \infty} S(\cF_m)=S(\cF)$ by the dominated convergence theorem. Since $(X,\Delta)$ is reduced uniformly K-stable, it is K-polystable and thus by Lemma \ref{lem:DNA twist & JNA min} we have $S(\cF_\xi)=S(\cF)$ and $S(\cF_{m,\xi})=S(\cF_m)$  for any $\xi\in N_\bR$ and any $T$-equivariant filtration $\cF$ on $R$. 
 In particular, by Definition \ref{defn:D^NA},
$$\lim_{m\to \infty} \JNA(\cF_{m,\xi}) = \JNA(\cF_\xi)\mbox{ for any fixed }\xi\in N_\bR.$$ 

Let $\lambda_{m,\alpha}=\sup\{\lambda\,|\,(\cF^\lambda R_m)_\alpha \neq 0\}$ and let $\Gamma_m = \{\alpha\in M\,|\,R_{m,\alpha}\neq 0\}$. Then there exists a bounded region $P\subseteq M_\bR$ such that $\frac{1}{m}\Gamma_m\subseteq P$ for all $m\in\bN$. It follows that
$$\lambda_{\max}(\cF_{m,\xi}) = \sup_{\alpha\in \Gamma_m} \frac{\lambda_{m,\alpha}+\la \alpha,\xi \ra}{m} \le \lambda_{\max}(\cF)+\sup_{\alpha\in \Gamma_m} \frac{\la \alpha,\xi \ra}{m}\le \lambda_{\max}(\cF)+C|\xi|$$
for some constant $C>0$ that only depends on $P$. Thus the functions $\JNA(\cF_{m,\xi})$ ($m\in\bN$) are equicontinuous on $N_\bR$, as the above estimate implies for any $m$,
$$|\lambda_{\max}(\cF_{m,\xi})-\lambda_{\max}(\cF_{m,\xi_0})|\le C|\xi-\xi_0|.$$ On the other hand, as in the proof of \cite{Li19}*{Lemma 3.15 and Proposition 3.16}, there exist some constants $C_1,C_2>0$ such that 
$$\JNA(\cF_\xi)\ge C_1|\xi|-C_2\mbox{ and }\JNA(\cF_{m,\xi})\ge C_1|\xi|-C_2 \mbox{ for all $m\gg 0$ and }\xi\in N_\bR,$$ 
so the infima $\inf_{\xi\in N_\bR} \JNA(\cF_{m,\xi})$ and $\inf_{\xi\in N_\bR} \JNA(\cF_\xi)$ are achieved on a fixed compact subset $\Xi\subseteq N_\bR$. By the Arzel\`a-Ascoli theorem, the convergence 
$$\JNA(\cF_{m,\xi})\to \JNA(\cF_\xi) \ \ (m\to \infty)$$ 
is uniform over $\Xi$ and hence we also get the convergence of infima
$$\mathbf{I} = \lim_{m\to \infty} \inf_{\xi\in N_\bR} \JNA(\cF_{m,\xi}) = \inf_{\xi\in N_\bR} \JNA(\cF_\xi) = \JNA_T(\cF).$$
as claimed \eqref{eq:J_T,m converge to J_T}.

Since $\cF_m$ is finitely generated, it comes from a test configuration of $(X,\Delta)$ \cite{BHJ17}*{Proposition 2.15}, hence by the reduced uniform K-stability of $(X,\Delta)$ we have $\DNA(\cF_m)\ge \eta\cdot\JNA_T(\cF_m)$ for some constant $\eta>0$ depending only on $(X,\Delta)$. By construction, we have $\tI_m(\cF_m) = \tI_m(\cF)$ and $\tI_{m\ell}(\cF_m) = \tI_m(\cF_m)^\ell$ for all $m,\ell\in \bN$, it is thus easy to check that $\lim_{m\to \infty} \LNA(\cF_m)=\LNA(\cF)$ and (see e.g. \cite{Fuj18b}*{\S 4.2})
\begin{equation} \label{eq:ding filtration}
\lim_{m\to \infty} \DNA(\cF_m) = \DNA(\cF),
\end{equation}
hence combining with \eqref{eq:J_T,m converge to J_T} we obtain $\DNA(\cF)\ge \eta\cdot\JNA_T(\cF)$. 
\end{proof}

\subsection{Further properties}

The second property is related to the behaviour of the slope $\eta$ in family, which is a partial generalization of the result in \cite{BL18} to the twisted version. It is needed later in our study of the CM line bundle. 

We first need a simple lemma.
\begin{lem} \label{lem:only fg N-filt}
Let $\eta > 0$, let $(X,\Delta)$ be a log Fano pair and let $T\subseteq \Aut(X,\Delta)$ be a maximal torus. Then $(X,\Delta)$ is reduced uniformly K-stable with slope at least $\eta>0$ if and only if
\begin{equation} \label{eq:slope eta}
    \DNA(\cF)\ge \eta \cdot \JNA_T(\cF)
\end{equation}
for all finitely generated $T$-equivariant $\bN$-filtrations $\cF$ with $\lambda_{\max}(\cF)\le 1$.
\end{lem}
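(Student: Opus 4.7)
The forward direction is immediate: any finitely generated $T$-equivariant $\bN$-filtration with $\lambda_{\max}\le 1$ corresponds, by Example \ref{expl:filtration from tc}, to a $T$-equivariant test configuration of $(X,\Delta)$, for which \eqref{eq:slope eta} is exactly the defining inequality of reduced uniform K-stability with slope at least $\eta$.

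For the converse, let $(\cX,\cL)$ be any $T$-equivariant test configuration and let $\cF$ be its associated $T$-equivariant $\bZ$-filtration of $R=R(X,-r(K_X+\Delta))$. The plan is to reduce $\cF$ to the restricted class (finitely generated $T$-equivariant $\bN$-filtrations with $\lambda_{\max}\le 1$) by translation and rescaling, observing that both operations preserve the inequality $\DNA\ge\eta\cdot\JNA_T$. For the translation step, Corollary \ref{cor:DNA & JNA after translation} gives $\DNA(\cF_c)=\DNA(\cF)$ for every $c\in\bR$; moreover, the identity $(\cF_c)_\xi=(\cF_\xi)_c$ (which is immediate from Definition \ref{d-quotient}) together with the translation invariance of $\JNA$ yields
\[
\JNA_T(\cF_c)=\inf_{\xi\in N_\bR}\JNA((\cF_c)_\xi)=\inf_{\xi\in N_\bR}\JNA(\cF_\xi)=\JNA_T(\cF).
\]
Choosing $c=\lceil -e_-\rceil$ (notation as in Definition \ref{defn:filtration}) therefore replaces $\cF$ by an $\bN$-filtration with the same values of $\DNA$ and $\JNA_T$.

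For the rescaling step, we may assume $\lambda_{\max}(\cF)>0$, otherwise $\cF$ is the trivial filtration and \eqref{eq:slope eta} is automatic. Replacing $\cL$ by the positive rational multiple $a\cL$ with $a=1/\lambda_{\max}(\cF)$, enlarging $r$ to clear denominators, and passing to the $\bZ$-integralization of the resulting filtration produces a new $T$-equivariant test configuration whose associated filtration $\cF'$ is in the restricted class. Under this rescaling each of $\lambda_{\max}$, $S$, and $\LNA$ scales by $a$, and hence $\DNA$, $\JNA$, and $\JNA_T$ all scale by $a$ as well. Applying the hypothesis to $\cF'$ gives $\DNA(\cF')\ge\eta\cdot\JNA_T(\cF')$, which transports back to $\DNA(\cF)\ge\eta\cdot\JNA_T(\cF)$ and completes the proof.

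The main point requiring verification is the scaling behaviour of the three invariants under rescaling of the polarization, which is a direct but slightly tedious check from the definitions in Definition \ref{defn:D^NA}; the $\bZ$-integralization step is handled by Corollary \ref{cor:DNA & JNA after translation} and its proof, which shows that $\DNA$ and $\JNA$ (and hence $\JNA_T$, via the twist identity above) are unchanged upon passage to $\cF_\bZ$.
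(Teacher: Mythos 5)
Your overall plan is more direct than the paper's (which handles arbitrary filtrations and therefore needs the approximating filtrations $\cF_m$ from Proposition \ref{prop:D>=J any filtration}): you correctly observe that reduced uniform K-stability only quantifies over test configurations, whose associated $\bZ$-filtrations are already finitely generated, so in principle no approximation is needed. Your translation step is fine: translating by $c=\lceil -e_-\rceil\in\bZ$ preserves finite generation (shift of the Rees-algebra grading) and, via Corollary \ref{cor:DNA & JNA after translation} together with the identity $(\cF_c)_\xi=(\cF_\xi)_c$, preserves $\DNA$ and $\JNA_T$.

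The rescaling step, however, contains a genuine gap. Replacing $\cL$ by $a\cL$ with $a=1/\lambda_{\max}(\cF)<1$ cannot be realized as an honest operation on test configurations: $a\cL$ is only a $\bQ$-line bundle, and clearing denominators replaces $\cL$ by $p\cL$ (where $a=p/q$ in lowest terms) and $r$ by $pr$; the filtration of $(\cX,p\cL)$ on $R(X,-pr(K_X+\Delta))$ then has $\lambda_{\max}=p\cdot\lambda_{\max}(\cF)\ge\lambda_{\max}(\cF)$, so $\lambda_{\max}$ \emph{increases}. Relatedly, your claim that ``$\DNA$, $\JNA$, $\JNA_T$ all scale by $a$'' is inconsistent with this operation: under $\cL\mapsto k\cL$ (with $r\mapsto kr$) each of $\lambda_{\max}$, $S$, $\LNA$ scales by $k$ but so does $r$, so $\DNA$ and $\JNA$ are \emph{invariant}. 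What is actually needed is the reparametrization the paper uses, $\cG^\lambda R_m=\cF^{\lambda M_1-mM_2}R_m$ on the \emph{same} ring $R$, which does satisfy $\lambda_{\max}(\cG)\le 1$ and $\DNA(\cF)=M_1\DNA(\cG)$, $\JNA_T(\cF)=M_1\JNA_T(\cG)$. If you want to carry out your shortcut (and skip the paper's approximating filtrations), you must take $M_1\in\bN$ and additionally verify that $\cG_\bZ$ is finitely generated so that it is again a test configuration filtration; this is true, since the Rees algebra of $\cG_\bZ$ is an isotypic subalgebra for a $\mu_{M_1}$-action on the Rees algebra of $\cF_c$ and invariants of a finite group on a finitely generated $k$-algebra remain finitely generated, but this check is missing from your argument. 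With those two corrections — reparametrize the filtration exponent rather than the line bundle, and verify finite generation of the rescaling — your streamlined route does work.
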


\begin{proof}
It suffices to prove the backward implication. Let $\cF$ be a filtration of $R$. Choose some $M_1,M_2\gg 0$ such that the filtration $\cG$ on $R$ defined by $\cG^\lambda R_m = \cF^{\lambda M_1 - m M_2} R_m$ satisfies $\cG^0 R = R$ and $\lambda_{\max}(\cG)\le 1$. It is not hard to check that $\DNA(\cF)=M_1\cdot \DNA(\cG)$ and $\JNA(\cF)=M_1\cdot\JNA(\cG)$, hence \eqref{eq:slope eta} holds for $\cF$ if and only if it holds for $\cG$. 

Replacing $\cF$ by $\cG$, we may assume that $\cF^0 R = R$ and $\lambda_{\max}(\cF)\le 1$. Similarly, as $\DNA(\cF_\bZ)=\DNA(\cF)$ and $\JNA(\cF_\bZ)=\JNA(\cF)$, it suffices to check \eqref{eq:slope eta} for the $\bN$-filtration $\cF_\bZ$. For each positive integer $m$, let $\cF_m$ be the $m$-th approximating filtration of $\cF_\bZ$, then $\cF_m$ is a finitely generated $T$-equivariant $\bN$-filtrations with $\lambda_{\max}(\cF_m)\le 1$. If \eqref{eq:slope eta} holds for all such filtrations, then letting $m\to \infty$ we see that \eqref{eq:slope eta} holds for $\cF_\bZ$ (by \eqref{eq:J_T,m converge to J_T}) and hence for $\cF$ as well.
\end{proof}

\begin{prop} \label{prop:slope at vg pt}
Let $T$ be a torus and let $f\colon (X,\Delta)\to B$ be a $\bQ$-Gorenstein family of log Fano pairs with a fiberwise $T$-action. Let $\eta > 0$. Assume that $(X_0,\Delta_0)$ is reduced uniformly Ding-stable with slope at least $\eta$ for some $0\in B$ and $T\subseteq \Aut(X_0,\Delta_0)$ is a maximal torus. Then the same is true for very general fibers of $f$. 
\end{prop}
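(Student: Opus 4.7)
The plan is to adapt the Blum--Liu strategy \cite{BL18} for lower semi-continuity of the $\delta$-invariant to the reduced setting, parametrizing $T$-equivariant test configurations of bounded complexity in projective schemes over $B$ and concluding via a countable union argument.

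First, by upper semi-continuity of $\dim \Aut(X_t,\Delta_t)$, I may shrink $B$ so that $T$ remains a maximal torus of $\Aut(X_t,\Delta_t)$ for every $t\in B$. By Lemma \ref{lem:only fg N-filt} together with Remark \ref{rem:choice of T}, it then suffices to verify $\DNA(\cF)\ge \eta\cdot \JNA_T(\cF)$ for every finitely generated $T$-equivariant $\bN$-filtration $\cF$ of $R_t:=R(X_t,-r(K_{X_t}+\Delta_t))$ with $\lambda_{\max}(\cF)\le 1$, where $r$ is fixed uniformly in the family. By \cite{BHJ17}*{Proposition 2.15}, for each positive integer $m$ those filtrations generated in degree $\le m$ correspond to $T$-equivariant test configurations of bounded complexity; these are parametrized by a projective morphism $\pi_m\colon \cH_m\to B$ coming from a $T\times\bG_m$-invariant relative Hilbert scheme, over which one obtains a universal family of filtrations $\cF_s$ and their twists $\cF_{s,\xi}$ for $(s,\xi)\in \cH_m\times N_\bR$.

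Over $\cH_m$ the Duistermaat--Heckman measures of $\cF_{s,\xi}$ vary continuously in $(s,\xi)$, so $\JNA(\cF_{s,\xi})$ is continuous. The Ding invariant $\DNA(\cF_s)$ is lower semi-continuous in $s$ by lower semi-continuity of log canonical thresholds applied to the ideal sequences $\tI_\ell(\cF_s)$ in its definition. A uniform version of the coerciveness estimate \cite{Li19}*{Lemma 3.15}---which should follow from the boundedness of the family and the resulting uniform control on the supports of the Duistermaat--Heckman measures---confines the infimum computing $\JNA_T(\cF_s)$ to a fixed compact region of $N_\bR$, so $\JNA_T$ is upper semi-continuous on $\cH_m$. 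Hence $c(s):=\DNA(\cF_s)-\eta\cdot\JNA_T(\cF_s)$ is lower semi-continuous on $\cH_m$.

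For each $m,k\in \bN_{>0}$, set $Z_{m,k}:=\{s\in \cH_m : c(s)\le -1/k\}$, closed in $\cH_m$, and $W_{m,k}:=\pi_m(Z_{m,k})\subseteq B$, closed by properness of $\pi_m$. By hypothesis $\pi_m^{-1}(0)\cap Z_{m,k}=\emptyset$, so $W_{m,k}$ is a proper closed subvariety of $B$ (arguing componentwise if $B$ is reducible). Any $t$ in the complement of the countable union $W:=\bigcup_{m,k} W_{m,k}$ then satisfies $c(s)\ge 0$ on every bounded test configuration of $(X_t,\Delta_t)$, and Lemma \ref{lem:only fg N-filt} yields reduced uniform Ding-stability of $(X_t,\Delta_t)$ with slope $\ge \eta$. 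The principal technical obstacle will be establishing the uniform coerciveness estimate required for upper semi-continuity of $\JNA_T$, since the constants in \cite{Li19}*{Lemma 3.15} a priori depend on the filtration; this requires some care but is expected from the boundedness built into the construction of $\cH_m$.
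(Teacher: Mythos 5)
Your broad strategy matches the paper's: reduce via Lemma~\ref{lem:only fg N-filt} to finitely generated $T$-equivariant $\bN$-filtrations with bounded $\lambda_{\max}$, parametrize those generated in degree $m$ by a scheme proper over $B$, use semicontinuity to carve out an open locus containing the central fiber, and intersect over $m$. However, there is a genuine gap exactly where you flag the ``principal technical obstacle.'' Your argument requires $s\mapsto\JNA_T(\cF_s)$ to be upper semicontinuous, which you want to derive from a coerciveness bound $\JNA(\cF_{s,\xi})\ge C_1|\xi|-C_2$ with $C_1,C_2$ uniform over the parameter space. The constants appearing in \cite{Li19}*{Lemma 3.15} depend on the filtration (through the support of its Duistermaat--Heckman measure and the weight decomposition), and making them uniform is precisely the difficulty you would need to resolve; as written the claim is asserted, not proved. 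A smaller point: your appeal to upper semicontinuity of $\dim\Aut$ to keep $T$ maximal is unnecessary, since Remark~\ref{rem:choice of T} already shows maximality is automatic once the inequality $\DNA\ge\eta\JNA_T$ with respect to $T$ is verified.

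The paper circumvents the problem of controlling $\JNA_T$ across the family entirely. It parametrizes the degree-$m$ generated filtrations by fiber products of relative flag varieties inside the weight components $\cR_{m,\alpha}$ (rather than Hilbert schemes of test configurations), which makes the discrete data of the filtration, hence $S_m(\cF_t)$ and $\lambda_{\max}(\cF_{t,\xi})$ for any \emph{fixed} $\xi$, manifestly locally constant on components. It also passes from $S$ to $S_m$ via the uniform approximation $S(\cF)\ge\epsilon_m S_m(\cF)$ from \cite{BJ17}*{Corollary 2.10}, which you do not invoke; this is not cosmetic, since $S(\cF_t)$ is \emph{not} locally constant (the Rees algebra of a degree-$m$ generated filtration can degenerate as $t$ varies), so it cannot play the role of a locally constant comparator in a semicontinuity argument. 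The paper then rewrites Li's inequality at the central fiber as $\LNA(\cF)\ge\eta\lambda_{\max}(\cF_\xi)+(1-\eta)\epsilon_m S_m(\cF)/r$ for a single $\xi$ chosen per filtration; since the right side is locally constant and $\LNA$ is constructible and lower semicontinuous (via the $T$-invariant universal ideals and inversion of adjunction), Noetherian induction gives the open set $U_m$, and letting the generating degree tend to infinity recovers $\DNA\ge\eta\JNA_T$. If you want to repair your proof, replacing the upper semicontinuity of $\JNA_T$ with this fixed-$\xi$, $S_m$-based inequality is the most direct fix.
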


\begin{proof}
Let $r>0$ be a sufficiently divisible integer and let 
$$\cR:=\bigoplus_{m\in \bN} \cR_m := \bigoplus_{m\in \bN} f_*\cO_X(-mr(K_{X/B}+\Delta)).$$ 
By Remark \ref{rem:choice of T}, it suffices to show that very general fibers of $f$ are reduced uniformly Ding-stable with slope at least $\eta$ and with respect to $T$.
By Lemma \ref{lem:only fg N-filt}, this reduces to showing that $\DNA(\cF)\ge \eta \cdot \JNA_T(\cF)$ for any $T$-equivariant finitely generated $\bN$-filtration $\cF$ on 
$$R_b = \cR\otimes k(b) = \bigoplus_{m\in \bN} H^0(X_b,-mr(K_{X_b}+\Delta_b))$$
with $\lambda_{\max}(\cF)\le 1$ (where $b\in B$ is very general).

We first construct the parameter spaces of such filtrations. Let $m\in\bN_+$ and let $\cR_m=\bigoplus_{\alpha\in M} \cR_{m,\alpha}$ be the weight decomposition. If $R$ is an $\bN$-graded ring, then we denote by $R^{(m)}$ the  subring of $R$ which only consists of the elements with degree divisible by $m$. 

Clearly giving a $T$-equivariant $\bN$-filtration $\cF$ on $R^{(m)}_b$ that is generated in degree $m$ is equivalent to giving an $\bN$-filtrations on $R_{m,\alpha,b}:=\cR_{m,\alpha}\otimes k(b)$ for each $\alpha$. If moreover $\lambda_{\max}(\cF)\le 1$, then this amounts to choosing a length $m$ decreasing sequence of subspaces $\cF^i R_{m,\alpha,b}$ ($i=1,2,\cdots,m$) of each $R_{m,\alpha,b}$. Such a sequence is parametrized by the relative flag variety (of length $m$) of $\cR_{m,\alpha}$ over $B$. In other words, if we denote by $F_m$ the fiber product of these relative flag varieties (for a fixed $m\in\bN_+$) over $B$ and let $\phi_m\colon F_m\to B$ be the natural projection, then for any $b\in B$, there is a bijection between the geometric points of $\phi_m^{-1}(b)$ and the $T$-equivariant $\bN$-filtrations $\cF$ on $R_b$ with $\lambda_{\max}(\cF)\le 1$ which are generated in degree $m$. In particular, for each $t\in F_m$, we denote by $\cF_t$ the corresponding filtration on $R_{\phi_m(t)}$. Note that each irreducible component of $F_m$ (corresponding to different choices of the signature of the flag) is proper over $B$. 

By \cite{BJ17}*{Corollary 2.10}, there exist constants $\epsilon_m \ge 0$ ($m\in\bN_+$) such that $\epsilon_m\to 1$ $(m\to \infty)$ and $S(\cF)\ge \epsilon_m \cdot S_m(\cF)$ for all $m$ and all $\bN$-filtration $\cF$ on $R_0$. By \cite{Li19}*{Lemma 3.15}, for any $T$-equivariant finitely generated filtration $\cF$ on $R_0$ there exists some $\xi\in N_\bR$ such that $\DNA(\cF)\ge \eta\cdot \JNA(\cF_\xi)$; since $(X_0,\Delta_0)$ is K-semistable, we also have $S(\cF_\xi)=S(\cF)$. It follows that
\begin{align*}
    \LNA(\cF) & \ge \eta \cdot \JNA(\cF_\xi) + \frac{S(\cF)}{r} \\
     & = \eta \cdot \lambda_{\max}(\cF_\xi)+ (1-\eta)\cdot \frac{S(\cF)}{r} \\
     & \ge \eta \cdot \lambda_{\max}(\cF_\xi)+ (1-\eta)\epsilon_m\cdot \frac{S_m(\cF)}{r}.
\end{align*}
Let $(Y,\Gamma)=(X,\Delta)\times_B F_m$ and let $g\colon (Y,\Gamma)\to F_m$ be the natural projection. Let $L=-mr(K_{Y/F_m}+\Gamma)$. By construction, there exists a universal flag of $T$-invariant subbundles 
$$\cF^m \subseteq \cdots \subseteq \cF^1\mbox{\ \ \ of \ \ }g_*\cO_Y(L)=\phi_m^* \cR_m$$ whose restriction to any geometric point $t\in F_m$ is the corresponding filtration $\cF_t$ on $R_{m,t}=\phi_m^* \cR_m\otimes k(t)$. 

Let $\cI_{m,i}$ be the image of the composition $g^*\cF^i\otimes \cO_Y(-L)\to g^*g_*\cO_Y(L) \otimes \cO_Y(-L) \to \cO_Y$ and let 
\[
\tcI_m = \cI_{m,m} + \cI_{m,m-1}\cdot t + \cdots + \cI_{m,1}\cdot t^{m-1} + (t^{m}) \subseteq \cO_{Y\times \bA^1}.
\]
Then we have $\cI_{m,i}\otimes k(y)=I_{m,i}(\cF_t)$ and $\tcI_m\otimes k(t)=\tI_m(\cF_t)$ for all $t\in F_m$. As $\cF_t$ is generated in degree $m$, we also have $\tI_{m\ell}(\cF_t)=\tI_m(\cF_t)^\ell$ for all $\ell\in\bN$ and hence 
\[
\LNA(\cF_t) = \lct \big(Y_t \times \bA^1,(\Gamma_t \times\bA^1)\cdot (\tcI_{m,t})^{\frac{1}{mr}};Y_t\times \{0\} \big) + \frac{1}{r} - 1.
\]
By inversion of adjunction, this implies that the function $t\mapsto \LNA(\cF_t)$ on $F_m$ is constructible and lower semicontinuous. 

Moreover, the functions $t\mapsto S_m(\cF_t)$ and $t\mapsto \lambda_{\max}(\cF_{t,\xi})$ (for each fixed $\xi\in N_\bR$) are constant on each irreducible component of $F_m$. It follows from the Noetherian induction that for each $m\in \bN_+$, there exists an open subset $U_m\subseteq F_m$ containing $\phi_m^{-1}(0)$ such that for all $t\in U_m$, there exists some $\xi\in N_\bR$ with
\[
\LNA(\cF_t)\ge \eta \cdot \lambda_{\max}(\cF_{t,\xi})+ (1-\eta)\epsilon_m\cdot \frac{S_m(\cF_t)}{r}.
\]
Since $F_m$ is proper over $B$, $V:=\bigcap_m \phi_m(U_m)$ is the complement of a countable union of closed sets. It is nonempty as $0\in V$. Further shrinking $V$, we may also assume that every fiber over $V$ is K-semistable by \cite{BL18}*{Theorem A}. 
We therefore deduce that for any $b\in V$ and any $T$-equivariant $\bN$-filtration $\cF$ of $R_b$ with $\lambda_{\max}(\cF)\le 1$ that is generated in degree $m$, we have
\begin{equation} \label{eq:DNA>=J_T,m}
    \LNA(\cF)\ge \eta \cdot \JNA_T(\cF) + \eta\cdot \frac{S(\cF)}{r} + (1-\eta)\epsilon_m\cdot \frac{S_m(\cF)}{r}.
\end{equation}

But a filtration that is generated in degree $m$ is also generated in degree $m\ell$ for all $\ell\in \bN$ and hence \eqref{eq:DNA>=J_T,m} remains true if we replace $m$ by $m\ell$. Letting $\ell\to \infty$ we obtain $\DNA(\cF)\ge \eta\cdot \JNA_T(\cF)$ for all $b\in V$ and all $T$-equivariant finitely generated $\bN$-filtrations $\cF$ of $R_b$ with $\lambda_{\max}(\cF)\le 1$. By Remark \ref{rem:choice of T} and Lemma \ref{lem:only fg N-filt}, we conclude that $(X_b,\Delta_b)$ is reduced uniformly K-stable with slope at least $\eta$ as long as $b\in V$. 
\end{proof}

\section{$\beta_{\delta}$-invariants for filtrations}\label{s-tbeta}

In this section, we define and study the $\beta_{\delta}$-invariants for filtrations of anti-canonical rings of log Fano pairs. In particular, we will complete the proof of Theorem \ref{t-main1}. We believe for many questions, this gives the appropriate extension of $\beta$-invariants in \cite{Li17, Fuj19} defined for valuations. 

As an immediate consequence, we will also give a more conceptual (in our opinion) proof of the semi-positivity of CM line bundles \cite{CP18}*{Theorem 1.8}. 

We fix the following notation: let $(X,\Delta)$ be a log Fano pair, let $r>0$ be an integer such that $L:=-r(K_X+\Delta)$ is Cartier and let $R=R(X,L)$.

\subsection{Definition of $\beta_{\delta}$-invariants}

\begin{defn} \label{d-beta}
Given a filtration $\cF$ of $R$ and some $\delta\in\bR_+$, we define the \emph{$\delta$-log canonical slope} (or simply \emph{log canonical slope} when $\delta=1$) $\mu_{X,\Delta,\delta}(\cF)$ (or $\mu_{\delta}(\cF)$ if the pair $(X,\Delta)$ is clear from the context) as
\begin{equation}\label{e-twistmu}
\mu_{X,\Delta,\delta}(\cF) = \sup \left\{t\in\bR\,|\,\lct(X,\Delta;I^{(t)}_\bullet)\ge \frac{\delta}{r}\right\}
\end{equation}
where $I^{(t)}_\bullet=I^{(t)}_\bullet(\cF)$ is as in Definition \ref{defn:base ideal} and define 
\[
\beta_{X,\Delta,\delta}(\cF):= \frac{\mu_{X,\Delta,\delta}(\cF)-S(\cF)}{r}.
\]
We will often suppress the subscripts when the pair $(X,\Delta)$ is clear or $\delta=1$. 
\end{defn}


We have the following properties which compares our $\beta$-invariants to the original definition in \cite{Fuj19, Li17}.

\begin{prop} \label{prop:tbeta property}
For any divisor $E$ over $X$, we have $\beta(E)\ge \beta(\cF_{\ord_E})$. Moreover, equality holds when $E$ is weakly special $($i.e. it is induced by a weakly special test configuration with an irreducible central fiber, see e.g. \cite{BLX19}*{Definition A.1} or Lemma \ref{l-weakly}). 
\end{prop}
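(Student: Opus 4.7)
The plan is to first establish the inequality $\beta(E)\ge \beta(\cF_{\ord_E})$ by a direct valuative argument, and then verify equality in the weakly special case by exploiting the finite generation of the associated Rees algebra.

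\textbf{Inequality.} By definition of the filtration attached to a valuation (Example \ref{ex: filtration from val}) and the definition of $S_{X,\Delta}(E)$, we have $S(\cF_{\ord_E})=r\cdot S_{X,\Delta}(E)$. It therefore suffices to prove $\mu_{X,\Delta,1}(\cF_{\ord_E})\le r\cdot A_{X,\Delta}(E)$. The key observation is that for any $s\in \cF^{tm}_{\ord_E}R_m$, the divisor $(s)_0$ has multiplicity at least $tm$ along $E$, so after pulling back to a model where $E$ is a prime divisor, every local generator of the base ideal $I^{(t)}_m=I_{m,tm}(\cF_{\ord_E})$ has $\ord_E \ge tm$. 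Hence $\ord_E(I^{(t)}_m)\ge tm$, and passing to the graded sequence gives $\ord_E(I^{(t)}_\bullet)\ge t$. Combining this with the standard valuative bound $\lct(X,\Delta;\fa_\bullet)\le A_{X,\Delta}(E)/\ord_E(\fa_\bullet)$ applied to $\fa_\bullet=I^{(t)}_\bullet$, any $t$ with $\lct(X,\Delta;I^{(t)}_\bullet)\ge 1/r$ satisfies $t\le r\cdot A_{X,\Delta}(E)$. Taking supremum in $t$ gives the desired inequality, and the conclusion $\beta(E)\ge \beta(\cF_{\ord_E})$ follows.

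\textbf{Equality for weakly special $E$.} Assume $E$ arises from a weakly special test configuration $(\cX,\Delta_\cX,\cL)$ with irreducible central fiber $\cX_0$; after rescaling, $\ord_E$ is (a multiple of) $\ord_{\cX_0}$ restricted to the generic fiber, and in particular $\cF_{\ord_E}$ is finitely generated (by Example \ref{expl:filtration from tc} together with Lemma \ref{l-weakly}). Finite generation allows us to realize the entire graded sequence $I^{(t)}_\bullet$ on a single proper birational model $\pi\colon Y\to X$ on which $E$ is a prime divisor: for all sufficiently divisible $m$ and rational $t\in (0,T_{X,\Delta}(E))$, we can choose bases of $\cF^{tm}_{\ord_E}R_m$ extracted from sections on the test configuration, so that $\ord_E(I^{(t)}_m)=tm$ exactly. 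Thus $\ord_E(I^{(t)}_\bullet)=t$. Moreover, the weakly special hypothesis (which guarantees that the pair $(\cX,\Delta_\cX+\cX_0)$ is log canonical near $\cX_0$) yields, via inversion of adjunction applied to the degeneration, that $\ord_E$ computes $\lct(X,\Delta;I^{(t)}_\bullet)$; hence $\lct(X,\Delta;I^{(t)}_\bullet)=A_{X,\Delta}(E)/t$. Setting $t=r\cdot A_{X,\Delta}(E)$ gives $\lct = 1/r$, so $\mu_{X,\Delta,1}(\cF_{\ord_E})\ge r\cdot A_{X,\Delta}(E)$, which combined with the first part forces equality.

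\textbf{Main obstacle.} The inequality part is essentially formal once one identifies the correct valuative bound. The delicate point is the equality assertion, where one must convert the abstract finite-generation provided by the weakly special test configuration into the two concrete statements that $\ord_E(I^{(t)}_\bullet)=t$ \emph{and} that $\ord_E$ realizes the log canonical threshold of $I^{(t)}_\bullet$. The latter, in particular, is not automatic — it is essentially equivalent to saying that the divisor $E$ is obtained by a plt blow-up with log canonical center computing the lct — and this is exactly where the hypothesis that the central fiber is irreducible and the test configuration is weakly special enters.
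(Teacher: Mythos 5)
Your inequality argument is essentially identical to the paper's: both show $\ord_E(I^{(t)}_\bullet(\cF_{\ord_E}))\ge t$ and combine this with the valuative bound $\lct(X,\Delta;\fa_\bullet)\le A_{X,\Delta}(E)/\ord_E(\fa_\bullet)$ to get $\mu(\cF_{\ord_E})\le r\cdot A_{X,\Delta}(E)$. That part is fine.

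For the equality case your route is genuinely different from the paper's, and it has a gap exactly where you flag it as the ``delicate point.'' You assert that finite generation plus ``inversion of adjunction applied to the degeneration'' shows $\ord_E(I^{(t)}_\bullet)=t$ and that $\ord_E$ computes $\lct(X,\Delta;I^{(t)}_\bullet)$, but you never actually carry out the inversion-of-adjunction step, and it is not clear how one deduces $\lct(X,\Delta;I^{(t)}_\bullet)\ge 1/r$ directly from the log canonicity of $(\cX,\Delta_{\cX}+\cX_0)$. You are also proving more than you need: for $\mu(\cF_{\ord_E})\ge r\cdot A_{X,\Delta}(E)$ it suffices to bound the lct from below, and you do not need $\ord_E(I^{(t)}_\bullet)=t$ or that $\ord_E$ realizes the lct. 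The paper's argument is much shorter and avoids both of your intermediate claims: since $E$ is weakly special, \cite{BLX19}*{Theorem A.2} produces an effective $\bQ$-divisor $D\sim_\bQ -(K_X+\Delta)$ with $(X,\Delta+D)$ lc and $\ord_E(D)=A_{X,\Delta}(E)$. For $m$ sufficiently divisible, $mrD$ is an integral divisor giving a section of $R_m$ with $\ord_E$ exactly $mr\cdot A_{X,\Delta}(E)$, so its ideal is contained in $I^{(r A_{X,\Delta}(E))}_m$; hence $\lct(X,\Delta;I^{(r A_{X,\Delta}(E))}_\bullet)\ge\lct(X,\Delta;rD)\ge 1/r$ and $\mu(\cF_{\ord_E})\ge r\cdot A_{X,\Delta}(E)$. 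Your sketch can likely be repaired, but the natural way to justify both of your claims is via this same complement $D$, at which point the finite generation and inversion-of-adjunction detour becomes superfluous.
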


\begin{proof}
Let $\cF=\cF_{\ord_E}$. By definition we have $\ord_E(I^{(t)}_\bullet(\cF))\ge t$, thus 
$$\lct(X,\Delta;I^{(t)}_\bullet)<\frac{1}{r}\qquad\mbox{ when } t>r\cdot A_{X,\Delta}(E).$$ It follows that $\mu(\cF)\le r\cdot A_{X,\Delta}(E)$ and hence $\beta(\cF)\le \beta(E)$ by definition. If $E$ is weakly special, then by \cite{BLX19}*{Theorem A.2} there exists an effective $\bQ$-divisor $D\sim_\bQ -(K_X+\Delta)$ such that $(X,\Delta+D)$ is lc and $A_{X,\Delta}(E)=\ord_E(D)$. This implies that 
\[
\lct(X,\Delta;I^{ (r\cdot A_{X,\Delta}(E))}_\bullet)\ge \lct(X,\Delta;rD)\ge \frac{1}{r}.
\]
Thus $\mu(\cF)\ge r\cdot A_{X,\Delta}(E)$ and $\beta(\cF) \ge \beta(E)$.
\end{proof}


In Lemma \ref{l-comple}, we will prove a partial converse of Proposition \ref{prop:tbeta property}. 

In the remaining part of this subsection, we show that the non-negativity of $\beta$-invariants (resp. $\beta_{\delta}$-invariants for some $\delta>1$) for filtrations characterizes K-semistability (resp. uniform K-stability). Later in $\S$\ref{ss-tbetaredK}, we shall see that the $\beta_\delta$-invariants can be used to detect reduced uniform K-stability (Definition \ref{defn:red uks}).

\begin{thm} \label{thm:tbeta>=D^NA}
Let $(X,\Delta)$ be a log Fano pair and let $r>0$ be an integer such that $L=-r(K_X+\Delta)$ is Cartier. Then we have 
\[
\beta(\cF)\ge \DNA(\cF)
\]
for any linearly bounded multiplicative filtration $\cF$ of $R=R(X,L)$.
\end{thm}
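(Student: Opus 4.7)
Unwinding the definitions, the inequality $\beta(\cF)\ge \DNA(\cF)$ reduces to
\[
\mu(\cF)\ \ge\ r\cdot\LNA(\cF)\ =\ rc_\infty+e_+-r,
\]
so it suffices to prove that for every rational $c<c_\infty$, the real number $t^\star:=r(c-1)+e_+$ satisfies $\lct(X,\Delta;I^{(t^\star)}_\bullet)\ge 1/r$. By the valuative criterion for asymptotic log canonical thresholds of graded sequences of ideals, this amounts to showing
$A_{X,\Delta}(v)\ge \tfrac{1}{r}v(I^{(t^\star)}_\bullet)$
for every valuation $v$ on $X$ with finite log discrepancy.

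Fix such a $v$ and a rational $c<c_\infty$. For $m\gg 0$ we have $c<c_m$, so the pair $(X\times\bA^1,\Delta\times\bA^1+cX_0,\tilde I_m^{1/(mr)})$ is sub-log-canonical. Since both $\tilde I_m$ and $X_0$ are $\bG_m$-equivariant with respect to the natural action on $\bA^1$, this sub-lc condition can be tested against $\bG_m$-invariant valuations; these are precisely the quasi-monomial valuations $w_a:=(v,a\cdot\wt_t)$ with $a>0$. Computing $A_{X\times \bA^1,\Delta\times\bA^1}(w_a)=A_{X,\Delta}(v)+a$, $w_a(X_0)=a$, and reading off $w_a(\tilde I_m)$ from the explicit description of $\tilde I_m$, one obtains
\[
A_{X,\Delta}(v)+a(1-c)\ \ge\ \frac{1}{mr}\min_{me_-\le i\le me_+}\bigl(v(I_{m,i})+a(me_+-i)\bigr) \quad \text{for all } a>0.
\]
Setting $\phi_v(t):=v(I^{(t)}_\bullet)=\lim_m v(I_{m,\lceil tm\rceil})/m$ (the limit exists by subadditivity of $I^{(t)}_\bullet$) and letting $m\to\infty$, the discrete minimum on the right passes to a continuous infimum:
\[
A_{X,\Delta}(v)+a(1-c)\ \ge\ \frac{1}{r}\inf_{t\in[e_-,e_+]}\bigl(\phi_v(t)+a(e_+-t)\bigr).
\]

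The decisive observation is that $\phi_v$ is \emph{convex} on $[e_-,e_+]$: the multiplicativity of $\cF$ yields an inclusion $I^{(s)}_m\cdot I^{(t)}_{m'}\subseteq I^{(u)}_{m+m'}$ with $u=(sm+tm')/(m+m')$, and taking valuations, dividing by $m+m'$, and passing to the limit gives $\lambda\phi_v(s)+(1-\lambda)\phi_v(t)\ge \phi_v(\lambda s+(1-\lambda)t)$. Choose $a^\star$ in the subdifferential $\partial\phi_v(t^\star)$, so that the convex function $\phi_v(t)+a^\star(e_+-t)$ of $t$ is minimized precisely at $t=t^\star$, with minimum value $\phi_v(t^\star)+a^\star(e_+-t^\star)$. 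Substituting $a=a^\star$ into the asymptotic inequality and using the identity $e_+-t^\star=r(1-c)$, the $a^\star$-terms cancel and the inequality collapses to $A_{X,\Delta}(v)\ge \phi_v(t^\star)/r=\tfrac{1}{r}v(I^{(t^\star)}_\bullet)$, which is exactly what we needed. Letting $c\nearrow c_\infty$ and varying $v$ then yields $\mu(\cF)\ge r\LNA(\cF)$.

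The main technical obstacle is justifying the passage from the discrete minimum to the continuous infimum in the $m\to\infty$ limit: along any minimizing sequence $i^\star_m$ with $i^\star_m/m\to t^\infty$, one must verify that $v(I_{m,i^\star_m})/m\to \phi_v(t^\infty)$, which requires some uniform control resting on the convexity (and hence interior continuity) of $\phi_v$ together with standard asymptotic estimates for valuations of base ideals of graded linear series. Minor care is also needed to handle the case where $t^\star$ sits at a boundary of the interval or where the subdifferential $\partial\phi_v(t^\star)$ is unbounded, which can be dealt with by a harmless perturbation of $c$; these points do not affect the overall shape of the argument.
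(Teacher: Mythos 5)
Your argument is correct, and it is essentially the paper's proof run in the contrapositive direction. The paper starts from $\mu(\cF)$, invokes \cite{JM12}*{Theorem A} to produce a valuation $v$ that computes $\lct(X,\Delta;I^{(\mu)}_\bullet)$, normalizes $v$ so that the left derivative of $f(\lambda)=v(I^{(\lambda)}_\bullet)$ at $\mu$ equals $1$, and then evaluates the quasi-monomial combination of $v$ with $\ord_{X_0}$ of weight $(1,1)$ on $\tI_m$ to bound $c_m$ from above. You instead fix $c<c_\infty$ and an arbitrary valuation $v$, take the weight-$(1,a)$ combination, and choose $a$ in the subdifferential of $\phi_v$ at $t^\star$ — which plays precisely the role of the paper's normalization. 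Both proofs rest on the same two ingredients, namely the convexity of $\lambda\mapsto v(I^{(\lambda)}_\bullet)$ coming from multiplicativity of $\cF$, and the quasi-monomial/Gauss extension over $X\times\bA^1$; and both make one appeal to the JM12 circle of ideas (the paper to the existence of a minimizing valuation, you to the valuative characterization of the lct of a graded sequence of ideals). Neither route is noticeably shorter than the other.

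The ``main technical obstacle'' you flag at the end is not actually an obstacle. Since $I^{(t)}_m\cdot I^{(t)}_{m'}\subseteq I^{(t)}_{m+m'}$, the sequence $m\mapsto v(I^{(t)}_m)$ is subadditive, so Fekete's lemma gives $v(I_{m,i})/m\ge\phi_v(i/m)$ for \emph{every} $m$ and $i$. Consequently the discrete minimum $\tfrac{1}{mr}\min_i\bigl(v(I_{m,i})+a(me_+-i)\bigr)$ already dominates $\tfrac{1}{r}\inf_t\bigl(\phi_v(t)+a(e_+-t)\bigr)$ term-by-term, with no convergence, equicontinuity, or perturbation argument required — each single $m$ with $c<c_m$ suffices. (The paper uses exactly this one-sided Fekete bound in the displayed estimate for $\tilde v(\tI_m)$.) The boundary concerns are likewise vacuous: for $c<c_\infty$ the inequality $c_\infty\le 1-(e_+-\lambda_{\max})/r$ gives $t^\star<\lambda_{\max}(\cF)$, so $t^\star$ lies in the interior of the interval where $\phi_v$ is finite, the subdifferential there is a bounded nonempty set of nonnegative numbers, and if $0\in\partial\phi_v(t^\star)$ then $\phi_v(t^\star)=0$ (as $\phi_v$ is nondecreasing with $\phi_v(e_-)=0$) and the required inequality $A_{X,\Delta}(v)\ge\phi_v(t^\star)/r$ is automatic.
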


\begin{proof}
Denote by $\mu:=\mu_{X,\Delta}(\cF)$, $\lambda_{\max}:=\lambda_{\max}(\cF)$ and we have $\mu\le \lambda_{\max}$. If $\mu=\lambda_{\max}$, then it is clear that $\beta(\cF)=\JNA(\cF)\ge \DNA(\cF)$. Hence we may assume that $\lambda_{\max}>\mu$ in what follows. In particular, $I_{m,\lambda}\neq 0$ for some $\lambda>\mu m$.

For each $m$, the ideal $I_{m,\mu m+\epsilon}$ does not depend on the choice of $\epsilon>0$ as long as $\epsilon$ is sufficiently small and we set $\fa_m=I_{m,\mu m+\epsilon}$ where $0<\epsilon\ll 1$. It is easy to see that $\fa_\bullet$ is a graded sequence of ideals on $X$. By the definition of $\mu$, we have 
\[
m\cdot \lct(X,\Delta;\fa_m) = m\cdot \lct(X,\Delta;I_{m,\mu m+\epsilon})\le \lct(X,\Delta;I_\bullet^{(\mu+\epsilon /m)})\le \frac{1}{r}
\]
for all $m$. It follows that $\lct(X,\Delta;\fa_\bullet)\le \frac{1}{r}$ and hence by \cite{JM12}*{Theorem A} there is a valuation $v$ over $X$ such that 
\begin{equation} \label{eq:a<=v}
    a:=A_{X,\Delta}(v)\le \frac{1}{r}v(\fa_\bullet)<\infty.
\end{equation}
For each $\lambda\in \bR$, we set $f(\lambda)=v(I^{(\lambda)}_\bullet)$. Since $\lambda_{\max}>\mu$, there exists some $\epsilon>0$ such that $f(\lambda)<\infty$ for all $\lambda<\mu+\epsilon$. Since the filtration $\cF$ is multiplicative, we know that $f$ is a non-decreasing convex function. It follows that $f$ is continuous on $(-\infty,\mu+\epsilon)$ and from the construction we see that 
$$f(\mu)\le v(\fa_\bullet)\le \lim_{\lambda\to \mu+} f(\lambda)=f(\mu),$$ hence $f(\mu)=v(\fa_\bullet)\ge ar$ by \eqref{eq:a<=v}. We then have 
\begin{eqnarray}\label{e-convex}
f(\lambda)\ge f(\mu)+\xi (\lambda-\mu)\ge ar+\xi(\lambda-\mu) \mbox{\ \ where \ } \xi:=\lim_{h\to 0+}\frac{f(\mu)-f(\mu-h)}{h}
\end{eqnarray} 
for all $\lambda$ by the convexity of $f$. We claim that $\xi>0$. Indeed, it is clear that $\xi\ge 0$ since $f$ is non-decreasing. If $\xi=0$, then $f$ must be constant on $(-\infty,\mu]$; but this is a contradiction since $f(\mu)\ge ar>0$ while we always have $f(e_-)=0$. Hence $\xi>0$ as desired. Replacing $v$ by $\xi^{-1} v$, we may assume that $\xi=1$ and \eqref{e-convex} becomes
\begin{equation} \label{eq:f convex}
    f(\lambda)\ge \lambda +ar-\mu.
\end{equation}

Now let $\tv$ be the valuation on $X\times \bA^1$ given by the quasi-monomial combination of $v$ and $X_0$ with weight $(1,1)$. Using the same notation as in Definition \ref{defn:D^NA}, we have
\begin{align*}
    \tv(I_{m,me_-+i}\cdot t^{me-i}) & \ge m f\left( \frac{me_-+i}{m} \right) +(me-i) \\
     & \ge m\left(\frac{me_-+i}{m}+ar-\mu\right)+(me-i) \\
     & = m(e_++ar-\mu) \quad (\forall i\in\bN)\ ,
\end{align*}
where the first inequality follows from the definition of $f(\lambda)$ and the second inequality follows from \eqref{eq:f convex}. It follows that $\tv(\tI_m)\ge m(e_++ar-\mu)$ and hence by definition of $c_m$ we obtain
\[
c_m \le A_{(X,\Delta)\times \bA^1}(\tv)-\frac{\tv(\tI_m)}{mr}\le a+1-\frac{e_+ +ar-\mu}{r}=\frac{\mu-e_+}{r}+1
\]
for all $m\in\bN$. Thus $c_\infty\le \frac{\mu-e_+}{r}+1$ and we have
\[
\DNA(\cF) = c_\infty + \frac{e_+ - S(\cF)}{r} -1 \le \frac{\mu-S(\cF)}{r} = \beta(\cF)
\]
as desired.
\end{proof}

\begin{cor} \label{cor:beta hat>=0}
A log Fano pair $(X,\Delta)$ is K-semistable if and only if $\beta(\cF)\ge 0$ for any filtration $\cF$ of $R=R(X,-r(K_X+\Delta))$.
\end{cor}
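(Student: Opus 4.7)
The plan is to deduce both directions by combining two ingredients that are already in place: the inequality $\beta(\cF)\ge\DNA(\cF)$ from Theorem \ref{thm:tbeta>=D^NA}, and the comparison $\beta(E)\ge\beta(\cF_{\ord_E})$ from Proposition \ref{prop:tbeta property}. Neither direction should require any genuinely new argument; this is why the remark after Theorem \ref{t-main1} says the statement ``follows immediately.''

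For the forward direction, I would invoke the Fujita--Li valuative/filtration criterion (\cite{Fuj18b, Li17}): $(X,\Delta)$ is K-semistable if and only if $\DNA(\cF)\ge 0$ for every linearly bounded multiplicative filtration $\cF$ of $R$. Then Theorem \ref{thm:tbeta>=D^NA} gives
\[
\beta(\cF)\ge \DNA(\cF)\ge 0
\]
for every such $\cF$, which is exactly what is required.

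For the backward direction, I would use the valuative criterion for K-semistability recorded in the Definition-Theorem of \S2.5, namely that $(X,\Delta)$ is K-semistable iff $\beta_{X,\Delta}(E)\ge 0$ for every divisor $E$ over $X$. Fix such an $E$; since $A_{X,\Delta}(E)<\infty$, the valuation $\ord_E$ has linear growth, and so by Example \ref{ex: filtration from val} the induced filtration $\cF_{\ord_E}$ is a linearly bounded multiplicative filtration of $R$. Applying the standing hypothesis to this particular filtration and then Proposition \ref{prop:tbeta property} yields
\[
\beta_{X,\Delta}(E)\ge \beta(\cF_{\ord_E})\ge 0,
\]
so $(X,\Delta)$ is K-semistable.

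There is no real obstacle at this stage: all of the work has been absorbed into Theorem \ref{thm:tbeta>=D^NA} (whose proof relies on the convexity of $\lambda\mapsto v(I^{(\lambda)}_\bullet)$ and the Jonsson--Musta\c{t}\u{a} valuation realizing the log canonical threshold of a graded sequence) and into Proposition \ref{prop:tbeta property}. The only minor point to be careful about is that in the backward direction we use the comparison $\beta(E)\ge\beta(\cF_{\ord_E})$ in the ``correct'' direction, so that non-negativity for the filtration transfers to non-negativity for the divisor; equality of the two quantities (which holds only for weakly special $E$) is not needed here.
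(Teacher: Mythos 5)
Your proof is correct and essentially identical to the paper's: the forward direction is Theorem \ref{thm:tbeta>=D^NA} combined with the Ding-semistability of $(X,\Delta)$ for arbitrary linearly bounded filtrations (which the paper obtains from \eqref{eq:ding filtration} together with Fujita's result), and the backward direction is the one-line application of Proposition \ref{prop:tbeta property} to the filtrations $\cF_{\ord_E}$ plus the valuative criterion. You also correctly flag the only subtlety — that the inequality $\beta(E)\ge\beta(\cF_{\ord_E})$ must run in the right direction — so nothing is missing.
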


\begin{proof}
Suppose that $(X,\Delta)$ is K-semistable. Then combining \eqref{eq:ding filtration} (see \cite{Fuj18b}*{\S 4.2}) and  \cite[Theorem 6.5]{Fuj19}, we have $\DNA(\cF)\ge 0$ for any linearly bounded filtration $\cF$ of $R$. Hence $\beta(\cF)\ge 0$ by Theorem \ref{thm:tbeta>=D^NA}. Conversely, if $\beta(\cF)\ge 0$ for any linearly bounded filtration $\cF$ of $R$, then by Proposition \ref{prop:tbeta property} we have $\beta(E)\ge 0$ for all divisors $E$ over $X$, thus $(X,\Delta)$ is K-semistable.
\end{proof}

More generally, we have:

\begin{prop} \label{prop:delta twisted beta>=0}
Let $(X,\Delta)$ be a log Fano pair, let $r\in \mathbb{N}$ be an integer such that $L=-r(K_X+\Delta)$ is Cartier and let $R=R(X,L)$. Then
\[
\delta(X,\Delta)=\sup\{\delta>0\,|\,\beta_\delta (\cF)\ge 0\mbox{ for any linearly bounded filtration } \cF \mbox{ of } R\}.
\]
\end{prop}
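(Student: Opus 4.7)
Let $\tilde\delta$ denote the supremum on the right-hand side. The plan is to prove $\tilde\delta=\delta(X,\Delta)$ by establishing both inequalities separately.

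For $\tilde\delta \le \delta(X,\Delta)$, I would test the defining condition against the filtrations $\cF_{\ord_E}$ attached to divisors $E$ over $X$ (Example~\ref{ex: filtration from val}). Since every section in $\cF_{\ord_E}^{tm}R_m$ vanishes along $E$ to order at least $tm$, we have $\ord_E(I^{(t)}_m)\ge tm$, whence $\lct(X,\Delta;I^{(t)}_\bullet)\le A_{X,\Delta}(E)/t$; consequently $\mu_{X,\Delta,\delta}(\cF_{\ord_E})\le rA_{X,\Delta}(E)/\delta$, giving
\[
\beta_{X,\Delta,\delta}(\cF_{\ord_E})\le \frac{A_{X,\Delta}(E)}{\delta}-S_{X,\Delta}(E).
\]
If $\beta_\delta(\cF)\ge 0$ for every filtration $\cF$, then applying this to $\cF_{\ord_E}$ yields $\delta\le A_{X,\Delta}(E)/S_{X,\Delta}(E)$ for every $E$, and taking the infimum over $E$ gives $\delta\le \delta(X,\Delta)$.

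For $\tilde\delta \ge \delta(X,\Delta)$, fix $0<\delta\le\delta(X,\Delta)$ and a linearly bounded filtration $\cF$. The condition $\beta_\delta(\cF)\ge 0$ is equivalent to $\lct(X,\Delta;I^{(S(\cF))}_\bullet)\ge \delta/r$, which by the valuative description of the log canonical threshold of a graded sequence \cite{JM12} amounts to
\[
A_{X,\Delta}(v)\ge \tfrac{\delta}{r}\,v(I^{(S(\cF))}_\bullet)
\]
for every valuation $v$ with $A_{X,\Delta}(v)<\infty$. Since $A_{X,\Delta}(v)\ge \delta(X,\Delta)S_{X,\Delta}(v)\ge \delta\, S_{X,\Delta}(v)$, the whole problem reduces to the key comparison
\[
v(I^{(S(\cF))}_\bullet)\le rS_{X,\Delta}(v)=S(\cF_v).
\]

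To prove this comparison I would use the Newton--Okounkov body framework. Choose a rank-$n$ admissible flag on $k(X)$ whose first component is $v$ (in the sense of Kaveh--Khovanskii); the resulting Okounkov body $\Delta\subseteq\bR^n$ of $L$, equipped with normalized Lebesgue probability measure $\mu$, carries a linear coordinate $\ell_v(p)=p_1$ with $\int_\Delta \ell_v\,d\mu = S(\cF_v)=rS_{X,\Delta}(v)$, while the filtration $\cF$ induces the Boucksom--Chen concave transform $G_\cF\colon\Delta\to\bR$ with $\int_\Delta G_\cF\,d\mu = S(\cF)$; in this picture
\[
v(I^{(t)}_\bullet)=\inf\{\ell_v(p) : p\in\Delta,\ G_\cF(p)\ge t\}.
\]
At $t=S(\cF)$, Jensen applied to the concave $G_\cF$ places the centroid $c=\int_\Delta p\,d\mu$ of $\Delta$ inside $\{G_\cF\ge S(\cF)\}$, while the linearity of $\ell_v$ gives $\ell_v(c)=rS_{X,\Delta}(v)$, so the infimum is at most $rS_{X,\Delta}(v)$. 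The main technical obstacle is establishing the Okounkov-body formula for $v(I^{(t)}_\bullet)$ and justifying the integral identities for $G_\cF$ and $\ell_v$; an equivalent algebraic route uses the convexity of $f_v(t):=v(I^{(t)}_\bullet)$ (shown in the proof of Theorem~\ref{thm:tbeta>=D^NA}), combined with Jensen $f_v(S(\cF))\le \int f_v\,d\nu_\cF$ and the bound $\int f_v\,d\nu_\cF \le rS_{X,\Delta}(v)$, which is obtained by passing to the limit in the level-$m$ estimate $\tfrac{1}{\dim R_m}\sum_i \tfrac{1}{m}\,v(I_{m,\lambda_i}(\cF)) \le S_m(\cF_v)$, itself a consequence of the basis bound $v(s_i)\ge v(I_{m,\lambda_i}(\cF))$ valid for any $\cF$-adapted basis $\{s_i\}$ of $R_m$.
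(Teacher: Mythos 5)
Your proof is correct. The first inequality $\tilde\delta\le\delta(X,\Delta)$ is essentially identical to the paper's: both test against the filtrations $\cF_{\ord_E}$ and use $\mu_\delta(\cF_{\ord_E})\le rA_{X,\Delta}(E)/\delta$.

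The second inequality is where you take a genuinely different route. The paper works at the threshold $\mu:=\mu_\delta(\cF)$: it invokes \cite{JM12} to produce a valuation $v$ computing the lct of the graded sequence $I_{m,\mu m+\epsilon}$, normalizes the derivative $f_v'(\mu)$ to $1$, translates $\cF$ so that $f_v(\lambda)\ge\lambda$, concludes the filtration inclusion $\cF\subseteq\cF_v$, and hence $S(\cF)\le S(\cF_v)$. You instead reduce the whole statement to the single uniform comparison
\[
v\big(I^{(S(\cF))}_\bullet\big)\le S(\cF_v)\quad\text{for every valuation }v\text{ with }A_{X,\Delta}(v)<\infty,
\]
and prove it by Jensen's inequality applied to the convex function $f_v(t)=v(I^{(t)}_\bullet)$ (so that $f_v(S(\cF))\le\int f_v\,\rd\nu_\cF$), together with the bound $\int f_v\,\rd\nu_\cF\le S(\cF_v)$ coming from the basis-counting estimate $v(s_i)\ge v(I_{m,\lambda_i}(\cF))$ and the fact that $\frac{1}{m\dim R_m}\sum_i v(s_i)$ is maximized over bases by the $\cF_v$-adapted one. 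Both proofs ultimately hinge on the convexity of $f_v$, but the paper exploits it only at the lct-computing valuation (tangent line at $\mu$, giving $\cF\subseteq\cF_v$), whereas you use it globally through Jensen and obtain a cleaner universal inequality that does not depend on which valuation realizes the lct. The paper's version has the advantage of running in exact parallel with the proof of Theorem~\ref{thm:tbeta>=D^NA}, keeping the two arguments uniform; your version isolates a comparison of independent interest and would adapt more readily to situations where one cannot single out an lct-minimizer. Two small points to tighten if you write this up: (i) the Okounkov-body picture is only a heuristic here, since a general valuation with $A_{X,\Delta}(v)<\infty$ need not embed as the first step of an admissible flag, so you should lead with the algebraic route rather than treat the two as equivalent; (ii) in the algebraic route the chain needs the Fekete-type bound $f_v(\lambda_i/m)\le\frac{1}{m}v(I_{m,\lambda_i})$ and the weak convergence $\nu_{m,\cF}\to\nu_\cF$ against the continuous function $f_v$ spelled out explicitly, together with the observation (used tacitly, as in the paper via Lemma~\ref{l-comple}) that $t\mapsto\lct(X,\Delta;I^{(t)}_\bullet)$ is continuous, so that $\mu_\delta(\cF)\ge S(\cF)$ is indeed equivalent to $\lct(X,\Delta;I^{(S(\cF))}_\bullet)\ge\delta/r$.
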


\begin{proof}
For any $\delta>\delta(X,\Delta)$, there exists some valuation $v$  (with $A_{X,\Delta}(v)<\infty$) such that $A_{X,\Delta}(v)<\delta\cdot S_{X,\Delta}(v)$ and from Definition \ref{d-beta} we see that 
\[
\beta_\delta(\cF_v) = \frac{\mu_{X,\Delta,\delta}(\cF_v)-S(\cF_v)}{r}\le \frac{A_{X,\Delta}(v)}{\delta}-S_{X,\Delta}(v)<0,
\]
where we use the fact that $v(I_{\bullet}^{(t)}(\cF_v))\ge t$ for any $t\in \bR_{\ge 0}$ and therefore if we take $t_0=\frac{A_{X,\Delta}(v)r}{\delta}$, then
$$\lct(X,\Delta; (I_{\bullet}^{(t_0)}(\cF_v))\le \frac{\delta}{r}.$$

Thus it remains to show that for all $0<\delta\le \delta(X,\Delta)$, we have $\beta_\delta (\cF)\ge 0$ for any filtration $\cF$ of $R$. The argument is very close to the proof of Theorem \ref{thm:tbeta>=D^NA}, so we only give a sketch. First we may assume that $\mu:=\mu_{X,\Delta,\delta}(\cF)<\lambda_{\max}:=\lambda_{\max}(\cF)$. Let $\fa_m=I_{m,\mu m+\epsilon}$ where $0<\epsilon\ll 1$. As in the proof of Theorem \ref{thm:tbeta>=D^NA}, we have $\lct(X,\Delta;\fa_\bullet)\le \frac{\delta}{r}$ and thus there exists a valuation $v$ over $X$ such that 
\[
a:=A_{X,\Delta}(v)\le \frac{\delta}{r}v(\fa_\bullet).
\]
For each $\lambda\in\bR$, let $f(\lambda)=v(I^{(\lambda)}_\bullet)$. Then we have $f(\mu)\ge \frac{ar}{\delta}$ by the above inequality. By convexity we also have 
\begin{equation}\label{e-conv2}
f(\lambda)\ge f(\mu)+\xi (\lambda-\mu)\ge \frac{ar}{\delta}+\xi (\lambda-\mu)
\end{equation} 
 for all $\lambda\in\bR$ (where $\xi=f'(\mu)>0$). Replacing $v$ by $\xi^{-1} v$, we may assume that $\xi=1$. After translating $\cF$ by $c=\frac{ar}{\delta}-\mu$ (which does not change the value of $\beta(\cF)$), we may further assume that $\frac{ar}{\delta}=\mu$ and hence $f(\lambda)\ge \lambda$ for all $\lambda$. In other words, $\cF^\lambda R \subseteq \cF_v^\lambda R$ for all $\lambda$ (where $\cF_v$ the filtration associated to the valuation $v$), hence $S(\cF)\le S(\cF_v)$. Since $\mu=\frac{ar}{\delta}$, from the definition of $\delta(X,\Delta)$, we see that
\[
\beta_\delta (\cF) = \frac{\mu-S(\cF)}{r} \ge \frac{a}{\delta} - \frac{S(\cF_v)}{r} \ge \frac{a}{\delta(X,\Delta)} - \frac{S(\cF_v)}{r} \ge 0
\]
as desired.
\end{proof}

\subsection{Semi-positivity of CM line bundle}\label{ss-semiposi}

Before we proceed to investigate more on $\beta$-invariants, let us show the criterion in Corollary \ref{cor:beta hat>=0} can be used to give a direct proof of the semi-positivity of CM line bundle. 

\begin{prop}\label{prop:cm>=beta}
Let $f\colon (X,\Delta)\to C$ be a generic log Fano family over a smooth projective curve $C$, let $t\in C$ be a point such that $(X_t,\Delta_t)$ is a log Fano pair and let $r>0$ be an integer such that $L=-r(K_X+\Delta)$ is Cartier. Then we have
\[
\deg \lambda_{f,\Delta} \ge (n+1) (-K_{X_t}-\Delta_t)^n \cdot  \beta_{X_t,\Delta_t}(\cF_{\HN}).
\]
where $n=\dim X_t$ and $\cF_\HN$ is the HN-filtration on $R:=R(X_t,L_t)$ $($see \S \ref{sec:HN}$)$.
\end{prop}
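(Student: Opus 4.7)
The approach is to first translate the claimed inequality into the statement that the log canonical slope $\mu:=\mu_{X_t,\Delta_t}(\cF_{\HN})$ satisfies $\mu\le 0$, and then to prove $\mu\le 0$ by lifting sections of the positive-slope HN part to effective divisors on $X$ and invoking the canonical bundle formula.

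To reduce to $\mu\le 0$, note that since the $\beta$-invariant of a filtration is invariant under translation (Lemma \ref{lem:S after translation} for $S$, and the analogous identity $\mu(\cF_c)=\mu(\cF)+c$, which follows directly from $I^{(t)}_\bullet(\cF_c)=I^{(t-c)}_\bullet(\cF)$), we may pass from $L=-r(K_X+\Delta)$ to its relative version $-r(K_{X/C}+\Delta)$, which merely shifts slopes by a genus-dependent constant. With this choice, Hirzebruch--Riemann--Roch on $X$ and on the fiber $X_t$ give
\begin{equation*}
\deg\cR_m=\tfrac{L^{n+1}}{(n+1)!}\,m^{n+1}+O(m^n),\qquad \dim R_{t,m}=\tfrac{L_t^n}{n!}\,m^n+O(m^{n-1}).
\end{equation*}
Since the HN filtration on $\cR_m$ satisfies $\sum_{\lambda}\lambda\dim\Gr^{\lambda}_{\cF_{\HN}}R_{t,m}=\deg\cR_m$ (the sum of slopes times ranks of HN quotients), taking the limit yields $S(\cF_{\HN})=L^{n+1}/((n+1)L_t^n)$. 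Combined with the intersection formula $\deg\lambda_{f,\Delta}=-L^{n+1}/r^{n+1}$ for the relative $L$, a short calculation gives the key identity
\begin{equation*}
\deg\lambda_{f,\Delta}-(n+1)v\cdot\beta_{X_t,\Delta_t}(\cF_{\HN})=-\tfrac{(n+1)L_t^n}{r^{n+1}}\,\mu,
\end{equation*}
reducing the proposition to the assertion $\mu\le 0$.

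For the bound $\mu\le 0$, assume to the contrary that $\mu>0$, and fix a rational $t\in(0,\mu)$ together with a general closed point $P\in C$. By definition of $\mu$ one has $\lct(X_t,\Delta_t;I^{(t)}_\bullet(\cF_{\HN}))\ge 1/r$, so for sufficiently divisible $m$ with $tm\in\bZ$, a general divisor $D_0\in|\cF^{tm}_{\HN}R_{t,m}|$ satisfies $\lct(X_t,\Delta_t;\tfrac{1}{mr}D_0)\ge 1-o(1)$ as $m\to\infty$. By \cite{CP18}*{Proposition 5.7}, $\cF^{tm+2g}_{\HN}\cR_m$ is contained in the image of $H^0(X,mL-tmf^*P)\otimes\cO_C\to\cR_m$, so $D_0$ lifts to an effective divisor $\tilde D\sim mL-tmf^*P$ on $X$ with $\tilde D|_{X_t}=D_0$. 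Inversion of adjunction upgrades the fiberwise log-canonicity to log-canonicity of $(X,\Delta+\tfrac{1}{mr}\tilde D)$ in an open neighborhood of $X_t$, and a standard perturbation (absorbing the non-lc contributions from the finitely many remaining bad fibers into a small effective modification supported in $f^{-1}(\text{points})$, whose effect on the class vanishes as $m\to\infty$) makes the pair globally log canonical. Since
\begin{equation*}
K_X+\Delta+\tfrac{1}{mr}\tilde D\sim_{\bQ}f^*\bigl(K_C-\tfrac{t}{r}P\bigr),
\end{equation*}
the canonical bundle formula of Ambro--Fujino for lc-trivial fibrations writes $K_C-(t/r)P\sim_\bQ K_C+B_C+M_C$ with $B_C\ge 0$ effective and $M_C$ nef; taking degrees gives $0\le\deg B_C+\deg M_C=-t/r<0$, a contradiction. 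Hence $\mu\le 0$, proving the proposition.

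The main technical obstacle is the globalization step: inversion of adjunction a priori only yields log canonicity in an \'etale neighborhood of $X_t$, and the canonical bundle formula requires a globally lc pair. The fix sketched above (perturbing $\tilde D$ by a small $f^*$-divisor whose degree tends to zero) is the textbook remedy, but it must be arranged so that the contradiction at the end survives the perturbation; alternatively, one can run the canonical bundle formula argument over an open subset $U\subseteq C$ containing $t$ but avoiding the bad fibers, using that $M_C$ is nef on all of $C$ and $B_C$ is globally effective to reach the same numerical contradiction.
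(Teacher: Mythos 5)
Your overall plan matches the paper's: reduce the inequality to the assertion $\mu_{X_t,\Delta_t}(\cF_{\HN})\le 0$ via the Riemann--Roch identity $\deg\lambda_{f,\Delta}=-(n+1)v\cdot S(\cF_{\HN})/r$, then derive a contradiction from $\mu>0$ by lifting sections of $\cF^{\bullet}_{\HN}$ to effective divisors on $X$ using \cite{CP18}*{Proposition 5.7} and invoking the canonical bundle formula. That reduction and that skeleton are all correct.

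The gap is in the lifting step. Fixing a rational $t\in(0,\mu)$, the definition of $\mu$ only gives $\lct(X_t,\Delta_t;I^{(t)}_\bullet)\ge 1/r$, which is an \emph{asymptotic} statement: it yields $m\cdot\lct(X_t,\Delta_t;I^{(t)}_m)\to \text{(something $\ge 1/r$)}$ but \emph{not} $\lct(X_t,\Delta_t;I^{(t)}_m)\ge 1/(rm)$ at any finite $m$ (Fekete's lemma gives $m\cdot\lct(I^{(t)}_m)\le\lct(I^{(t)}_\bullet)$, so the finite-level threshold can approach $1/r$ strictly from below). You acknowledge this by writing $\lct(X_t,\Delta_t;\frac{1}{mr}D_0)\ge 1-o(1)$, but then you immediately treat $(X_t,\Delta_t+\frac{1}{mr}D_0)$ as lc and feed it into inversion of adjunction. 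The $o(1)$ deficit is on the \emph{general} fiber, not on finitely many bad fibers, so your proposed fix --- perturbing by an $f$-vertical divisor supported over finitely many points --- does nothing to restore lc-ness at the generic point of $C$. Scaling $D$ down to coefficient $(1-o(1))/(mr)$ would restore lc-ness but destroys the relation $K_{X/C}+\Delta+D\sim_\bQ -\frac{t}{r}f^*P$, since the leftover $o(1)\cdot(K_{X/C}+\Delta)$ is not $f$-numerically trivial, and then the canonical bundle formula no longer applies. Likewise, working only over an open $U\ni t$ does not help: the problem is at $t$ itself.

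The paper closes exactly this gap with Lemma \ref{lem:twisted mu}: from $\mu(\cF_\HN)>0$ one deduces $\mu_\delta(\cF_\HN)>0$ for some $\delta>1$, and then the definition of $\mu_\delta$ gives $\lct(X_t,\Delta_t;I^{(2\epsilon)}_\bullet)\ge\delta/r>1/r$ for $0<2\epsilon<\mu_\delta$. Now $m\cdot\lct(X_t,\Delta_t;I_{m,2\epsilon m})>1/r$ holds at \emph{finite} (sufficiently divisible) $m$, i.e.\ the fiber pair $(X_t,\Delta_t+\frac{1}{rm}D_0)$ is genuinely klt, with no $o(1)$ error. After lifting via \cite{CP18}*{Proposition 5.7}, the canonical bundle formula applies with coefficient exactly $\frac{1}{rm}$ and the numerics close. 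This strict-inequality slack bought by $\delta>1$ is the key missing idea in your argument; without some such device the $o(1)$ defect is not curable by the perturbations you sketch.
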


\begin{proof}
Let $\cR_m:=f_*\cO_X(mL)$ so that $R_m=\cR_m\otimes k(t)$. By definition, it is not hard to see that
\[
S_m(\cF_{\HN})=\frac{1}{m\dim R_m} \deg \cR_m,
\]
hence by Riemann-Roch calculation (see e.g. \cite{CP18}*{Lemma A.2}) we have
\begin{equation} \label{e-CM=S}
    \deg \lambda_{f,\Delta} = - (n+1) (-K_{X_t}-\Delta_t)^n \cdot \frac{S(\cF_\HN )}{r}.
\end{equation}
Thus it suffices to show that $\mu(\cF_\HN )\le 0$. Suppose that this is not the case, i.e. $\mu(\cF_\HN)>0$, then we also have $\mu_\delta(\cF_\HN)>0$ for some $\delta>1$ (c.f. Lemma \ref{lem:twisted mu}). Choose some $\epsilon\in\bQ$ such that $0<2\epsilon< \mu_\delta(\cF_\HN)$, then by the definition of ($\delta$-)log canonical slope, the pair $(X_t,\Delta_t + \frac{1}{rm}I_{m,2\epsilon m})$ is klt for sufficiently divisible $m$. 

On the other hand, recall that $\cF_{\HN}^{2\epsilon m}R_m$ is the stalk of $$\cF_{\HN}^{2\epsilon m}\cR_m=\cF_{\HN}^{\epsilon m}\left( \mathcal{R}_m\otimes \cO_C(-m\epsilon P) \right)$$
at $t\in C$, hence by \cite{CP18}*{Proposition 5.7}, once $\epsilon m\ge 2g$ every element of $\cF_{\HN}^{2\epsilon m}R_m$ can be lifted to a global section of 
$\mathcal{R}_m\otimes \cO_C(-m\epsilon P)$ (where $P\in C$), i.e. an element of $H^0(X,-mr(K_{X/C}+\Delta)-m\epsilon f^*P)$. Let 
$$f\in H^0(X,-mr(K_{X/C}+\Delta)-m\epsilon f^*P)$$ be a lift of a general member of $\cF_\HN^{2\epsilon m}R_m$ and let $D=\frac{1}{rm}(f=0)$. 

By construction we know that $$K_{X/C}+\Delta+D\sim_{\bQ} -\frac{\epsilon}{r}f^*P$$ and $(X_t,\Delta_t+D_t)$ is klt for general $t\in C$. But then the canonical bundle formula \cite{Kol07}*{Theorem 8.5.1} implies that $K_{X/C}+\Delta+D\sim_\bQ f^*Q$ for some pseudo-effective divisor $Q$ on $C$; as $\epsilon>0$, this a contradiction.
\end{proof}

\begin{cor}[{\cite{CP18}*{Theorem 1.8}}]\label{cor:cm>=0} 
Let $f\colon (X,\Delta)\to C$ be a generic log Fano family over a smooth projective curve $C$. Assume that the general fibers are K-semistable, then $\deg \lambda_{f,\Delta} \ge 0$.
\end{cor}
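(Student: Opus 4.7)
The plan is to combine the two main ingredients developed in this section: the bound on $\deg \lambda_{f,\Delta}$ in terms of the $\beta$-invariant of the Harder-Narasimhan filtration (Proposition \ref{prop:cm>=beta}) and the valuative-type characterization of K-semistability via filtrations (Corollary \ref{cor:beta hat>=0}). The strategy is essentially a direct concatenation of these two results once we pick a suitable fiber.

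First I would choose a point $t \in C$ such that the fiber $(X_t, \Delta_t)$ is simultaneously a log Fano pair and K-semistable; since the general fibers are log Fano (by the definition of a generic log Fano family, Definition \ref{d-genericlog}) and K-semistable by hypothesis, such a $t$ exists (it is even a general point of $C$). We may additionally assume $t$ lies in the open locus where the restriction map $f_*\cO_X(mL) \to H^0(X_t, mL_t)$ is surjective for all $m$, so that the HN-filtration on the section ring $R_t$ is defined as in Definition-Lemma \ref{l-HNfil}.

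Next, I would apply Proposition \ref{prop:cm>=beta} at this point $t$, yielding
\[
\deg \lambda_{f,\Delta} \ge (n+1)(-K_{X_t}-\Delta_t)^n \cdot \beta_{X_t,\Delta_t}(\cF_{\HN}).
\]
Since $(X_t,\Delta_t)$ is a log Fano pair, the intersection number $(-K_{X_t}-\Delta_t)^n$ is strictly positive. It therefore suffices to show $\beta_{X_t,\Delta_t}(\cF_{\HN}) \ge 0$.

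Finally, because $(X_t, \Delta_t)$ is K-semistable, Corollary \ref{cor:beta hat>=0} applies and gives $\beta_{X_t,\Delta_t}(\cF) \ge 0$ for every linearly bounded multiplicative filtration $\cF$ of $R_t = R(X_t, -r(K_{X_t}+\Delta_t))$. The HN-filtration $\cF_\HN$ is such a filtration by Definition-Lemma \ref{l-HNfil}, so in particular $\beta_{X_t,\Delta_t}(\cF_{\HN}) \ge 0$ and we conclude $\deg \lambda_{f,\Delta} \ge 0$. No step here is genuinely an obstacle — all the real work has already been absorbed into Proposition \ref{prop:cm>=beta} (which distilled the Campana-P\u{a}un-type argument via the canonical bundle formula into an inequality for $\mu(\cF_\HN)$) and into Corollary \ref{cor:beta hat>=0} (which, through Theorem \ref{thm:tbeta>=D^NA}, reduced non-negativity of $\beta(\cF)$ to the non-negativity of $\DNA(\cF)$ for K-semistable Fanos proved in \cite{Fuj18b, Li17}). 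The corollary is thus a clean three-line deduction from these two inputs.
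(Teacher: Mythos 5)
Your proof is correct and is essentially identical to the paper's, which simply says the corollary follows immediately from Corollary \ref{cor:beta hat>=0} and Proposition \ref{prop:cm>=beta}. You have just spelled out the routine details (choice of general $t$, positivity of the volume) that the paper leaves implicit.
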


\begin{proof}
This is an immediate consequence of Corollary \ref{cor:beta hat>=0} and Proposition \ref{prop:cm>=beta}.
\end{proof}

\begin{rem}
Unlike \cite{CP18}, our proof does not use the product trick. We note that the question on K-(semi,poly)stability of product is recently settled in \cite{Zhu19}. 
\end{rem}

Using a similar strategy, we can also bound the nef threshold of $-(K_{X/C}+\Delta)$ with respect to the CM line bundle. This will be one of the key ingredients in proving the ampleness of CM line bundle.

\begin{prop} \label{prop:nef threshold}
Notation as in Proposition \ref{prop:cm>=beta}. Assume that $\beta_{X_t,\Delta_t,\delta}(\cF_\HN)\ge 0$ for some $\delta>1$. Then 
$$-(K_{X/C}+\Delta)+\frac{\delta}{(n+1)v(\delta-1)}f^*\lambda_{f,\Delta}$$ is nef, where $v:=(-K_{X_t}-\Delta_t)^n$.
\end{prop}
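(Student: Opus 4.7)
The plan is to follow the strategy of Proposition \ref{prop:cm>=beta} while exploiting the quantitative $\delta>1$ buffer from $\beta_\delta(\cF_\HN)\ge 0$. First, since $C$ is a curve, $f^*\lambda_{f,\Delta}\equiv \deg\lambda_{f,\Delta}\cdot f^*P$, and the Riemann--Roch computation used in Proposition \ref{prop:cm>=beta} gives $\deg\lambda_{f,\Delta}=-(n+1)v\,S(\cF_\HN)/r$. Combined with Proposition \ref{p-supp}, which identifies $\lambda_{\min}(\cF_\HN)=r\,\lambda_-(-(K_{X/C}+\Delta))$, the desired nefness translates to the explicit inequality
\[
\lambda_{\min}(\cF_\HN)\ \ge\ \frac{\delta\,S(\cF_\HN)}{\delta-1}.
\]
Moreover, $\beta_\delta\le\beta$, so Proposition \ref{prop:cm>=beta} already delivers $\deg\lambda_{f,\Delta}\ge 0$ and hence $S(\cF_\HN)\le 0$, making the inequality a genuine lower bound on how negative $\lambda_{\min}$ can be.

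\textbf{Lifting and the baseline CBF argument.} For rational $\gamma'<\mu_\delta(\cF_\HN)$ and $m$ sufficiently divisible, the definition of $\mu_\delta$ together with $\delta>1$ guarantees that $(X_t,\Delta_t+\frac{1}{mr}\mathrm{div}(s_t))$ is klt for a general $s\in\cF^{\gamma'm}_\HN R_m$. By \cite{CP18}*{Prop.~5.7} (applied after the twist $\cF_\HN^{\gamma'm}\cR_m=\cF_\HN^{\gamma'm-\deg\cL}(\cR_m\otimes\cL)\otimes\cL^{-1}$ with $\cL=\cO_C(-(\gamma'm-2g)P)$), such $s$ lifts to $\tilde s\in H^0(X,-mr(K_{X/C}+\Delta)-(m\gamma'-2g)f^*P)$. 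Setting $D:=\frac{1}{mr}\mathrm{div}(\tilde s)$, we get $K_{X/C}+\Delta+D\sim_\bQ -\tfrac{m\gamma'-2g}{mr}f^*P$ together with $(X_t,\Delta_t+D_t)$ klt. The canonical bundle formula of \cite{Kol07} then forces $-\tfrac{m\gamma'-2g}{mr}P$ to be pseudo-effective on $C$, i.e.\ $\gamma'\le 2g/m$. Letting $m\to\infty$ and $\gamma'\to\mu_\delta$ recovers the Proposition \ref{prop:cm>=beta} conclusion $\mu_\delta\le 0$.

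\textbf{Sharpening via the $\delta$-buffer.} To upgrade this to the nef threshold, I plan to run the same construction with a coefficient $\delta'\in(1,\delta)$ (admissible because $\delta/(mr)\le\lct(X_t,\Delta_t;I^{(\gamma')}_m)$), producing $D':=\frac{\delta'}{mr}\mathrm{div}(\tilde s)$ and a family $(X,\Delta+D')\to C$ whose generic fibre is klt with $K_{X_t}+\Delta_t+D'_t\sim_\bQ (1-\delta')(K_{X_t}+\Delta_t)$ $f$-ample. A semi-positivity statement on this KSB-type family (non-negativity of $(K_{X/C}+\Delta+D')^{n+1}$) plugged into the binomial expansion
\[
(K_{X/C}+\Delta+D')^{n+1}=(\delta'-1)^n\Bigl[(1-\delta')\deg\lambda_{f,\Delta}-\tfrac{(n+1)\delta'\gamma' v}{r}+o(1)\Bigr]
\]
then yields, in the limits $m\to\infty$, $\gamma'\to\mu_\delta$, $\delta'\to\delta$, an inequality relating $\mu_\delta$, $\deg\lambda_{f,\Delta}$, and $v$. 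Equivalently, applying the lifting+CBF machinery directly to the translated/twisted filtration $\widetilde\cF:=(\cF_\HN)_{-\delta S/(\delta-1)}$ (which has the same $\beta_\delta$ but target inequality $\lambda_{\min}(\widetilde\cF)\ge 0$) converts the pseudo-effectivity extracted from CBF into the nef threshold for the shifted line bundle $\tilde L=-(K_{X/C}+\Delta)+cf^*\lambda_{f,\Delta}$. The precise combination producing the constant $\tfrac{\delta}{(n+1)v(\delta-1)}$ arises from balancing the $(1-\delta')\deg\lambda_{f,\Delta}$ and $\tfrac{(n+1)\delta'\gamma' v}{r}$ terms at the critical $\delta'=\delta$, $\gamma'=\mu_\delta=S$.

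\textbf{Main obstacle.} The subtle point is that the CBF/KSB positivity naturally yields an inequality on $\mu_\delta$, whereas nefness asks for a bound on $\lambda_{\min}$, and in general $\lambda_{\min}\le S\le\mu_\delta$ with no uniform converse. Bridging this gap requires either (i) running the argument on the twisted $\tilde L$ so that $\lambda_{\min}$ of the shifted filtration becomes the relevant quantity (which in turn demands interpreting the translated HN filtration geometrically, as $\tilde L$ is not anti-log-canonical of any effective pair), or (ii) combining the $\delta'>1$ KSB-type argument with a \cite{CDB13}-style obstruction $Z$ produced by failure of nefness and the reverse direction of \cite{CP18}*{Prop.~5.7} to extract from $\mult_Z(\|M\|)>0$ a quantitative drop of $\lct(X_t,\Delta_t;I^{(\cdot)}_\bullet)$ strictly below $\delta/r$, contradicting $\mu_\delta\ge S$. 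Either path is the heart of the proof.
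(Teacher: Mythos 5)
Your reduction to the filtration inequality $\lambda_{\min}(\cF_\HN)\ge \tfrac{\delta S(\cF_\HN)}{\delta-1}$ is correct, and you are right that the obstruction is precisely the gap between a bound on $\mu_\delta$ (or $S$) and a bound on $\lambda_{\min}$. But the argument as written does not close it, and the two speculative paths you sketch are not what the paper does. The decisive ingredient you are missing is that the semipositivity needed here is \emph{not} the numerical inequality $(K_{X/C}+\Delta+\delta D)^{n+1}\ge 0$ — which, as your own binomial expansion would show, is automatically satisfied once $\lambda>\tfrac{\deg\lambda_{f,\Delta}}{(n+1)v}$ and gives nothing — but the much stronger statement that the push-forward \emph{vector bundle} $f_*\cO_X(m(K_{X/C}+\Delta+\delta D))$ is nef on $C$. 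This is exactly what Fujino's semipositivity theorem \cite{Fuj18}*{Theorem 1.11} provides once one knows $(X_t,\Delta_t+\delta D_t)$ is klt (lc) for general $t$ and $K_{X/C}+\Delta+\delta D$ is $f$-ample. Combined with \cite{CP18}*{Proposition 5.7} (global generation of nef bundles on curves after a bounded $\cO_C(2gP)$ twist), it yields global generation of $m(K_{X/C}+\Delta+\delta D)+2gf^*P$ for $m$ sufficiently divisible, hence nefness of the $\bQ$-divisor $K_{X/C}+\Delta+\delta D$ itself. Since $K_{X/C}+\Delta+\delta D\sim_\bQ -(\delta-1)(K_{X/C}+\Delta)+\delta\lambda f^*P$ with $\delta-1>0$, this directly produces a nef threshold for $-(K_{X/C}+\Delta)$, and letting $\lambda\to\tfrac{\deg\lambda_{f,\Delta}}{(n+1)v}$ gives the stated constant. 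No translation of the HN filtration, no \cite{CDB13}-style contradiction, and no degree-only argument is needed: the vector-bundle nefness from Fujino's theorem is what converts the fiberwise klt property into a global nefness statement, and this is the step absent from your outline.
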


\begin{proof}
First assume that $\delta\in \bQ$. By our assumption, we have 
\[
\mu_\delta(\cF_\HN)\ge S(\cF_\HN)=-\frac{r\deg \lambda_{f,\Delta}}{(n+1)v}.
\]
Thus for any rational numbers $\lambda >\lambda' > \frac{\deg \lambda_{f,\Delta}}{(n+1)v}$, there exists $m\gg 0$ and some $G \in |\cF^{-mr\lambda'}_\HN R_m|$ such that $(X_t,\Delta_t + \frac{\delta}{mr}G)$ is klt. As before, by \cite{CP18}*{Proposition 5.7} we can lift $G$ to a section of 
\[
\cF^{2g}_\HN \big( \cR_m \otimes \cO_C(\lceil (mr\lambda'+2g)P \rceil) \big)  \subseteq |-mr(K_{X/C}+\Delta)+ mr\lambda f^*P|
\]
and hence we get an effective divisor $D\sim_\bQ -(K_{X/C}+\Delta)+ \lambda f^*P$ such that $(X_t,\Delta_t+\delta D_t)$ is klt. By \cite{Fuj18}*{Theorem 1.11}, this implies that $f_*\cO_X(m(K_{X/C}+\Delta+\delta D))$ is nef for all sufficiently divisible $m\in\bN$ and hence $$f_*\cO_X(m(K_{X/C}+\Delta+\delta D))\otimes \cO_C(2gP)$$ is globally generated by \cite{CP18}*{Proposition 5.7}. As 
$$K_{X/C}+\Delta+\delta D\sim_{C,\bQ} -(\delta-1)(K_{X/C}+\Delta)$$ 
is $f$-ample, it follow that $m(K_{X/C}+\Delta+\delta D)+2gf^*P$ is globally generated for sufficiently divisible $m\in\bN$. Letting $m\to\infty$ we deduce that 
$$K_{X/C}+\Delta+\delta D \sim_\bQ -(\delta-1)(K_{X/C}+\Delta)+\delta\lambda f^*P$$ is nef. As $\lambda > \frac{\deg \lambda_{f,\Delta}}{(n+1)v}$ is arbitrary, we see that $-(K_{X/C}+\Delta)+\frac{\delta}{(n+1)v(\delta-1)}f^*\lambda_{f,\Delta}$ is nef.

In the general case let $\delta'\in \bQ \cap (1,\delta)$. If $\beta_{X_t,\Delta_t,\delta}(\cF_\HN)\ge 0$, then we also have $\beta_{X_t,\Delta_t,\delta'}(\cF_\HN)\ge 0$. The previous case implies that 
$$-(K_{X/C}+\Delta)+\frac{\delta'}{(n+1)v(\delta'-1)}f^*\lambda_{f,\Delta}$$ is nef. Letting $\delta'\to \delta$ we finish the proof.
\end{proof}

\begin{cor}[\cite{CP18}*{Theorem 1.20}] \label{cor:nef threshold uks}
Notation as in Proposition \ref{prop:cm>=beta}. Assume that for a very general geometric point $t\in C$, $(X_t,\Delta_t)$ is uniformly K-stable and let $\delta=\delta(X_t,\Delta_t)$ (by \cite{BL18}*{Theorem B}, this is well defined), $v=(-K_{X_t}-\Delta_t)^n$. Then $-(K_{X/C}+\Delta)+\frac{\delta}{(n+1)v(\delta-1)}f^*\lambda_{f,\Delta}$ is nef.
\end{cor}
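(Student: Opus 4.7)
The plan is to deduce this corollary directly from Proposition \ref{prop:nef threshold} by verifying its hypothesis via the $\beta_\delta$-invariant characterization of uniform K-stability given by Proposition \ref{prop:delta twisted beta>=0}. Since the assumption of uniform K-stability at a very general point $t\in C$ yields $\delta(X_t,\Delta_t) > 1$, once we can take $t$ to be both very general and a point where the HN-filtration is defined via restriction (the surjectivity condition in \S\ref{sec:HN}), the rest should be immediate.

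First, I would fix a point $t\in C$ that is very general (so that $\delta(X_t,\Delta_t)=\delta$ in the sense of \cite{BL18}*{Theorem B}) and such that the restriction maps $f_*\cO_X(mL)\to H^0(X_t,mL_t)$ are surjective for all $m$; such a point exists since the latter is a generic condition. By Definition-Lemma \ref{l-HNfil} the HN-filtration $\cF_{\HN}$ of $R=R(X_t,L_t)$ is then well-defined as a linearly bounded multiplicative filtration.

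Next, I would invoke Proposition \ref{prop:delta twisted beta>=0}, whose proof actually establishes that $\beta_{X,\Delta,\delta'}(\cF)\ge 0$ for all linearly bounded filtrations $\cF$ whenever $\delta'\le \delta(X,\Delta)$ (the supremum is attained). Applied to $(X_t,\Delta_t)$ with $\delta'=\delta=\delta(X_t,\Delta_t)$ and $\cF=\cF_{\HN}$, this gives $\beta_{X_t,\Delta_t,\delta}(\cF_{\HN})\ge 0$. Since uniform K-stability gives $\delta>1$, the hypothesis of Proposition \ref{prop:nef threshold} is satisfied, and we conclude that
\[
-(K_{X/C}+\Delta)+\frac{\delta}{(n+1)v(\delta-1)}f^*\lambda_{f,\Delta}
\]
is nef, as desired.

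There is essentially no obstacle here since all the work has already been done in Propositions \ref{prop:delta twisted beta>=0} and \ref{prop:nef threshold}; the only minor point is to make sure the two notions of ``general'' (the one ensuring surjectivity for defining $\cF_{\HN}$ and the ``very general'' required by \cite{BL18}*{Theorem B} for the constancy of $\delta$) are compatible, which they are since both describe dense loci in $C$. If one prefers to avoid invoking the attainment of the supremum in Proposition \ref{prop:delta twisted beta>=0}, one can instead pick any rational $\delta'\in(1,\delta)$, apply Propositions \ref{prop:delta twisted beta>=0} and \ref{prop:nef threshold} at $\delta'$, and let $\delta'\to \delta$ using the closedness of the nef cone.
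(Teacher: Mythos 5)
Your proposal is correct and follows exactly the paper's argument: apply Proposition \ref{prop:delta twisted beta>=0} (reading off from its proof that $\beta_{\delta'}(\cF)\ge 0$ holds at $\delta'=\delta(X,\Delta)$ itself, not only for $\delta'<\delta$) to get $\beta_{\delta}(\cF_{\HN})\ge 0$, then invoke Proposition \ref{prop:nef threshold}. Your extra care about the compatibility of the two genericity conditions on $t$ and about the attainment of the supremum are just explicit spellings-out of steps the paper treats as immediate.
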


\begin{proof}
By Proposition \ref{prop:delta twisted beta>=0}, we have $\beta_\delta(\cF_\HN)\ge 0$, hence the statement follows immediately from Proposition \ref{prop:nef threshold}.
\end{proof}

\subsection{Relation to reduced uniform K-stability}\label{ss-tbetaredK}

We would like to have a similar statement on the nef thresholds as in Corollary \ref{cor:nef threshold uks} when general fibers are only reduced uniformly K-stable. By Proposition \ref{prop:nef threshold}, this would be true if $\beta_\delta(\cF_\HN)\ge 0$ for some $\delta>1$. However, if a general fiber has a non-discrete automorphism group, there are simple examples (e.g. $\bP^1$-bundles $f\colon X=\mathbf{F}_e\to C=\bP^1$ with $e>0$) where $\deg \lambda_f = 0$ while $-K_{X/C}$ is not nef (in particular, the nef threshold does not exists). 
 
It turns out that the right statement is that for a family of reduced uniformly K-stable log Fano pairs, the non-negativity $\beta_\delta(\cF)\ge 0$ (for some $\delta>1$) is true after a torus twist (see Theorem \ref{thm:beta>=0 after twist}). To obtain this result, we need to have a better understanding of the relation between $\beta_{\delta}$, $\DNA$ and $\JNA$ for a filtration $\cF$. More precisely, we want to establish the following technical statement.

\begin{prop} \label{prop:perturbed beta>=0}
Let $\alpha,\eta$ be two positive numbers and $n$ a positive integer. Then there exists some $\delta=\delta(\eta,n,\alpha)>1$ with the following property: 
for any $n$-dimensional log Fano pair $(X,\Delta)$ with $\alpha(X,\Delta)\ge \alpha$ and any linearly bounded multiplicative filtration $\cF$ of $R=R(X,L)$ $($where $L=-r(K_X+\Delta)$ is Cartier$)$ that satisfies $\DNA(\cF)\ge \eta \cdot \JNA(\cF)$, we have $\beta_\delta (\cF) \ge 0$.
\end{prop}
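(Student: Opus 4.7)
The plan is to argue by contradiction. Suppose $\beta_{X,\Delta,\delta}(\cF) < 0$, equivalently $\mu := \mu_{X,\Delta,\delta}(\cF) < S(\cF)$. Applying \cite{JM12}*{Theorem A} to the graded sequence $\fa_m := I_{m,\mu m + \epsilon}(\cF)$ for a small $\epsilon > 0$ produces a valuation $v$ on $X$ with $a := A_{X,\Delta}(v) \le \tfrac{\delta}{r}\, v(\fa_\bullet)$. Normalizing $v$ so that the convex function $f(\lambda) := v(I^{(\lambda)}_\bullet(\cF))$ has left-derivative $1$ at $\mu$ and following the Duistermaat--Heckman computation in the proof of Theorem~\ref{thm:tbeta>=D^NA} (with $\delta$ in place of $1$), one obtains the Ding-type estimate
\[
\beta_{X,\Delta,\delta}(\cF) \;\ge\; \DNA(\cF) - \bigl(1 - \tfrac{1}{\delta}\bigr)\,a.
\]

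I would then bracket $a$ from two sides. The vanishing $f(\lambda_{\min}(\cF)) = 0$ and convexity of $f$ give the upper bound $a \le \delta(\mu - \lambda_{\min}(\cF))/r$. On the other side, the crude estimate $v(s) \le mr\,T(v) \le mr\,a/\alpha$ for $s \in R_m$---where the second inequality is Fujita's alpha bound $A(v) \ge \alpha T(v)$---yields $f(\lambda) \le ra/\alpha$ uniformly in $\lambda$; combined with convexity of $f$ evaluated at $\lambda_{\max}(\cF)$ this gives the lower bound $a \ge \alpha\delta(\lambda_{\max}(\cF) - \mu)/(r(\delta - \alpha))$ for $\delta > \alpha$ (the case $\delta \le \alpha$ already forces $\mu = \lambda_{\max}(\cF)$ and hence $\beta_{X,\Delta,\delta}(\cF) \ge \JNA(\cF) \ge 0$).

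Substituting the upper bound on $a$ into the Ding-type inequality, together with $\DNA(\cF) \ge \eta\,\JNA(\cF)$ and the strict inequality $\mu < S(\cF)$, yields
\[
\eta\,(\lambda_{\max}(\cF) - S(\cF)) \;<\; (\delta - 1)\,(S(\cF) - \lambda_{\min}(\cF));
\]
comparing the upper and lower bounds on $a$ directly produces
\[
\alpha\,(\lambda_{\max}(\cF) - S(\cF)) \;<\; (\delta - \alpha)\,(S(\cF) - \lambda_{\min}(\cF)).
\]
The main obstacle is the final balancing step: neither inequality alone gives a uniform lower bound on $\delta$ strictly above $1$, and it is only their quantitative combination with the concavity bound $\lambda_{\max}(\cF) - S(\cF) \le n\,(S(\cF) - \lambda_{\min}(\cF))$ coming from Lemma~\ref{l-concavedistri} (applied to the Duistermaat--Heckman measure of $\cF$) that produces an explicit threshold $\delta_0 = \delta_0(\eta,n,\alpha) > 1$ beyond which the two inequalities cannot hold simultaneously; taking $\delta \le \delta_0$ then yields the desired contradiction.
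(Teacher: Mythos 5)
Your proposal takes a genuinely different route from the paper, and the route has a structural gap at exactly the point you flag as ``the main obstacle.'' Let me explain why the combination you describe cannot work.

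The Ding-type estimate $\beta_\delta(\cF)\ge\DNA(\cF)-(1-1/\delta)a$ is a correct generalization of Theorem~\ref{thm:tbeta>=D^NA} (rerunning the same calculation with $\delta$ in place of $1$ gives the extra term $a(1-1/\delta)$), and your two bounds on $a$ --- an upper bound from convexity of $f$ on the left and a lower bound from the Tian-type bound $T(v)\le A(v)/\alpha$ on the right --- are both sound (modulo the unverified claim $f(\lambda_{\min})=0$; the safe version uses $f(e_-)=0$, but this does not change the issue). However, the three inequalities you end up with, namely $\eta(\lambda_{\max}-S)<(\delta-1)(S-\lambda_{\min})$, $\alpha(\lambda_{\max}-S)<(\delta-\alpha)(S-\lambda_{\min})$, and the concavity bound $\lambda_{\max}-S\le n(S-\lambda_{\min})$, are \emph{all} upper bounds on the ratio $\rho:=(\lambda_{\max}-S)/(S-\lambda_{\min})$. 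To rule out all $\delta$ near $1$ you would need a uniform \emph{lower} bound $\rho\ge c(n,\alpha,\eta)>0$, and that is false: taking $\vol(\cF R^{(x)})^{1/n}=(1-x)^{1/(kn)}$ on $[0,1]$ gives a concave profile with $\rho=1/k$ for arbitrary $k$, so the ratio can be arbitrarily small while Lemma~\ref{l-concavedistri} is fully respected. Concretely, if $\rho$ is tiny then all three of your inequalities are compatible with $\delta$ arbitrarily close to $1$, so no contradiction emerges.

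The paper's proof avoids this by never trying to control the log discrepancy $a=A_{X,\Delta}(v)$ of the JM valuation. Instead, after translating to $S(\cF)=0$ and rescaling $\lambda_{\max}=1$, it uses Lemma~\ref{lem:volume lower bound} to show the Duistermaat--Heckman measure leaves very little mass to the left of $-t$ for $t$ large, and then Lemma~\ref{lem:volume upper bound} (which is where $\alpha(X,\Delta)\ge\alpha$ enters) converts this into a \emph{uniform} bound $\ord_E(I^{(-t)}_\bullet)/r<(A(E)/\alpha)\sqrt[2n]{n/t}$ for every divisor $E$, hence $\lct(X,\Delta;I^{(-t)}_\bullet)>\tfrac{\alpha}{r}\sqrt[2n]{t/n}$. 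Choosing $t_0=n(2/\alpha)^{2n}$ yields $\mu_2(\cF)\ge -t_0$, with $t_0$ depending only on $n$ and $\alpha$, not on $\cF$. Separately, $\beta(\cF)\ge\DNA(\cF)\ge\eta\JNA(\cF)=\eta/r$ gives $\mu_1(\cF)\ge\eta$. The interpolation Lemma~\ref{lem:twisted mu} then produces $\delta=1+\tfrac{\eta}{2(t_0+\eta)}>1$ with $\mu_\delta(\cF)\ge 0$. The essential difference is that the paper gets a \emph{uniform} estimate $\mu_2(\cF)\ge -t_0\lambda_{\max}$ (which is strictly stronger than any crude endpoint inequality such as your (C)), whereas your approach collapses all the quantitative information about the measure into the single ratio $\rho$, which simply does not carry enough information to force $\delta$ away from $1$.
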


\begin{rem}
When $\cF$ is induced by a special test configuration and $v$ is the valuation induced by the special test configuration, then one can show as in Proposition \ref{prop:tbeta property} that
$$\beta_{\delta}(\cF_v)=\frac{A_{X,\Delta}(v)}{\delta}-\frac{1}{r}S(\cF_v).$$  Thus the claim is easy to see. However, (unless $\delta=1$) we are not able to argue as  in \cite{LX14} by using MMP to reduce the general case to the special test configurations. 
\end{rem}
Therefore, to prove  Proposition \ref{prop:perturbed beta>=0}, we will rely on a detailed study of the Duistermaat-Heckman measure. We first need to show a number of auxiliary results.

\begin{lem} \label{lem:twisted mu}
For any $0<s,\epsilon<1$, we have  
\begin{equation} \label{eq:twisted mu}
    \mu_{1+(1-\epsilon)s}(\cF) \ge s\cdot \mu_{\epsilon^{-1}}(\cF)+(1-s)\mu(\cF).
\end{equation}
\end{lem}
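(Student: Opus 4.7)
The plan is to exploit the multiplicativity of $\cF$ together with the valuative characterization of log canonical thresholds. Fix rationals $t_1 < \mu_{\epsilon^{-1}}(\cF)$ and $t_2 < \mu_1(\cF)$, so that
\[
\lct(X,\Delta;I^{(t_1)}_\bullet)\ge \frac{\epsilon^{-1}}{r} \quad\text{and}\quad \lct(X,\Delta;I^{(t_2)}_\bullet)\ge \frac{1}{r}.
\]
I would show that for $t := s t_1 + (1-s) t_2$, one has $\lct(X,\Delta;I^{(t)}_\bullet)\ge \frac{1+(1-\epsilon)s}{r}$, which upon letting $t_1\to \mu_{\epsilon^{-1}}(\cF)$ and $t_2\to \mu(\cF)$ yields the stated inequality.

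The first step is to observe that multiplicativity of $\cF$ implies $\cF^{t_1 m_1}R_{m_1}\cdot \cF^{t_2 m_2}R_{m_2}\subseteq \cF^{t_1m_1+t_2m_2}R_{m_1+m_2}$, so after taking images of base ideals,
\[
I^{(t_1)}_{m_1} \cdot I^{(t_2)}_{m_2} \subseteq I_{m_1+m_2,\, t_1 m_1 + t_2 m_2}(\cF).
\]
Writing $m=m_1+m_2$ with $m_1=sm$ (for rational $s$ and suitably divisible $m$), the right-hand side is exactly $I^{(t)}_m$. Applying any valuation $v$ with $A_{X,\Delta}(v)<\infty$ gives
\[
\frac{v(I^{(t)}_m)}{m} \le s\cdot \frac{v(I^{(t_1)}_{m_1})}{m_1} + (1-s)\cdot \frac{v(I^{(t_2)}_{m_2})}{m_2},
\]
and passing to the limit along this subsequence yields
\[
v(I^{(t)}_\bullet) \le s\cdot v(I^{(t_1)}_\bullet) + (1-s)\cdot v(I^{(t_2)}_\bullet).
\]

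Now the valuative characterization of $\lct$ (which is Izumi-type and holds for graded sequences by \cite{JM12}) together with the hypotheses gives $v(I^{(t_1)}_\bullet) \le \epsilon r \cdot A_{X,\Delta}(v)$ and $v(I^{(t_2)}_\bullet) \le r\cdot A_{X,\Delta}(v)$, so
\[
\frac{A_{X,\Delta}(v)}{v(I^{(t)}_\bullet)} \ge \frac{A_{X,\Delta}(v)}{\bigl(s\epsilon + (1-s)\bigr)\cdot r\cdot A_{X,\Delta}(v)} = \frac{1}{r\bigl(1-(1-\epsilon)s\bigr)}.
\]
Taking infimum over $v$ gives $\lct(X,\Delta; I^{(t)}_\bullet)\ge \tfrac{1}{r(1-(1-\epsilon)s)}$. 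The elementary identity $(1-x)(1+x)=1-x^2\le 1$ with $x=(1-\epsilon)s\in[0,1)$ gives $\tfrac{1}{1-(1-\epsilon)s}\ge 1+(1-\epsilon)s$, so the desired lower bound $\frac{1+(1-\epsilon)s}{r}$ follows.

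There is no real obstacle here; the only mild technical points are (i) justifying the limit of $v(I^{(t)}_m)/m$ along the arithmetic subsequence $\{m_1+m_2\}$ with fixed rational ratio, which follows from Fekete-type subadditivity for the multiplicative filtration, and (ii) extending from rational $s$ to all $s\in(0,1)$ by continuity of $\mu_\delta(\cF)$ (or by approximation). Both are routine, so the argument reduces essentially to the convex-combination estimate above.
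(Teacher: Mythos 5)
Your proof is correct and, while it rests on the same underlying convexity fact, it runs the argument in the opposite direction from the paper. The paper fixes $\mu'=\mu_{1+(1-\epsilon)s}(\cF)$, uses \cite{JM12}*{Theorem A} to produce a single valuation $v$ (approximately) computing $\lct(X,\Delta;\fa'_\bullet)$ where $\fa'_m=I_{m,\mu'm+\epsilon'}$, and then compares $f(\mu')$ with $f(s\mu_0+(1-s)\mu)$ using the convexity and monotonicity of $f(\lambda)=v(I^{(\lambda)}_\bullet)$. You instead work with \emph{all} valuations at once: multiplicativity of $\cF$ gives $I^{(t_1)}_{m_1}\cdot I^{(t_2)}_{m_2}\subseteq I^{(t)}_{m_1+m_2}$, which after taking $v(\cdot)$ and Fekete limits is precisely the convexity $v(I^{(t)}_\bullet)\le s\,v(I^{(t_1)}_\bullet)+(1-s)\,v(I^{(t_2)}_\bullet)$; then the valuative infimum characterization $\lct(I^{(t)}_\bullet)=\inf_v A(v)/v(I^{(t)}_\bullet)$ gives the lower bound directly, with no need to extract a minimizing valuation. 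Both approaches end with the same elementary estimate $(1-(1-\epsilon)s)(1+(1-\epsilon)s)<1$. Your route is arguably a bit more economical, and it makes visible a slight strengthening: what you actually prove is $\mu_{1/(1-(1-\epsilon)s)}(\cF)\ge s\,\mu_{\epsilon^{-1}}(\cF)+(1-s)\mu(\cF)$, from which the stated inequality follows by monotonicity of $\delta\mapsto\mu_\delta(\cF)$. The two technical points you flag (limits along arithmetic subsequences; extending from rational $s$ to all $s$ via a one-sided rational approximation $s'\searrow s$ and monotonicity of both sides in $s$) are indeed routine and handled correctly.
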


\begin{proof}
Let $\mu=\mu(\cF)$, $\mu_0=\mu_{\epsilon^{-1}}(\cF)$ and $\mu'=\mu_{1+(1-\epsilon)s}(\cF)$. We may assume that $\mu'<\lambda_{\max}(\cF)$ and $\mu'<\mu$, otherwise the statement is clear. Similar to the proof of Theorem \ref{thm:tbeta>=D^NA} we let $\fa'_m = \cI_{m,\mu' m+\epsilon'}$ where $0<\epsilon'\ll 1$. Then $\lct(\fa'_\bullet)\le \frac{1+(1-\epsilon)s}{r}$ and there exists a valuation $v$ over $X$ such that
\[
a:=A_{X,\Delta}(v)\le \frac{1+(1-\epsilon)s}{r}\cdot v(\fa'_\bullet)<\infty.
\]
For each $\lambda\in \bR$, we set $f(\lambda)=v(I^{(\lambda)}_\bullet)$. Hence we have
\begin{equation} \label{eq:f(mu')}
    f(\mu')=v(\fa'_\bullet)\ge \frac{ar}{1+(1-\epsilon)s}.
\end{equation}
On the other hand, by the definition of $\mu_\delta(\cF)$, we have 
$$f(\mu_0-\eta) \le \epsilon \cdot A_{X,\Delta}(v)\cdot r=\epsilon a r \mbox{ for any }\eta>0$$ and similarly $f(\mu-\eta) \le ar$. By the convexity and continuity of $f$ on $(-\infty,\lambda_{\max})$ we see that
\[
f(s\mu_0+(1-s)\mu)\le ar(\epsilon s+1-s)<\frac{ar}{1+(1-\epsilon)s}.
\]
Combined with \eqref{eq:f(mu')}, we get $f(\mu')>f(s\mu_0+(1-s)\mu)$ and hence $\mu'>s\mu_0+(1-s)\mu$ as $f$ is non-decreasing.
\end{proof}

\begin{lem} \label{lem:convex integral}
Let $0< \lambda <T$ and let $f(x)$ be a non-negative concave function on $(0,T)$. Then $$\int_0^\lambda f(x)^n \rd x \ge \left(\frac{\lambda}{T} \right)^{n+1} \int_0^T f(x)^n \rd x.$$
\end{lem}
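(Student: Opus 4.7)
The plan is to reduce the integral inequality to the pointwise concavity bound $f(tx)\ge t f(x)$ for $t\in[0,1]$ and then apply a linear change of variables. First I would extend $f$ to a non-negative concave function on $[0,T)$. Since $f$ is concave on the open interval $(0,T)$, the difference quotient $x\mapsto (f(x)-f(a))/(x-a)$ is monotone for each fixed $a\in(0,T)$, so the one-sided limit $f(0^+):=\lim_{x\to 0^+}f(x)$ exists in $[-\infty,+\infty]$. Non-negativity of $f$ forces $f(0^+)\ne -\infty$, while the monotone difference quotient forces $f(0^+)<+\infty$; setting $f(0):=f(0^+)$ thus yields a non-negative concave function on $[0,T)$.

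Next, the pointwise bound: for $t\in[0,1]$ and $x\in[0,T)$, writing $tx = t\cdot x+(1-t)\cdot 0$ and applying concavity gives
\[
f(tx)\ \ge\ t f(x)+(1-t)f(0)\ \ge\ t f(x),
\]
using $f(0)\ge 0$. Since both sides are non-negative, raising to the $n$-th power preserves the inequality: $f(tx)^n\ge t^n f(x)^n$.

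Finally, I apply the change of variables $x=(\lambda/T)u$ to the left-hand integral:
\[
\int_0^\lambda f(x)^n\,\rd x\ =\ \frac{\lambda}{T}\int_0^T f\!\Big(\tfrac{\lambda}{T}u\Big)^n\,\rd u\ \ge\ \frac{\lambda}{T}\left(\frac{\lambda}{T}\right)^n\int_0^T f(u)^n\,\rd u\ =\ \left(\frac{\lambda}{T}\right)^{n+1}\int_0^T f(u)^n\,\rd u,
\]
where the middle inequality invokes the pointwise bound with $t=\lambda/T$. There is no real obstacle here; the only mildly delicate point is verifying that $f$ extends non-negatively to $0$, but this is the standard boundary behavior of a non-negative concave function on an open interval.
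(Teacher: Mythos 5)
Your proof is correct, and it takes a genuinely different route from the paper's. The paper normalizes $f(\lambda)=1$, uses concavity to sandwich $f$ by the line $x/\lambda$ (from below on $[0,\lambda]$ and from above on $[\lambda,T]$), computes $\int_0^\lambda f^n \ge \lambda/(n+1)$ and $\int_\lambda^T f^n \le \frac{\lambda}{n+1}\left(\left(\frac{T}{\lambda}\right)^{n+1}-1\right)$ explicitly, and then observes that these two numerical bounds combine to give the claimed ratio. Your argument instead isolates the scaling behavior $f(tx)\ge t f(x)$ that concavity plus non-negativity at $0$ gives, and gets the inequality in one stroke by the substitution $x=(\lambda/T)u$. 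Both arguments rely implicitly on the boundary extension $f(0^+)\ge 0$ (the paper's lower bound $f(x)\ge x/\lambda$ also needs it); you spell this out whereas the paper leaves it tacit. Your version is arguably cleaner and more conceptual; the paper's version has the minor advantage that the two intermediate bounds $\int_0^\lambda f^n \ge \lambda/(n+1)$ and $\int_\lambda^T f^n \le \frac{\lambda}{n+1}\left(\left(\frac{T}{\lambda}\right)^{n+1}-1\right)$ are also of independent interest if one wants explicit constants. There is no gap in your argument.
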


\begin{proof}
We may assume that $f(\lambda)=1$. By assumption, we have $f(x)\ge \frac{x}{\lambda}$ when $x\le \lambda$ and $f(x)\le \frac{x}{\lambda}$ when $x\ge \lambda$. Hence
$$\int_0^\lambda f(x)^n \rd x \ge \frac{\lambda}{n+1}\mbox{\ \ and\ \  }\int_\lambda^T f(x)^n \rd x \le \frac{\lambda}{n+1}\left(\left(\frac{T}{\lambda}\right)^{n+1}-1\right)$$ by direct calculation and the lemma follows.
\end{proof}

\begin{lem} \label{lem:volume upper bound}
Let $(X,\Delta)$ be a log Fano pair of dimension $n$ and let $L=-(K_X+\Delta)$. Then we have
\[
\frac{\vol(L- \lambda E)}{\vol(L)}\le 1- \left(\frac{\lambda \alpha(X,\Delta)}{A_{X,\Delta}(E)} \right)^n
\]
for any divisor $E$ over $X$ and any $0\le \lambda\le \frac{A_{X,\Delta}(E)}{\alpha(X,\Delta)}$.
\end{lem}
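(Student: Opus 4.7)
My plan is to first reduce to the range $\lambda \in [0,T]$ where $T := T_{X,\Delta}(E)$, and then prove the sharper bound
\[
\frac{\vol(L-\lambda E)}{\vol(L)} \le 1 - \left(\frac{\lambda}{T}\right)^n
\]
which implies the stated one. The reduction works because the defining property of $\alpha = \alpha(X,\Delta)$ gives $\alpha \le A_{X,\Delta}(E)/T$, so $T \le A_{X,\Delta}(E)/\alpha$, and in particular $(\lambda\alpha/A_{X,\Delta}(E))^n \le (\lambda/T)^n$. If $\lambda \in [T, A_{X,\Delta}(E)/\alpha]$, then $\vol(L-\lambda E) = 0$ by \eqref{e-tinvariant} and the RHS is non-negative, so the inequality is automatic.

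For $\lambda \in [0,T]$, I would pass to a smooth birational model $\mu\colon Y\to X$ on which $E$ is a prime divisor, and study the function $g(t) := \vol(\mu^*L - tE)$, with $g(0)=\vol(L)$ and $g(T)=0$. The key ingredients are two standard facts about restricted volumes (due to Lazarsfeld--Musta\c{t}\u{a} and Boucksom--Favre--Jonsson): first, the derivative formula
\[
-g'(t) = n \cdot \vol_{Y|E}(\mu^*L - tE)
\]
holding on $(0,T)$; and second, the Brunn--Minkowski-type concavity statement that $\psi(t) := \vol_{Y|E}(\mu^*L - tE)^{1/(n-1)}$ is a non-negative concave function of $t$ on $(0,T)$ (this is transparent from the Okounkov-body picture with a flag starting from $E$, where $\psi^{n-1}$ is a constant multiple of the area of the $t$-slice of the Okounkov body of $\mu^*L$).

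With these in hand, the proof is essentially an application of Lemma \ref{lem:convex integral} with the exponent $n$ there replaced by $n-1$ and with $f = \psi$. This yields
\[
\int_0^\lambda \psi(t)^{n-1}\,dt \ge \left(\frac{\lambda}{T}\right)^{n} \int_0^T \psi(t)^{n-1}\,dt.
\]
Combined with the derivative formula and the fundamental theorem of calculus,
\[
\vol(L) - \vol(L-\lambda E) = \int_0^\lambda (-g'(t))\,dt = n\int_0^\lambda \psi^{n-1}\,dt \ge \left(\frac{\lambda}{T}\right)^n \cdot n\int_0^T \psi^{n-1}\,dt = \left(\frac{\lambda}{T}\right)^n \vol(L),
\]
which is exactly what we need. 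The main subtlety I expect is the careful citation and application of the concavity/derivative formulas at the boundary of $[0,T]$ (where $\mu^*L - tE$ leaves the big cone), but this should cause no trouble since all the quantities involved are continuous and the inequality is stable under taking limits $\lambda \to T^-$; alternatively one can first prove the inequality on a dense subset where everything is smooth, and extend by continuity. Note also that the reduction step above already handled the equality $\lambda = T$ case by the trivial argument.
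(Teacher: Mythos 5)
Your proof is correct and follows essentially the same route as the paper's: pass to a resolution where $E$ appears, use the BFJ/LM derivative formula together with the concavity of $\vol_{Y|E}(\mu^*L-tE)^{1/(n-1)}$, feed this into Lemma \ref{lem:convex integral} with exponent $n-1$, and finish with $T\le A_{X,\Delta}(E)/\alpha(X,\Delta)$. You are slightly more careful than the paper in explicitly separating off the trivial range $\lambda\in[T,A_{X,\Delta}(E)/\alpha(X,\Delta)]$ and in tracking the factor of $n$ in the derivative formula, but these are presentational refinements rather than a different argument.
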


\begin{proof}
Let $T=T_{X,\Delta}(E)$ (see \eqref{e-tinvariant}). Let $\pi:Y\to X$ be a log resolution such that $E\subseteq Y$. Let $f(x)=\vol_{Y|E}(\pi^*L-xE)$ where $\vol_{Y|E}(\cdot)$ denotes the restricted volume and $0\le x\le T$. By combining \cite[Theorem A]{BFJ09} and \cite[Theorem 5.2]{ELMNP09}, the function $f(x)^{\frac{1}{n-1}}$ is concave on $(0,T)$ and we have 
$$\vol(L)=\int_0^T f(x)\rd x\mbox{\ \ and \ \ }\vol(L)-\vol(\pi^*L-\lambda E)=\int_0^\lambda f(x)\rd x.$$ Thus we have
\[
\frac{\vol(L)-\vol(L- \lambda E)}{\vol(L)}\ge \left( \frac{\lambda}{T}\right)^n \ge \left(\frac{\lambda \alpha(X,\Delta)}{A_{X,\Delta}(E)} \right)^n
\]
where the first inequality follows from Lemma \ref{lem:convex integral} and the second inequality follows from the definition of alpha invariants. This proves the lemma.
\end{proof}

\begin{lem} \label{lem:volume lower bound}
Let $\nu$ be a probability measure on $\bR$ with compact support such that $\int_\bR \lambda \rd \nu =0$. Assume that $g(\lambda)=\nu\{x\ge \lambda\}^{1/n}$ is concave on $(-\infty,\lambda_{\max})$ where $\lambda_{\max}=\max \rm{supp}\,\nu$. Then 
$$g(-t\lambda_{\max})\ge 1-\frac{1}{\sqrt{nt}}\mbox{\ \ \ for all }t>0.$$
\end{lem}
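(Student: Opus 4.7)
The plan is to prove the inequality by first establishing a Chebyshev-type lower bound $s^n \ge t/(1+t)$ on $s := g(-t\lambda_{\max})$ using mean zero and the support condition, and then verifying the elementary inequality $(1 - 1/\sqrt{nt})^n \le t/(1+t)$. The concavity hypothesis will enter only to rule out an atom of $\nu$ at $-t\lambda_{\max}$. Write $T := \lambda_{\max}$. The cases $T = 0$ and $nt \le 1$ are immediate (in the former, mean zero forces $\nu = \delta_0$ so $g \equiv 1$ on $(-\infty,0)$; in the latter the right-hand side is $\le 0$ while $s \ge 0$), so I shall assume $T > 0$ and $nt > 1$.

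For the Chebyshev step, split the mean-zero integral at $0$: using $\lambda \le T$ on $[0,T]$ and $\lambda \le -tT$ on $(-\infty, -tT]$, one has
\[
0 \;=\; \int_\bR \lambda\,\rd\nu \;\le\; T\cdot \nu\{x \ge 0\} \;-\; tT\cdot \nu\{x \le -tT\}.
\]
Because $g$ is concave on the interval $(-\infty,T)$ it is continuous there, so $\nu$ has no atom at $-tT$; hence $\nu\{x \le -tT\} = 1 - g(-tT)^n = 1 - s^n$, while $\nu\{x \ge 0\} = g(0)^n$. Combining with the monotonicity estimate $g(0) \le g(-tT) = s$ (which holds because $\lambda \mapsto \nu\{x \ge \lambda\}$ is non-increasing) yields $s^n \ge t(1-s^n)$, i.e., $s^n \ge t/(1+t)$.

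For the elementary step, I will chain three standard estimates. First, $1 - x \le e^{-x}$ applied at $x = 1/\sqrt{nt}$ gives $(1-1/\sqrt{nt})^n \le e^{-\sqrt{n/t}}$. Next, $1 - e^{-y} \ge y/(1+y)$ (valid for $y \ge 0$) applied at $y = \sqrt{n/t}$ gives $1 - e^{-\sqrt{n/t}} \ge \sqrt{n/t}/(1+\sqrt{n/t})$. Finally, a short algebraic manipulation shows that $\sqrt{n/t}/(1+\sqrt{n/t}) \ge 1/(1+t)$ is equivalent to $nt \ge 1$, which holds by assumption. Assembling these delivers $(1-1/\sqrt{nt})^n \le t/(1+t) \le s^n$, hence $s \ge 1 - 1/\sqrt{nt}$. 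No step presents a serious obstacle; the only subtle point is the invocation of concavity to exclude an atom at $-t\lambda_{\max}$ in the Chebyshev computation.
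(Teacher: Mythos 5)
Your proof is correct, and it takes a genuinely different and more elementary route than the paper's. The paper rescales to $\lambda_{\max}=1$, uses Fubini to rewrite the mean-zero condition as $\int_0^1 g^n\,\rd\lambda = \int_{-\infty}^0 (1-g^n)\,\rd\lambda$, supports $g$ from above by a line at $\lambda=-t$ using concavity, and extracts the bound from the resulting polynomial inequality $(n+1)u\ge n+(u-a)^{n+1}$. Your argument instead gets the cleaner intermediate bound $g(-t\lambda_{\max})^n\ge t/(1+t)$ by a direct Markov-type tail estimate from the mean-zero and support conditions alone, and then verifies $(1-1/\sqrt{nt})^n\le t/(1+t)$ by chaining the elementary inequalities $1-x\le e^{-x}$, $e^y\ge 1+y$, and $nt\ge 1$. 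One observation worth flagging: the concavity hypothesis is in fact entirely unused in your argument. You invoke it only to rule out an atom of $\nu$ at $-t\lambda_{\max}$, but even that is unnecessary---the quantity $\nu\{x\le -t\lambda_{\max}\}$ enters the Chebyshev estimate with a minus sign, so the always-valid one-sided bound $\nu\{x\le -t\lambda_{\max}\}\ge 1-s^n$ already suffices. Thus your proof actually establishes the inequality for \emph{any} compactly supported probability measure on $\bR$ with mean zero. The paper's supporting-line approach is the standard template in the Duistermaat--Heckman setting (compare \cite{BHJ17}*{Lemma 7.10}), where $g$ arises as $\vol^{1/n}$ and concavity is given for free, but your route is shorter, more elementary, and strictly more general.
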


\begin{proof}
The idea is similar to the proof of \cite{BHJ17}*{Lemma 7.10}. After rescaling, we may assume for simplicity that $\lambda_{\max}=1$. Since $\nu$ is the distributional derivative of $-g(\lambda)^n$, we have
\[
\int_0^1 g(\lambda)^n \rd \lambda = \int_0^1 \lambda \rd \nu = -\int_{-\infty}^0 \lambda \rd \nu = \int_{-\infty}^0 (1-g(\lambda)^n) \rd \lambda,
\]
where the first and third equalities follow from Fubini's theorem, and the second equality follows from the assumption that $ \int_{-\infty}^1 \lambda \rd \nu=0$.

Let $a=-g'_+(-t)\ge 0$ and $b=g(-t)\in [0,1]$. Since $g$ is concave on $(-\infty,1)$, we have 
$$g(\lambda)\le -a(\lambda+t)+b\mbox{\ \  on }(-\infty,1).$$ If $a=0$, then letting $\lambda\to -\infty$ we see that $b= 1$ and there is nothing left to prove. Therefore, we may and do assume $a>0$. Let $\lambda_0$ be such that $-a(\lambda_0+t)+b=1$. Then we have
\begin{align*}
    \int_0^1 (-a(\lambda+t)+b)^n \rd \lambda & \ge \int_0^1 g(\lambda)^n \rd \lambda \\
    &  = \int_{-\infty}^0 (1-g(\lambda)^n) \rd \lambda \ge \int_{\lambda_0}^0 (1-(-a(\lambda+t)+b)^n) \rd \lambda.
\end{align*}
Computing the integrals, we deduce that
\[
\frac{1-(b-at-a)^{n+1}}{a(n+1)}\ge -\lambda_0 = \frac{1-(b-at)}{a},
\]
hence $(n+1)u\ge n+(u-a)^{n+1}$ where $u=b-at$. Note that 
$$u-a=b-a(t+1)\ge g(1)\ge 0,$$ thus $u\ge \frac{n}{n+1}$. As $u+at=b=g(-t)\le 1$, we see that $u\le 1$ and $a\le \frac{1}{(n+1)t}$. We then have
\[
(n+1)u\ge n + (u-a)^{n+1} \ge n + u^{n+1} - (n+1)a u^n \ge n + u^{n+1} - \frac{u^n}{t}.
\]
It follows that 
\[
\frac{1}{t}\ge \frac{u^n}{t}\ge n+u^{n+1}-(n+1)u = (1-u)^2\sum_{i=1}^n \frac{1-u^i}{1-u}\ge n(1-u)^2.
\]
Therefore, $f(-t)=b\ge u\ge 1-\frac{1}{\sqrt{nt}}$ as desired.
\end{proof}

\begin{proof}[Proof of Proposition \ref{prop:perturbed beta>=0}]
After translating $\cF$ by $-S(\cF)$ (which does not change $\DNA(\cF)$, $\JNA(\cF)$ and $\beta_\delta(\cF)$), we may and do assume that $S(\cF)=0$. Let $\lambda_{\max} = \lambda_{\max}(\cF)$.  By Lemma \ref{l-concavedistri}, we can apply Lemma \ref{lem:volume lower bound} to the Duistermaat-Heckman measure of $\cF$ (see \S \ref{ss-filt}). So we have (recall that $L=-r(K_X+\Delta)$)
\begin{equation} \label{eq:vol(FR) lower bound}
    \frac{\vol(\cF R^{(-t\lambda_{\max})})}{(L^n)}\ge \left(1-\frac{1}{\sqrt{nt}}\right)^n >  1-\sqrt{\frac{n}{t}}
\end{equation}
for all $t>0$. Let $E$ be a divisor over $X$. Then we claim there is an inequality
\begin{equation} \label{eq:ord_E(I)}
    \frac{\ord_E(I_\bullet^{(-t\lambda_{\max})})}{r}< \frac{A_{X,\Delta}(E)}{\alpha(X,\Delta)}\sqrt[2n]{\frac{n}{t}}.
\end{equation}
Otherwise we have $\cF^{-mt\lambda_{\max}} R_m\subseteq \cF_E^{mr\lambda_0} R_m$ (where $\lambda_0$ is the right hand side of \eqref{eq:ord_E(I)}) for all $m\in\bN$ and thus by Lemma \ref{lem:volume upper bound},
\[
\frac{\vol(\cF R^{(-t\lambda_{\max})})}{(L^n)}\le \frac{\vol(-K_X-\Delta-\lambda_0 E)}{(-K_X-\Delta)^n}\le 1-\sqrt{\frac{n}{t}},
\]
contradicting \eqref{eq:vol(FR) lower bound}. 

After rescaling $\cF$ (which will not change our conclusion), we can assume $\lambda_{\max}=1$. Since $E$ is arbitrary, we infer from \eqref{eq:ord_E(I)} that
\[
\lct(X,\Delta;I^{(-t)}_\bullet)>\frac{\alpha(X,\Delta)}{r}\sqrt[2n]{\frac{t}{n}}\ge \frac{\alpha}{r}\sqrt[2n]{\frac{t}{n}}.
\]
Now choose $t=t_0:=n\left(\frac{2}{\alpha}\right)^{2n}$, then the above lct estimate becomes $\lct(X,\Delta;I^{(-t_0)}_\bullet)>\frac{2}{r}$ 
and thus $\mu_2(\cF)\ge -t_0$. By the assumption and Theorem \ref{thm:tbeta>=D^NA}, we also have 
$$\beta(\cF) \ge \DNA(\cF)\ge \eta \cdot \JNA(\cF)=\frac{\eta}{r},$$ hence $\mu(\cF)\ge \eta$ as $S(\cF)=0$. If we choose $\delta=1+\frac{\eta}{2(t_0+\eta)}$ (which only depends on $\eta,\alpha$ and $n$), $\epsilon=\frac{1}{2}$ and  $s=\frac{\eta}{t_0+\eta}$, then it follows from Lemma \ref{lem:twisted mu} that 
$$\beta_{\delta}(\cF)=\mu_{\delta}(\cF)\ge s\mu_2(\cF)+(1-s)\mu(\cF)\ge 0.$$
\end{proof}

\begin{cor} \label{cor:red uKs criterion}
Let $(X,\Delta)$ be a log Fano and $T\subseteq \Aut(X,\Delta)$ a maximal torus. Then the following are equivalent:
\begin{enumerate}
    \item $(X,\Delta)$ is reduced uniformly K-stable, 
    \item there exists some constant $\eta>0$ such that for any $T$-equivariant filtration $\cF$ on $R$, there exists some $\xi\in N_\bR$ such that $\DNA(\cF)\ge \eta\cdot \JNA(\cF_\xi)$,
    \item $(X,\Delta)$ is K-semistable, and there exists some constant $\delta>1$ such that for any $T$-equivariant filtration $\cF$ on $R$, there exists some $\xi\in N_\bR$ such that $\beta_\delta(\cF_\xi)\ge 0$.
\end{enumerate}
\end{cor}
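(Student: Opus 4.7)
My plan is to establish a triangle of implications, with the heaviest lifting concentrated in (1)$\Rightarrow$(3); the implications (1)$\Leftrightarrow$(2) will be essentially formal, while (3)$\Rightarrow$(1) will reduce to Li's valuative criterion from Theorem \ref{thm:Li's valuative T-uKs}.

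For (1)$\Leftrightarrow$(2), I will start from (1) and apply Proposition \ref{prop:D>=J any filtration} to get $\DNA(\cF)\ge\eta\cdot\JNA_T(\cF)$ for every $T$-equivariant filtration $\cF$. Since (1) forces K-polystability and hence K-semistability, Lemma \ref{lem:DNA twist & JNA min}(2) will apply and the infimum in $\JNA_T(\cF)=\inf_\xi \JNA(\cF_\xi)$ will be attained at some $\xi\in N_\bR$, which yields (2). For the converse I will apply (2) to the finitely generated filtration (Example \ref{expl:filtration from tc}) associated to an arbitrary $T$-equivariant test configuration $(\cX,\cL)$ and use $\JNA(\cF_\xi)\ge\JNA_T(\cF)$ to recover the defining inequality of reduced uniform K-stability.

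For (1)$\Rightarrow$(3), K-semistability is automatic. For any $T$-equivariant filtration $\cF$ I will apply (2) to obtain $\xi\in N_\bR$ with $\DNA(\cF)\ge\eta\cdot\JNA(\cF_\xi)$, and then invoke the $\xi$-invariance $\DNA(\cF_\xi)=\DNA(\cF)$ from Lemma \ref{lem:DNA twist & JNA min}(1) (which again uses K-semistability) to rewrite this as $\DNA(\cF_\xi)\ge\eta\cdot\JNA(\cF_\xi)$. Since $\cF_\xi$ is still a linearly bounded multiplicative filtration, I can then appeal to Proposition \ref{prop:perturbed beta>=0} with $n=\dim X$ and $\alpha=\alpha(X,\Delta)>0$ (positive because $(X,\Delta)$ is klt) to produce a uniform $\delta=\delta(\eta,n,\alpha)>1$, depending only on $(X,\Delta)$, for which $\beta_\delta(\cF_\xi)\ge 0$.

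For (3)$\Rightarrow$(1), I will verify condition (3) of Theorem \ref{thm:Li's valuative T-uKs}. Fix any $T$-invariant valuation $v$ with $A_{X,\Delta}(v)<\infty$; since $v$ is $T$-invariant, the filtration $\cF_v$ is $T$-equivariant, so hypothesis (3) supplies some $\xi\in N_\bR$ with $\beta_\delta((\cF_v)_\xi)\ge 0$. A weight-by-weight check using Definition \ref{d-quotient} together with the description \eqref{e-valuation} of $T$-invariant valuations will show $(\cF_v)_\xi=\cF_{v_\xi}$. Since $v_\xi(I_\bullet^{(t)}(\cF_{v_\xi}))\ge t$ by construction of the base ideals, I get $\lct(X,\Delta;I_\bullet^{(t)}(\cF_{v_\xi}))\le A_{X,\Delta}(v_\xi)/t$ and hence $\mu_\delta(\cF_{v_\xi})\le r\cdot A_{X,\Delta}(v_\xi)/\delta$; combining,
\[
0\le \beta_\delta(\cF_{v_\xi})=\frac{\mu_\delta(\cF_{v_\xi})-S(\cF_{v_\xi})}{r}\le \frac{A_{X,\Delta}(v_\xi)}{\delta}-S_{X,\Delta}(v_\xi),
\]
so $A_{X,\Delta}(v_\xi)\ge\delta\cdot S_{X,\Delta}(v_\xi)$. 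Together with the K-semistability assumed in (3), this is exactly the hypothesis of Theorem \ref{thm:Li's valuative T-uKs}(3), and (1) follows. The main obstacle is concentrated outside the corollary, namely in Proposition \ref{prop:perturbed beta>=0}, whose proof requires the concavity of $\vol(\cF R^{(x)})^{1/n}$ (Lemma \ref{l-concavedistri}), the quantitative lower bound of Lemma \ref{lem:volume lower bound}, the alpha-invariant volume estimate of Lemma \ref{lem:volume upper bound}, and the interpolation of $\mu_\delta$ via Lemma \ref{lem:twisted mu}; inside the corollary itself, the only delicate point is verifying the identity $(\cF_v)_\xi=\cF_{v_\xi}$, while the remaining steps are formal bookkeeping.
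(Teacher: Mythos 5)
Your overall structure mirrors the paper's cyclic argument $(1)\Rightarrow(2)\Rightarrow(3)\Rightarrow(1)$, using the same ingredients (Lemma \ref{lem:DNA twist & JNA min}, Proposition \ref{prop:D>=J any filtration}, Proposition \ref{prop:perturbed beta>=0}, Theorem \ref{thm:Li's valuative T-uKs}); your extra $(2)\Rightarrow(1)$ step is harmless redundancy. However, the intermediate claim $(\cF_v)_\xi=\cF_{v_\xi}$ in your $(3)\Rightarrow(1)$ step is false in general. By Li's Proposition 3.8 (which the paper's proof cites explicitly, and which also appears in the proof of Lemma \ref{l-twistinvariant}), $\cF_{v_\xi}$ equals $(\cF_v)_\xi$ only up to a translation by $r\cdot\theta_\xi(v)$, where $\theta_\xi(v)=A_{X,\Delta}(v_\xi)-A_{X,\Delta}(v)$ is generically nonzero. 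The source of the mismatch is the normalization built into $\cF_v^\lambda R_m=\{s:v(s)\ge\lambda\}$: turning a section into a rational function to evaluate $v$ requires a reference trivialization, whose $T$-weight produces a shift linear in $m$ under the $\xi$-twist. A literal weight-by-weight check would reveal this shift rather than the claimed equality.

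The argument can be repaired because $\beta_\delta$ is invariant under translations: if $\cF_c^\lambda R_m=\cF^{\lambda-cm}R_m$, then $I^{(t)}_\bullet(\cF_c)=I^{(t-c)}_\bullet(\cF)$ gives $\mu_\delta(\cF_c)=\mu_\delta(\cF)+c$, and $S(\cF_c)=S(\cF)+c$ by Lemma \ref{lem:S after translation}, so $\beta_\delta(\cF_c)=\beta_\delta(\cF)$. With this in hand, $\beta_\delta(\cF_{v_\xi})=\beta_\delta((\cF_v)_\xi)\ge 0$ and the rest of your computation of $\mu_\delta(\cF_{v_\xi})\le rA_{X,\Delta}(v_\xi)/\delta$ and the deduction $A_{X,\Delta}(v_\xi)\ge\delta\cdot S_{X,\Delta}(v_\xi)$ go through verbatim. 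So the route is sound once you replace the erroneous identity with the translation-invariance argument; as written, that one step would not survive scrutiny.
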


\begin{proof}
By Lemma \ref{lem:DNA twist & JNA min} and Proposition \ref{prop:D>=J any filtration}, we have (1) implies (2). Now we assume (2), in particular $(X,\Delta)$ is K-semistable. By Proposition \ref{prop:perturbed beta>=0}, we then have (3) since $\DNA(\cF_{\xi})=\DNA(\cF)$ as $\Fut(\xi)=0$. It remains to show that (3) implies (1). 

Let $\delta>1$ be the constant for which (3) holds. Let $v$ be a $T$-invariant valuation and let $\cF=\cF_v$ be its induced filtration on $R$. Then there exists some $\xi\in N_\bR$ such that $\beta_\delta(\cF_{v_\xi})=\beta_\delta(\cF_\xi)\ge 0$ (the first equality holds since $\cF_{v_\xi}$ and $\cF_\xi$ only differ by a translation, see \cite{Li19}*{Proposition 3.8}). But it is clear from the definition that $\mu_\delta(\cF_{v_\xi})\le \frac{A_{X,\Delta}(v_\xi)}{\delta}$, hence we obtain $A_{X,\Delta}(v_{\xi})\ge \delta \cdot S(v_{\xi})$. By Theorem \ref{thm:Li's valuative T-uKs}, this implies (1). 
\end{proof}

Now we can complete the proof of Theorem \ref{t-main1}. 

\begin{proof}[Proof of Theorem \ref{t-main1}]
Theorem \ref{t-main1}(1) (resp. (3)) follows from Corollary \ref{cor:beta hat>=0} (resp. Corollary \ref{cor:red uKs criterion}). One direction of Theorem \ref{t-main1}(2) follows from Proposition \ref{prop:delta twisted beta>=0}, as if $(X,\Delta)$ is uniformly K-stable, then $\delta(X,\Delta)>1$. The converse can be derived from exactly the same argument as in the second paragraph of Corollary \ref{cor:red uKs criterion}, without taking a twist by $\xi$. 
\end{proof}

The following theorem, which will be needed later, is also an easy consequence of Proposition \ref{prop:perturbed beta>=0}.

\begin{thm} \label{thm:beta>=0 after twist}
Let $\alpha,\eta>0$, let $(X,\Delta)$ be a log Fano pair and let $T\subseteq \Aut(X,\Delta)$ be a maximal torus. Assume that $(X,\Delta)$ is reduced uniformly K-stable with slope at least $\eta$ and $\alpha(X,\Delta)\ge \alpha$, then there exists a constant $\delta>1$ depending only on $\eta$, $n=\dim X$ and $\alpha$ such that for any filtration $\cF$ on $R=R(X,-r(K_X+\Delta))$, there exists some $\xi\in N_\bR$ such that $\beta_\delta(\cF_\xi)\ge 0$.
\end{thm}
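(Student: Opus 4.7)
The plan is to chain together three already-established ingredients: the reduced $\JNA_T$ inequality for all filtrations (Proposition \ref{prop:D>=J any filtration}), the attainment of $\JNA_T$ as $\JNA$ of some twist (Lemma \ref{lem:DNA twist & JNA min}(2)), and the quantitative $\beta_\delta$-estimate (Proposition \ref{prop:perturbed beta>=0}). All the hard analysis of the Duistermaat--Heckman measure has been done in Proposition \ref{prop:perturbed beta>=0}; here we just need to produce a twist for which the hypothesis of that proposition holds.

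We may assume $\cF$ is $T$-equivariant, so that the twists $\cF_\xi$ are defined. Since $(X,\Delta)$ is reduced uniformly K-stable with slope at least $\eta$, Proposition \ref{prop:D>=J any filtration} (which was proved precisely to upgrade the inequality on test configurations to all filtrations) gives
\[
\DNA(\cF) \;\ge\; \eta \cdot \JNA_T(\cF).
\]
Reduced uniform K-stability implies K-polystability (Remark \ref{rem:choice of T}), so in particular $(X,\Delta)$ is K-semistable and the Futaki invariant vanishes on all one-parameter subgroups of $T$. Consequently, by Lemma \ref{lem:DNA twist & JNA min}(2) the continuous function $\xi \mapsto \JNA(\cF_\xi)$ attains its infimum at some $\xi \in N_\bR$, that is, $\JNA_T(\cF) = \JNA(\cF_\xi)$; and by Lemma \ref{lem:DNA twist & JNA min}(1) the Ding invariant is twist-invariant, $\DNA(\cF_\xi) = \DNA(\cF)$. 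Combining these,
\[
\DNA(\cF_\xi) \;\ge\; \eta \cdot \JNA(\cF_\xi).
\]

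Now apply Proposition \ref{prop:perturbed beta>=0} to the filtration $\cF_\xi$ with parameters $(\eta, n, \alpha)$ (the hypothesis $\alpha(X,\Delta) \ge \alpha$ is an invariant of the pair and is unaffected by the choice of filtration). This yields a constant $\delta = \delta(\eta, n, \alpha) > 1$, depending only on the three specified inputs, such that
\[
\beta_\delta(\cF_\xi) \;\ge\; 0,
\]
which is the desired conclusion. There is no real obstacle: the theorem is essentially a packaging of the previous three results, and any difficulty lies entirely inside Proposition \ref{prop:perturbed beta>=0}, whose proof used the Duistermaat--Heckman concavity (Lemma \ref{l-concavedistri}), the volume estimates of Lemmas \ref{lem:convex integral}--\ref{lem:volume lower bound}, and the interpolation inequality Lemma \ref{lem:twisted mu} to convert an inequality $\DNA \ge \eta \cdot \JNA$ into $\beta_\delta \ge 0$ with $\delta$ controlled only by $\eta$, $n$, and the alpha invariant.
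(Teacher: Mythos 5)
Your argument is correct and matches the paper's proof essentially line for line: the paper likewise chooses $\delta = \delta(\eta,n,\alpha)$ from Proposition \ref{prop:perturbed beta>=0}, invokes the extension of the reduced-slope inequality to all filtrations (your Proposition \ref{prop:D>=J any filtration}; the paper compresses this to "by definition"), uses Lemma \ref{lem:DNA twist & JNA min} to realize $\JNA_T(\cF)$ as $\JNA(\cF_\xi)$ for a genuine $\xi$ with $\DNA$ unchanged, and then applies Proposition \ref{prop:perturbed beta>=0} to $\cF_\xi$. You are, if anything, slightly more explicit than the paper in citing Proposition \ref{prop:D>=J any filtration} for the step from test configurations to general filtrations.
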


\begin{proof}
Let $\delta=\delta(\eta,n,\alpha)>1$ be the constant given by Proposition \ref{prop:perturbed beta>=0}. By definition, we have $\DNA(\cF)\ge \eta\cdot \JNA_T(\cF)$ and  $(X,\Delta)$ is K-semistable. By Lemma \ref{lem:DNA twist & JNA min}, this implies that there exists $\xi\in N_\bR$ such that $\DNA(\cF_\xi) \ge \eta\cdot \JNA(\cF_\xi)$. Then $\beta_\delta(\cF_\xi)\ge 0$ by Proposition \ref{prop:perturbed beta>=0} and our choice of $\delta>1$.
\end{proof}

\section{Twisted families} \label{sec:twisted family}

We will eventually apply Theorem \ref{thm:beta>=0 after twist} to the Harder-Narasimhan filtrations induced by generic log Fano families over curves (see Definition \ref{d-genericlog}). To get nef thresholds through Proposition \ref{prop:nef threshold}, we need to construct a twisted family whose HN-filtration is the twist of the original HN-filtration. In this section, we show that this can be done after a suitable modification.

\begin{defn}[Twisted family] \label{defn:twisted family}
Let $T$ be a torus, let $f\colon X\to S$ be a projective flat morphism with a fiberwise $T$-action and let $L$ be a $T$-linearized $f$-ample line bundle on $X$. We have the weight decomposition
$$\cR_m:=f_*\cO_X(mL)=\bigoplus_{\alpha \in M} \cR_{m,\alpha}\mbox{\ \ where }M=\Hom(T,\bG_m).$$ 
Let $A$ be a Cartier divisor on $S$ and let $\xi\in N=M^*=\Hom(\bG_m,T)$. Then the {\it $\xi$-twist of $f\colon (X,L)\to S$} along $A$ is defined to be
\begin{equation} \label{eq:f_xi}
    f_\xi \colon \big(X_{\xi}=\Proj_S\bigoplus_{m\in \bN}\bigoplus_{\alpha\in M}\cR_{m,\alpha}\otimes \cO_S(\la \alpha, \xi \ra \cdot A), L_{\xi}=\cO_{X_\xi}(1)\big)\to S.
\end{equation}
Note that Zariski locally over $S$, $(X_\xi,L_\xi)$ is isomorphic to $(X,L)$. If $Z\subseteq X$ is a $T$-invariant closed subscheme, then $Z_\xi$ is naturally a closed subscheme of $X_\xi$. In particular, if $f\colon (X,\Delta)\to S$ is a generic log Fano family with a fiberwise $T$-action, then $T$ naturally acts on $L=-r(K_{X/S}+\Delta)$ for some sufficiently divisible $r\in\bN_+$ and we define the $\xi$-twist $(X_\xi,\Delta_\xi)$ of $(X,\Delta)$ as the $\xi$-twist with respect to $L$. 
\end{defn}

\begin{expl}
Consider the trivial $\bP^1$-bundle $f\colon X=\bP^1\times \bP^1\to \bP^1$ with the canonical fiberwise $\bG_m$-action and let $\xi\in N\cong \bZ$ be a generator. Then the $\xi$-twist of $X$ along a divisor of degree $e>0$ on the base $\bP^1$ is isomorphic to the ruled surface $\mathbb{F}_e$. Therefore the above construction of twisted family can be viewed as a generalization of elementary transformations on ruled surfaces.
\end{expl}

\begin{lem} \label{lem:L=-K-Delta}
Let $f\colon (X,\Delta)\to S$ be a generic log Fano family with a fiberwise $T$-action and let $\xi\in N$. Then in the notation of Definition \ref{defn:twisted family}, we have $L_\xi\sim -r(K_{X_\xi/S}+\Delta_\xi)$.
\end{lem}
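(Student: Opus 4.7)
The plan is to verify the linear equivalence Zariski-locally on $S$ and then check that the local identifications glue, using the hypothesis in Definition~\ref{defn:twisted family} that the $T$-action on $L$ is the natural one coming from the $T$-action on $X$ preserving $K_{X/S}+\Delta$.

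First, fix a Zariski open cover $\{U_i\}$ of $S$ trivializing $\cO_S(A)$, with transition functions $g_{ij}\in \cO^{*}_{U_i\cap U_j}$. Inspecting the formula $X_\xi=\Proj_S\bigoplus_m\bigoplus_\alpha \cR_{m,\alpha}\otimes\cO_S(\langle\alpha,\xi\rangle A)$, each trivialization of $\cO_S(A)|_{U_i}$ induces a canonical $S$-isomorphism $\phi_i\colon X_\xi|_{U_i}\xrightarrow{\sim} X|_{U_i}$ under which $L_\xi|_{U_i}$ corresponds to $L|_{U_i}$ and $\Delta_\xi|_{U_i}$ corresponds to $\Delta|_{U_i}$; consequently $K_{X_\xi/S}|_{U_i}$ corresponds to $K_{X/S}|_{U_i}$. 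Pulling back the linear equivalence $L\sim -r(K_{X/S}+\Delta)$ on $X$ via $\phi_i$ yields the required linear equivalence $L_\xi|_{U_i}\sim -r(K_{X_\xi/S}+\Delta_\xi)|_{U_i}$ on each $U_i$.

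Second, on an overlap $U_i\cap U_j$ the composition $\phi_j\circ\phi_i^{-1}$ is the fiberwise action of $\xi(g_{ij})\in T$. Since $T$ acts on $X$ preserving $f$, this automorphism preserves $K_{X/S}$; since $\Delta$ is $T$-invariant, it also preserves $\Delta$. Under the hypothesis that the $T$-linearization on $L$ is the natural one induced from the $T$-action on $\cO_X(-r(K_{X/S}+\Delta))$, the gluing automorphism $\xi(g_{ij})$ acts on $L$ in precisely the way dictated by its action on $-r(K_{X/S}+\Delta)$. Hence the local linear equivalences are compatible on overlaps and assemble into the global linear equivalence $L_\xi\sim -r(K_{X_\xi/S}+\Delta_\xi)$.

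The only real subtlety is in ensuring that the $T$-linearization on $L$ coincides with the canonical one on $\cO_X(-r(K_{X/S}+\Delta))$ (rather than differing by a character of $T$, which would alter the answer by a pullback $f_\xi^{*}D$ for some divisor $D$ on $S$ proportional to $A$). This is exactly what is meant by the phrase ``$T$ naturally acts on $L=-r(K_{X/S}+\Delta)$'' in Definition~\ref{defn:twisted family}, so once this convention is granted, the remainder of the argument is a routine \v{C}ech-style gluing computation.
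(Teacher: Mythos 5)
Your proposal is correct and follows essentially the same approach as the paper: both argue via local trivializations of $\cO_S(A)$ that identify $(X_\xi,\Delta_\xi,L_\xi)$ with $(X,\Delta,L)$ over each open set, and both observe that the gluing automorphisms on overlaps, given by $\xi$ applied to the transition units, respect the isomorphism $\cO(L)\cong\cO(-r(K_{X/S}+\Delta))$ precisely because the $T$-linearization on $L$ is the canonical one. The subtlety you single out at the end — that the $T$-linearization must be the natural one and not twisted by a character — is exactly the point the paper's proof hinges on.
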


In particular, from \eqref{eq:f_xi} we see that for families over curves, the HN-filtration of the $\xi$-twist coincides with the $\xi$-twist of the HN-filtration of the original families.

\begin{proof}
By choosing local trivialization $\cO_U(A)\cong \cO_U$ (where $U\subseteq S$ is open), we get isomorphisms $\cR_{m,\alpha}\otimes \cO_S(\la \alpha, \xi \ra \cdot A) \cong \cR_{m,\alpha}$ for all $m,\alpha$ and hence an identification of $(X_\xi,\Delta_\xi,L_\xi)$ with $(X,\Delta,L)$ over $U$ and also an isomorphism $\cO_V(L_\xi)\cong \cO_V(-r(K_{X_\xi/S}+\Delta_\xi))$ since $L=-r(K_{X/S}+\Delta)$ (where $V=f^{-1}(U)$). Different trivializations $\cO_U(A)\cong \cO_U$ differ by a unit $u\in\cO_U^*$. It can be lifted to an automorphism of $(X,\Delta)$ over $U$ through the composition $ U\stackrel{u}{\to} \bG_m \stackrel{\xi}{\to} T$. The $T$-action on $\cO_V(1)$ and on $\cO_V(-r(K_{X/S}+\Delta))$ coincides, hence the action of $u$ commutes with the isomorphism $\cO_V(L_\xi)\cong \cO_V(-r(K_{X_\xi/S}+\Delta_\xi))$. Thus these isomorphisms glue to give an isomorphism $\cO_{X_{\xi}}(L_\xi)\cong \cO_{X_\xi}(-r(K_{X_\xi/S}+\Delta_\xi))$.
\end{proof}

\begin{cor} \label{cor:CM deg after twist}
Let $T$ be a torus and let $f\colon (X,\Delta)\to S$ be a generic log Fano family with a fiberwise $T$-action. Assume that the general fibers $(X_t,\Delta_t)$ are K-semistable. Then for any $\xi\in N$ and any Cartier divisor $A$ on $S$ we have $\lambda_{f,\Delta} \sim_\bQ \lambda_{f_\xi,\Delta_\xi}$, where $f_{\xi}\colon (X_{\xi},\Delta_{\xi})\to S$ is the $\xi$-twist of $f$ along $A$.
\end{cor}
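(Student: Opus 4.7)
The plan is to reduce the problem to a Knudsen--Mumford comparison and then exploit K-semistability to kill a single residual coefficient.

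First, directly from Definition \ref{defn:twisted family}, $f_{\xi*}\cO_{X_\xi}(mL_\xi)\cong \bigoplus_\alpha \cR_{m,\alpha}\otimes\cO_S(\la\alpha,\xi\ra A)$; taking determinants yields
\[
\det f_{\xi*}\cO_{X_\xi}(mL_\xi)\cong \det f_*\cO_X(mL)\otimes \cO_S(w(m)A),\qquad w(m):=\sum_\alpha \rk(\cR_{m,\alpha})\la\alpha,\xi\ra.
\]
By equivariant Riemann--Roch on a general fiber, $w(m)$ is a polynomial in $m$ of degree at most $n+1$ for $m\gg 0$; write $w(m)=\sum_{i=0}^{n+1}c_i\binom{m}{i}$. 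Comparing the Knudsen--Mumford expansions $\det f_*\cO_X(mL)\cong\bigotimes_i\cM_i^{\binom{m}{i}}$ and the analogous one for $f_\xi$, and using the linear independence of the functions $\binom{m}{i}$ in $m$, one obtains $\cM_{i,\xi}\cong \cM_i\otimes\cO_S(c_iA)$ for each $i$. Since Lemma \ref{lem:L=-K-Delta} permits the common normalization $s=r$ in both CM definitions, this gives
\[
\lambda_{f_\xi,\Delta_\xi}-\lambda_{f,\Delta}=-r^{-n-1}c_{n+1}\cdot A\qquad\text{in }\Pic(S)\otimes\bQ,
\]
so the problem reduces to showing $c_{n+1}=(n+1)!\,b_0=0$, where $b_0$ is the leading coefficient of $w(m)$ as a polynomial in $m$.

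Next I will identify $b_0$ with a multiple of the Futaki invariant. Restrict to a general geometric point $t\in S$, so $w(m)$ becomes the total $\xi$-weight on $H^0(X_t,mL_t)$ for the canonical lift of the $T$-action to $L_t=-r(K_{X_t}+\Delta_t)$. Consider the compactified product test configuration $(\overline{\cX}_t,\overline{\cL})\to \bP^1$ induced by $\xi$, which is itself the $\xi$-twist of the trivial family over $\bA^1$ compactified at $\infty$. The key computation is the fibered pushforward formula $\bar{\pi}_*(m\overline{\cL})=\bigoplus_\alpha H^0(X_t,mL_t)_\alpha \otimes \cO_{\bP^1}(\alpha)$, which is essentially a special case of Definition \ref{defn:twisted family} applied fiberwise. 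Combining this with the Leray spectral sequence gives $\chi(\overline{\cX}_t,m\overline{\cL})=w(m)+a(m)$ where $a(m):=\dim H^0(X_t,mL_t)$, and Hilbert--Samuel comparison then forces $(\overline{\cL}^{n+1})=(n+1)!\,b_0(\xi)$. Using the relation $K_{\overline{\cX}_t/\bP^1}+\Delta_{\overline{\cX}_t}=-\overline{\cL}/r$, which holds on $\cX_t=X_t\times\bA^1$ and extends globally under the gluing in Definition \ref{def:tc}, in the Futaki formula of Definition \ref{def:tc} gives
\[
\Fut_{X_t,\Delta_t}(\xi)=-\frac{n!\,b_0(\xi)}{V\,r^{n+1}},\qquad V:=(-K_{X_t}-\Delta_t)^n.
\]

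Finally, since $w(m)$ is manifestly linear in $\xi$, so are $b_0$ and $\Fut$, giving $\Fut(-\xi)=-\Fut(\xi)$. K-semistability of $(X_t,\Delta_t)$ applied to the product test configurations from $\xi$ and $-\xi$ yields $\Fut(\pm\xi)\ge 0$, forcing $\Fut(\xi)=0$, hence $b_0=0$ and therefore $c_{n+1}=0$. This gives the asserted $\bQ$-linear equivalence. The main technical point is the intersection-theoretic identity $(\overline{\cL}^{n+1})=(n+1)!\,b_0$, which rests on the pushforward description of $\bar{\pi}_*(m\overline{\cL})$; the remainder is routine Knudsen--Mumford bookkeeping combined with the linearity of the Futaki invariant in the 1-PS parameter.
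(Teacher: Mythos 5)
Your proposal is correct and takes essentially the same route as the paper's proof. The paper's argument is: compare the Knudsen--Mumford expansions of $\det f_*\cO_X(mL)$ and $\det f_{\xi*}\cO_{X_\xi}(mL_\xi)$ to deduce $\lambda_{f_\xi,\Delta_\xi} \sim_\bQ \lambda_{f,\Delta} + (n+1)(-K_{X_t}-\Delta_t)^n\cdot\Fut_{X_t,\Delta_t}(\xi)\cdot A$, and then invoke K-semistability to get $\Fut_{X_t,\Delta_t}(\xi)=0$. You carry out exactly this plan, only spelling out the intermediate bookkeeping that the paper leaves implicit behind the phrase ``by the construction of twisted family'': namely, that the discrepancy between the two $\cM_{n+1}$'s is $\cO_S(c_{n+1}A)$ with $c_{n+1}=(n+1)!\,b_0$, and that $(n+1)!\,b_0=(\overline{\cL}^{n+1})$ is related to $\Fut(\xi)$ via the compactified product test configuration together with Lemma~\ref{lem:L=-K-Delta}, whose $S=\bP^1$ case gives precisely $\overline{\cL}\sim -r(K_{\overline{\cX}/\bP^1}+\Delta_{\overline{\cX}})$. (The linearity of $w(m)$ in $\xi$ and the two-sided inequality $\Fut(\pm\xi)\ge0$ is a clean way to observe the vanishing.)
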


\begin{proof}
By the definition of CM line bundle and Lemma \ref{lem:L=-K-Delta}, $-r^{n+1}\lambda_{f,\Delta}$ (resp. $-r^{n+1}\lambda_{f_\xi,\Delta_\xi}$) is the leading term of the Knudsen-Mumford expansion of $L$ (resp. $L_\xi$), e.g. $c_1(f_*\cO_X(mL))=-\frac{(mr)^{n+1}}{(n+1)!}\lambda_{f,\Delta}+O(m^n)$. By the construction of twisted family \eqref{eq:f_xi} we then have
\[
\lambda_{f_\xi,\Delta_\xi} \sim_\bQ \lambda_{f,\Delta} + (n+1)(-K_{X_t}-\Delta_t)^n\cdot \Fut_{X_t,\Delta_t}(\xi)\cdot A.
\]
Since $(X_t,\Delta_t)$ is K-semistable, we have $\Fut_{X_t,\Delta_t}(\xi) = 0$, hence the result follows.
\end{proof}

So far we realize the $\xi$-twists of an HN-filtration as HN-filtrations of twisted families for all $\xi\in N$. By passing to finite covers, one can also construct families that realize $\xi$-twists when $\xi\in N_\bQ$. However, the twisted family seems unlikely to exist if $\xi$ is not a rational vector. Fortunately, as we will show in the remaining part of this section, for HN-filtrations the twist vectors $\xi$ in Theorem \ref{thm:beta>=0 after twist} can be chosen to be rational. 



\begin{lem} \label{lem:S and lambda_min in Q}
Let $T$ be a torus and let $f\colon (X,\Delta)\to C$ be a generic log Fano family over a smooth curve with a fiberwise $T$-action. Let $\cF$ be the induced Harder-Narasimhan filtration on $R=R(X_t,-r(K_{X_t}+\Delta_t))$ where $t\in C$ is a general closed point. Then $S(\cF)\in\bQ$ and for any $\xi\in N_\bQ$, we have $\lambda_{\min}(\cF_\xi)\in \bQ$.
\end{lem}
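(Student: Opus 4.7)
The plan for the first claim is to invoke the identity \eqref{e-CM=S} established in Proposition \ref{prop:cm>=beta}, namely
$\deg \lambda_{f,\Delta} = -(n+1)(-K_{X_t}-\Delta_t)^n \cdot S(\cF)/r$.
The left-hand side is the degree of a $\bQ$-line bundle on the smooth curve $C$ and hence rational, while the intersection number $(-K_{X_t}-\Delta_t)^n$ is a positive rational number and $r\in\bN_+$. Solving for $S(\cF)$ gives $S(\cF)\in\bQ$.

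For the second claim I first handle $\xi\in N$. After replacing $r$ by a sufficiently divisible multiple so that the fiberwise $T$-action linearizes on $L=-r(K_{X/C}+\Delta)$ (this only rescales the filtration and preserves rationality of $\lambda_{\min}$), I apply the twist construction of Definition \ref{defn:twisted family} with $A$ equal to a closed point $p\in C$. Because the HN-filtration of $\cR_m$ is canonical and hence $T$-invariant, each $\cF_\HN^\lambda\cR_m$ decomposes as $\bigoplus_\alpha\cF_\HN^\lambda\cR_{m,\alpha}$; twisting the weight-$\alpha$ piece by $\cO_C(\la\alpha,\xi\ra p)$ shifts its slope by exactly $\la\alpha,\xi\ra$. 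Unwinding definitions, the HN-filtration of the twisted family $f_\xi$ on $R_t$ coincides with $\cF_\xi$. By Lemma \ref{lem:L=-K-Delta} we have $L_\xi\sim -r(K_{X_\xi/C}+\Delta_\xi)$, so Proposition \ref{p-supp} gives $\lambda_{\min}(\cF_\xi)=\lambda_-(L_\xi)$, and Lemma \ref{l-nefinQ} delivers rationality.

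For general $\xi\in N_\bQ$, write $\xi=\xi'/d$ with $\xi'\in N$ and $d\in\bN_+$, and take a connected finite flat cover $\pi\colon C'\to C$ of degree $d$ (easily constructed on any smooth projective curve over an algebraically closed field of characteristic zero). Pull back $f$ to obtain a generic log Fano family $f'\colon(X',\Delta')\to C'$ with the same fiberwise $T$-action, and fix a general $t'\in C'$ above $t$, so that $R_{t',m}=R_{t,m}$. Since semistability of vector bundles on smooth projective curves is preserved under finite pullback in characteristic zero, the HN-filtration of $\pi^*\cR_m$ is $\pi^*$ of the HN-filtration of $\cR_m$ with all slopes multiplied by $d$; restricting to the fiber, the HN-filtration $\cF'$ induced by $f'$ satisfies $(\cF')^\lambda R_{t,m}=\cF^{\lambda/d}R_{t,m}$. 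Applying the already-proved integer case to $(f',\xi')$ gives $\lambda_{\min}((\cF')_{\xi'})\in\bQ$. A weight-by-weight computation yields
$(\cF')_{\xi'}^\mu R_{t,m,\alpha}=\cF^{(\mu-\la\alpha,\xi'\ra)/d}R_{t,m,\alpha}=(\cF_\xi)^{\mu/d}R_{t,m,\alpha}$
for every $\mu$ and $\alpha$, so the Duistermaat--Heckman measure of $(\cF')_{\xi'}$ is the pushforward of that of $\cF_\xi$ under $\lambda\mapsto d\lambda$. Consequently $\lambda_{\min}((\cF')_{\xi'})=d\cdot\lambda_{\min}(\cF_\xi)$, and the desired rationality follows.

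The main technical point is the interplay between pullback along $\pi$ and the twist construction of Definition \ref{defn:twisted family}. Verifying that the HN-filtration commutes with finite flat pullback rests on the classical preservation of semistability in characteristic zero, while checking that Definition \ref{defn:twisted family} reproduces $\cF_\xi$ on the nose is a direct $\Proj$ computation combined with Lemma \ref{lem:L=-K-Delta} and the compatibility of the HN-filtration with the weight decomposition. Both steps are essentially bookkeeping rather than serious obstacles.
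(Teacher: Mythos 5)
Your proposal is correct and follows essentially the same route as the paper: rationality of $S(\cF)$ from the identity $\eqref{e-CM=S}$, and rationality of $\lambda_{\min}(\cF_\xi)$ by passing to a degree-$d$ cover and realizing the twisted HN-filtration as the HN-filtration of a twisted family, then invoking Proposition \ref{p-supp} and Lemma \ref{l-nefinQ}. The only cosmetic difference is that you handle integral $\xi$ first and then reduce the rational case to it, whereas the paper performs the base change and the $(d\xi)$-twist in one step to produce the filtration $\cG$ with $\cG^\lambda R_m = \cF_\xi^{\lambda/d}R_m$ directly.
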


\begin{proof}
By assumption, the filtration $\cF$ is $T$-equivariant. By \eqref{e-CM=S}, $S(\cF)$ is a rational multiple of $\deg \lambda_f$, hence is rational. Let $d$ be an integer such that $d\xi\in N_{\bZ}$. Let $C'\to C$ be a finite morphism of degree $d$ and let $f'\colon (X',\Delta')\to C'$ be the base change of $f$. Let $P\in C'$ be a smooth point and consider the $(d\xi)$-twist $g\colon (X'_\xi,\Delta'_\xi)\to C'$ of $f'$ along $P$. Let $\cG$ be the Harder-Narasimhan filtration on $R$ induced by $g$. 

Since the pull back of a semistable vector bundle with slope $\mu_C$ under $C'\to C$ is still semi-stable with slope $\deg(C'/C)\cdot \mu_C$, by Lemma \ref{lem:L=-K-Delta} we can check that $\cG^\lambda R_m = \cF^{\lambda/d}_\xi R_m$ for all $\lambda,m$, hence $\lambda_{\min}(\cG)=d\cdot \lambda_{\min}(\cF_\xi)$. By Lemma \ref{l-nefinQ} and Proposition \ref{p-supp} we have $\lambda_{\min}(\cG)\in\bQ$, thus $\lambda_{\min}(\cF_\xi)\in \bQ$ as well.
\end{proof}

\begin{prop} \label{prop:tbeta>=0 rat'l twist}
Notation as in Lemma \ref{lem:S and lambda_min in Q} and let $\alpha,\eta>0$. Assume that very general fibers $(X_t,\Delta_t)$ are reduced uniformly K-stable with slope at least $\eta>0$, $T\subseteq \Aut(X_t,\Delta_t)$ is a maximal torus and $\alpha(X_t,\Delta_t)\ge \alpha$. Then there exists some constant $\delta=\delta(\eta,n,\alpha)>1$ such that $\beta_\delta(\cF_\xi)\ge 0$ for some $\xi\in N_\bQ$.
\end{prop}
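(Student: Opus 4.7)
The plan is to combine Theorem \ref{thm:beta>=0 after twist} with a density argument using the continuity of the twisted non-Archimedean $\bfJ$-functional, together with the rationality of Harder-Narasimhan slopes. By Proposition \ref{prop:slope at vg pt}, reduced uniform K-stability with slope at least $\eta$ propagates to very general fibers, so we may take $\cF := \cF_\HN$ at a very general $t$; this filtration is linearly bounded by Lemma \ref{l-HNfil} and is $T$-equivariant since the HN filtration is canonical. Since $(X_t,\Delta_t)$ is K-polystable, Lemma \ref{lem:DNA twist & JNA min} gives $\DNA(\cF_\xi) = \DNA(\cF)$ and $S(\cF_\xi) = S(\cF)$ for all $\xi \in N_\bR$, and the continuous function $\xi \mapsto \JNA(\cF_\xi)$ attains its minimum $J := \JNA_T(\cF)$ at some $\xi_0 \in N_\bR$; Proposition \ref{prop:D>=J any filtration} then yields $\DNA(\cF) \geq \eta J$.

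In the nondegenerate case $\DNA(\cF) > 0$, I would choose a rational $\xi_1 \in N_\bQ$ close enough to $\xi_0$ that $\JNA(\cF_{\xi_1}) \leq \min\{2J,\, 2\DNA(\cF)/\eta\}$, which yields
\[
\DNA(\cF_{\xi_1}) = \DNA(\cF) \geq \frac{\eta}{2} \JNA(\cF_{\xi_1}).
\]
Applying Proposition \ref{prop:perturbed beta>=0} with $\eta/2$ in place of $\eta$ then produces a constant $\delta = \delta(\eta/2, n, \alpha) > 1$ with $\beta_\delta(\cF_{\xi_1}) \geq 0$, as desired.

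The main obstacle is the degenerate case $\DNA(\cF) = J = 0$, where continuity alone provides no slack: a small perturbation of $\xi_0$ to a rational vector could a priori destroy the inequality $\DNA \geq \eta \JNA$. Here one must exploit that $\cF$ is an HN-filtration. In this case the Duistermaat-Heckman measure of $\cF_{\xi_0}$ is a Dirac mass at $S(\cF)$, so $\lambda_{\min}(\cF_{\xi_0}) = \lambda_{\max}(\cF_{\xi_0}) = S(\cF)$. The HN filtration on each weight subbundle $\cR_{m,\alpha}$ has only rational jumps (slopes $\deg/\rk$), and the collapse of all asymptotic jumps onto the rational value $S(\cF)$ (cf.\ Lemma \ref{lem:S and lambda_min in Q}) forces $\langle \alpha,\xi_0\rangle \in \bQ$ for every weight $\alpha$ occurring in the decomposition of $R$. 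Modifying $\xi_0$ by a vector in the rational annihilator of the span of these effective weights, which leaves $\cF_{\xi_0}$ unchanged, produces $\xi \in N_\bQ$ with $\mu_\delta(\cF_\xi) = S(\cF)$, whence $\beta_\delta(\cF_\xi) = 0$ for every $\delta > 1$, finishing the proof.
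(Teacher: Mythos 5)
Your nondegenerate case ($J := \JNA_T(\cF) > 0$) is correct and matches the paper's argument: you twist to a nearby rational $\xi_1$, use continuity of $\xi \mapsto \JNA(\cF_\xi)$ and the invariance $\DNA(\cF_\xi)=\DNA(\cF)$, obtain $\DNA(\cF_{\xi_1})\ge \tfrac{\eta}{2}\JNA(\cF_{\xi_1})$, and apply Proposition \ref{prop:perturbed beta>=0} with $\eta/2$. Good.

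The degenerate case $J=0$ contains a genuine gap. You assert that ``the collapse of all asymptotic jumps onto the rational value $S(\cF)$ forces $\langle\alpha,\xi_0\rangle\in\bQ$,'' but this does not follow from what you cite. The HN slopes of $\cR_{m,\alpha}$ are rational at each fixed $m$, but to pin down $\langle\beta,\xi_0\rangle$ for $\beta\in P$ you must consider sequences $\alpha_m\in\Gamma_m$ with $\alpha_m/m\to\beta$; the limit $\lim_m\lambda_{m,\alpha_m}/m$ of rational numbers need not be rational, and rationality of $S(\cF)$ alone provides no control on the per-direction limit. (Fixing $\alpha$ and letting $m\to\infty$ is worse: the shift $\langle\alpha,\xi_0\rangle/m$ tends to $0$ and carries no information.) The paper closes this gap by using the full strength of Lemma \ref{lem:S and lambda_min in Q}, namely rationality of $\lambda_{\min}(\cF_\xi)$ for \emph{every} $\xi\in N_\bQ$, combined with the explicit formula (valid exactly because of the Dirac degeneracy)
\[
\lambda_{\min}(\cF_\xi)=\lambda_0+\inf_{\alpha\in P}\langle\alpha,\xi-\xi_0\rangle,
\]
together with the fact that the moment polytope $P$ is a rational polytope; since the concave piecewise-linear function $\xi\mapsto\inf_{\alpha\in P}\langle\alpha,\xi-\xi_0\rangle$ takes rational values on $N_\bQ$, its vertex gradients pin down $\langle v,\xi_0\rangle\in\bQ$ for the vertices $v$ of $P$, whence $\xi_0$ is rational (modulo the rational annihilator you identify). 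Your proposal never invokes this formula, and without it the rationality conclusion does not follow.

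A secondary issue: your final claim in this case, $\mu_\delta(\cF_\xi)=S(\cF)$ and hence $\beta_\delta(\cF_\xi)=0$ for \emph{every} $\delta>1$, is not justified and is in general false. Degeneracy of the Duistermaat--Heckman measure says $\dim\cF_\xi^{tm}R_m/\dim R_m\to 1$ for $t<S(\cF)$, but the base ideals $I^{(t)}_m$ can still be proper and can have a finite (even small) log canonical threshold. You do not need this overreach: once $\xi\in N_\bQ$ is produced with $\cF_\xi=\cF_{\xi_0}$, one has $\JNA(\cF_\xi)=0$ and $\DNA(\cF_\xi)=\DNA(\cF)\ge 0\ge\tfrac{\eta}{2}\JNA(\cF_\xi)$ trivially, and Proposition \ref{prop:perturbed beta>=0} gives $\beta_\delta(\cF_\xi)\ge 0$ for the same $\delta=\delta(\eta/2,n,\alpha)$, exactly as in the nondegenerate case.
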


\begin{proof}
By assumption and Corollary \ref{cor:red uKs criterion}, there exists some $\xi_0\in N_\bR$ such that $\DNA(\cF_{\xi_0})\ge \eta\cdot \JNA(\cF_{\xi_0})$. We claim that
\begin{equation} \label{eq:D>=J rat'l twist}
    \DNA(\cF_\xi)\ge \frac{\eta}{2}\cdot \JNA(\cF_\xi)
\end{equation}
for some $\xi\in N_\bQ$. Indeed when $\JNA(\cF_{\xi_0})>0$ this follows from $\DNA(\cF_\xi)=\DNA(\cF)$ and the continuity of $\JNA(\cF_\xi)$ with respect to $\xi$ (Lemma \ref{lem:DNA twist & JNA min}), so it suffices to consider the case when $\JNA(\cF_{\xi_0})=0$, in other words, 
$$\lambda_{\max}(\cF_{\xi_0})=\lambda_{\min}(\cF_{\xi_0})=S(\cF_{\xi_0})=:\lambda_0.$$ As $(X_t,\Delta_t)$ is K-semistable, we have $S(\cF_{\xi_0})=S(\cF)$ by Lemma \ref{lem:DNA twist & JNA min}, thus $\lambda_0\in \bQ$ by Lemma \ref{lem:S and lambda_min in Q}. Let $\lambda_{m,\alpha}=\sup\{\lambda\,|\,(\cF_{\xi_0}^\lambda R_m)_\alpha = R_{m,\alpha}\}$ and let $\Gamma_m = \{\alpha\in M\,|\,R_{m,\alpha}\neq 0\}$. Let $P\subseteq M_\bR$ be the convex hull of $\cup_m \frac{1}{m}\Gamma_m$. As $\lambda_{\max}(\cF_{\xi_0})=\lambda_{\min}(\cF_{\xi_0})=\lambda_0$, we also have
$$\lim_{m\to \infty}\frac{\sup_{\alpha\in \Gamma_m} \lambda_{m,\alpha}}{m} = \lim_{m\to \infty}\frac{\inf_{\alpha\in \Gamma_m} \lambda_{m,\alpha}}{m}=\lambda_0.$$
It follows that for any fixed $\xi\in N_\bQ$, we have 
$$\lambda_{\min}(\cF_\xi) = \lim_{m\to\infty} \frac{\inf_{\alpha\in\Gamma_m} (\lambda_{m,\alpha}+\la \alpha, \xi-\xi_0 \ra)}{m} = \lambda_0 + \inf_{\alpha\in P} \la \alpha, \xi-\xi_0 \ra.$$
Since $\lambda_{\min}(\cF_\xi)\in \bQ$ by Lemma \ref{lem:S and lambda_min in Q}, we deduce that $\inf_{\alpha\in P} \la \alpha, \xi-\xi_0 \ra\in \bQ$ for all $\xi\in N_\bQ$. As $R=\bigoplus_{m,\alpha} R_{m,\alpha}$ is finitely generated, $P$ is a rational polytope. This implies that $\xi_0\in N_\bQ$ and in particular \eqref{eq:D>=J rat'l twist} holds with $\xi=\xi_0\in N_\bQ$. The lemma now follows directly from \eqref{eq:D>=J rat'l twist} and Proposition \ref{prop:perturbed beta>=0}.
\end{proof}

\begin{cor} \label{cor:integral twist}
Notation and assumptions as in Proposition \ref{prop:tbeta>=0 rat'l twist}. Then there exists a constant $\delta=\delta(\eta,n,\alpha)>1$ such that for any finite cover $C'\to C$ of sufficiently divisible degree, we can find $\xi\in N$ which satisfies that $\beta_\delta(\cG)\ge 0$ where $\cG$ is the HN-filtration induced by the $\xi$-twist $g\colon (X'_\xi,\Delta'_\xi)\to C'$ of $(X,\Delta)\times_C C'$ along a smooth point $P\in C'$.
\end{cor}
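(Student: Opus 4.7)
The plan is to combine Proposition~\ref{prop:tbeta>=0 rat'l twist} with the rescaling identity that is implicit in the proof of Lemma~\ref{lem:S and lambda_min in Q}. First, apply Proposition~\ref{prop:tbeta>=0 rat'l twist} to produce a constant $\delta = \delta(\eta,n,\alpha) > 1$ and a rational vector $\xi_0 \in N_\bQ$ such that $\beta_\delta(\cF_{\xi_0}) \ge 0$, where $\cF$ denotes the HN-filtration on $R = R(X_t,-r(K_{X_t}+\Delta_t))$ induced by the original family $f$. Let $d_0 \in \bZ_{>0}$ be a clearing denominator for $\xi_0$, so that $d_0\xi_0 \in N$; the phrase ``sufficiently divisible degree'' in the statement will mean divisible by $d_0$.

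Next, given any finite cover $\pi\colon C' \to C$ of degree $d$ with $d_0 \mid d$, set $\xi := d\xi_0 \in N$, let $f'\colon (X',\Delta')\to C'$ be the base change, choose a smooth point $P\in C'$, and form the $\xi$-twist $g\colon (X'_\xi,\Delta'_\xi)\to C'$ along $P$. The key claim is the identity
\[
\cG^\lambda R_m \;=\; \cF_{\xi_0}^{\lambda/d}\, R_m \qquad (\lambda\in\bR,\ m\in\bN).
\]
This is exactly the computation already carried out in the proof of Lemma~\ref{lem:S and lambda_min in Q}: under the pullback $\pi^*$, each semistable piece of slope $\mu$ remains semistable of slope $d\mu$, so the HN-filtration on the weight-$\alpha$ summand $\pi^*\cR_{m,\alpha}$ is indexed by $d$ times the indexing on $\cR_{m,\alpha}$; the further twist by $\cO_{C'}(\langle\alpha,d\xi_0\rangle P)$ shifts the indexing in weight $\alpha$ by an additional $\langle\alpha,d\xi_0\rangle$; and reading off the induced filtration on the fiber $R_m = \bigoplus_\alpha R_{m,\alpha}$ at a point of $C'$ lying over the original general point gives precisely $\cF_{\xi_0}^{\lambda/d}R_m$ by the very definition of the twist in Definition~\ref{d-quotient}. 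The argument in Lemma~\ref{lem:S and lambda_min in Q} is written for the minimal $d$, but it is insensitive to this choice and applies verbatim for any multiple of $d_0$.

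Granted the identity, everything that remains is bookkeeping with the definitions: it yields $I^{(s)}_\bullet(\cG) = I^{(s/d)}_\bullet(\cF_{\xi_0})$ for every $s\in\bR$, and $S(\cG) = d\cdot S(\cF_{\xi_0})$ (since $\Gr^\lambda_\cG R_m = \Gr^{\lambda/d}_{\cF_{\xi_0}} R_m$), so
\[
\mu_\delta(\cG) = d\cdot \mu_\delta(\cF_{\xi_0}), \qquad \beta_\delta(\cG) = d\cdot \beta_\delta(\cF_{\xi_0}) \ge 0,
\]
as required. I do not expect any essential obstacle here: the constant $\delta$ depends only on the intrinsic invariants $\eta,n,\alpha$ of a very general fiber, and the clearing denominator $d_0$ depends only on $\xi_0$; both are fixed before $C'$ is chosen, so the rescaling identity above converts the rational twist furnished by Proposition~\ref{prop:tbeta>=0 rat'l twist} into the desired integral twist after any finite base change of sufficiently divisible degree.
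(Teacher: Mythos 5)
Your proposal is correct and follows essentially the same path as the paper's own proof: invoke Proposition~\ref{prop:tbeta>=0 rat'l twist} to get $\delta$ and $\xi_0\in N_\bQ$, clear denominators by a base change of degree $d$, and use the identity $\cG^\lambda R_m = \cF_{\xi_0}^{\lambda/d}R_m$ (read off from the computation in Lemma~\ref{lem:S and lambda_min in Q}) together with the homogeneity $\beta_\delta(\cG)=d\cdot\beta_\delta(\cF_{\xi_0})$. The only addition you make, correctly, is to spell out explicitly that ``sufficiently divisible'' means divisible by a clearing denominator $d_0$ of $\xi_0$ and that the Lemma~\ref{lem:S and lambda_min in Q} computation is insensitive to which multiple of $d_0$ one takes.
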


\begin{proof}
By Proposition \ref{prop:tbeta>=0 rat'l twist}, there exists $\delta=\delta(\eta,n,\alpha)>1$ and $\xi_0\in N_\bQ$ such that $\beta_\delta(\cF_{\xi_0})\ge 0$. Let $C'\to C$ be a finite cover of degree $d$ with $d\xi_0\in N$. Let $\xi=d\xi_0$ and let $g$ be the $\xi$-twist of $(X,\Delta)\times_C C'$ along a smooth point $P\in C'$. Then as in the proof of Lemma \ref{lem:S and lambda_min in Q}, we have $\cG^\lambda R_m = \cF^{\lambda/d}_{\xi_0} R_m$ for all $\lambda,m$, hence $\beta_\delta(\cG)=d\cdot \beta_\delta(\cF_{\xi_0})\ge 0$ as desired.
\end{proof}

\section{Ampleness lemma}\label{s-ampleness}

Our next ingredient is an enhanced version of the ampleness lemma (see \cite{Kol90}*{\S 3} and \cite{KP17}*{\S 5}) that gives a simultaneous treatment of all twisted families. We first introduce some definitions and notation that are necessary for the statement.

\begin{defn} \label{defn:condition (*)_d}
Let $f\colon (X,\Delta;L)\to S$ be a polarized family of normal pairs, i.e. $S$ is a normal variety, $f\colon X\to S$ is a flat projective morphism with normal fibers, $\Delta$ is a Weil $\bQ$-divisor on $X$ whose support does not contain any fiber of $f$ in its support and $L$ is an $f$-ample line bundle. Let $D=\Supp(\Delta)$ and let $d\in\bN_+$. We say that $f\colon (X,\Delta;L)\to S$ satisfies condition $(*)_d$ if the followings hold:
\begin{enumerate}
    \item $L$ is $f$-very ample,
    \item $H^j(X_s,mL_s)=H^j(D_s,mL_s)=0$ for all $s\in S$ and all $j,m\in\bN_+$,
    \item for every $s\in S$ the embeddings of $X_s$ and $D_s$ via $L_s$  are cut out (set theoretically) by degree $\le d$ equations, and
    \item both natural maps $\Sym^d H^0(X_s,L_s)\to H^0(X_s,dL_s)\to H^0(D_s,dL_s)$ are surjective.
\end{enumerate}
\end{defn}

\begin{rem}
By assumption, every component of $\Delta$ dominates $S$, hence $f|_D$ is flat over all codimension one point of $S$ \cite{Har77}*{Proposition III.9.7} and in particular there exists a big open set $S^\circ\subseteq S$ such that $f|_D$ is flat over $S^\circ$. Let $f^\circ\colon D^\circ=(f|_D)^{-1}(S^\circ)\to S^\circ$. Condition $(*)_d$ then implies that the formation of $f_*\cO_X(mL)$ and $f^\circ_*\cO_{D^\circ}(mL|_{D^\circ})$ commutes with base change for all $m\in\bN$ (see \cite{Har77}*{Theorem III.12.11}). Note also that if $f\colon (X,\Delta;L)\to S$ admits a fiberwise $T$-action (where $T$ is a torus) and satisfies condition $(*)_d$ for some $d\in\bN_+$, then so does its $\xi$-twists (for any $\xi\in N=\Hom(\bG_m,T)$ and along any Cartier divisor $A$ on $S$).
\end{rem}

\begin{defn} \label{defn:bir pullback family}
Let $\nu\colon S'\dashrightarrow S$ be a dominant rational map between quasi-projective normal varieties and let $f\colon (X,\Delta;L)\to S$ (resp. $f'\colon (X',\Delta';L')\to S'$) be a polarized family of normal pairs. We say that $f'$ is a birational pullback of $f$ if there exists an open subset $U\subseteq S'$ where $\nu$ is defined and a diagram
\begin{equation} \label{eq:diagram bir family}
    \xymatrix{
    (X',\Delta';L')\ar[d]_{f'} & (X_U,\Delta_U;L_U)\ar@{_{(}->}[l]\ar[d]^{f_U}\ar[r] & (X,\Delta;L)\ar[d]^f \\
    S' & U\ar@{_{(}->}[l]_{i_U} \ar[r]^{\nu|_{U}} & S
    }
\end{equation}
where both squares are Cartesian. 
\end{defn}

\begin{rem}
Similarly, we can define birational pullbacks between generic log Fano families $f\colon (X,\Delta)\to S$ using $L=-r(K_{X/S}+\Delta)$ for some sufficiently divisible $r\in\bN_+$. It is easy to see that if $f'$ (resp. $f''$) is a birational pullback of $f$ (resp. $f'$), then $f''$ is also a birational pullback of $f$. Moreover if $f$ admits a fiberwise $T$-action ($T$ being a torus), then any $\xi$-twist of $f$ (where $\xi\in N$) is a birational pullback of $f$ (over the same base $S$).
\end{rem}

\begin{notation} \label{notation for ampleness lem}
We keep the notation of Definition \ref{defn:bir pullback family}. Let $d\in\bN_+$ and consider a diagram as in \eqref{eq:diagram bir family} where both $f$ and $f'$ satisfy condition $(*)_d$. Let $\oS\supseteq S$ be a compactification and let $H$ be a line bundle on $\oS$. Let $\nu^*H$ be the rational pullback of $H$ (i.e. $\nu^*H=p'_*p^*H$ where $p'\colon \widetilde{S}\to S'$ is a log resolution that resolves the indeterminacy locus of $S'\dasharrow \oS$ and $p=\nu\circ p'\colon \widetilde{S}\to \oS$). Let 
$$D=\Supp(\Delta)\mbox{ and }W=f_*\cO_X(L), Q=f_*\cO_X(dL)\oplus f_*\cO_{D}(dL|_D);$$ 
similarly we define $D'$, $W'$ and $Q'$ with $f'$ in place of $f$. Let $w$ and $q$ be the ranks of $W$ and $Q$, respectively. Note that we have a natural surjective map $(\Sym^d W)^{\oplus 2}\to Q$ (similarly with $W'$, $Q'$ in place of $W$, $Q$) by condition $(*)_d$.
\end{notation}

\begin{thm} \label{thm:ampleness lemma}
In the situation of Notation \ref{notation for ampleness lem}, assume that the family $f\colon (X,\Delta)\to S$ has maximal variation. Then there exists some $m\in\bN_+$ depending only on $d$, $H$ and the family $f$ such that there is a non-zero map
\[
\Sym^{dqm} \Big( W'^{\oplus 4w} \Big) \to \cO_{S'}(-\nu^* H)\otimes \det(Q')^{\otimes m}
\]
for any birational pull back family as in Definition \ref{defn:bir pullback family}.
\end{thm}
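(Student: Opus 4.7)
The plan is to follow the classical strategy of Koll\'ar's ampleness lemma (\cite{Kol90, KP17}) to build an appropriate map over $S$, and then transfer it to $S'$ using the birational pullback structure. First, over $S$: condition $(*)_d$ together with the surjection $(\Sym^d W)^{\oplus 2} \twoheadrightarrow Q$ produces a classifying morphism $\varphi_S \colon S \to M$, where $M$ is a suitable GIT quotient parameterizing pairs of subschemes of $\bP^{w-1}$ cut out by equations of degree $\le d$. Maximal variation of $f$ (and hence also of $(X,D) \to S$ by Lemma \ref{lem:max var to reduced bdy}) implies that $\varphi_S$ is generically finite onto its image, so Koll\'ar's construction yields an ample line bundle $\mathcal{A}$ on the image with $\varphi_S^*\mathcal{A} \cong \det(Q)^{\otimes m_0}$ for some $m_0$. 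In particular $\det(Q)$ is big on $S$; consequently, for some $m \gg 0$ depending only on $d$, $H$, and $f$, the line bundle $\det(Q)^{\otimes m}\otimes \cO_S(-H|_S)$ is effective. Combining this effective section with the natural $\mathrm{GL}(W)$-equivariant surjection coming from iterated Pl\"ucker-type quotients $\Sym^{dq}(W^{\oplus 4w}) \twoheadrightarrow \det(Q)$ produces a non-zero map
$$\alpha_S\colon \Sym^{dqm}\bigl(W^{\oplus 4w}\bigr) \longrightarrow \det(Q)^{\otimes m} \otimes \cO_S(-H|_S).$$

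Next I transfer $\alpha_S$ to $S'$. The diagram \eqref{eq:diagram bir family} and the base-change compatibility built into $(*)_d$ give canonical isomorphisms $W'|_U \cong (\nu|_U)^*W$ and $Q'|_U \cong (\nu|_U)^*Q$. Pulling back $\alpha_S$ via $\nu|_U$ yields a non-zero map
$$\alpha_U\colon \Sym^{dqm}\bigl(W'^{\oplus 4w}\bigr)\big|_U \longrightarrow \det(Q')^{\otimes m}|_U \otimes \cO_U\bigl(-(\nu|_U)^*(H|_S)\bigr).$$
To extend $\alpha_U$ across $S'$ with target $\cO_{S'}(-\nu^*H)\otimes \det(Q')^{\otimes m}$, I pass to a resolution $p'\colon \widetilde{S}\to S'$ that also resolves the indeterminacy of $\nu\colon S'\dashrightarrow \bar{S}$, inducing a morphism $p\colon \widetilde{S}\to \bar{S}$. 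Functoriality of the construction of $\alpha_S$ shows that $p'^*\alpha_U$ and $p^*\alpha_S$ agree on $p'^{-1}(U)$, and the latter is globally defined on $\widetilde{S}$, giving a non-zero extension $\widetilde{\alpha}$ valued in $(p'^*\det(Q'))^{\otimes m}\otimes \cO_{\widetilde{S}}(-p^*H)$. Pushing $\widetilde{\alpha}$ forward by $p'_*$ and invoking $\nu^*H = p'_*p^*H$ by definition, together with reflexivity of the relevant Hom sheaf on the normal variety $S'$, yields the required non-zero map on $S'$.

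The main obstacle lies in this extension step. The domain of definition $U \subseteq S'$ of $\nu$ (as a morphism into $S$, not merely into $\bar{S}$) may have codimension-one complement at loci where $\nu$ sends points into $\bar{S}\setminus S$, so extending $\alpha_U$ by reflexivity alone is insufficient. The rational pullback $\nu^*H$, defined via $p'_*p^*H$, is engineered precisely to absorb these codimension-one contributions, which is why the statement is formulated with the line bundle $\cO_{S'}(-\nu^*H)$ rather than an open restriction. Verifying that the extension through $\widetilde{S}$ and the push-forward through $p'$ indeed produce a non-zero map with this precise target sheaf---and that one can perform the entire Koll\'ar construction functorially enough to commute with the birational pullback---constitutes the technical heart of Section \ref{s-ampleness}.
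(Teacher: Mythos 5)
Your proposal founders at the step where you assert that $\det(Q)$ is big on $S$, supported by a claim that the Koll\'ar construction yields a classifying morphism $\varphi_S\colon S\to M$ into a GIT quotient with $\varphi_S^*\mathcal{A}\cong\det(Q)^{\otimes m_0}$. Both assertions are unjustified precisely in the setting this theorem needs to cover. When the fibers $(X_s,\Delta_s)$ admit positive-dimensional automorphism groups --- which is exactly the case one must handle for reduced uniformly K-stable (as opposed to uniformly K-stable) Fano varieties --- the $\mathrm{GL}(W)$-action on the relevant Grassmannian/Hilbert points has positive-dimensional, non-closed stabilizer orbits. In that situation a well-behaved GIT quotient $M$ parameterizing the subschemes need not exist, the purported map $\varphi_S$ need not be a morphism (nor generically finite onto an image with an ample descended bundle), and $\det(Q)$ need not be big on $S$. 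Koll\'ar's original ampleness lemma proves bigness of $\det(Q)$ only under a finiteness-of-stabilizers hypothesis, and dropping that hypothesis is the whole technical point of Step 3$_{\rm r}$ described in the introduction.

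The paper avoids this by never asserting bigness of $\det(Q)$ on $S$. Instead it works on the projectivized bundle of matrices $\bP=\bP_S(\oplus_{i=1}^v V^*)$, where the universal basis map $\zeta$, its symmetric and exterior powers, and an extension $\tbP\to\bP$ give $g^*(\pi^*\det(Q)\otimes\cO_\bP(dq)) = u^*\cO_{\Gr}(1)\otimes\cO_{\tbP}(E)$ with a morphism $u\colon\tbP\to\Gr$. Maximal variation is then used to show that the image $\oY$ of $(\pi\circ g,u)$ inside $\oS\times\Gr$ (note: not inside a quotient of $\Gr$) maps generically finitely to $\Gr$, so $\pi_2^*\cO_{\Gr}(1)$ is big on $\oY$. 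The resulting non-zero section lives in $H^0(\bP,\cO_\bP(dqm)\otimes\pi^*(\cO_S(-H)\otimes\det(Q)^m))$, and the crucial extra $\cO_\bP(dqm)$ factor in the fiber direction is what furnishes positivity even when $\det(Q)$ itself carries none. Pushing down via $\pi_*\cO_\bP(dqm)=(\Sym^{dqm}(V^{\oplus v}))^*$ then produces exactly the map of the theorem, with no intermediate need for an effective section of $\det(Q)^{\otimes m}\otimes\cO_S(-H)$. Relatedly, your invocation of a surjection $\Sym^{dq}(W^{\oplus 4w})\twoheadrightarrow\det(Q)$ via ``iterated Pl\"ucker-type quotients'' is not a map of vector bundles on $S$; in the paper this data is only available after pulling back to $\bP$ and tensoring with $\cO_\bP(dq)$.

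By contrast, the second half of your proposal --- transferring the map to $S'$ via functoriality of the construction, resolving indeterminacy on $\widetilde{S}$, and using the rational pullback $\nu^*H$ to absorb codimension-one contributions --- is essentially aligned with what the paper does (a commutative diagram $\tbP'\dashrightarrow\oY$ over $S'\dashrightarrow\oS$, together with properness, shows the section pulls back correctly), and your identification of the extension step as the technical heart is accurate. The gap is in the first half.
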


\begin{proof}
We follow the argument of \cite{Kol90}*{\S 3} and \cite{KP17}*{\S 5}. First observe that if $(X,\Delta)\to S$ has maximal variation, then so does $(X,D)$ by Lemma \ref{lem:max var to reduced bdy}, hence we may assume $\Delta=D$. Replacing $S'$ and $S$ by some big open subset, We may also assume that $f|_D$ and $f'|_{D'}$ are flat (since $W'$ is locally free, the obtained non-zero map over the big open subset then extends to a non-zero map over the original $B'$ by pushforward). Let $V=W^{\oplus 2}$ and let $v=\rk(V)=2w$. Let $\bP=\bP_S (\oplus_{i=1}^v V^*)$ be the projectivized space of matrices with columns in $V$ and let $\pi\colon \bP\to S$ be the projection. Consider the universal basis map $\oplus_{i=1}^v \cO_\bP(-1)\to \pi^* V$, or equivalently
\[
\zeta\colon \cO_\bP^{\oplus v} \to \pi^* V \otimes \cO_\bP(1), 
\]
sending a matrix to its columns. Let $G\subseteq\bP$ be the divisor of matrices of determinant zero. Then $\zeta$ is surjective outside $G$. Taking symmetric power and composing with the surjective maps $\Sym^d V \to (\Sym^d W)^{\oplus 2} \to Q$, we get the following map
\[
U_{\Gr} \colon \Sym^d\left(\cO_\bP^{\oplus v}\right)\to \pi^* \Sym^d V \otimes \cO_\bP(d)\to \pi^* Q \otimes \cO_\bP(d),
\]
which is also surjective outside $G$. Further taking the $q$-th exterior power on both sides of $U_{\Gr}$, we obtain an induced map
\[
\widetilde{u}\colon \sum \cO_\bP \to \pi^*\det(Q)\otimes \cO_\bP(dq)
\]
which is again surjective over $\bP-G$. This gives a morphism
\[
u\colon \bP-G\to \Gr:=\Gr(w',q)\subseteq \bP^N
\]
where $w'$ is the rank of $ \Sym^d\left(\cO_\bP^{\oplus v}\right)$ and the Grassmannian is embedded in $\bP^N$ via the Pl\"ucker embedding. Let $g\colon \tbP\to \bP$ be the normalization of the blowup of the ideal sheaf corresponding to the image of $\widetilde{u}$. Then the map $u$ extends to $\tbP$ (which we still denote by $u$) and there exists an effective Cartier divisor $E\subseteq \tbP$ such that
\begin{equation} \label{eq:pullback of O_Gr(1)}
    g^*\left(\pi^*\det(Q)\otimes \cO_\bP(dq)\right) = u^*\cO_{\Gr}(1)\otimes \cO_{\tbP}(E).
\end{equation}
Let $Y$ be the image of the product map $(\pi\circ g,u)\colon \tbP\to S\times \Gr$, let $\oY$ be its closure in $\oS\times \Gr$ and let $\pi_1$ (resp. $\pi_2$) be the projection to $S$ (resp. $\Gr$). 
\begin{equation*}
    \xymatrix{
   \tbP\ar@{->}[rr]^g\ar[d]\ar[drr]^{(\pi\circ g,u)}  & &\bP\ar@{-->}[d] \\
    Y  \ar@{^{(}->}[rr] & & S\times \Gr  
    }
\end{equation*}
By assumption, every $X_s$ and $D_s$ are cut out by degree $\le d$ equations, thus the isomorphism class of $X_{\pi_1(t)}$ is uniquely determined by $\pi_2(t)$ for a general $t\in Y$. Since $f$ has maximal variation, it follows that $\pi_2\colon \oY\to \Gr$ is generically finite and hence $\pi_2^*\cO_{\Gr}(1)$ is big on $\oY$. In particular, there exists some $m\in\bN_+$ such that $\pi_2^*\cO_{\Gr}(m)\otimes \pi_1^*\cO_{\oS}(-H)$ on $\oY$ has a non-zero section. Pulling back to $\tbP$, we see that $u^*\cO_{\Gr}(m)\otimes g^*\pi^*\cO_S(-H)$ also has a non-zero section. By \eqref{eq:pullback of O_Gr(1)}, this implies
\[
H^0\big(\bP, \cO_\bP(dqm)\otimes \pi^*(\cO_S(-H)\otimes\det(Q)^m)\big)\neq 0.
\]
Pushing down to $B$, we obtain a nonzero map
\begin{equation} \label{eq:ampleness original base}
    \Sym^{dqm}\left(W^{\oplus 4w}\right)=\Sym^{dqm}\left(V^{\oplus v}\right)=(\pi_*\cO_\bP(dqm))^*\to \det(Q)^m \otimes \cO_S(-H).
\end{equation}

\smallskip

We claim that the same choice of $m$ works for the family $f'\colon (X',\Delta';L)\to S'$ as well. Indeed, most of the constructions here are functorial, namely, we have a corresponding $\pi'\colon \bP'=\bP_{S'}(\oplus_{i=1}^v V'^*)\to S'$ (where $V'=W'^{\oplus 2}$) and a rational map $u'\colon \bP' \dashrightarrow \Gr$ that extends to a proper birational model $g'\colon \tbP'\to \bP'$ such that
\begin{equation} \label{eq:pullback over B'}
    g'^*\left(\pi'^*\det(Q')\otimes \cO_{\bP'}(dq)\right) = u'^*\cO_{\Gr}(1)\otimes \cO_{\tbP'}(E')
\end{equation}
for some effective Cartier divisor $E'$ on $\tbP'$ (as before we still denote the induced map $\tbP'\to \Gr$ by $u'$). We claim that
\begin{equation} \label{eq:nonzero section on pullback}
    H^0(\tbP', u'^*\cO_{\Gr}(m)\otimes  g'^*\pi'^*\cO_{S'}(-\nu^*H))\neq 0.
\end{equation}
Indeed, by \eqref{eq:diagram bir family} we have $W'|_U = W|_U$, thus the restriction of $u'$ to $\bP'\times_{S'} U$ factors through $\bP$ and we may choose $\tbP'$ such that the restriction of $g'$ to $\bP'\times_{S'} U$ factors through $\tbP$ as well. In particular, we have the following commutative diagram
\[
\xymatrix{
\tbP' \ar[d] \ar@{-->}[rr]^{(\nu\circ \pi'\circ g',u')} & & \oY \ar[d] \ar[r] & \Gr \\
S' \ar@{-->}[rr] & & \oS
}
\]
where the rational $\tbP'\dashrightarrow \oY$ is dominant. Since $\pi_2^*\cO_{\Gr}(m)\otimes \pi_1^*\cO_{\oS}(-H)$ has a non-zero section, so does its rational pullback to $\tbP'$. As $u'\colon \tbP'\to \Gr$ is a morphism and both $\tbP'\to S'$ and $\oY\to \oS$ are proper, the rational pullback equals $u'^*\cO_{\Gr}(m)\otimes  g'^*\pi'^*\cO_{S'}(-\nu^*H)$ and this proves \eqref{eq:nonzero section on pullback}. By \eqref{eq:pullback over B'} and the same argument that proves \eqref{eq:ampleness original base}, this is enough to conclude the proof of the theorem.
\end{proof}

\section{Positivity of CM line bundle}\label{s-CMpositive}

In this section, we will put all ingredients together to prove Theorem \ref{t-main2}. The log case requires additional argument which will be developed in Section \ref{ss-perturb}. Our approach is inspired by the earlier works \cites{KP17, Pos19} in the log settings. 

\subsection{General setup} \label{sec:setup for cm>0}
Our goal is to show that the CM line bundle is big for any family of reduced uniformly K-stable Fano varieties with maximal variation. The idea is to apply \cite{BDPP} to show its intersection with any movable curve $C$ is at least the intersection of $C$ with some fixed big $\bQ$-line bundle (see \eqref{eq:CM>=(H.C)}). However, the situation is more complicated if we start with a proper subspace $M$ of the K-moduli (as in Theorem \ref{t-main2}), since a priori we only get a family over a big open set of some generically finite cover of $M$. For this reason, we consider the following somewhat technical set-up. 

\begin{notation}\label{no-setup}
Let $T$ be a torus, let $S$ be a normal projective variety and let 
\[
f\colon (X^\circ,\Delta^\circ)\to S^\circ
\]
be a generic log Fano family over an open subset $S^\circ$ of $S$ with maximal variation and a fiberwise $T$-action. Assume that very general fibers $(X_s,\Delta_s)$ are reduced uniformly K-stable with slope at least $\eta>0$ (for some $\eta>0$) and $T\subseteq \Aut(X_s,\Delta_s)$ is a maximal torus. We further introduce the following additional notation and assumptions, which will be fixed throughout the entire section.
\begin{enumerate}
    \item Let $n=\dim X_s$ and $v=(-K_{X_s}-\Delta_s)^n$.
    \item Choose some $\alpha>0$ such that $\alpha(X_s,\Delta_s)\ge \alpha$ holds for the general fibers $(X_s,\Delta_s)$ (such $\alpha$ exists by e.g. \cite{BL18}*{Theorem B}).
    \item Let $\delta=\delta(\alpha,\eta,n)>1$ be the constant given by Corollary \ref{cor:integral twist}. Decreasing $\delta$ if necessary, we may assume that $\delta\in\bQ$. Let $\gamma=\frac{\delta}{(\delta-1)(n+1)v}$ and let $0<\epsilon<\frac{1}{\delta-1}$ be such that the Weil $\bQ$-divisor $-(K_{X_s}+(1+\epsilon)\Delta_s)$ is big for all $s\in S^{\circ}$.
    \item Let $H$ be an ample line bundle on $S$. Let
    \begin{equation} \label{eq:covering family}
        \xymatrix{
        U\ar[d]_{u}\ar[r]^{p} & S \\
        V
        }
    \end{equation}
    be a covering family of curves on $S$, i.e. $u$ is a smooth projective morphism of relative dimension one, $p$ is dominant and does not contract the general fibers of $u$. We further assume that $p$ is generically finite (this can be achieved by taking hyperplane sections on $V$) and $V$ is smooth.
    \item Let $r,d\in\bN_+$ with $r\gamma\in\bN$ and let $g\colon (X,\Delta)\to U$ be a generic log Fano family with a fiberwise $T$-action that is birational to the pullback of $f\colon (X^\circ,\Delta^\circ)\to S^\circ$ such that $$L:=-r(K_{X/U}+\Delta)+2r\gamma g^*\lambda_{g,\Delta}$$ is Cartier and $g$-very ample and {\it all} fibers $(X_u,\Delta_u;L_u)$ of $g$ satisfy condition $(*)_d$ from Definition \ref{defn:condition (*)_d}.
    \item Let $C$ be the geometric generic fiber of $u$, let $q\colon C\to S$ be the induced map, let $h\colon (Y,\Delta_Y)\to C$ be the base change of $g$ (i.e. $Y=X\times_U C$ and $\Delta_Y$ is the divisorial pullback of $\Delta$) and let $\cF_C$ be the induced HN-filtration on $R=R(X_t,-r(K_{X_t}+\Delta_t))$ where $t\in C$ is a general closed point. Let $L_Y=\pi^*L$ where $\pi\colon Y\to X$ is the natural projection.
\end{enumerate}
\end{notation}
Our goal is to prove the following more general statement.

\begin{thm} \label{thm:cm>0 framework}
In the above setup, there exists a constant $c_0>0$ depending only on the family $f$, the line bundle $H$ and the integers $r,d$ $($in particular, it does not depend on $U$ or the birational pullback $(X,\Delta))$ such that 
\begin{equation} \label{eq:CM>=(H.C)}
    \deg (\lambda_{g,\Delta}|_C) = \deg \lambda_{h,\Delta_Y}\ge c_0 \deg q^*H.
\end{equation}
\end{thm}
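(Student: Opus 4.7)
The plan is to combine the nef threshold of Proposition \ref{prop:nef threshold}, which requires reduced uniform K-stability and holds only after a suitable torus twist, with the ampleness lemma (Theorem \ref{thm:ampleness lemma}) applied to the birational pullback $g\colon (X,\Delta;L)\to U$ of $f$. The twist lives naturally on a finite cover of the geometric generic curve $C$ of the covering family, whereas the ampleness lemma lives on $U$, so the conceptual step is to restrict to $C$ and patch the two estimates there.

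First, restrict to the family $h\colon (Y,\Delta_Y)\to C$ and invoke Corollary \ref{cor:integral twist}: this produces a finite cover $q'\colon C'\to C$ of degree bounded by some $N_0=N_0(f)$ and an integral twist vector $\xi\in N$, such that the Harder-Narasimhan filtration $\cG$ of the $\xi$-twist $h_\xi\colon (Y'_\xi,\Delta'_\xi)\to C'$ satisfies $\beta_\delta(\cG)\ge 0$. By Proposition \ref{prop:nef threshold} and Lemma \ref{lem:L=-K-Delta}, the $\bQ$-divisor $N_\xi := -(K_{Y'_\xi/C'}+\Delta'_\xi)+\gamma\cdot h_\xi^*\lambda_{h_\xi,\Delta'_\xi}$ is nef on $Y'_\xi$, and Corollary \ref{cor:CM deg after twist} identifies $\lambda_{h_\xi,\Delta'_\xi}$ with $q'^*\lambda_h$ where $\lambda_h:=\lambda_{h,\Delta_Y}$. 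Consequently $L_\xi$ is the sum of a multiple of the nef $N_\xi$ and a pullback of a divisor of degree $r\gamma\deg(q'/q)\deg\lambda_h$ on $C'$; combined with fiberwise ampleness, \cite{CP18}*{Proposition 6.4} then implies that $(h_\xi)_*\cO(L_\xi)$ is a nef vector bundle on $C'$ (we may assume $\deg\lambda_h\ge 0$ since this is the content of Corollary \ref{cor:cm>=0}).

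Next, apply Theorem \ref{thm:ampleness lemma} to $g\colon (X,\Delta;L)\to U$ with $p\colon U\dashrightarrow S$ in the role of $\nu$, obtaining a nonzero map
$$\Sym^{dqm}\!\bigl(W^{\oplus 4w}\bigr)\longrightarrow \cO_U(-p^*H)\otimes\det(Q)^{\otimes m}$$
for an integer $m=m(d,H,f)$ independent of $U$ and of the birational pullback. Restricting to $C$ and taking degrees, the inequality $\mu_{\min}(\Sym^k V^{\oplus N})\ge k\mu_{\min}(V)$ in characteristic zero together with the observation that a nonzero map from a coherent sheaf to a line bundle on a smooth curve bounds the target's degree below by the source's $\mu_{\min}$ yields
$$\deg q^*H \;\le\; m\,\deg\det(Q|_C) - dqm\cdot\mu_{\min}(W|_C).$$
The Knudsen-Mumford expansion applied to $L_Y=-r(K_{Y/C}+\Delta_Y)+2r\gamma h^*\lambda_h$ gives $(L_Y^{n+1})=r^{n+1}\tfrac{\delta+1}{\delta-1}\deg\lambda_h$ (the factor $2$ in front of $r\gamma$ in the definition of $L$ is precisely what makes this coefficient strictly positive rather than zero), so $\deg\det(Q|_C)=A\deg\lambda_h+B_d$ with $A>0$ and $B_d$ uniformly bounded. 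The lower bound $\mu_{\min}(W|_C)\ge -C_1\deg\lambda_h-C_2$ follows from pulling $W|_C$ back to $C'$, twisting by $\xi$, and invoking the nefness of $(h_\xi)_*\cO(L_\xi)$ from the previous step; the loss due to the twist is $O(|\xi|)$ times the diameter of the weight polytope of $T$ acting on $R$, both uniform in the family. Combining these estimates produces $\deg q^*H\le A'\deg\lambda_h+B'$ with absolute constants $A'>0$ and $B'\ge 0$, and following the standard device of \cite{KP17, Pos19} (replacing $H$ by a sufficiently high multiple) absorbs $B'$ to yield $\deg\lambda_h\ge c_0\deg q^*H$ for some uniform $c_0>0$.

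The principal difficulty is keeping every constant uniform as $U$, the birational pullback $(X,\Delta)$, and the curve $C$ vary. Three sources of uniformity are required: (i) the degree $N_0$ of $C'\to C$ and the twist vector $\xi$ must lie in a bounded region of $N$, which holds because $T$, the alpha invariant $\alpha$, and the slope $\eta$ depend only on $f$; (ii) the integer $m$ in the ampleness lemma must be universal, which is the content of Theorem \ref{thm:ampleness lemma}; and (iii) the weight polytope of $T$ on $R$ and the ranks $w,q$ must be uniformly bounded, which is guaranteed by condition $(*)_d$ of Definition \ref{defn:condition (*)_d}. Once these are in place, patching the ampleness lemma on $U$ with the nef threshold on the bounded cover $C'$ produces the required inequality.
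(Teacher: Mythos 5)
Your overall outline—apply a torus twist to make the Harder--Narasimhan filtration $\beta_\delta$-nonnegative, run the ampleness lemma, and chain the resulting inequalities—matches the paper's strategy at a high level, but there are two genuine gaps that the paper's three-lemma structure (Lemmas \ref{lem:reduction}, \ref{lem:deg L>=deg H}, \ref{lem:CM>=deg L}) is specifically designed to avoid.

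First, and most seriously, you apply Theorem \ref{thm:ampleness lemma} to the \emph{untwisted} birational pullback $g\colon (X,\Delta;L)\to U$ and then try to bound $\mu_{\min}(W|_C)$ from below by pulling back to $C'$, applying the twist $\xi$, and invoking nefness of the twisted pushforward, claiming the loss is $O(|\xi|)$ with $|\xi|$ uniformly bounded. But $|\xi|$ is \emph{not} uniformly bounded: the twist vector produced by Corollary \ref{cor:integral twist} (via Proposition \ref{prop:tbeta>=0 rat'l twist}) depends on the HN-filtration $\cF_C$, hence on $U$, the covering family, and the choice of birational pullback; nothing in $\alpha$, $\eta$, $T$, or the family $f$ constrains its magnitude. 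The paper circumvents this entirely in Lemma \ref{lem:reduction}: it replaces the family over $U$ by its $\xi$-twist along a Cartier section (after passing to covers of $U$ and $V$), which is \emph{still a birational pullback of $f$}, and only then applies the ampleness lemma. This is precisely why Theorem \ref{thm:ampleness lemma} is stated for arbitrary birational pullbacks with a universal $m$ — so that the twist can be absorbed into the family to which the lemma is applied, and the question of bounding $|\xi|$ never arises. Once the twist is absorbed, $\beta_\delta(\cF_C)\ge 0$ holds on the nose, so $W|_C$ is nef with $\mu_{\min}(W|_C)\ge 0$ and there is no loss term to estimate.

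Second, your degree bookkeeping ignores the boundary. You write $\deg\det(Q|_C)=A\deg\lambda_h+B_d$, but $Q=Q_0\oplus Q_1$ with $Q_1=g_*\cO_{D}(dL|_D)$, and $\deg\det(Q_1|_C)$ is governed by $(L_Y^n\cdot D_Y)$ via the Knudsen--Mumford expansion on $D_Y$. The CM degree controls $(L_Y^{n+1})$ (this is \eqref{eq:deg(L)}), but there is no a priori bound of the form $(L_Y^n\cdot D_Y)\le \text{const}\cdot\deg\lambda_h$: this is exactly the content of Lemma \ref{lem:CM>=deg L}, which rests on the perturbation argument of Lemma \ref{lem:pseff after perturb} (pseudo-effectivity of $-(K_{Y/C}+(1+\epsilon)\Delta_Y)+\gamma h^*\lambda_h$, obtained through a log canonical modification and Fujino's nefness theorem). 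Without this, your chain $\deg q^*H\le m\deg\det(Q|_C)-dqm\,\mu_{\min}(W|_C)\le A'\deg\lambda_h+B'$ breaks at the first inequality's right-hand side. This is an independent omission from the twist issue, and when $\Delta\neq 0$ it cannot be patched by absorbing constants into $H$.

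A minor further point: the paper uses the product trick in Lemma \ref{lem:deg L>=deg H} to extract a clean, purely multiplicative inequality $(L_Y^{n+1})+(L_Y^n\cdot\Delta_Y)\ge a_0\deg q^*H$, with no additive constants. Your $\mu_{\min}$ route introduces additive terms that you then try to absorb by rescaling $H$; this is delicate because the integer $m$ in the ampleness lemma itself depends on $H$, so the constants do not scale cleanly. The multiplicative form the paper obtains sidesteps this entirely.

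In short: apply the ampleness lemma to the twisted family rather than the original one (this eliminates the need to bound $|\xi|$), and you still must add the perturbation argument of \S\ref{ss-perturb} to control the boundary term $(L_Y^n\cdot\Delta_Y)$ by the CM degree.
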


As a first step, we use the twisted families introduced in Section \ref{sec:twisted family} to make the following reduction:

\begin{lem} \label{lem:reduction}
For the proof of Theorem \ref{thm:cm>0 framework}, we may assume that $\beta_\delta(\cF_C)\ge 0$.
\end{lem}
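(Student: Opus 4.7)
The plan is to replace $g\colon (X,\Delta)\to U$ by a birational pullback constructed via a finite cover of $U$ together with a $\xi$-twist, so that on the new general fiber the $\beta_\delta$-invariant of the HN-filtration is non-negative, and then deduce \eqref{eq:CM>=(H.C)} for the original $g$ from the corresponding statement for the new family.

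First, I will apply Corollary~\ref{cor:integral twist} to $h\colon (Y,\Delta_Y)\to C$ to obtain a positive integer $e$ and a vector $\xi\in N$ (depending only on $\eta,n,\alpha$ and the family $f$) such that for any degree-$e$ cover $\tilde C\to C$, the $\xi$-twist of the base change of $h$ to $\tilde C$ along a smooth point $P\in \tilde C$ has HN-filtration $\cG$ with $\beta_\delta(\cG)\ge 0$.

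Next, I will globalize this cover. Construct a generically finite dominant morphism $\Phi\colon \tilde U\to U$ of degree $e$ with $\tilde U$ smooth and projective; after enlarging $e$ by a divisible multiple if necessary, the generic fiber of the Stein factorization $\tilde u\colon \tilde U\to \tilde V\to V$ is a connected smooth curve $\tilde C$ realizing a degree-$e$ cover of the generic fiber $C$ of $u$. The composition $\tilde p:=p\circ\Phi\colon \tilde U\to S$ is dominant and generically finite, so $(\tilde u,\tilde p)$ is again a covering family of the form \eqref{eq:covering family}. Now pull back $g$ along $\Phi$ and then take its $\xi$-twist along a Cartier divisor $A$ on $\tilde U$ whose restriction to the generic fiber of $\tilde u$ is a reduced smooth point; such an $A$ can be taken as a general member of a sufficiently ample linear system on $\tilde U$, at the possible cost of replacing $\xi$ by an integer multiple (which is still admissible by Corollary~\ref{cor:integral twist}). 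The resulting $\tilde g\colon (\tilde X,\tilde\Delta)\to \tilde U$ is a birational pullback of $f$ (composition of a pullback and a twist), carries a fiberwise $T$-action, and has fibers isomorphic to those of the pullback of $g$; in particular condition $(*)_d$ continues to hold with the same integers $r,d$, and by construction the HN-filtration of $\tilde g$ on the generic fiber $\tilde C$ coincides with $\cG$, so $\beta_\delta(\cF_{\tilde C})=\beta_\delta(\cG)\ge 0$.

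Finally, I will compare CM degrees. Since general fibers are K-semistable, Corollary~\ref{cor:CM deg after twist} gives $\lambda_{\tilde g,\tilde\Delta}\sim_{\bQ}\Phi^*\lambda_{g,\Delta}$. Restricting to a general fiber and using $\deg(\tilde C\to C)=e$, both $\deg(\lambda_{\tilde g,\tilde\Delta}|_{\tilde C})$ and $\deg(\tilde p^*H|_{\tilde C})$ equal $e$ times their counterparts for $g$. Hence \eqref{eq:CM>=(H.C)} for $\tilde g$ with constant $c_0$ implies the same inequality for $g$ with the \emph{same} $c_0$, which is legitimate because the constant in Theorem~\ref{thm:cm>0 framework} depends only on $f,H,r,d$ and not on the birational pullback. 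The main technical obstacle will be the globalization step: producing $\tilde U\to U\to V$ with the prescribed degree on general fibers and arranging the auxiliary divisor $A$ to restrict to a reduced point on the generic fiber of $\tilde u$. Once the set-up is in place, the remainder is formal, relying only on transitivity of birational pullbacks and on the two cited corollaries.
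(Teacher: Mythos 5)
Your overall strategy---reduce to the twisted case via Corollary~\ref{cor:integral twist}, globalize the fiber cover and twist to a new birational pullback of $f$, then compare CM degrees---is the same as the paper's. But there are genuine gaps in the construction steps, which you yourself flag as ``the main technical obstacle'' without resolving.

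The construction of $A$ is incorrect as stated: you want $A$ to restrict to a \emph{single} reduced point on the generic fiber, but a general member of a sufficiently ample linear system on $\widetilde{U}$ is a multi-section of $\widetilde{u}$, so $A|_{\widetilde{C}}$ has degree $d>1$. The HN-filtration induced on $R$ by the $\xi$-twist of the family over $\widetilde{C}$ along a degree-$d$ divisor coincides with that of the $(d\xi)$-twist along one point; hence to recover the filtration $\cG$ from Corollary~\ref{cor:integral twist} you would need to \emph{divide} by $d$, i.e.\ take $\xi=\xi_0/d$, and then buy integrality by enlarging the cover degree $e$. Your phrase ``replacing $\xi$ by an integer multiple'' goes in the wrong direction and does not fix this. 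Likewise, the globalization is imprecise: a degree-$e$ generically finite morphism $\widetilde{U}\to U$ need not restrict to a degree-$e$ cover of the generic fiber $C$, since the degree can split between the fiber and base directions. The paper handles both issues at once by \emph{also} replacing $V$ by a generically finite cover, so that $u$ admits an honest section $A$ whose image is Cartier on $U$; then $A|_C$ is automatically one reduced point and no rescaling of $\xi$ is needed. (On the positive side, you correctly invoke Corollary~\ref{cor:CM deg after twist} to compare the CM degrees of the twisted and untwisted families; this step is needed, and the paper leaves it implicit.)
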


\begin{proof}
First note that we are free to replace $U$ by a finite cover or $V$ by a generically finite cover (and then replace $U$ and $(X,\Delta)$ by its corresponding base change): in either case we multiply the CM degree and $\deg q^*H$ by the same constant. As $(X_t,\Delta_t)$ is reduced uniformly K-stable with slope at least $\eta$ and maximal torus $T$ for some closed point $t\in C$, after replacing $U$ by a finite cover of sufficiently divisible degree, we may assume by Corollary \ref{cor:integral twist} that there exists some $\xi\in N=\Hom(\bG_m,T)$ such that $\beta_\delta(\cG)\ge 0$ where $\cG$ is the HN-filtration on $R=R(X_t,-r(K_{X_t}+\Delta_t))$ induced by the $\xi$-twist of $(Y,\Delta_Y)$ along a smooth point $P\in C$. Replacing $V$ by a generically finite cover, we may assume that $u$ admits a section $A$ which is Cartier as a divisor on $U$. Let $(X_\xi,\Delta_\xi)$ be the $\xi$-twist of $(X,\Delta)$ along $A$, then $(X_\xi,\Delta_\xi)\times_U C$ coincides with the $\xi$-twists of $(Y,\Delta_Y)$ along the smooth point $A\cap C$. Thus after replacing $(X,\Delta)$ by $(X_\xi,\Delta_\xi)$ (which is still birational to the pullback of $f$), we may assume (by our choice of $\xi$) that $\beta_\delta(\cF_C)\ge 0$ and this completes the proof.
\end{proof}

In view of Lemma \ref{lem:reduction}, we will henceforth add the following assumption to Notation \ref{no-setup}:
\begin{enumerate}
\item[(7)] We may assume  $\beta_\delta(\cF_C)\ge 0$.
\end{enumerate}

\subsection{Product trick}


The proof of Theorem \ref{thm:cm>0 framework} eventually boils down to comparing both sides of \eqref{eq:CM>=(H.C)} to certain degrees of $L_Y$. In this subsection, we combine the ampleness lemma with the product trick as in \cite[$\S$7]{KP17} or \cite[$\S$6.3]{Pos19} to provide the first part of the comparison.

\begin{lem} \label{lem:deg L>=deg H}
In the situation of Notation \ref{no-setup} $($in particular $\beta_\delta(\cF_C)\ge 0)$, there exists some constant $a_0>0$ depending only on the family $f$, the line bundle $H$ and the integers $r,d$ such that
\[
(L_Y^{n+1}) + (L_Y^n\cdot \Delta_Y) \ge a_0 \cdot \deg q^*H.
\]
\end{lem}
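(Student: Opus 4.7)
\smallskip

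My plan is to apply the birational ampleness lemma, Theorem~\ref{thm:ampleness lemma}, with the ample line bundle $H$ on $S$ and with $g\colon(X,\Delta;L)\to U$ viewed as a birational pullback of $f$ along the composition $U\to V\to S$. Restricting to the geometric generic curve $C$ of the covering family $u\colon U\to V$, the theorem produces an integer $m$ depending only on $d,H$ and $f$ together with a non-zero map of coherent sheaves on $C$
\[
\Sym^{dqm}\bigl(W_C^{\oplus 4w}\bigr)\;\longrightarrow\;\cO_C(-q^*H)\otimes(\det Q_C)^{\otimes m}.
\]
Because $C$ is a smooth curve and we are in characteristic zero, we have $\mu_{\min}(\Sym^{dqm}(W_C^{\oplus 4w}))=dqm\cdot\mu_{\min}(W_C)$, and the existence of a non-zero map to a line bundle forces
\begin{equation*}
\deg q^*H\;\le\;m\cdot\deg\det Q_C\,-\,dqm\cdot\mu_{\min}(W_C).
\end{equation*}
It thus suffices to bound each term on the right by $C\bigl((L_Y^{n+1})+(L_Y^n\cdot\Delta_Y)\bigr)+C'$ for constants depending only on $f,H,r,d$.

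For the first term, $\deg\det Q_C=\deg h_*\cO_Y(dL_Y)+\deg(h|_{D_Y})_*\cO_{D_Y}(dL_Y|_{D_Y})$. Using condition $(*)_d$ to obtain vanishing of higher cohomology, asymptotic Riemann--Roch (equivalently, the Knudsen--Mumford expansion evaluated at the fixed integer $d$) expresses this as a linear combination of $(L_Y^{n+1})$ and $(L_Y^n\cdot D_Y)$ with sub-leading corrections involving intersections of $L_Y$ with $K_{Y/C}$ and $K_{D_Y/C}$. Since $r,d$ are fixed and $D_Y=\Supp\Delta_Y$, this yields the required bound on $\deg\det Q_C$, up to an additive constant depending on the fibre invariants alone.

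The serious input goes into bounding $-\mu_{\min}(W_C)$, and here the deliberate choice $L=-r(K_{X/U}+\Delta)+2r\gamma\,g^*\lambda_{g,\Delta}$ plays its role. By the projection formula,
\[
W_C\;\cong\;h_*\cO_Y\bigl(-r(K_{Y/C}+\Delta_Y)\bigr)\otimes\cO_C\bigl(2r\gamma\,\lambda_{h,\Delta_Y}\bigr).
\]
The standing hypothesis $\beta_\delta(\cF_C)\ge0$ (Notation~\ref{no-setup}(7)) together with Proposition~\ref{prop:nef threshold} gives the nefness of $-(K_{Y/C}+\Delta_Y)+\gamma\,h^*\lambda_{h,\Delta_Y}$, which by Proposition~\ref{p-supp} translates into $\lambda_{\min}(\cF_C)\ge -r\gamma\deg\lambda_{h,\Delta_Y}$ at the asymptotic (i.e.\ $m\to\infty$) level. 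A Fujita-type positivity argument based on \cite{CP18}*{Proposition 6.4} applied after a small $h^*$-ample perturbation, or equivalently the product-trick of \cite{KP17}*{\S 7} (replacing $g$ by its $N$-fold fibre product and exploiting $\mu_{\min}(W_C^{\otimes N})=N\mu_{\min}(W_C)$ together with the Künneth formula for the pushforwards), upgrades the asymptotic bound to a pointwise estimate $\mu_{\min}\bigl(h_*\cO_Y(-r(K_{Y/C}+\Delta_Y))\bigr)\ge -r\gamma\deg\lambda_{h,\Delta_Y}-C_3$ with $C_3$ depending only on $f,r$. Tensoring with $\cO_C(2r\gamma\,\lambda_{h,\Delta_Y})$ and using $\deg\lambda_{h,\Delta_Y}\ge0$ (K-semistability, Corollary~\ref{cor:cm>=0}) gives $-dqm\cdot\mu_{\min}(W_C)\le dqm\,C_3$.

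Substituting both estimates back yields $\deg q^*H\le C_4\bigl((L_Y^{n+1})+(L_Y^n\cdot\Delta_Y)\bigr)+C_5$ with $C_4,C_5$ depending only on $f,H,r,d$. The remaining additive constant $C_5$ is absorbed via the direct identity
\[
(L_Y^{n+1})\;=\;r^{n+1}\cdot\frac{\delta+1}{\delta-1}\cdot\deg\lambda_{h,\Delta_Y},
\]
which follows from $(h^*\lambda_{h,\Delta_Y})^2=0$, the intersection formula $\lambda_{h,\Delta_Y}=-h_*(-(K_{Y/C}+\Delta_Y))^{n+1}$, and the defining relation $\gamma=\delta/((\delta-1)(n+1)v)$: since both sides scale linearly under a finite base change $C'\to C$ while $C_5$ stays bounded, one either has $\deg q^*H\le 2C_5$ (in which case $a_0$ can be enlarged to accommodate it) or the leading term dominates. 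The main obstacle in this plan is the slope estimate in the third paragraph --- passing from the asymptotic bound on $\lambda_{\min}(\cF_C)$ to a pointwise estimate on $\mu_{\min}(W_C)$ at level~$1$ --- which is precisely where the product trick becomes essential and forms the technical heart of the argument.
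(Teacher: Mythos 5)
Your approach differs genuinely from the paper's --- you want to conclude from the slope inequality $\deg q^*H\le m\,\deg\det Q_C - dqm\,\mu_{\min}(W_C)$ --- but the second paragraph has a real gap. You assert that (via Riemann--Roch or the Knudsen--Mumford expansion at $m=d$) the degree $\deg\det Q_C$ is a linear combination of $(L_Y^{n+1})$ and $(L_Y^n\cdot D_Y)$ up to sub-leading corrections involving only intersections of $L_Y$ with $K_{Y/C}$ and $K_{D_Y/C}$, and that these are bounded by fiber invariants. This fails for $n\ge 2$: the Knudsen--Mumford coefficients $\cM_i$ with $i\le n-1$ involve intersections of $L_Y^{\,i}$ with higher Todd classes of the total space $Y$, which are \emph{not} determined by the fiber, are not controlled by the leading-order intersection numbers, and --- since $Y$ may be singular --- are only defined via Fulton's singular Riemann--Roch. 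The product trick in the paper is precisely what circumvents this: by embedding $\det(Q)\hookrightarrow\nu_*\cO_Z(dL_Z)$ over the fiber product $Z = X^{(q_0)}\times_U D^{(q_1)}$ and pushing the ampleness-lemma map through, one gets a non-zero map $\nu^*(W|_C)^{\otimes l}\to\cO_{Z'}(dmL'-\nu^*q^*H)$ on a component $Z'$ of $Z_C$, whence (using nefness of $W|_C$ and $L'$) $dmL'-\nu^*q^*H$ is pseudo-effective; the ensuing \emph{top self-intersection} estimate $(dm+1)^{\dim Z'}\vol(L')\ge\vol(L'_t)\cdot\deg q^*H$ compares $\deg q^*H$ to $\vol(L')$, which expands as a positive linear combination of only $(L_Y^{n+1})$ and the $(L_Y^n\cdot\Delta_Y^i)$ with coefficients built from fiber-level intersection numbers --- no sub-leading Chern corrections ever appear. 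This is exactly the step your proposal skips.

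A secondary remark: your third paragraph misidentifies the technical heart and overcomplicates it. Once $\beta_\delta(\cF_C)\ge 0$, Proposition~\ref{prop:nef threshold} and Corollary~\ref{cor:cm>=0} give that both $L_Y$ and $L_Y-(K_{Y/C}+\Delta_Y)$ are nef and $h$-ample, and then $W_C=h_*\cO_Y(L_Y)$ is nef directly by \cite{CP18}*{Proposition 6.4}. So $\mu_{\min}(W_C)\ge 0$ at level~$1$ requires no asymptotic-to-pointwise upgrade, no $h^*$-ample perturbation, and no product trick; the fiber product enters precisely where your argument is missing a step, not where you placed it. (Your closing scaling argument to absorb the additive constant also breaks when $\Delta=0$ and $\deg\lambda_{h,\Delta_Y}$ is small, since then $(L_Y^{n+1})+(L_Y^n\cdot\Delta_Y)$ degenerates to zero while $\deg q^*H>0$; the paper's chain $\deg q^*H\lesssim\vol(L')\lesssim\deg\lambda_{h,\Delta_Y}$ automatically handles this.)
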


\begin{proof}
Let $D_0=X$, $D_1=D=\Supp(\Delta)$, $W=g_*\cO_X(L)$, $Q_i=g_*\cO_{D_i}(dL|_{D_i})$ ($i=0,1$) and $Q=Q_0\oplus Q_1$. Since $g|_D$ is flat over the codimension one point of $U$, we may shrink $V$ and assume that $g|_D$ is flat. As all the fibers $(X_u,\Delta_u;L_u)$ satisfy condition $(*)_d$, $W$ and $Q$ are locally free and their formation commutes with base change. By Theorem \ref{thm:ampleness lemma}, there exists $l,m\in\bN_+$ depending only on $r,d,H$ and the family $f$ such that there exists a non-zero map
\begin{equation}\label{e-ample}
W^{\otimes l} \to \cO_U(-p^*H)\otimes \det(Q)^{\otimes m}.
\end{equation}
We claim that $W|_C$ is nef. Indeed, since $\beta_\delta(\cF_C)\ge 0$, we see that 
$$-(K_{Y/C}+\Delta_Y)+\gamma h^*\lambda_{h,\Delta_Y}$$ 
is nef and $\deg \lambda_{h,\Delta_Y}\ge 0$ by Corollary \ref{cor:cm>=0} and Proposition \ref{prop:nef threshold}, hence $L_Y$ and 
\[
L_Y-(K_{Y/C}+\Delta_Y)=(r+1)(-(K_{Y/C}+\Delta_Y)+\gamma h^*\lambda_{h,\Delta_Y})+(r-1)\gamma h^*\lambda_{h,\Delta_Y}
\]
are both nef and $h$-ample on $Y$. It follows that $W|_C=h_*\cO_Y(L_Y)$ is nef by \cite{CP18}*{Proposition 6.4}. Now let $q_i=\rk Q_i$ ($i=0,1$) and consider the product
\[
Z=D_0^{(q_0)}\times_U D_1^{(q_1)}
\]
where we use the notation $X^{(a)}=X\times_U \cdots \times_U X$ ($a$ times) for a family $X\to U$. Since $g$ and $g|_D$ are both flat, the same holds for $\nu\colon Z\to U$ and it is not hard to see that $Z$ is reduced. Let $p_{ij}\colon Z\to D_i$ ($0\le i\le 1$, $1\le j\le q_i$) be the natural projections to factors. Let $L_Z=\bigotimes_{i,j} p_{ij}^*(L|_{D_i})$. Then by the flatness of $g|_{D_i}$ and the projection formula we have the equality $\nu_*\cO_Z(dL_Z)=\bigotimes_{i,j}Q_i$. Through the natural embeddings $\det(Q_i)\hookrightarrow \bigotimes_{j=1}^{q_i} Q_i$ we then get an embedding $\det(Q)\hookrightarrow \nu_*\cO_Z(dL_Z)$ over $U$ and hence by adjunction of $\nu_*$ and $\nu^*$ also a non-zero map $\nu^*\det(Q)\hookrightarrow \cO_Z(dL_Z)$. Composing with the map \eqref{e-ample} and restricting to $C$, we get a non-zero map
\begin{equation} \label{eq:H embed into L}
    \nu^*\left(W|_C^{\otimes l})\right) \to \cO_{Z_C}(dmL_Z-\nu^*q^*H)
\end{equation}
where $Z_C=Z\times_U C$. In particular, \eqref{eq:H embed into L} is non-zero on some irreducible component 
\[
Z'=\Delta^1\times_C\cdots\times_C \Delta^{q_0+q_1}
\] of $Z_C$, where each $\Delta^i$ is either $Y$ or an irreducible component of $\Delta_Y$. Let $p_i\colon Z'\to \Delta^i$ be the natural projections and let $L'=L_Z|_{Z'}$, then $L'=\bigotimes_{i=1}^q p_i^*(L_Y|_{\Delta^i})$. As $W|_C$ and $L_Y$ are both nef, by \eqref{eq:H embed into L} we see that $L'$ is nef and $dmL'-\nu^*q^*H$ is pseudo-effective on $Z'$, hence
\begin{align*}
    (dm+1)^{\dim Z'}\cdot \vol(L') & = \vol( (dm+1) L') \\
    & \ge \vol(L' + \nu^*q^*H) = \left( (L' + \nu^*q^*H)^{\dim Z'} \right) \\
    & \ge \left( (L')^{\dim Z' -1}\cdot \nu^*q^*H \right) = \vol(L'_t)\cdot \deg q^*H 
\end{align*}
where $t\in C$ is a general closed point.  It is clear that 
$$\vol(L'_t)=\prod_{i=1}^q \vol(L_Y|_{\Delta_t^i})$$ is bounded from below by some positive constant that only depends on the family $f$. Indeed we have $d_i:=\vol(L_Y|_{\Delta_t^i})=r^{n-1} (-K_{X_t}-\Delta_t)^{n-1}\cdot \Delta^i_t $ unless $\Delta^i=Y$, in which case $\vol(L_Y|_{\Delta_t^i})=r^n(-K_{X_t}-\Delta_t)^n$. Hence there exists some constant $a_1>0$ depending only on $r,d,H$ and the family $f$ such that
\[
\vol(L')\ge a_1\cdot \deg q^*H.
\]
On the other hand, it is straightforward to check that 
$$(L_Y^{n+1})+(L_Y^n\cdot \Delta_Y)\ge a_2\cdot \vol(L')$$ for some constant $a_2>0$ depending again only on $r,d,H$ and $f$ (indeed, $\vol(L')$ is a linear combination of $(L_Y^{n+1})$ and $(L_Y^n\cdot \Delta^i_Y)$ with positive coefficients depending on the various $d_i$; see e.g. \cite[(6.3.5.i)]{Pos19}). The lemma now follows immediately from the above two inequalities.
\end{proof}

\subsection{Perturbing the boundary} \label{ss-perturb}

In this subsection, we prove the other part of the comparison by perturbing the boundary.

\begin{lem} \label{lem:CM>=deg L}
In the situation of Notation \ref{no-setup} $($in particular $\beta_\delta(\cF_C)\ge 0)$, there exists some constant $b_0>0$ depending only on the family $f$ such that
\[
(L_Y^{n+1}) + (L_Y^n\cdot \Delta_Y) \le b_0\cdot \deg \lambda_{h,\Delta_Y}.
\]
\end{lem}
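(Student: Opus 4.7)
My approach has two parts: a direct binomial expansion of $(L_Y^{n+1})$, and a perturbation-of-the-boundary argument to bound $(L_Y^n\cdot\Delta_Y)$ from above. Set $M := -(K_{Y/C}+\Delta_Y)$, so $L_Y = rM + 2r\gamma\, h^*\lambda_{h,\Delta_Y}$. Since $h^*\lambda_{h,\Delta_Y}$ is pulled back from the smooth curve $C$, we have $(h^*\lambda_{h,\Delta_Y})^2 \equiv 0$, and the intersection formula for the CM line bundle on a curve gives $(M^{n+1}) = -\deg\lambda_{h,\Delta_Y}$. Substituting $\gamma v = \delta/((\delta-1)(n+1))$ into the binomial expansion yields
$$(L_Y^{n+1}) \,=\, r^{n+1}\bigl(2(n+1)\gamma v - 1\bigr)\deg\lambda_{h,\Delta_Y} \,=\, \frac{r^{n+1}(\delta+1)}{\delta-1}\deg\lambda_{h,\Delta_Y}.$$

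For the intersection $(L_Y^n\cdot\Delta_Y)$, consider the perturbed $\bQ$-divisor
$$\tilde L \;:=\; L_Y - r\epsilon\,\Delta_Y \;=\; -r\bigl(K_{Y/C}+(1+\epsilon)\Delta_Y\bigr) + 2r\gamma\, h^*\lambda_{h,\Delta_Y}.$$
The key claim is that $\tilde L$ is pseudo-effective on $Y$. Granting this, $L_Y$ is nef because $L_Y = r(M+\gamma\,h^*\lambda_{h,\Delta_Y}) + r\gamma\,h^*\lambda_{h,\Delta_Y}$ is the sum of the nef divisor from Proposition~\ref{prop:nef threshold} (applicable since $\beta_\delta(\cF_C)\ge 0$) and a nef pullback (as $\deg\lambda_{h,\Delta_Y}\ge 0$ by Corollary~\ref{cor:cm>=0}). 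Hence $L_Y^n$ is nef, and intersecting the pseudo-effective $\tilde L$ with $L_Y^n$ yields
$$0 \;\le\; (\tilde L\cdot L_Y^n) \;=\; (L_Y^{n+1}) - r\epsilon\,(L_Y^n\cdot\Delta_Y), \qquad\text{so}\qquad (L_Y^n\cdot\Delta_Y) \;\le\; \frac{r^n(\delta+1)}{\epsilon(\delta-1)}\deg\lambda_{h,\Delta_Y}.$$
Summing with the formula for $(L_Y^{n+1})$ gives the desired bound with
$b_0 = \tfrac{r^n(\delta+1)}{\delta-1}\bigl(r+\tfrac{1}{\epsilon}\bigr)$, a constant depending only on $r, n, \delta, \epsilon$, hence only on the family $f$.

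It remains to establish the pseudo-effectivity claim, which is the technical heart of the lemma. I would start from the proof of Proposition~\ref{prop:nef threshold}: for any rational $\lambda_0$ slightly larger than $\deg\lambda_{h,\Delta_Y}/((n+1)v)$, one obtains an effective $\bQ$-divisor $D \sim_\bQ M + \lambda_0\, h^*P$ such that $(Y_t,\Delta_{Y,t}+\delta D_t)$ is klt. The hypothesis $\epsilon < 1/(\delta-1)$ ensures that $\delta' := \delta - \epsilon(\delta-1) > 1$, and by boundedness of the family of K-semistable log Fano pairs parametrized by $f$, one can arrange that the perturbed pair $(Y_t,(1+\epsilon)\Delta_{Y,t}+\delta' D_t)$ is also klt for $\epsilon$ uniformly small. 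A positivity argument for direct images on this perturbed klt pair, combined with the fiberwise bigness of $-(K_{Y_t}+(1+\epsilon)\Delta_{Y,t})$ guaranteed by our choice of $\epsilon$ and the extra positivity room built into $L_Y$ through the doubled factor $2r\gamma$ (rather than $r\gamma$) of $h^*\lambda_{h,\Delta_Y}$, yields the required global pseudo-effective representative of $\tilde L$, following the perturbation blueprint of \cite{KP17, Pos19}.

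The hard part is precisely this last step. Naive candidates for effective representatives of $\tilde L$ (up to scale), such as $D - \epsilon\Delta_Y$, fail because a generic choice of $D$ avoids the support of $\Delta_Y$, so $D - \epsilon\Delta_Y$ is not effective. Moreover, a direct Fujita-positivity argument applied to the perturbed klt pair only produces pseudo-effectivity of a divisor of the form $-(K+a\Delta_Y)+c\,h^*\lambda_{h,\Delta_Y}$ with boundary coefficient $a<1$, which is exactly the wrong direction for an \emph{upper} bound on $(L_Y^n\cdot\Delta_Y)$. Overcoming this requires carefully combining the nef threshold output of Proposition~\ref{prop:nef threshold} with the bigness of $-(K_{Y_t}+(1+\epsilon)\Delta_{Y,t})$ to extract a genuine effective divisor whose $\Delta_Y$-coefficient exceeds $1$.
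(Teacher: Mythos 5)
Your outer argument matches the paper exactly: the binomial expansion giving $(L_Y^{n+1})=\frac{r^{n+1}(\delta+1)}{\delta-1}\deg\lambda_{h,\Delta_Y}$, the nefness of $L_Y$ via Proposition~\ref{prop:nef threshold} and Corollary~\ref{cor:cm>=0}, and the final intersection inequality $\epsilon\, r\,(L_Y^n\cdot\Delta_Y)\le (L_Y^{n+1})$ are all correct and are precisely what the paper does. The issue is that you have correctly reduced the lemma to the pseudo-effectivity of $-(K_{Y/C}+(1+\epsilon)\Delta_Y)+\gamma h^*\lambda_{h,\Delta_Y}$ (which in the paper is a separate Lemma~\ref{lem:pseff after perturb}) but you do not prove it, and your sketch of how one would is wrong on the key point.

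Specifically, the pair you propose, $(Y_t,(1+\epsilon)\Delta_{Y,t}+\delta'D_t)$ with $\delta'=\delta-\epsilon(\delta-1)$, is not known to be klt: the nef-threshold construction only gives $\lct(Y_t,\Delta_{Y,t};D_t)\ge\delta$, and raising the coefficient of $\Delta_{Y,t}$ above $1$ can destroy this bound. Moreover, $K_{Y/C}+(1+\epsilon)\Delta_Y+\delta'D$ is not $\bQ$-linearly equivalent to a multiple of $-(K_{Y/C}+(1+\epsilon)\Delta_Y)$ plus a pullback, so even if it were klt and its pushforward positive, you would not extract the divisor you need. The diagnosis in your final paragraph is also off: you claim a Fujino-type positivity applied to a pair with $\Delta$-coefficient $<1$ "points the wrong way," but in fact it is exactly what is wanted, because of a sign flip hidden in the $\bQ$-linear equivalence. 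Concretely, the paper takes a small $\bQ$-factorial modification $\pi\colon Z\to Y$, sets
\[
\Gamma=(1-\epsilon(\delta-1))\Delta_Z+\delta\,\pi^*D,
\]
which \emph{is} klt fiberwise (its $\Delta_Z$-coefficient is $<1$ and $\delta D$ is allowed by the lct bound), and observes the identity
\[
K_{Z/C}+\Gamma\ \sim_\bQ\ -(\delta-1)\,\pi^*\bigl(K_{Y/C}+(1+\epsilon)\Delta_Y\bigr)+\delta\lambda\,\phi^*P
\]
coming from $D\sim_\bQ -(K_{Y/C}+\Delta_Y)+\lambda h^*P$. Thus the $+\delta\pi^*D$ term converts $K_{Y/C}+\Delta_Y$ into $-(\delta-1)(K_{Y/C}+\Delta_Y)$, and the small deficit $-\epsilon(\delta-1)\Delta_Z$ is exactly what promotes the $\Delta_Y$-coefficient to $1+\epsilon$ on the right-hand side. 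One then applies Fujino's nefness theorem (\cite{Fuj17}) to the pushforward of pluricanonical sheaves of a log canonical modification of $(Z,\Gamma)$ to conclude $K_{Z/C}+\Gamma$ is pseudo-effective, pushes forward to $Y$, and lets $\lambda\to\deg\lambda_{h,\Delta_Y}/((n+1)v)$. This identity-and-sign-flip mechanism is the missing ingredient; without it your proof has a genuine gap at its acknowledged "technical heart."
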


A key ingredient is given by the following result.

\begin{lem} \label{lem:pseff after perturb}
In the situation of Notation \ref{no-setup},  $-(K_{Y/C}+(1+\epsilon)\Delta_Y) +\gamma h^*\lambda_{h,\Delta_Y}$ is pseudo-effective $($as a Weil $\bQ$-divisor$)$.
\end{lem}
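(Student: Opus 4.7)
The proof adapts the strategy of Proposition \ref{prop:nef threshold}. The key new idea is to pass to the pair with the \emph{reduced} boundary $(1-\epsilon')\Delta_Y$, where $\epsilon' := \epsilon(\delta-1) \in (0,1)$ by the assumption $\epsilon < 1/(\delta-1)$.

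First, by $\beta_\delta(\cF_C) \geq 0$, for each rational $\lambda$ slightly larger than $\deg\lambda_{h,\Delta_Y}/((n+1)v)$ the proof of Proposition \ref{prop:nef threshold} produces an effective $\bQ$-divisor $D \sim_\bQ -(K_{Y/C}+\Delta_Y)+\lambda h^*P$ on $Y$ such that $(X_t,\Delta_t+\delta D_t)$ is klt along the generic fiber. Since reducing the boundary preserves klt-ness, the pair $(X_t,(1-\epsilon')\Delta_t+\delta D_t)$ is again klt. A direct computation substituting $D \sim_\bQ -(K_{Y/C}+\Delta_Y)+\lambda h^*P$ then yields the central identity
\begin{equation*}
K_{Y/C} + (1-\epsilon')\Delta_Y + \delta D \sim_\bQ -(\delta-1)\bigl(K_{Y/C}+(1+\epsilon)\Delta_Y\bigr) + \delta\lambda\, h^*P,
\end{equation*}
for which the specific choice $\epsilon' = \epsilon(\delta-1)$ is essential: it is exactly what makes the $\Delta_Y$-coefficient on the right combine into $(1+\epsilon)$.

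Restricted to the generic fiber, the divisor $K_{Y/C}+(1-\epsilon')\Delta_Y+\delta D$ equals $-(\delta-1)(K_{X_t}+(1+\epsilon)\Delta_t)$, which is big by hypothesis. Applying semi-positivity for klt pairs (Viehweg--Fujino type, possibly after running a relative MMP via BCHM to convert $h$-bigness to $h$-semi-ampleness), the sheaf $h_*\cO_Y\bigl(m(K_{Y/C}+(1-\epsilon')\Delta_Y+\delta D)\bigr)$ is nef on $C$ for sufficiently divisible $m$. Combined with \cite{CP18}*{Proposition 5.7}, the divisor $m(K_{Y/C}+(1-\epsilon')\Delta_Y+\delta D)+2gh^*P$ is globally generated on $Y$, and letting $m\to\infty$ gives pseudo-effectivity of $K_{Y/C}+(1-\epsilon')\Delta_Y+\delta D$. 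Via the identity above and dividing by $\delta-1 > 0$, this shows
\begin{equation*}
-(K_{Y/C}+(1+\epsilon)\Delta_Y) + \frac{\delta\lambda}{\delta-1}\, h^*P \quad\text{is pseudo-effective.}
\end{equation*}
Finally, letting $\lambda\downarrow \deg\lambda_{h,\Delta_Y}/((n+1)v)$, the coefficient of $h^*P$ tends to $\delta\deg\lambda_{h,\Delta_Y}/((\delta-1)(n+1)v) = \gamma\deg\lambda_{h,\Delta_Y}$, and closedness of the pseudo-effective cone gives the claim.

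The main obstacle is the application of semi-positivity when $K_{Y/C}+(1-\epsilon')\Delta_Y+\delta D$ is only $h$-big rather than $h$-ample. This requires either a version of Fujino/Viehweg semi-positivity that accommodates $h$-bigness, or a preliminary step running BCHM on the klt pair to produce a relative canonical model on which the divisor becomes $h$-semi-ample so that standard semi-positivity applies; either route should be available since the klt and bigness hypotheses hold on every fiber over $S^\circ$.
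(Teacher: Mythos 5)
Your setup matches the paper's exactly: the auxiliary divisor $D$, the coefficient $\epsilon' = \epsilon(\delta-1)$, and the central identity
\[
K_{Y/C} + (1-\epsilon')\Delta_Y + \delta D \sim_\bQ -(\delta-1)\bigl(K_{Y/C}+(1+\epsilon)\Delta_Y\bigr) + \delta\lambda\, h^*P
\]
are all as in the paper's proof, and the final limit in $\lambda$ is the same. However, you underestimate the difficulty of the semi-positivity step, and the routes you sketch to close it contain a genuine gap.

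The obstacle is not only that $K_{Y/C}+(1-\epsilon')\Delta_Y+\delta D$ is $h$-big rather than $h$-ample; it is also that the pair $(Y,(1-\epsilon')\Delta_Y+\delta D)$ is klt only along \emph{general} fibers of $h$, not along special ones. (The divisor $D$ comes from lifting a general section in a Harder--Narasimhan piece, and its singularities over special points of $C$ are uncontrolled.) Your closing remark that ``the klt and bigness hypotheses hold on every fiber over $S^\circ$'' is therefore off base --- the relevant base is $C$, and klt-ness does fail over some points of $C$. Because of this you cannot run a relative MMP on $(Y,(1-\epsilon')\Delta_Y+\delta D)$ (that requires global klt or lc), nor can you directly invoke Viehweg--Fujino semi-positivity (that requires the pair to be at least lc on all of $Y$). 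Neither of your two suggested routes confronts this.

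What the paper actually does to bridge the gap: (i) pass to a small $\bQ$-factorial modification $\pi\colon Z\to Y$ via \cite{BCHM} and set $\Gamma=(1-\epsilon')\Delta_Z+\delta\pi^*D$; (ii) take a log canonical modification $\psi\colon W\to(Z,\Gamma)$ (existence from \cite{OX13}), so that $(W,\psi^{-1}_*\Gamma+{\rm Ex}(\psi))$ \emph{is} lc, and observe that the effective divisor $G$ defined by $\psi^*(K_Z+\Gamma)-G=K_W+\psi^{-1}_*\Gamma+{\rm Ex}(\psi)$ is supported on the locus where $(Z,\Gamma)$ fails to be lc, hence is \emph{vertical} over $C$; and (iii) apply Fujino's weak positivity theorem (\cite{Fuj17}*{Theorem 1.1}, which needs lc-ness but not relative ampleness) to the lc pair on $W$, then transfer nefness to $\cE_m=\phi_*\cO_Z(m(K_{Z/C}+\Gamma))$ via the exact sequence
\[
0\to(\phi\circ\psi)_*\cO_W\bigl(m(K_W+\psi^{-1}_*\Gamma+{\rm Ex}(\psi))\bigr)\to \cE_m \to G_m\to 0,
\]
whose cokernel $G_m$ is a skyscraper sheaf precisely because ${\rm Supp}(G)$ is vertical. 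A vector bundle on a curve containing a nef subsheaf of full rank with torsion quotient is itself nef, so $\cE_m$ is nef, and the argument concludes as you describe. The log canonical modification and the torsion-quotient exact sequence are the essential missing steps in your plan.
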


\begin{proof} 
We may assume that $\epsilon \in \bQ$. Since $(Y,\Delta_Y)$ is locally stable over $C$ and klt along a general fiber, it is klt. 
By \cite{BCHM}*{Corollary 1.4.4},  there exists a proper $\bQ$-factorial modification $\pi\colon Z\to Y$ which is small. Let $\Delta_Z$ be the birational transform of $\Delta_Y$ on $Z$. Denote by $\phi=h\circ \pi\colon Z\to C$.

As in the proof of Proposition \ref{prop:nef threshold}, for any rational number $\lambda>\frac{\deg \lambda_{h,\Delta_Y}}{(n+1)v}$, there exists some effective divisor $D\sim_\bQ -(K_{Y/C}+\Delta_Y)+ \lambda h^*P$ (where $P\in C$ is a smooth point) such that $(Y,\Delta_Y+\delta D)$ is klt along the general fibers of $h$. 
 It follows that the pair 
$$\big(Z, \Gamma := (1-\epsilon(\delta-1))\Delta_Z+\delta \pi^*D\big)$$ is also klt  along the general fibers of $\phi$ (note that $\Gamma$ is effective by our choice of $\epsilon$). Let $\psi\colon  W\to (Z,\Gamma)$ be a log canonical modification, whose existence follows from \cite{OX13}. Then $\phi$ is an isomorphism over a open set of $C$.
Write
$$\psi^*(K_Z+\Gamma)-G=K_W+\psi^{-1}_*\Gamma+{\rm Ex}(\psi),$$
then $G\ge 0$ by the definition and ${\rm Supp}(G)$ is vertical over $C$.

It is straightforward to verify that 
\[
K_{Z/C}+\Gamma\sim_\bQ -(\delta-1)\pi^*\big(K_{Y/C}+(1+\epsilon)\Delta_Y\big)+\delta\lambda \phi^*P
\]
and over a general closed point $t\in C$,
\[
K_{Z_t}+\Gamma_t=-(\delta-1)\pi^*(K_{X_t}+(1+\epsilon)\Delta_t),
\]
hence by our assumption $K_{Z_t}+\Gamma_t$ is big.  For any sufficiently large and divisible integers $m>0$, $\cE_m := \phi_*\cO_Z(m(K_{Z/C}+\Gamma))\neq 0$ and there is an exact sequence
$$0\to(\phi\circ\psi)_*(m(K_W+\phi^{-1}_*\Gamma+{\rm Ex}(\psi)))\to \cE_m \to G_m\to 0$$ 
for some skyscraper sheaf $G_m$.  
By \cite{Fuj17}*{Theorem 1.1}, we know $(\phi\circ\psi)_*(m(K_W+\phi^{-1}_*\Gamma+{\rm Ex}(\psi)))$ is a nef vector bundle, which implies that $\cE_m$ is nef. 

This means that for any ample line bundle $A$ on $C$ and any integer $a>0$, there exists some $b\in\bN_+$ such that $\Sym^{ab}(\cE_m)\otimes \cO_C(bA)$ is generically generated by global sections. Via the natural map
\begin{align*}
    \phi^*\left( \Sym^{ab}(\cE_m)\otimes \cO_C(bA)\right) & \to \Sym^{ab}(\phi^*\cE_m)\otimes \cO_Z(b \phi^*A) \\
     & \to \cO_Z(abm(K_{Z/C}+\Gamma)+b\phi^*A),
\end{align*}
it follows that $am(K_{Z/C}+\Gamma)+\phi^*A$ is effective. Letting $a\to \infty$ we see that $K_{Z/C}+\Gamma$ is pseudo-effective. Pushing forward to $Y$ and letting $\lambda\to \frac{\deg \lambda_{h,\Delta_Y}}{(n+1)v}$ we obtain the desired statement.
\end{proof}

\begin{proof}[Proof of Lemma \ref{lem:CM>=deg L}]
Recall that $L_Y$ is nef as in the proof of Lemma \ref{lem:deg L>=deg H}. It is not hard to check that
\begin{equation} \label{eq:deg(L)}
    (L_Y^{n+1})= (-K_{Y/C}-\Delta_Y)^{n+1} + (n+1)v\cdot 2\gamma \deg \lambda_{h,\Delta_Y} = \frac{\delta+1}{\delta-1}\deg \lambda_{h,\Delta_Y}
\end{equation}
as $\deg \lambda_{h,\Delta_Y} = - ((-K_{Y/C}-\Delta_Y)^{n+1})$. By Lemma \ref{lem:pseff after perturb}, $L_Y-\epsilon \Delta_Y$ is pseudo-effective, hence as $L_Y$ is nef we have $(L_Y^n\cdot (L_Y-\epsilon \Delta_Y))\ge 0$, or equivalently,
\begin{equation} \label{eq:deg(L|Delta)}
    \epsilon (L_Y^n\cdot \Delta_Y) \le (L_Y^{n+1}).
\end{equation}
Note that the constants $\delta$ and $\epsilon$ only depend on the family $f$, hence the result follows directly from \eqref{eq:deg(L)} and \eqref{eq:deg(L|Delta)}.
\end{proof}

\subsection{Proof of main results}

\begin{proof}[Proof of Theorem \ref{thm:cm>0 framework}]
This follows directly from Lemmas \ref{lem:reduction}, \ref{lem:deg L>=deg H} and \ref{lem:CM>=deg L}.
\end{proof}

We give some applications of Theorem \ref{thm:cm>0 framework}.

\begin{thm} \label{thm:cm>0 fix family}
Let $T$ be a torus, let $S$ be a normal projective variety and let $f\colon (X,\Delta)\to S$ be a generic log Fano family with maximal variation and a fiberwise $T$-action. Assume that very general fibers $(X_s,\Delta_s)$ are reduced uniformly K-stable and $T\subseteq \Aut(X_s,\Delta_s)$ is a maximal torus. Then the CM $\bQ$-line bundle $\lambda_{f,\Delta}$ on $S$ is big.
\end{thm}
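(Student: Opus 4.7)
The plan is to invoke Theorem \ref{thm:cm>0 framework} for every covering family of curves on $S$ with a \emph{uniform} constant, and then conclude bigness by the movable-curve criterion of \cite{BDPP}. First, I would fix the constants that feed into the framework. Pick any very general point $s_0\in S$ where $(X_{s_0},\Delta_{s_0})$ is reduced uniformly K-stable with $T\subseteq \Aut(X_{s_0},\Delta_{s_0})$ a maximal torus, and apply Proposition \ref{prop:slope at vg pt} to obtain a common $\eta>0$ such that very general fibers of $f$ are reduced uniformly K-stable with slope at least $\eta$. Boundedness of K-semistable log Fano pairs (Theorem \ref{c-kmoduli}) together with \cite{BL18}*{Theorem B} gives a lower bound $\alpha(X_s,\Delta_s)\ge \alpha>0$ for very general $s$. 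The resulting $\delta=\delta(\alpha,\eta,n)>1$ from Corollary \ref{cor:integral twist} and $\gamma = \delta/((\delta-1)(n+1)v)$ then depend only on $f$.

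Next, I would make uniform choices of $r$ and $d$. Using relative ampleness and boundedness, fix $r,d\in\bN_+$ with $r\gamma\in\bN$ so that $L := -r(K_{X/S}+\Delta) + 2r\gamma f^*\lambda_{f,\Delta}$ is Cartier and $f$-very ample, and every fiber $(X_s,\Delta_s;L_s)$ of $f$ satisfies condition $(*)_d$. Also fix an ample line bundle $H$ on $S$. For any covering family of curves $u\colon U\to V$ on $S$ with generically finite $p\colon U\to S$ as in Notation \ref{no-setup}(4), the strict base change $g\colon (X_U,\Delta_U)\to U$ is a generic log Fano family with a fiberwise $T$-action for which the pullbacks of $r, d, H$ still satisfy the hypotheses of Notation \ref{no-setup} (no birational modification is needed since $f$ is already defined over all of $S$, and condition $(*)_d$ is preserved under base change). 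Theorem \ref{thm:cm>0 framework} then produces a constant $c_0>0$ depending only on $f, H, r, d$---in particular independent of $U$, $V$, $p$---such that
$$\lambda_{f,\Delta}\cdot p_*[C] \;=\; \deg(\lambda_{g,\Delta_U}|_C) \;\ge\; c_0\deg(q^*H) \;=\; c_0\,H\cdot p_*[C],$$
where $C$ is the geometric generic fiber of $u$ and $q = p|_C$.

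Finally, I would conclude via \cite{BDPP}. After passing to a projective resolution $\widetilde{S}\to S$ if necessary (bigness is preserved under such pullback), the displayed inequality says that $\lambda_{f,\Delta}-c_0 H$ has non-negative intersection with the movable curve class $p_*[C]$ for every covering family of curves on $S$. Since such classes span the closed movable cone, the main theorem of \cite{BDPP} implies that $\lambda_{f,\Delta}-c_0 H$ is pseudo-effective, whence $\lambda_{f,\Delta}$ is the sum of a pseudo-effective and an ample class, and is therefore big. The principal technical hurdle is securing the uniform choice of $r$ and $d$---which rests on boundedness of the K-semistable locus and on base-change invariance of condition $(*)_d$---so that a single $c_0$ works for every covering family; once this uniformity is in hand, the BDPP duality does the rest.
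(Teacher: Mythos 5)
Your proposal is correct and takes essentially the same route as the paper's own proof: fix a uniform slope $\eta$ via Proposition \ref{prop:slope at vg pt}, fix $\gamma$, $r$, $d$ and an ample $H$ on $S$ so that Notation \ref{no-setup} is satisfied for the strict base change of $f$ along any covering family of curves, invoke Theorem \ref{thm:cm>0 framework} to get a constant $c_0>0$ independent of the covering family, and conclude that $\lambda_{f,\Delta}-c_0 H$ is pseudo-effective by \cite{BDPP}. Your extra remarks (passing to a resolution of $S$ before applying BDPP, and noting that condition $(*)_d$ is fiberwise and hence stable under base change) are accurate technical clarifications that the paper leaves implicit.
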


\begin{proof}
We verify the conditions in \S \ref{sec:setup for cm>0}. By assumption and Proposition \ref{prop:slope at vg pt}, there exists some $\eta>0$ such that the very general fibers $(X_s,\Delta_s)$ are reduced uniformly K-stable with slope at least $\eta$. Fix $\gamma\in\bQ_+$ as in Notation \ref{no-setup}(3). Let $r\in \bN_+$ be such that $L:=-r(K_{X/S}+\Delta) +2 r\gamma f^*\lambda_{f,\Delta}$ is Cartier and $f$-very ample and choose $d\in\bN_+$ be such that all the fibers $(X_s,\Delta_s;L_s)$ satisfies the condition $(*)_d$ from Definition \ref{defn:condition (*)_d}. Then for any covering family of curves as in \eqref{eq:covering family}, the family $g\colon (X_U,\Delta_U)=(X,\Delta)\times_S U\to U$ satisfies all the assumptions of Theorem \ref{thm:cm>0 framework}, hence by Theorem \ref{thm:cm>0 framework}, for any fixed ample line bundle $H$ on $S$, there exists some constant $c_0>0$ depending only on $r,d,H$ and the family $f$ such that $(\lambda_{f,\Delta}\cdot C)=(\lambda_{g,\Delta_U}\cdot C)\ge c_0\cdot (H\cdot C)$ where $C$ is a very general member of the covering family. Since the covering family is arbitrary, it follows that $\lambda_{f,\Delta}-c_0 H$ is pseudo-effective by \cite{BDPP} and hence $\lambda_{f,\Delta}$ is big.
\end{proof}

 Let $\cM^{\rm Kss}_{n,v,c}$ be the Artin stack defined in $\S$\ref{ss-kmoduli} and $\phi\colon \cM^{\rm Kss}_{n,v,c}\to M^{\rm Kps}_{n,v,c}$ the corresponding good moduli space (see Theorem \ref{c-kmodulipair}). Let $\Lambda_{\CM}$ be the CM $\bQ$-line bundle on $M^{\rm Kps}_{n,v,c}$ (see Proposition \ref{p-CMLa}).

\begin{thm} \label{thm:cm>0 subsp of K-moduli}
In the above notation, let $M\subseteq M^{\rm Kps}_{n,v,c}$ be a proper algebraic subspace such that for a very general point $s\in M$, the corresponding K-polystable log Fano pair  is reduced uniformly K-stable. Then $\Lambda_{\CM}|_M$ is big.
\end{thm}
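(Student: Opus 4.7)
The plan is to reduce to Theorem \ref{thm:cm>0 framework} by constructing a suitable family over a normal projective variety mapping generically finitely to $M$, and then to apply \cite{BDPP}.

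First I would reduce to the case that $M$ is irreducible, as bigness is preserved under finite unions. Choose a very general point $s \in M$ whose associated pair $(X_s, \Delta_s)$ is reduced uniformly K-stable and has automorphism group of minimal dimension among points of $M$, and let $T \subseteq \Aut(X_s, \Delta_s)$ be a maximal torus. By Proposition \ref{prop:slope at vg pt}, there exists $\eta > 0$ such that very general fibers over $M$ are reduced uniformly K-stable with slope at least $\eta$. Using the presentation $\cM^{\rm Kss}_{n,v,c} \cong [W/PGL(N+1)]$ from the proof of Theorem \ref{c-kmodulipair}, choose a linearization so that $T \subseteq PGL(N+1)$ acts on $(X_s, \Delta_s) \subseteq \bP^N$ in the prescribed way, let $W_M \subseteq W$ be the preimage of $M$, and let $Z$ be an irreducible component of the $T$-fixed locus $W_M^T$ passing through a preimage of $s$. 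By upper semicontinuity of $\dim \Aut$ on the K-polystable locus, $Z \to M$ is dominant, and the restriction of the universal family to $Z$ carries a fiberwise $T$-action.

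Next, I take a general complete intersection $S_0 \subseteq Z$ of dimension $\dim M$ through the chosen point of $Z$, and set $S$ to be the normalization of a projective compactification of $S_0$. Let $S^\circ$ be the open locus in $S$ parametrizing K-semistable log Fano pairs. Restricting the universal family to $S^\circ$ gives a generic log Fano family $f\colon (X^\circ, \Delta^\circ) \to S^\circ$ with fiberwise $T$-action, having maximal variation since $S_0 \to M$ is generically finite. By construction, very general fibers are reduced uniformly K-stable with slope at least $\eta$ and have $T$ as a maximal torus of their automorphism groups. By Proposition \ref{p-CMLa}, $\Lambda_S := \pi^* \Lambda_{\CM}|_M$ is a well-defined $\bQ$-line bundle on $S$ restricting to $\lambda_{f, \Delta^\circ}$ on $S^\circ$, where $\pi\colon S \to M$ is the induced morphism.

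Finally, I would apply Theorem \ref{thm:cm>0 framework} to every covering family of curves $u\colon U \to V$ on $S$ with $p\colon U \to S$ generically finite and dominant, after passing to a birational pullback $g\colon (X, \Delta) \to U$ of $f$ satisfying condition $(*)_d$ for suitable $r$ and $d$. Fixing an ample line bundle $H$ on $S$, Theorem \ref{thm:cm>0 framework} produces a constant $c_0 > 0$ depending only on $f$, $H$, $r$, and $d$ such that
\[
(\Lambda_S \cdot C) = \deg \lambda_{h, \Delta_Y} \ge c_0 \deg q^* H
\]
for the geometric generic curve $C$ of each such covering family. As $c_0$ is independent of the covering family, \cite{BDPP} implies $\Lambda_S - c_0 H$ is pseudo-effective and hence $\Lambda_S$ is big. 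Since $\pi\colon S \to M$ is generically finite, $\Lambda_{\CM}|_M$ is big as well.

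The main obstacle is the construction in step two: exhibiting a normal projective variety mapping generically finitely to $M$ together with a $T$-equivariant generic log Fano family. One must carefully verify that the component $Z$ of the $T$-fixed locus in $W_M$ dominates $M$ (relying on upper semicontinuity of $\dim \Aut$ and the very generality of $s$) and that the cut-down variety $S_0$ admits a projective compactification while retaining both the fiberwise $T$-action and maximal variation. Everything else is a fairly direct combination of Theorem \ref{thm:cm>0 framework} with the BDPP criterion.
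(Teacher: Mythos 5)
Your overall strategy---reduce to a generically finite cover $S\to M$, construct a $T$-equivariant generic log Fano family over an open subset, then feed covering families of curves into Theorem~\ref{thm:cm>0 framework} and invoke \cite{BDPP}---matches the paper's. But the two steps where you diverge from the paper's construction both have genuine gaps.

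\textbf{The dominance of $Z\to M$ is not a consequence of upper semicontinuity of $\dim\Aut$ alone.} You fix a specific subtorus $T\subseteq PGL(N+1)$ from the single fiber $(X_s,\Delta_s)$, and then cut out the locus $W_M^T$ in the parameter space. For a K-polystable point $[(X',\Delta')]$ to lie in the image of $W_M^T$, one needs $T$ to be $PGL(N+1)$-conjugate to a subtorus of $\Aut(X',\Delta')$. Constancy of $\dim\Aut$ does not control the torus rank, and even equal torus rank does not give $PGL(N+1)$-conjugacy: distinct tori of the same rank in $PGL(N+1)$ with different weight decompositions on $\bC^{N+1}$ are not conjugate. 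To salvage the claim one would have to argue separately that (i) over the dense open where $\dim\Aut$ is minimal the automorphism group scheme is smooth, so the torus rank is locally constant; (ii) the family of maximal tori over an irreducible base of closed orbits has constant weight decomposition on $\bC^{N+1}$, so all maximal tori are $PGL(N+1)$-conjugate; and (iii) some component of $W_M^T$ through an appropriately chosen preimage of $s$ then dominates. This is substantially more work than ``upper semicontinuity and very generality of $s$''. The paper avoids all of this by working with the abstract automorphism group scheme $\Aut_{S^\circ}(X^\circ,\Delta^\circ)$ over the base and splitting its maximal torus after a finite cover of $S$ (which is harmless since $S\to M$ need only be generically finite); the weight-decomposition and monodromy issues never arise because the torus is never pinned down inside $PGL(N+1)$.

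\textbf{Extending the family over the full covering curves $U$ is not addressed and is the more serious omission.} Notation~\ref{no-setup}(5) requires $g\colon(X,\Delta)\to U$ to be a generic log Fano family with fiberwise $T$-action over \emph{all} of $U$, agreeing with the pullback of $f$ only over an open subset. Your $f$ lives only on $S^\circ\subsetneq S$, so the pullback to $U$ is a priori only defined over the preimage of $S^\circ$; the naive closure of $S_0$ inside the Hilbert scheme would in general have non-normal or non-lc fibers, violating Definition~\ref{d-genericlog}. The paper deals with this by invoking \cite{AHLH18}*{Theorem A.8} (essentially S-completeness/properness of the good moduli morphism) to extend the punctured family over each curve after a finite base change, and then $\Theta$-reductivity of $\cM$ to extend the $T$-action across codimension-one points. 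Nothing in your argument plays this role, and without it the hypotheses of Theorem~\ref{thm:cm>0 framework} are not verified.

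So the route you propose is a genuine alternative in flavor (slicing the $T$-fixed Hilbert scheme rather than splitting the abstract automorphism group scheme), but as written it skips two non-trivial steps; the second one in particular cannot be waved away.
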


\begin{proof}
It suffices to show that $\pi^*\Lambda_{\CM}$ is big for any dominant generically finite morphism $\pi\colon S\to M$. We can also assume $S$ is a normal projective variety. 

Since $\phi\colon \cM: = \phi^{-1}(M) \to M$ is a good moduli space, \'etale locally around any point of $M$, it has the form $\phi_A\colon [{\rm Spec}(A)/G]\to {\rm Spec}(A^G)$ for a reductive group $G$ acting on an affine variety ${\rm Spec}(A)$. For the orbit geometry of $\phi_A$, see \cite{New78}*{Section 3.3}.
Let $\overline{\eta}(S)$ be the geometric generic point of $S$, i.e. $\overline{\eta}(S)={\rm Spec}(\overline{k(S)})$ for an algebraic closure of $k(S)\subset \overline{k(S)}$. Then the morphism $\overline{\eta}(S)\to S\to M$ has a lifting $\tilde{\pi}_{\eta}\colon\overline{\eta}(S)\to \cM$. Moreover,   replacing the image of $\tilde{\pi}_{\eta}$ by its K-polystable reduction, i.e. the unique closed point in the preimage  of  $\cM\times _M \overline{\eta}(S) $, we may assume the log Fano pair obtained by pulling back over  $\tilde{\pi}_{\eta}\colon \overline{\eta}(S) \to \cM$ is K-polystable. (We remark this uniqueness is proved in \cite{LWX18b}, and encoded in the good moduli space construction, see \cite{New78}*{Theorem 3.5.v} or \cite[Lemma 3.24]{AHLH18}). 
 This descends to a morphism ${\rm Spec}(T)\to \cM$, where
$k(S)\to k(T)$ is a finite extension. Spreading out ${\rm Spec}(T)\to \cM$, it implies we may replace $S$ by a finite cover, and  assume that there exists an open subset $S^\circ\subseteq S$ such that the map $\pi^\circ:=\pi|_{S^\circ}$ lifts to $\widetilde{\pi}^\circ\colon S^\circ\to \cM$. We may further assume (after possibly shrinking $S^\circ$) that for all $s\in S^\circ$, $\widetilde{\pi}^\circ (s)$ is the unique closed point lying over $\pi(s)\in M$, as the closed point locus form a constructible subset of $\cM$. Indeed the constructibility of such locus can be seen \'etale locally  on $[{\rm Spec}(A)/G]\to {\rm Spec}(A^G)$,
where the locus consists of $[V/G]$ for 
$$V:=\{x\in {\rm Spec} (A) \ | \ \dim (G\cdot x)\le \dim (G\cdot y), \mbox{ for any }y\in \phi_A^{-1}(\phi_A(x)) \}.$$
As a consequence, we get a $\bQ$-Gorenstein family $f\colon (X^\circ,\Delta^\circ)\to S^\circ$ of K-polystable log Fano pairs induced by the morphism $\widetilde{\pi}^\circ$. 
Since the automorphism functor $\Aut_{S^{\circ}}(X^{\circ},\Delta^{\circ})$ of (polarized) pairs is represented by an algebraic group scheme over $S^{\circ}$ (of finite type), shrinking $S^{\circ}$ we can assume $\Aut_{S^{\circ}}(X^{\circ},\Delta^{\circ})\to S^{\circ}$ is a smooth group scheme.
Then replacing by another finite cover and shrinking $S^\circ$, we may also assume that the maximal torus of $\Aut_{S^{\circ}}(X^{\circ},\Delta^{\circ})$ is split over $S^{\circ}$, i.e. that $f$ admits a fiberwise $T$-action ($T$ being a torus) such that $T\subseteq \Aut(X_s,\Delta_s)$ is a maximal torus. By assumption, any very general fiber $(X_s,\Delta_s)$ is reduced uniformly K-stable, hence by Proposition \ref{prop:slope at vg pt} we may find some $\eta>0$ such that a very general fiber $(X_s,\Delta_s)$ is reduced uniformly K-stable with slope at least $\eta$. 

Now fix the constant $\gamma\in\bQ_+$ as in Notation \ref{no-setup}(3) and choose $r,d\in\bN_+$ such that
\begin{enumerate}
    \item $r\gamma \pi^*\Lambda_\CM$ is Cartier,
    \item for any K-semistable log Fano pairs $(X,\Delta)$ with $\dim X=n$, $\vol(-K_X-\Delta)=v$ and $\Delta=cD$ for some integral Weil divisor $D$, we have $-r(K_X+\Delta)$ is Cartier and very ample and the triple $(X,\Delta;-r(K_X+\Delta))$ satisfies the condition $(*)_d$ from Definition \ref{defn:condition (*)_d} (this is possible since the set of such log Fano pairs is bounded by \cite{Jia17,Che18, LLX18}).
\end{enumerate}
Let $H$ be a fixed ample line bundle on $S$. Let $V\leftarrow U\to S$ be a covering family of curves as in \eqref{eq:covering family}. By \cite{AHLH18}*{Theorem A.8}, after possibly replacing $U$ by a finite cover and shrinking $V$, we may extend the birational pullback of $f$ to a $\bQ$-Gorenstein family $g\colon (X,\Delta)\to U$ of K-polystable log Fano pairs. In addition, since $\cM$ is $\Theta$-reductive (see \cite{AHLH18}*{Definition 3.10} and \cite{ABHX19}*{Theorem 1.1}, this is part of the requirement for a stack to have a good moduli space) and $[(X_u,\Delta_u)]\in \cM$ is a closed point for every $u\in U$, every $\bG_m$-action on the generic fiber of $g$ induces a $\bG_m$-action on $(X_u,\Delta_u)$ for every codimension one point $u\in U$. In particular, after possibly shrinking $V$ we may assume that the family $g$ has a fiberwise $T$-action. Note that we also have $\pi_U^*\Lambda_\CM = \lambda_{g,\Delta}$ where $\pi_U\colon U\to M$ is the induced map. By our choice of $r$ and $d$, it is clear that the family $g$ satisfies the assumptions of Theorem \ref{thm:cm>0 framework} and hence there exists some constant $c_0>0$ depending only on $r,d,H$ and the family $f$ such that 
$$(\pi^*\Lambda_\CM\cdot C)=(\lambda_{g,\Delta}\cdot C)\ge c_0\cdot (H\cdot C).$$ Since the covering family of curves is arbitrary, it follows that $\pi^*\Lambda_\CM - c_0 H$ is pseudo-effective and therefore $\pi^*\Lambda_\CM$ is big as desired.
\end{proof}

The following is a natural generalization of Theorem \ref{t-main2} into a log version.
\begin{thm} \label{t-main2log}
In the above notation, let $M\subseteq M^{\rm Kps}_{n,v,c}$ be a proper algebraic subspace such that every geometric point $s\in M$ parametrizes a reduced uniformly K-stable log Fano pair. Then $\Lambda_{\CM}|_M$ is ample.
\end{thm}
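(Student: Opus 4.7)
The strategy is to combine the bigness result Theorem \ref{thm:cm>0 subsp of K-moduli} with a Nakai--Moishezon-type ampleness criterion for proper algebraic spaces. The key observation is that the hypothesis ``every geometric point parametrizes a reduced uniformly K-stable log Fano pair'' is automatically inherited by every irreducible closed subspace $M'\subseteq M$. Therefore Theorem \ref{thm:cm>0 subsp of K-moduli} applied with $M$ replaced by $M'$ yields bigness of $\Lambda_{\CM}|_{M'}$ for \emph{every} such $M'$; ampleness of $\Lambda_{\CM}|_M$ will then follow once nefness is established.

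For the nefness step, I would restrict to an arbitrary irreducible curve $C\subseteq M$ and pass to a generically finite cover $C'\to C$ with $C'$ smooth projective, exactly as in the opening reduction of the proof of Theorem \ref{thm:cm>0 subsp of K-moduli} (using $\Theta$-reductivity of $\cM^{\rm Kss}_{n,v,c}$ and the good moduli space construction from \cite{ABHX19, AHLH18}), to obtain a generic log Fano family $f\colon (X,\Delta)\to C'$ whose geometric fibers are K-polystable, hence in particular K-semistable. Corollary \ref{cor:cm>=0} then gives $\deg \lambda_{f,\Delta}\ge 0$, and since $\pi^*\Lambda_{\CM}=\lambda_{f,\Delta}$ where $\pi\colon C'\to M$ is the induced map, we conclude $(\Lambda_{\CM}\cdot C)\ge 0$. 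Running this over every curve in $M$ shows $\Lambda_{\CM}|_M$ is nef.

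With nefness in hand, I would invoke the Nakai--Moishezon criterion for proper algebraic spaces (cf.\ \cite{Kol90}, extended via Chow's lemma to algebraic spaces as in \cite{KP17, PX17}): a nef $\bQ$-line bundle on a proper algebraic space is ample if and only if its restriction to every irreducible closed subspace $Z$ has positive top self-intersection. For each such $Z\subseteq M$, the restriction $\Lambda_{\CM}|_Z$ is nef and, by the first paragraph above, big; hence $(\Lambda_{\CM}|_Z)^{\dim Z}>0$ by standard intersection theory for nef and big line bundles. This supplies exactly the input the ampleness criterion demands.

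I expect the main obstacle to lie not in the argument itself, which is essentially a short Noetherian induction packaged inside Nakai--Moishezon, but rather in checking that all the auxiliary constructions (finite covers lifting through the good moduli map, extension of the family to the smooth projective completion of $C'$, and the algebraic-space-level ampleness criterion) are available at the correct level of generality. All of these are by now standard in the moduli-of-varieties literature, having been developed in \cite{Kol90, KP17, PX17}, so the remaining task is essentially to assemble these ingredients; once that is done the theorem follows immediately from Theorem \ref{thm:cm>0 subsp of K-moduli} and the nefness established above.
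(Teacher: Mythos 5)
Your proposal is correct and follows essentially the same route as the paper: nefness of $\Lambda_{\CM}|_M$ from the semipositivity result (Corollary \ref{cor:cm>=0}, or \cite{CP18}*{Theorem 1.8}), bigness on every irreducible closed subspace from Theorem \ref{thm:cm>0 subsp of K-moduli} (noting the hypothesis is pointwise and thus inherited), and then the Nakai--Moishezon criterion. The paper's proof is just a terser version of the same argument.
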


\begin{proof}
Since $\Lambda_{\CM}|_M$ is nef by \cite{CP18}*{Theorem 1.8} (or Corollary \ref{cor:cm>=0}), this directly follows from Theorem \ref{thm:cm>0 subsp of K-moduli} and the Nakai-Moishezon criterion.
\end{proof}

Using analytic tools and Theorem \ref{t-logsmoothable},  we also have the following theorem. 

\begin{thm}\label{t-logample}
Let $k=\bC$.  Notation as in  Theorem \ref{t-logsmoothable}. Then the restriction of $\Lambda_{\CM}$ on $\overline{M}^{\rm sm, Kps}_{n,v,c}$  is ample.   
\end{thm}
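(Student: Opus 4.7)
The plan is to reduce the claim to Theorem \ref{t-main2log} and then invoke the analytic characterization of reduced uniform K-stability via K\"ahler--Einstein metrics, exactly in parallel with the deduction of Corollary \ref{c-smkps} from Theorem \ref{t-main2}. By Theorem \ref{t-logsmoothable} the space $\overline{M}^{\rm sm, Kps}_{n,v,c}$ is a proper algebraic subspace of $M^{\rm Kps}_{n,v,c}$, so Theorem \ref{t-main2log} will give the desired ampleness provided we can show that every geometric point of $\overline{M}^{\rm sm, Kps}_{n,v,c}$ parametrizes a reduced uniformly K-stable log Fano pair.

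To establish this, I would argue as follows. Let $(X, cD)$ be any K-polystable log Fano pair represented by a point of $\overline{M}^{\rm sm, Kps}_{n,v,c}$. By definition, $(X,cD)$ is a limit of a one-parameter family of log Fano pairs whose general member lies in the smooth smoothable locus $\cM^{\rm sm, Kss}_{n,v,c}$. The analytic input needed is that such $(X, cD)$ admits a (singular) K\"ahler--Einstein metric; this is the log analogue of \cite{CDS, Tia15}, which in the smoothable log setting is established in \cite{ADL19} (and related works). Once we have the existence of a K\"ahler--Einstein metric, Theorem \ref{thm:Li's valuative T-uKs} (the equivalence (2)$\Leftrightarrow$(4) of Li's theorem, valid over $k=\bC$) immediately gives that $(X,cD)$ is reduced uniformly K-stable.

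With every geometric point of $\overline{M}^{\rm sm, Kps}_{n,v,c}$ now known to parametrize a reduced uniformly K-stable log Fano pair, Theorem \ref{t-main2log} applied to $M = \overline{M}^{\rm sm, Kps}_{n,v,c} \subseteq M^{\rm Kps}_{n,v,c}$ yields that $\Lambda_{\CM}|_{\overline{M}^{\rm sm, Kps}_{n,v,c}}$ is ample, which is the desired conclusion.

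The main (non-routine) obstacle in this argument is the invocation of the analytic existence of K\"ahler--Einstein metrics on all K-polystable smoothable log Fano pairs; this is the place where the proof genuinely departs from the purely algebraic framework developed earlier in the paper, and it is precisely the analytic reason why the hypothesis of Theorem \ref{t-main2log} (reduced uniform K-stability) is automatic on $\overline{M}^{\rm sm, Kps}_{n,v,c}$. Everything else is a formal consequence of results already established: the properness of $\overline{M}^{\rm sm, Kps}_{n,v,c}$ (Theorem \ref{t-logsmoothable}), the log-version ampleness criterion (Theorem \ref{t-main2log}), and Li's valuative/analytic characterization of reduced uniform K-stability (Theorem \ref{thm:Li's valuative T-uKs}).
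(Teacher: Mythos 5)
Your proof is correct and follows essentially the same route as the paper: properness from Theorem \ref{t-logsmoothable}, existence of (weak conical) K\"ahler--Einstein metrics on all smoothable K-polystable log Fano pairs (the paper cites \cite{TW19} and \cite{ADL19}*{Theorem 3.6} for this, whereas you point to \cite{ADL19}), Li's equivalence to deduce reduced uniform K-stability of every geometric point, and then Theorem \ref{t-main2log}. The only difference is the precise citation for the analytic existence result, which does not affect the logic of the argument.
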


\begin{proof}
By Theorem \ref{t-logsmoothable}, $\overline{M}^{\rm sm, Kps}_{n,v,c}$ is known to be proper. By \cite{TW19} (see also \cite{ADL19}*{Theorem 3.6}), we know the log Fano pairs parametrized by $\bC$-valued points of  $\overline{M}^{\rm sm, Kps}_{n,v,c}$ all admit weak conical K\"ahler-Einstein metrics, thus they are reduced uniformly K-stable by \cite{Li19} (see also \cite{BBJ15}). Therefore, we conclude by Theorem \ref{t-main2log}.
\end{proof}

\appendix
\section{Reduced $\delta$-invariants}\label{a-reduced}
In this section, we develop a reduced version of $\delta$-invariant for log Fano pairs $(X,\Delta)$ with a torus group action. Results in this section are not needed in the main part of the current article. 

Throughout, we fix a torus group $T$ and let $N=\Hom(\bG_m,T)=M^*$. 
Let $(X,\Delta)$ be a log Fano pair with a $T$-action. Recall from \S \ref{ss-reduks} (see also \cite{Li19}*{\S 2.3}) that any $\xi\in N_{\bR}$ determines a valuation $\wt_{\xi}$ given by 
$$\wt_{\xi}\colon f=\sum_{\alpha\in M, f_{\alpha}\neq 0} f_{\alpha} \mapsto \min\langle \alpha, \xi  \rangle.$$

\begin{defn}
Using the notation from \S \ref{ss-reduks}, for any $T$-equivariant valuation $v$ which is not of the form $\wt_{\xi}$ and $A_{X,\Delta}(v)<\infty$, we can define  \footnote{We want to thank the referee for suggesting this definition.}
\begin{equation} \label{e-tdelta(v)}
    \delta_{X,\Delta,T}(v)=1+\sup_{\xi\in N_\bR}\frac{\beta_{X,\Delta}(v)}{S_{X,\Delta}(v_{\xi})}
\end{equation}
(or abbreviated as $\tdelta(v)$ if $(X,\Delta)$ is clear). We define the {\it $T$-reduced $\delta$-invariant} as
\begin{eqnarray}
\tdelta(X,\Delta):=\inf_{v} \delta_{X,\Delta,T}(v),
\end{eqnarray}
where $v$ runs through all $T$-equivariant valuations with $A_{X,\Delta}(v)<\infty$ which are not of the form $\wt_{\xi}$.
\end{defn}

\begin{rem} \label{r-tdelta>=1}
We are mostly interested in the case where $\Fut(\xi)=0$ for all $\xi\in N$, e.g. $(X,\Delta)$ is K-semistable. In this case, we have $\beta(v)=\beta(v_{\xi})$ by \cite{Li19}*{Proposition 3.12}, 
thus 
 $$ \delta_{X,\Delta,T}(v)=\sup_{\xi\in N_\bR}\frac{A_{X,\Delta}(v_{\xi})}{S_{X,\Delta}(v_{\xi})}\qquad{\mbox{and}}\qquad \tdelta(X,\Delta)=\inf_{\mu}\sup_{\xi} \frac{A_{X,\Delta}(v_{\mu,\xi})}{S_{X,\Delta}(v_{{\mu,\xi}})},$$
where in the second expression the first infimum runs through all non-trivial valuations $\mu$   on the Chow quotient $Z$ such that $A_{X,\Delta}(v_{\mu,{\xi}})<\infty$ for some (and equivalently any) $\xi \in {N}_{\bR}$, and the second supremum runs through all $\xi \in {N}_{\bR}$. 
\end{rem}

\begin{rem}
By Theorem \ref{thm:Li's valuative T-uKs}, a K-semistable log Fano pair $(X,\Delta)$ is reduced uniformly K-stable if and only if $\tdelta(X,\Delta)>1$ for some maximal torus $T$ in $\Aut(X,\Delta)$.
\end{rem}

\begin{rem} \label{r-max}
If $\beta_{X,\Delta}(v)\ge 0$ and $\Fut(\xi) = 0$ for all $\xi\in N$, then the supremum in \eqref{e-tdelta(v)} is a maximum. Indeed, 
by \cite{BJ17}*{Proposition 3.11}, we have
\begin{equation} \label{e-S>=J}
    S(v_\xi) \ge \frac{1}{n+1} T(v_\xi) \ge \frac{1}{n+1}\JNA(\cF_{v_\xi})
\end{equation}
where $n=\dim X$, hence by the properness estimate in \cite{Li19}*{Lemma 3.15}, we know that it suffices to take the supremum in \eqref{e-tdelta(v)} over a compact subset of $N_\bR$ and therefore it is achieved for some $\xi$ by the continuity of $\xi \mapsto S(v_\xi)$. 
\end{rem}

The above definition is a modification of the characterization of $\delta$-invariant given by \cite[Theorem C]{BJ17}. We want to prove the following theorem which is an analogue of \cite[Theorem 4.5]{BLX19}.

\begin{thm}\label{t-redqm}
Let $(X,\Delta)$ be a log Fano pair with a $T$-action. If $(X,\Delta)$ is K-semistable and $\tdelta(X,\Delta)= 1$, then it can be calculated by a $T$-invariant quasi-monomial valuation $v\neq \wt_{\xi}$, i.e. there exists a quasi-monomial $T$-invariant valuation $v$ which is not of the form $\wt_{\xi}$ and satisfies that
$$\frac{A_{X,\Delta}(v_\xi)}{S_{X,\Delta}(v_\xi)}=\tdelta(v)=\tdelta(X,\Delta)=1$$
for all $\xi \in N_\bR$.
\end{thm}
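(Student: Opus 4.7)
The plan is to mimic the minimizing-sequence argument of \cite{BLX19}*{Theorem 4.5} in a $T$-equivariant way, with an extra step ruling out that the limit valuation is of the form $\wt_\xi$.

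\emph{Step 1: Minimizing sequence.} By Remark~\ref{r-tdelta>=1} and K-semistability, for every $T$-invariant valuation $v$ with $v \neq \wt_\xi$ and $A_{X,\Delta}(v) < \infty$ we have $\tdelta(v) \ge A(v)/S(v) \ge \delta(X,\Delta) \ge 1$. Pick a sequence of such $v_i$ with $\tdelta(v_i) \to 1$; by Remark~\ref{r-max} (applicable as $\beta(v_i) \ge 0$), there exist $\zeta_i \in N_\bR$ attaining $\tdelta(v_i) = A(v_{i,\zeta_i})/S(v_{i,\zeta_i})$, and we replace $v_i$ by $v_{i,\zeta_i}$ (still $T$-invariant, not of the form $\wt_\xi$, with $A<\infty$). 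After this twist, $\xi = 0$ maximizes $A(v_{i,\xi})/S(v_{i,\xi})$ and $A(v_i)/S(v_i) = \tdelta(v_i) \to 1$.

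\emph{Step 2: Quasi-monomial limit.} Normalize $A(v_i) = 1$, so $S(v_i) \to 1$. Using a $T$-equivariant common log resolution $\pi:(Y,E)\to (X,\Delta)$ with $E$ a $T$-invariant SNC divisor, the Izumi estimates in \cite{BLX19}*{\S 4} (carried out $T$-equivariantly) imply that, after passing to a subsequence, each $v_i$ is quasi-monomial on a common stratum of $E$ with weight vectors in a compact subset, and hence converges to a $T$-invariant quasi-monomial valuation $v_\infty$. Upper and lower semicontinuity of $A$ and $S$, together with the uniform bound $A(v_{i,\xi})/S(v_{i,\xi}) \le \tdelta(v_i) \to 1$ for each fixed $\xi$, give $A(v_{\infty,\xi})/S(v_{\infty,\xi}) \le 1$; combined with the K-semistability lower bound this forces $A(v_{\infty,\xi})/S(v_{\infty,\xi}) = 1$ for every $\xi \in N_\bR$, so $\tdelta(v_\infty) = 1$.

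\emph{Step 3: $v_\infty$ is not of the form $\wt_\xi$ (the main obstacle).} The main difficulty is ruling out $v_\infty = \wt_\eta$ for some $\eta$: writing $v_i = v_{\mu_i, \eta_i}$ with non-trivial $\mu_i \in \Val(Z)$ (where $Z = X /\!/_{\mathrm{chow}} T$), a sequence with $\mu_i \to 0$ produces a toric limit even though each $v_i$ is not of the form $\wt_\xi$. To handle this, I would restrict the minimizing sequence to lie in the dual complex $\Sigma$ of $T$-invariant quasi-monomial valuations computing $\delta(X,\Delta) = 1$ (non-empty and $T$-invariant by the $T$-equivariant version of \cite{BLX19}*{Theorem 4.5}), and argue that the hypothesis $\tdelta(X,\Delta) = 1$ forces $\Sigma$ to contain points outside the locus $\Sigma_0 = \Sigma \cap \{\wt_\eta : \eta \in N_\bR\}$. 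An alternative approach is to reinterpret $T$-invariant valuations on $X$ not of the form $\wt_\xi$ as pairs $(\mu, \xi)$ with $\mu$ a non-trivial valuation on the Chow quotient $Z$, deduce a "$\delta=1$" type statement directly on $Z$ from $\tdelta(X,\Delta)=1$, and apply BLX19 on $Z$ to produce $\mu_\infty \in \Val(Z)$, pulling back to $v_{\mu_\infty, \xi_\infty}$ on $X$. The hard part is controlling the degeneration of the $\mu_i$ in $\Val(Z)$ under the reduced maximization constraint from Step~1, ensuring the limit remains transverse to the $T$-orbits.
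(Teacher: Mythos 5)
Your Step 1 is too weak: you allow yourself an arbitrary minimizing sequence of $T$-invariant valuations, but the convergence machinery of \cite{BLX19} requires the $v_i$ to be lc places of a \emph{single, fixed} $\bQ$-complement in order to force them onto one compact dual complex. Simply quoting ``Izumi estimates'' over a $T$-equivariant log resolution, with no fixed complement, will not pin the $v_i$ to a common stratum. The paper first proves (by a contradiction argument using special test configurations and equivariant MMP as in \cite{Li19}*{\S 4}) that the infimum $\tdelta(X,\Delta)=1$ can be approximated by \emph{divisorial} valuations that are lc places of $N$-complements for a uniform $N$ (Lemma~\ref{l-weakly}); boundedness of $N$-complements then gives a finite-type parameter space $B$, and Lemma~\ref{l-constructible} shows the function $\tdelta(\cdot)$ is constant along a family, so one can move every $E_i$ to a \emph{fixed} complement $(X,\Delta+\tfrac{1}{N}\cM_{b_0})$. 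This is the step your sketch is missing.

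Your Step 3 correctly identifies the crux --- the limit might be $\wt_\eta$ --- but neither of your two alternatives is a proof: you write ``argue that the hypothesis $\tdelta=1$ forces $\Sigma$ to contain points outside $\Sigma_0$'' and ``deduce a $\delta=1$-type statement on $Z$'', both of which are restatements of the problem rather than arguments. The paper's actual mechanism is concrete and quite different: (i) pre-twist each $E_i$ using Lemma~\ref{l-twistinvariant} (twists of lc places of complements remain lc places of complements) so that the $N_\bR$-component $\pi(\ord_{E_i})$ is zero; (ii) when taking the simultaneous fiberwise $T$-equivariant log resolution over $B$, resolve \emph{both} $(X\times B,\Delta\times B+\cM)$ \emph{and} the toric pair $(\oT,\Gamma)\times Z\times B$, so that the condition ``$\pi(\cdot)=0$'' (center not contained in any component of $g'^*(\Gamma\times Z)$) is visibly constant in $b\in B$; (iii) the limit in the dual complex then inherits $\pi(v)=0$ and $A(v)=1$, hence $v\neq\wt_\xi$. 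Without the complement boundedness and the auxiliary toric pair in the resolution, there is no control on the degeneration of $\mu_i\in\Val(Z)$, which is exactly the obstruction you named but did not resolve. As written, the proposal has a genuine gap in both Step~2 (no mechanism forcing a common complement/dual complex) and Step~3 (no mechanism forcing transversality of the limit).
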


We first prove a number of lemmas. To this end, let $(X,\Delta)$ be a log Fano pair (not necessarily K-semistable), let $r$ be a positive integer such that $-r(K_X+\Delta)$ is Cartier and let $R$ be the section ring $R(X,-r(K_X+\Delta))$.

\begin{lem}\label{l-twistlct}
Let $\cF$ be a $T$-equivariant filtration on $R$ and $\mu=\mu(\cF)$.  Let $v$ be a $T$-invariant valuation that computes the log canonical threshold of $I^{(\mu)}_{\bullet}(\cF)$.
Then $\mu=\mu(\cF_{\xi})$ and $v_{\xi}$ computes the log canonical threshold of $I_{\bullet}^{(\mu)}(\cF_{\xi})$.
\end{lem}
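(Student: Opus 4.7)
The plan is to prove the lemma by making explicit how the twist operation interacts with both sides of the log canonical threshold computation, via the weight decomposition induced by the $T$-action.

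First, I unpack the structure of the base ideals under the weight decomposition. Since $\cF$ is $T$-equivariant, each piece $\cF^\lambda R_m$ decomposes as $\bigoplus_{\alpha \in M}(\cF^\lambda R_m)_\alpha$, and correspondingly the base ideal decomposes as $I_{m,\lambda}(\cF) = \sum_{\alpha} I_{m,\lambda,\alpha}(\cF)$, where $I_{m,\lambda,\alpha}(\cF)$ is the ideal sheaf generated by the weight-$\alpha$ sections in $(\cF^\lambda R_m)_\alpha$. Reading off Definition \ref{d-quotient} gives the translation formula $(\cF_\xi^\lambda R_m)_\alpha = (\cF^{\lambda - \langle \alpha,\xi\rangle} R_m)_\alpha$, and hence
$$I_{m,\lambda,\alpha}(\cF_\xi) = I_{m,\lambda-\langle\alpha,\xi\rangle,\alpha}(\cF).$$

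Second, I record how the torus twist of a $T$-invariant valuation acts on such weight-pure pieces. After choosing a $T$-invariant local trivialization of $\cO_X(mL)$ (which exists on a $T$-invariant big open subset once $r$ is sufficiently divisible), formula \eqref{e-valuation} gives $w_\xi(s) = w(s) + \langle\alpha,\xi\rangle$ for any $T$-invariant valuation $w$ and any $s\in R_{m,\alpha}$. Passing to base ideals yields
$$w_\xi(I_{m,\lambda,\alpha}(\cG)) = w(I_{m,\lambda,\alpha}(\cG)) + \langle\alpha,\xi\rangle$$
for every $T$-equivariant filtration $\cG$.

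Combining these two identities with $w = v$ and $\cG = \cF_\xi$ yields the key computation
$$v_\xi\bigl(I_{m,tm}(\cF_\xi)\bigr) = \min_{\alpha} \bigl[v\bigl(I_{m,tm-\langle\alpha,\xi\rangle,\alpha}(\cF)\bigr) + \langle\alpha,\xi\rangle\bigr] = v\bigl(I_{m,tm}(\cF)\bigr),$$
where the second equality is the heart of the argument: the $\xi$-shift in the filtration index and the $\xi$-shift in the valuation cancel upon taking the minimum, because the substitution $\alpha \mapsto \alpha$ is a bijection of the admissible weight lattice that matches the two minimization problems term by term. Taking the graded limit gives $v_\xi(I^{(t)}_\bullet(\cF_\xi)) = v(I^{(t)}_\bullet(\cF))$ for every $t \in \bR$. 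Combined with the corresponding invariance of the log discrepancy, $A_{X,\Delta}(v_\xi) = A_{X,\Delta}(v)$ (which holds because the one-parameter subgroup of $T$ corresponding to $\xi$ acts by automorphisms of $(X,\Delta)$), one obtains the identity of log canonical thresholds
$$\lct\bigl(X,\Delta; I^{(\mu)}_\bullet(\cF_\xi)\bigr) = \lct\bigl(X,\Delta; I^{(\mu)}_\bullet(\cF)\bigr) = \frac{1}{r},$$
the right-hand equality holding by hypothesis. This simultaneously gives $\mu = \mu(\cF_\xi)$ (the value $\mu$ is the log canonical slope for both filtrations) and that $v_\xi$ attains this threshold on $I^{(\mu)}_\bullet(\cF_\xi)$.

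The hardest step is the central equality in the key computation, i.e. verifying the interchange of minima; this requires a careful bookkeeping of which weights $\alpha$ contribute nontrivially to each base ideal and a direct check that the assignment $(\alpha,\lambda) \mapsto (\alpha, \lambda - \langle\alpha,\xi\rangle)$ establishes a weight-preserving bijection between the minimizing loci on the two sides. The rest of the argument is then formal from the compatibility of the twist with the $T$-equivariant structures on both $\cF$ and $v$.
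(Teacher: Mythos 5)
Your central computation claims the identity $v_\xi\bigl(I^{(t)}_\bullet(\cF_\xi)\bigr) = v\bigl(I^{(t)}_\bullet(\cF)\bigr)$ for all $t$, and then concludes via the assertion $A_{X,\Delta}(v_\xi) = A_{X,\Delta}(v)$. Both ingredients are false, and they do not cancel in the way you need.

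First, the twist $v\mapsto v_\xi$ is not push-forward by an automorphism of $(X,\Delta)$, so the log discrepancy is not preserved. The simplest counterexample is already in the definition: the trivial valuation $v = v_{\mathrm{triv},0}$ twists to $v_\xi=\wt_\xi$, which has $A_{X,\Delta}(\wt_\xi)>0$ whenever $\xi\neq 0$. The paper in fact names the discrepancy defect $\theta_\xi(v) := A_{X,\Delta}(v_\xi)-A_{X,\Delta}(v)$ precisely because it is nonzero. Correspondingly, your formula $w_\xi(I_{m,\lambda,\alpha}(\cG)) = w(I_{m,\lambda,\alpha}(\cG)) + \langle\alpha,\xi\rangle$ for base ideals is missing the additional shift $mr\,\theta_\xi(w)$: the base ideals sit in $\cO_X$, and the identification $R_m\to\cO_X$ is through a trivialization of $\cO_X(mL)$ with $L=-r(K_X+\Delta)$, whose $T$-weight contributes the $mr\,\theta_\xi$ term (this is what \cite{Li19}*{Proposition 3.8} accounts for). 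Second, even if you correct the formula, your ``interchange of minima'' is not valid: you need $v(I_{m,tm-\langle\alpha,\xi\rangle,\alpha}) + \langle\alpha,\xi\rangle = v(I_{m,tm,\alpha})$ term by term, which would require the function $\lambda\mapsto v(I_{m,\lambda,\alpha})$ to be exactly affine with slope $1$ near $\lambda=tm$ for every $\alpha$; that is not true in general (these are piecewise-constant jump functions at the level of fixed $m$, and only have the right convexity asymptotically). Nothing in the proposal substitutes for this.

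The paper's proof is structured differently and avoids both issues. It does not assert an identity at all: after a specific renormalization of $v$ dictated by the convexity inequality \eqref{eq:f convex} (exactly as in the proof of Theorem \ref{thm:tbeta>=D^NA}), it proves only the one-sided estimate $v_\xi\bigl(I_{m,\mu m}(\cF_\xi)\bigr)\ge mr\,A_{X,\Delta}(v_\xi)$. That single inequality gives $\mu(\cF_\xi)\le\mu(\cF)$, and then the reverse inequality comes for free from the symmetry $\cF=(\cF_\xi)_{-\xi}$. Equality then forces $v_\xi$ to compute the lct of $I^{(\mu)}_\bullet(\cF_\xi)$. If you want to salvage your approach, you must (a) restore the $mr\,\theta_\xi(v)$ correction, (b) replace the term-by-term equality of minima by the convexity bound from Theorem \ref{thm:tbeta>=D^NA} to get the needed inequality, and (c) use the $(-\xi)$-twist symmetry for the reverse direction, at which point you have reconstructed the paper's argument.
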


\begin{proof}
Let $I_{m,\lambda}$ (resp. $I^\xi_{m,\lambda}$) be the base ideals of $\cF$ (resp. $\cF_\xi$) and let $I_{m,\lambda,\alpha}$ (resp. $I^\xi_{m,\lambda,\alpha}$) ($\alpha\in M$) be their weight-$\alpha$ part, e.g. $I_{m,\lambda,\alpha} = {\rm Im}((\cF^\lambda R_m)_\alpha \otimes \cO_X(mr(K_X+\Delta))\to \cO_X)$. 
As in the proof of Theorem \ref{thm:tbeta>=D^NA}, after rescaling $v$, by \eqref{eq:f convex}, we have 
$$v(I_{m,\lambda,\alpha})\ge v(I_{m,\lambda})\ge m\cdot v(I_\bullet^{(\lambda/m)})\ge \lambda+m({rA_{X,\Delta}(v)}-\mu)$$
for any $\alpha\in M$. Then for any $t$, let $\theta_{\xi}(v)=A_{X,\Delta}(v_{\xi})-A_{X,\Delta}(v)$, we have
\begin{eqnarray*}
v_{\xi}(I^{\xi}_{m,tm,\alpha})&=&v_{\xi}(I_{m,tm-\la \xi,\alpha\ra,\alpha})\\
&=&v(I_{m,tm-\la \xi,\alpha\ra,\alpha})+\la\xi,\alpha \ra+mr \theta_{\xi}(v)\\
&\ge& m(t+r \theta_{\xi}(v)+{rA_{X,\Delta}(v)}-\mu)\\
&=&m(t+rA_{X,\Delta}(v_{\xi})-\mu),
\end{eqnarray*}
where the second equality follows from \cite{Li19}*{Proposition 3.8}.

Taking $t=\mu$, we see that $v_{\xi}(I^{\xi}_{m,\mu m,\alpha})\ge mr A_{X,\Delta}(v_{\xi})$ and hence
\begin{eqnarray}\label{e-twistlct}
v_{\xi}(I^{\xi}_{m,\mu m})\ge mr A_{X,\Delta}(v_{\xi}).
\end{eqnarray}
Thus $\mu(\cF_{\xi})\le \mu(\cF)$. Since we can take a $(-\xi)$-twist of $\cF_{\xi}$ to get $\cF$, this implies that 
$$\mu(\cF_{\xi})\le \mu(\cF)\le \mu(\cF_{\xi}).$$ 
It follows that equality holds and then \eqref{e-twistlct} implies that $v_{\xi}$ computes the log canonical threshold of $I^{(\mu)}_{\bullet}(\cF_\xi)$. 
\end{proof}

Recall that for an lc pair $(X,\Delta)$, an {\it lc place $E$} is a divisorial valuation over $X$ such that $A_{X,\Delta}(E)=0$. If $(X,\Delta)$ is a log Fano pair, {\it an $N$-complement} is a $\bQ$-divisor $D$ such that $N(K_X+\Delta+D)\sim 0$ and $(X,\Delta+D)$ is log canonical.  {\it A $\bQ$-complement} of a log Fano pair is an $N$-complement for some $N\in \bN_+$. 

\begin{lem}\label{l-comple}
A divisor $E$ over $X$ is an lc place of a $\bQ$-complement of $(X,\Delta)$ if and only if ${\rm gr}_{\cF_E}R:=\bigoplus_{m,i\in\bN} \Gr^i_{\cF_E} R_m$ is finitely generated and $\mu(\cF_E)=rA_{X,\Delta}(E)$. 
\end{lem}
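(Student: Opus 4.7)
The plan is to establish both implications separately. For the forward direction, suppose $E$ is an lc place of a $\bQ$-complement $D$, so $D\sim_\bQ-(K_X+\Delta)$, $(X,\Delta+D)$ is lc, and $\ord_E(D)=A_{X,\Delta}(E)$. I will show $\mu(\cF_E)=rA_{X,\Delta}(E)$ by noting that the upper bound $\mu(\cF_E)\le rA_{X,\Delta}(E)$ is already in Proposition~\ref{prop:tbeta property}, while for the reverse bound I would pick $N$ divisible enough so that $rND$ is an integral divisor and take a defining section $s\in R_N$ of $rND$. Since $\ord_E(s)=rNA_{X,\Delta}(E)$, we have $s\in\cF_E^{rNA_{X,\Delta}(E)}R_N$, which yields $I_N^{(rA_{X,\Delta}(E))}(\cF_E)\supseteq\cO_X(-rND)$. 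The log canonicity of $(X,\Delta+D)$ then gives $\lct(X,\Delta;I^{(rA_{X,\Delta}(E))}_\bullet)\ge 1/r$, hence $\mu(\cF_E)\ge rA_{X,\Delta}(E)$.

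For the finite generation of ${\rm gr}_{\cF_E}R$ in the forward direction, I would pass to the klt pair $(X,\Delta+(1-\epsilon)D)$ for small rational $\epsilon>0$ and apply the MMP from \cite{BCHM} to extract $E$, obtaining a $\bQ$-factorial birational model $\pi\colon Y\to X$ with $\rho(Y/X)=1$ and $E$ the unique $\pi$-exceptional divisor, on which $-E$ is $\pi$-ample. Setting $M:=-r\pi^*(K_X+\Delta)$, for small $\eta>0$ the divisor $M-\eta E$ is ample on $Y$, and the bigraded algebra $\bigoplus_{m,\lambda\in\bN}H^0(Y,mM-\lambda E)$ coincides with the Rees algebra $\bigoplus_{m,\lambda}\cF_E^\lambda R_m$. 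Its finite generation (and hence that of ${\rm gr}_{\cF_E}R$) will then follow from \cite{BCHM} applied on $(Y,\pi^{-1}_*\Delta+(1-\epsilon)\pi^{-1}_*D)$.

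For the backward direction, I assume ${\rm gr}_{\cF_E}R$ is finitely generated and $\mu(\cF_E)=t_0:=rA_{X,\Delta}(E)$. Finite generation allows me to choose $m_0\in\bN_+$ (with $t_0m_0\in\bZ$) such that $I^{(t_0)}_{mm_0}=(I^{(t_0)}_{m_0})^m$ for all $m\in\bN_+$, and consequently $\lct(X,\Delta;I^{(t_0)}_{m_0})\ge 1/(rm_0)$. The equality $\mu(\cF_E)=t_0$ forces the graded piece $\cF_E^{t_0m_0}R_{m_0}/\cF_E^{t_0m_0+1}R_{m_0}$ to be non-zero, so a general element $s\in\cF_E^{t_0m_0}R_{m_0}$ has $\ord_E(s)=t_0m_0$ exactly; a Bertini-type argument will then show $\lct(X,\Delta;\mathrm{div}(s))\ge 1/(rm_0)$ for general $s$. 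Setting $D:=\frac{1}{rm_0}\mathrm{div}(s)$, I obtain $D\sim_\bQ-(K_X+\Delta)$ with $(X,\Delta+D)$ lc and $\ord_E(D)=A_{X,\Delta}(E)$, realizing $E$ as an lc place of this $\bQ$-complement.

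I expect the main obstacle to be the finite generation in the forward direction: the MMP extraction of $E$ and the identification of the Rees algebra with the section ring on $Y$ require care, particularly in verifying that $M-\eta E$ is genuinely ample and that \cite{BCHM} applies to the pair chosen on $Y$. A secondary subtlety is the Bertini step in the backward direction, where finite generation is needed both to ensure that the general section of $\cF_E^{t_0m_0}R_{m_0}$ attains the minimal vanishing order $t_0m_0$ along $E$ and that its divisor realizes the lct bound of the base ideal $I^{(t_0)}_{m_0}$ rather than just being bounded above by it.
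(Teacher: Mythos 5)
Your proposal has the right overall architecture and is correct in the forward direction, but there is a real gap in the backward direction.

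In the forward direction, your derivation of $\mu(\cF_E)\ge rA_{X,\Delta}(E)$ from a defining section of the complement is exactly the argument already recorded in Proposition~\ref{prop:tbeta property}. For the finite generation of $\mathrm{gr}_{\cF_E}R$ you propose an explicit MMP extraction of $E$ and an identification of the Rees algebra with a section ring on the extracted model. This is a valid route and is essentially what underlies the cited machinery; the paper instead invokes the equivalence between lc places of $\bQ$-complements and weakly special test configurations (Lemma~\ref{l-weakly}, i.e.\ \cite{BLX19}*{Theorem A.2}), from which finite generation is automatic via the test configuration/filtration dictionary. Either way works.

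In the backward direction, the step ``and consequently $\lct(X,\Delta;I^{(t_0)}_{m_0})\ge 1/(rm_0)$'' hides the one nontrivial point. From $\mu(\cF_E)=t_0$ being a supremum you only know $\lct(X,\Delta;I^{(t)}_\bullet)\ge 1/r$ for $t<t_0$, not at $t=t_0$. Since $t\mapsto\lct(X,\Delta;I^{(t)}_\bullet)$ is non-increasing, monotonicity alone gives $\lct(X,\Delta;I^{(t_0)}_\bullet)\le\lim_{t\to t_0^-}\lct(X,\Delta;I^{(t)}_\bullet)$, which is the wrong direction. The paper closes this by observing that $t\mapsto\lct(X,\Delta;I^{(t)}_\bullet)$ is in fact continuous on $(0,T_{X,\Delta}(E))$: for each valuation $v$ the function $t\mapsto v(I^{(t)}_\bullet)$ is convex, so $1/\lct(X,\Delta;I^{(t)}_\bullet)=\sup_v v(I^{(t)}_\bullet)/A_{X,\Delta}(v)$ is a supremum of convex functions, hence convex and therefore continuous on the interior of its domain. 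Only after establishing $\lct(X,\Delta;I^{(t_0)}_\bullet)=1/r$ can one use finite generation to descend to a fixed level $m_0$ and run the Bertini argument. (Your extra observation that the graded piece $\Gr^{t_0m_0}_{\cF_E}R_{m_0}$ is nonzero is not needed: once $(X,\Delta+\frac{1}{rm_0}\mathrm{div}(s))$ is lc and $\ord_E(s)\ge t_0 m_0=rm_0A_{X,\Delta}(E)$, log canonicity already forces $A_{X,\Delta+\frac{1}{rm_0}\mathrm{div}(s)}(E)=0$.)
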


\begin{proof}
Assuming $E$ is an lc place of a $\bQ$-complement of $(X,\Delta)$, then  ${\rm gr}_{\cF_E}R$ is finitely generated and $\mu(\cF_E)=rA_{X,\Delta}(E)$ by Proposition \ref{prop:tbeta property}. Conversely, if $\mu(\cF_E)=rA_{X,\Delta}(E)$, then since the function $t\mapsto \lct(X,\Delta;I^{(t)}_\bullet)$ is continuous on $(0,T_{X,\Delta}(E))$ (which in turn follows from the fact that the function $t\mapsto v(I^{(t)}_\bullet)$ is convex for any valuation $v$ on $X$), we have $\lct(X,\Delta;I^{(r\cdot A_{X,\Delta}(E))}_\bullet)\ge \frac{1}{r}$ by the definition of log canonical slope. On the other hand, it is clear that 
\[
\lct(X,\Delta;I^{(r\cdot A_{X,\Delta}(E))}_\bullet)\le \frac{A_{X,\Delta}(E)}{\ord_E(I^{(r\cdot A_{X,\Delta}(E))}_\bullet)}\le \frac{A_{X,\Delta}(E)}{r\cdot A_{X,\Delta}(E)} = \frac{1}{r},
\]
thus $\lct(X,\Delta;I^{(r\cdot A_{X,\Delta}(E))}_\bullet) = \frac{1}{r}$.
If in addition ${\rm gr}_{\cF_E}R$ is finitely generated, then
\[
\lct(X,\Delta;I^{ (r\cdot A_{X,\Delta}(E))}_\bullet)=m\cdot \lct(X,\Delta;I_{m, mrA_{X,\Delta}(E))})= \frac{1}{r}.
\]
for some sufficiently divisible $m$. This means there is a divisor $D\in |-mr(K_X+\Delta)|$ with $\ord_E(D)\ge mrA_{X,\Delta}(E)$ and $(X,\Delta+\frac{1}{mr}D)$ is log canonical. Thus $E$ is an lc place of $(X,\Delta+\frac{1}{mr}D)$.  
\end{proof}

For the next lemma we use the following notation: if $E$ is a $T$-invariant divisor over $X$, $v=\ord_E$ and $\xi\in N_\bQ$, then $v_\xi$ is also a divisorial valuation over $X$ and we define $E_\xi$ as the divisor over $X$ such that $v_\xi=c\cdot \ord_{E_\xi}$ for some $c\in\bQ$.

\begin{lem}\label{l-twistinvariant}
If $E$ is an lc place of a $\bQ$-complement, then for any $\xi\in N_{\bQ}$, $ E_{\xi}$ is also an lc place of a $\bQ$-complement.
\end{lem}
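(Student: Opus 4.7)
By Lemma~\ref{l-comple}, the claim reduces to showing that $\mathrm{gr}_{\cF_{E_\xi}} R$ is finitely generated and $\mu(\cF_{E_\xi}) = rA_{X,\Delta}(E_\xi)$. Comparing Definition~\ref{d-quotient} with \eqref{e-valuation}, one checks that $\cF_{v_\xi} = (\cF_E)_\xi$ where $v = \ord_E$: for a $T$-eigenfunction $s \in R_{m,\alpha}$ we have $v_\xi(s) = v(s) + \la\xi,\alpha\ra$, which matches the defining formula for $(\cF_E)_\xi$. Since $v_\xi = c\cdot\ord_{E_\xi}$ with $c\in\bQ_{>0}$, this yields $\cF_{E_\xi}^\lambda = (\cF_E)_\xi^{c\lambda}$, so $\cF_{E_\xi}$ is a rescaling of $(\cF_E)_\xi$ by $c$; in particular the associated graded algebras agree up to regrading, and $\mu(\cF_{E_\xi}) = \mu((\cF_E)_\xi)/c$.

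Finite generation transfers because the $\xi$-twist only shifts the $\lambda$-grading on the weight-$\alpha$ piece of $\cF_E$ by $\la\alpha,\xi\ra$, a shift that is additive in $\alpha$ and thus compatible with ring multiplication. This gives an isomorphism of the Rees algebras of $\cF_E$ and $(\cF_E)_\xi$ as ungraded $k$-algebras, so finite generation of $\mathrm{gr}_{\cF_E}R$ (granted by the hypothesis on $E$ via Lemma~\ref{l-comple}) passes to $\mathrm{gr}_{(\cF_E)_\xi}R$ and then to $\mathrm{gr}_{\cF_{E_\xi}}R$.

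For the log canonical slope identity, Lemma~\ref{l-comple} applied to $E$ gives $\mu(\cF_E) = rA_{X,\Delta}(E)$ and says $\ord_E$ computes the log canonical threshold of $I^{(\mu(\cF_E))}_\bullet(\cF_E)$ with value $1/r$. Applying Lemma~\ref{l-twistlct} to $\cF = \cF_E$ and $v = \ord_E$, we conclude that $v_\xi$ computes the log canonical threshold of $I^{(\mu(\cF_E))}_\bullet((\cF_E)_\xi)$. Translating this through $v_\xi = c\cdot\ord_{E_\xi}$ (and the homogeneity $A_{X,\Delta}(v_\xi) = c\,A_{X,\Delta}(E_\xi)$), together with the rescaling identity $\cF_{E_\xi}^\lambda = (\cF_E)_\xi^{c\lambda}$, one finds that $\ord_{E_\xi}$ computes the log canonical threshold of the corresponding asymptotic ideal of $\cF_{E_\xi}$ with value $1/r$; combined with the definition of $\mu(\cF_{E_\xi})$ and the automatic upper bound $\mu(\cF_{E_\xi}) \le rA_{X,\Delta}(E_\xi)$, this forces equality $\mu(\cF_{E_\xi}) = rA_{X,\Delta}(E_\xi)$.

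The main obstacle is the careful bookkeeping of scalings in the final paragraph: one has to keep track of the factor $c$ converting $v_\xi$ into $\ord_{E_\xi}$, the corresponding rescaling of the index between $(\cF_E)_\xi$ and $\cF_{E_\xi}$, and the possible discrepancy between $A_{X,\Delta}(v)$ and $A_{X,\Delta}(v_\xi)$, to ensure that the lct statement from Lemma~\ref{l-twistlct} produces the correctly normalized identity $\mu(\cF_{E_\xi}) = rA_{X,\Delta}(E_\xi)$ needed for Lemma~\ref{l-comple} to yield the desired $\bQ$-complement of $(X,\Delta)$ with $E_\xi$ as an lc place.
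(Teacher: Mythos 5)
Your finite-generation argument is fine — the associated graded ring only changes by regrading, as the paper also uses — but the slope computation contains a genuine error that exactly cancels the effect you are trying to capture.

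The error is the identity $\cF_{v_\xi} = (\cF_E)_\xi$ (equivalently, the claimed formula $v_\xi(s) = v(s) + \la\xi,\alpha\ra$ for a $T$-eigensection $s\in R_{m,\alpha}$, and its scaled avatar $\cF_{E_\xi}^\lambda = (\cF_E)_\xi^{c\lambda}$). This is not true in general. When one passes from the abstract $\xi$-twist of the filtration to the filtration induced by the twisted valuation $v_\xi$ evaluated on \emph{sections} of $-r(K_X+\Delta)$, there is an extra degree-linear shift coming from the $T$-weight of the canonical $T$-linearization of $-r(K_X+\Delta)$ at the center. The precise statement is \cite{Li19}*{Proposition 3.8}: $\cF_{v_\xi}$ and $(\cF_v)_\xi$ agree only up to a translation by $r\theta_\xi(v)$, where $\theta_\xi(v) := A_{X,\Delta}(v_\xi) - A_{X,\Delta}(v)$. (This is also exactly the source of the extra term $mr\theta_\xi(v)$ that appears in the displayed computation inside the proof of Lemma~\ref{l-twistlct}.) Your formula would force $\theta_\xi(v)=0$, which fails already for, e.g., a divisorial valuation on a product $X = Z\times\bP^1$ once the twist $\xi$ is chosen to move the center into a higher-codimension stratum.

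The consequence is that your sandwich does not close. Applying Lemma~\ref{l-twistlct} to $\cF_E$ gives $\mu\big((\cF_E)_\xi\big) = \mu(\cF_E) = rA_{X,\Delta}(v)$; chasing this through your claimed identity produces the lower bound $\mu(\cF_{E_\xi}) \ge rA_{X,\Delta}(v)/c$, whereas the automatic upper bound is $\mu(\cF_{E_\xi}) \le rA_{X,\Delta}(E_\xi) = rA_{X,\Delta}(v_\xi)/c$. These differ precisely by $r\theta_\xi(v)/c$, which need not vanish (it can be of either sign, so the argument can even become internally inconsistent). The missing translation from Li's Proposition 3.8 is exactly what converts $rA_{X,\Delta}(v)$ into $rA_{X,\Delta}(v_\xi)$: in the paper's proof one writes $\mu(\cF_{v_\xi}) = \mu\big((\cF_v)_\xi\big) + r\theta_\xi(v) = r\big(A_{X,\Delta}(v) + \theta_\xi(v)\big) = rA_{X,\Delta}(v_\xi)$, and this, not a scaling identity between $\cF_{E_\xi}$ and $(\cF_E)_\xi$, is the crucial bookkeeping step. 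You should replace the identification $\cF_{v_\xi} = (\cF_E)_\xi$ with the translation relation and re-run the slope computation; everything else in your plan (reduction via Lemma~\ref{l-comple}, use of Lemma~\ref{l-twistlct}, and the upper bound from Proposition~\ref{prop:tbeta property}) is sound.
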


\begin{proof}
Since ${\rm gr}_{\cF_E}R$ is finitely generated by assumption and ${\rm gr}_{\cF_E}R\cong {\rm gr}_{\cF_{E_{\xi}}}R$, we know the latter is also finitely generated. By Lemma \ref{l-comple}, it suffices to prove $\mu(\cF_{E_{\xi}})=rA_{X,\Delta}(E_{\xi})$, or equivalently $$\mu(\cF_{v_\xi})=rA_{X,\Delta}(v_\xi)$$
where $v=\ord_E$. Since $\mu(\cF_v)=rA_{X,\Delta}(v)$ by Lemma \ref{l-comple}, we have $\mu((\cF_v)_\xi)=rA_{X,\Delta}(v)$ by Lemma \ref{l-twistlct}. By \cite{Li19}*{Proposition 3.8}, $\cF_{v_\xi}$ differs from $(\cF_v)_\xi$ by a translation of $r \cdot \theta_{\xi}(v)$. It is then clear that 
$$\mu(\cF_{v_\xi})=\mu((\cF_v)_\xi)+r \cdot \theta_{\xi}(v) = r(A_{X,\Delta}(v)+\theta_{\xi}(v)) = rA_{X,\Delta}(v_{\xi})$$
and we are done.
\end{proof}

\begin{lem}[{\cite{BLX19}}]\label{l-weakly}
Let $(X,\Delta)$ be a log Fano pair and let $E$ be a divisor over $X$. There exists an integer $N>0$ depending only on $\dim (X)$ and the coefficients of $\Delta$ such that the following are equivalent:
\begin{enumerate}
\item $E$ is an lc place of a $\bQ$-complement;
\item $E$ is an lc place of an $N$-complement;
\item $E$ is induced by a weakly special test configuration with irreducible central fiber. 
\end{enumerate}
\end{lem}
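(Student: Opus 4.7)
The implication (2)$\Rightarrow$(1) is immediate. I would organize the remaining work as three steps: (1)$\Rightarrow$(3), (3)$\Rightarrow$(2), and a separate uniformity argument for the constant $N$.

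\textbf{Step 1: (1)$\Rightarrow$(3).} Given that $E$ is an lc place of a $\bQ$-complement, say $(X,\Delta+D)$ is lc with $D\sim_\bQ -(K_X+\Delta)$ and $A_{X,\Delta+D}(E)=0$. Extracting $E$ by a run of the MMP on a suitable log resolution gives a projective birational morphism $\pi\colon Y\to X$ from a normal variety with $E$ a $\pi$-exceptional prime divisor and $-E$ $\pi$-ample, such that $(Y,\pi^{-1}_*(\Delta+D)+E)$ is lc (this is a standard dlt/plt extraction). By Lemma~\ref{l-comple} this is equivalent to saying that the filtered algebra ${\rm gr}_{\cF_E}R$ is finitely generated and $\mu(\cF_E)=rA_{X,\Delta}(E)$. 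Then applying $\Proj$ to the Rees algebra $\bigoplus_{m,k}\cF_E^k R_m \cdot t^{-k}$ produces a test configuration $(\cX,\cL)$ of $(X,-r(K_X+\Delta))$ whose central fiber is irreducible (coming from the prime divisor $E$), and whose central fiber $(\cX_0,\Delta_{\cX_0})$ is lc because $E$ is an lc place of the complement $D$ (degenerating $D$ inside the test configuration produces an lc pair $(\cX, \Delta_\cX+\cD+\cX_0)$ by inversion of adjunction from $Y$). This is exactly a weakly special test configuration inducing $E$.

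\textbf{Step 2: (3)$\Rightarrow$(2) and uniformity of $N$.} Suppose $E$ is induced by a weakly special test configuration $(\cX,\Delta_\cX,\cL)$ with irreducible central fiber $(\cX_0,\Delta_{\cX_0})$. This central fiber is an lc log Fano pair whose coefficient set is contained in a fixed DCC (in fact finite) set depending only on the coefficients of $\Delta$. Applying Birkar's boundedness of complements to $(\cX_0,\Delta_{\cX_0})$ yields an $N$-complement $D_0$ on $\cX_0$ with $N$ depending only on $\dim X$ and the coefficients of $\Delta$. The key step is to extend this to an $N$-complement on the total space of the family, and then specialize it away from $0\in\bA^1$ to obtain an $N$-complement of $(X,\Delta)$; this uses equivariant complement lifting (via a cyclic cover trick along $\cX_0$ to pass from a complement of the central fiber to a $\bG_m$-equivariant complement on the total space) and the fact that $\cX\setminus\cX_0\cong (X,\Delta)\times\bG_m$. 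Because $E$ is exactly the divisorial valuation obtained from $\cX_0$ via the deformation-to-the-normal-cone correspondence, the constructed $N$-complement $D$ on $X$ satisfies $A_{X,\Delta+D}(E)=0$, i.e.\ $E$ is an lc place of $D$.

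\textbf{Main obstacle.} The hard part is Step 2: upgrading from a $\bQ$-complement to an $N$-complement with the same lc place $E$, uniformly in the pair $(X,\Delta)$. Getting a complement on the central fiber is exactly Birkar's theorem, but one then needs to (i) lift the complement $\bG_m$-equivariantly to the whole test configuration (this is where the cyclic cover / inversion-of-adjunction argument enters and where care is required to stay in the lc regime) and (ii) ensure that the specialization retains $E$ as an lc place. Both can be handled by inversion of adjunction for lc, once one knows $(\cX,\Delta_\cX+\cX_0)$ is lc along $\cX_0$, which is precisely the weakly-special hypothesis. The implication $(1)\Rightarrow(2)$ is then obtained by composing Steps 1 and 2.
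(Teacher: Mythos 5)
The paper does not actually prove this lemma: its ``proof'' is a two-line citation, attributing $(1)\Leftrightarrow(2)$ to \cite{BLX19}*{Theorem 3.5} (whose argument goes through \cite{Bir19}) and $(2)\Leftrightarrow(3)$ to \cite{BLX19}*{Theorem A.2}. Your proposal is therefore an attempt to reconstruct the content of those two cited theorems, and it follows a genuinely different decomposition: you route $(1)\Rightarrow(2)$ through $(3)$ by applying Birkar's boundedness of complements to the \emph{central fiber} of the test configuration, whereas \cite{BLX19}*{Theorem 3.5} proves $(1)\Rightarrow(2)$ directly on a plt extraction $Y\to X$ of $E$ without ever mentioning test configurations, and $(1)\Leftrightarrow(3)$ is a separate statement.

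Two concrete gaps. First, your Step 1 invokes Lemma~\ref{l-comple} to conclude that ${\rm gr}_{\cF_E}R$ is finitely generated, but in this paper the forward implication of Lemma~\ref{l-comple} is itself justified by Proposition~\ref{prop:tbeta property}, whose ``equality for weakly special $E$'' clause cites precisely \cite{BLX19}*{Theorem A.2}, i.e.\ the equivalence $(1)\Leftrightarrow(3)$ you are trying to prove. This is circular. The finite generation has to be proved directly from the plt extraction by running a suitable MMP (this is the actual technical heart of \cite{BLX19}*{Theorem A.2}), and the lc-ness of $(\cX,\Delta_\cX+\cX_0)$ also requires more than the phrase ``degenerating $D$'': one needs to check inversion of adjunction carefully on the Rees construction.

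Second, in Step 2 the key step is lifting the $N$-complement $D_0$ from $\cX_0$ to the total space $\cX$, and the ``cyclic cover trick'' is not the right mechanism here. What is actually needed is a surjectivity statement for $H^0(\cX,-N(K_\cX+\Delta_\cX))\to H^0(\cX_0,-N(K_{\cX_0}+\Delta_{\cX_0}))$, which in Birkar's framework comes from Kawamata--Viehweg-type vanishing plus a delicate adjunction argument (lifting of complements), and which is nontrivial precisely because the central fiber is only lc, not klt. Moreover, \cite{BLX19}*{Theorem 3.5} does not use this central-fiber route at all: it applies boundedness of complements directly to $(Y,\Delta_Y+E)$, the plt extraction of $E$ over $X$, and then pushes the $N$-complement down to $X$. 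Your route \emph{could} in principle work, but it front-loads the hardest part (complement lifting along a degeneration) and you have not supplied the argument; as written, Step 2 is a sketch of the desired conclusion, not a proof.
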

\begin{proof}The equivalence between (1) and (2) follows from \cite[Theorem 3.5]{BLX19} whose proof relies on \cite{Bir19}; whereas the equivalence between (2) and (3) follows from \cite[Theorem A.2]{BLX19}. 
\end{proof}

To prove Theorem \ref{t-redqm}, we also need a constructibility result (similar to \cite{BLX19}*{Proposition 4.1}) of $\tdelta(v)$ when the valuation varies in a family. The following definition is a refinement of \cite{BLX19}*{Definition 2.2} and will be needed in the proof of Theorem \ref{t-redqm}.

\begin{defn} \label{d-flogres}
Let $f_i\colon (X_i,\Delta_i+M_i)\to B$ $(i=1,\cdots,m)$ be projective pairs over $B$ (where each $\Delta_i$ is a divisor on $X_i$ and $M_i$ is a $\bQ$-linear system, i.e. $M_i=a_i\cM_i$ for some $a>0$ and some linear series $\cM_i$ on $X_i$) such that the $X_i$'s are birational to each other over $B$. We say that $\phi \colon Y\to B$ gives a simultaneous fiberwise log resolution of the $f_i$'s if 
\begin{enumerate}
    \item There are proper birational morphisms $g_i\colon Y\to X_i$ such that $\phi=f_i\circ g_i$ for all $i$.
    \item We can write $g_i^*\cM_i=\Phi_i+F_i$ where $F_i$ (resp. $\Phi_i$) is the fixed (resp. movable) part over $B$ such that $\Phi_i$ is base point free over $B$, $G:=\Supp(\sum_{i=1}^m({\rm Exc}(g_i)+(g_i^{-1})_*\Delta_i+F_i))$ is an snc divisor and each stratum of $G$ is smooth over $B$ with irreducible fibers.
\end{enumerate}
\end{defn}

Consider now the following setup: let $B$ be a smooth variety and let $(\cX,\cD)\to B$ be a $\bQ$-Gorenstein family of log Fano pairs with a fiberwise $T$-action. Let $\cM\sim_\bQ -(K_{\cX/B}+\cD)$ be a $T$-invariant $\bQ$-linear system such that $(\cX_b,\cD_b+\cM_b)$ is lc for all $b\in B$ and let $g\colon \cY\to (\cX,\cD+\cM)$ be a fiberwise $T$-equivariant log resolution (i.e. $g$ is $T$-equivariant and is a fiberwise log resolution in the sense of Definition \ref{d-flogres}).

\begin{lem} \label{l-constructible}
In the above setup, let $\cE$ be a toroidal divisor over $\cY$ with respect to $\cG$ such that $A_{\cX,\cD+\cM}(E)<1$. Then $\delta_{\cX_b,\cD_b,T}(\cE_b)$ is independent of $b\in B$.
\end{lem}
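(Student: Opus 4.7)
The plan is to show that, for each fixed $\xi \in N_\bR$, both $A_{\cX_b,\cD_b}((\cE_b)_\xi)$ and $S_{\cX_b,\cD_b}((\cE_b)_\xi)$ are independent of $b \in B$. Granted this, the function
\[
\xi \;\longmapsto\; \frac{\beta_{\cX_b,\cD_b}(\cE_b)}{S_{\cX_b,\cD_b}((\cE_b)_\xi)}
\]
is itself independent of $b$ (note $\beta = A - S$ and both $A$, $S$ of the untwisted $\cE_b$ correspond to the $\xi = 0$ case), and hence so is its supremum over $\xi$, which by definition is $\delta_{\cX_b,\cD_b,T}(\cE_b) - 1$.

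First I would reduce to the combinatorial structure coming from the toroidal hypothesis. Since $g\colon \cY \to \cX$ is a fiberwise $T$-equivariant log resolution, the snc divisor $\cG$ is $T$-invariant, and any toroidal divisor $\cE$ over $\cY$ with respect to $\cG$ is $T$-invariant. By Definition \ref{d-flogres}, every stratum of $\cG$ is smooth over $B$ with irreducible fibers, so the restriction $\cE_b$ is a well-defined quasi-monomial valuation on $\cY_b$, with combinatorial weights independent of $b$. In particular, the $T$-weight structure of $\cE_b$ (the integer $\cE_b(f_\alpha)$ for each weight-$\alpha$ monomial coming from local coordinates adapted to the stratum) is determined by the toroidal data alone, hence independent of $b$. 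By \cite{Li19}*{Proposition 3.8},
\[
A_{\cX_b,\cD_b}((\cE_b)_\xi) \;=\; A_{\cX_b,\cD_b}(\cE_b) + \theta_\xi(\cE_b),
\]
where $\theta_\xi(\cE_b)$ is determined by this $T$-weight structure and $\xi$. Combined with the constancy of $A_{\cX_b,\cD_b}(\cE_b)$ — which is a linear combination of log discrepancies of components of $\cG$, each constant in $b$ — we obtain constancy of $A_{\cX_b,\cD_b}((\cE_b)_\xi)$ for every $\xi$.

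Next I would handle the $S$-invariant. By \cite{Li19}*{Proposition 3.8}, $\cF_{(\cE_b)_\xi}$ is a translation of the twist $(\cF_{\cE_b})_\xi$ by $r\theta_\xi(\cE_b)$, so by Lemma \ref{lem:S after translation} it suffices to show $S((\cF_{\cE_b})_\xi)$ is independent of $b$ for each $\xi \in N_\bR$. The weight decomposition $R_b = \bigoplus_\alpha R_{b,\alpha}$ and the filtration $\cF_{\cE_b}^\lambda R_{b,\alpha}$ both arise from a family structure on $\cX \to B$: the relevant spaces $(\cF_{\cE}^\lambda \cR_{m})_\alpha$ are locally free sheaves on $B$ whose ranks are constant in $b$, because the toroidal filtration and the weight decomposition are described combinatorially in terms of the strata of $\cG$. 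Therefore
\[
S_m((\cF_{\cE_b})_\xi) \;=\; \frac{1}{m\dim R_{b,m}}\sum_{\alpha,\lambda} (\lambda + \langle \alpha,\xi\rangle)\dim \Gr^\lambda_{\cF_{\cE_b}} R_{b,m,\alpha}
\]
is independent of $b$ for every $m$, and passing to the limit $m \to \infty$ gives the desired constancy of $S$. Concretely, this constancy can be extracted from (a mild equivariant refinement of) \cite{BLX19}*{Proposition 4.1}, applied to the graded sequence of $T$-equivariant base ideals $I^{(t)}_\bullet(\cF_{\cE})$; alternatively, one can directly verify that the Okounkov-body description of the filtration $\cF_{\cE_b}$ (cut out by the combinatorial toroidal data on $\cG_b$) is independent of $b$.

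The main obstacle is the $S$-step: one must verify that the $T$-equivariant toroidal graded pieces of $\cR$ form locally free $\cO_B$-modules with fiber dimensions independent of $b$, and then convert this flatness statement into constancy of the limit $S((\cF_{\cE_b})_\xi)$. Once this family-version of the volume computation is in place, taking the supremum over $\xi \in N_\bR$ commutes with restriction to each fiber and the lemma follows.
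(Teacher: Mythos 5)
The core difficulty of this lemma is showing that the weight-wise filtration dimensions $\dim\bigl(\cF^{\ell}_{\cE_b}R_{b,m}\bigr)_\alpha$ do not jump in $b$, and your proposal does not actually prove this. You assert that $(\cF_\cE^\lambda\cR_m)_\alpha$ is a locally free sheaf on $B$ with constant fiber ranks ``because the toroidal filtration and the weight decomposition are described combinatorially in terms of the strata of $\cG$,'' but this is not a valid inference. The fiberwise space $\cF^\ell_{\cE_b}R_{b,m}$ is $H^0\bigl(\cY_b,-mg_b^*(K_{\cX_b}+\cD_b)-\ell\cE_b\bigr)$ (after replacing $\cY$ by a toroidal blowup so that $\cE$ is a divisor on $\cY$); this is a cohomology group of a line bundle on the fibers, and its dimension can jump in a flat family unless one has some cohomology-and-base-change statement — it is not pinned down by the combinatorics of $\cG$. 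In short, flatness of the relevant sheaf and constancy of its $H^0$ are exactly what has to be proven, not a free consequence of the toroidal description.

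The paper closes this gap with an invariance-of-log-plurigenera argument. After reducing to the case where $\cE$ is a prime divisor on $\cY$, it perturbs $\cD+\cM$ to an auxiliary klt pair $(\cX,\Gamma)$ with $A_{\cX,\Gamma}(\cE)<1$ (Bertini), rewrites $-mg^*(K_\cX+\cD)-\ell\cE$ modulo a $g$-exceptional divisor as $\frac{\ell}{a}(K_\cY+\Gamma_1+H')$ for an effective $H'$ with $(\cY_b,(\Gamma_1)_b+H'_b)$ klt, and then invokes \cite{HMX13}*{Theorem 1.8} to get surjectivity of the restriction maps $H^0(\cY,\cdot)\to H^0(\cY_b,\cdot)$. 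Only this surjectivity gives the base-change property that makes the filtration dimensions (and hence $S_m$, $\theta_\xi$, $S$) constant in $b$. Your remark that ``this constancy can be extracted from a mild equivariant refinement of \cite{BLX19}*{Proposition 4.1}'' is pointing at the right statement, but that proposition is proved by precisely the same Hacon--McKernan--Xu mechanism; citing it does not replace the missing argument, and your alternative via ``the Okounkov-body description is independent of $b$'' has the same problem (the Okounkov body depends on actual section spaces, not just on toroidal data). The $A$-constancy portion of your proposal, where you use that the log discrepancy of a toroidal valuation is a fixed linear combination of log discrepancies of components of $\cG$, is fine; the gap is concentrated in the $S$-step, which is the real content of the lemma.
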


\begin{proof}
We follow the proof of \cite{BLX19}*{Proposition 4.1}, which is in turn based on \cite{HMX13}*{Theorem 1.8}. We may assume $B$ is affine and $\cE$ is a prime divisor on $\cY$ (by repeatedly blowup centers of $\cE$ on $\cY$). We aim to show that the natural restrictions
\begin{equation} \label{e-invplurigenera}
    H^0(\cY,-m g^*(K_{\cX}+\cD)-\ell \cE) \to H^0(\cY_b,-m g^*(K_{\cX_b}+\cD_b)-\ell \cE_b)
\end{equation}
are surjective for all sufficiently divisible integers $m,\ell \in\bN$.

By Bertini's theorem, there are effective $\bQ$-divisors $H\sim_\bQ -(K_{\cX/B}+\cD)$ and $M\in \cM$ such that $g$ is also a fiberwise log resolution of $(\cX,\Gamma=\cD+\epsilon H +(1-\epsilon)M)$, $(\cX_b,\Gamma_b)$ is klt for all $b\in B$ and $A_{\cX,\Gamma}(\cE)<1$ (note that $(\cX,\Gamma)$ no longer has a $T$-action but this does not affect the proof). We may write
\[
K_{\cY} + a\cE + \Gamma_1 - \Gamma_2 = g^*(K_{\cX}+\Gamma) \sim_\bQ 0
\]
where $a=1-A_{\cX,\Gamma}(\cE)$, $\Gamma_1$ and $\Gamma_2$ are effective without common component and $\Gamma_2$ is $g$-exceptional. Since $(\cX_b,\Gamma_b)$ is klt, so does $(\cY_b,(\Gamma_1)_b)$ for all $b\in B$. We then have
\[
-m g^*(K_{\cX}+\cD)-\ell \cE + \frac{\ell}{a}\Gamma_2 \sim \frac{\ell}{a}(K_{\cY} + \Gamma_1)-m g^*(K_{\cX}+\cD) \sim \frac{\ell}{a}(K_{\cY} + \Gamma_1 + H')
\]
for some effective $H'\sim_\bQ -\frac{am}{\ell}g^*(K_{\cX}+\cD)$ such that $(\cY_b,(\Gamma_1)_b+H'_b)$ is klt for all $b\in B$. From the proof of \cite{HMX13}*{Theorem 1.8(1)}, we see that the natural maps
\[
H^0\left(\cY,-m g^*(K_{\cX}+\cD) - \ell \cE + \frac{\ell}{a}\Gamma_2\right) \to H^0\left(\cY_b,-m g^*(K_{\cX_b}+\cD_b)-\ell \cE_b + \frac{\ell}{a}(\Gamma_2)_b\right)
\]
are surjective for all sufficiently divisible $m,\ell \in\bN$. But since $(\Gamma_2)_b$ is $g_b$-exceptional, the two $H^0$'s above can be identified with the ones in \eqref{e-invplurigenera} and thus \eqref{e-invplurigenera} follows.

Since $\cY\to B$ admits a fiberwise $T$-action, the maps in \eqref{e-invplurigenera} are $T$-equivariant and hence are also surjective on each component of the weight decomposition. It follows that the for each sufficiently divisible $m,\ell\in\bN$ and each $\alpha\in M$, $\dim (\cF^\ell_{\cE_b} R_{b,m})_\alpha$ is independent of $b\in B$ (where $R_b$ is the section ring of $-r(K_{\cX_b}+\cD_b)$). Recall that $\cF_{v_\xi}$ differs from $(\cF_v)_\xi$ by a translation of $r \cdot \theta_{\xi}(v)$ (see \cite{Li19}*{Proposition 3.8}) and $\lambda_{\min}(\cF_v)=0$ for any valuation $v$, then for each $\xi\in  N_\bR$, 
$$\theta_\xi(v_b) =  -\lambda_{\min}((\cF_{v_b})_\xi)$$ is independent of $b\in B$ (where $v_b=\ord_{\cE_b}$). Clearly $A_{\cX_b,\cD_b}(v_b)$ is also independent of $b\in B$. As a consequence,
\[
A_{\cX_b,\cD_b}((v_b)_\xi) \quad \text{and} \quad S_{\cX_b,\cD_b}((v_b)_\xi)
\]
are both independent of $b\in B$ as well. It is now evident from the definition \eqref{e-tdelta(v)} that $\delta_{\cX_b,\cD_b,T}(\cE_b)$ is independent of $b\in B$.
\end{proof}

We are now ready to present the proof of Theorem \ref{t-redqm}. The strategy is quite similar to the proof of \cite{BLX19}*{Theorem 4.5}: using the constructibility result Lemma \ref{l-constructible}, we aim to find a sequence of lc places of a \emph{fixed} complement that approximately computes $\delta_T(X,\Delta)$ and take their limit in the dual complex. However, we are in trouble if the limit valuation is of the form $\wt_\xi$ for some $\xi\in N_\bR$. A na\"ive approach is to twist the valuations by some $\xi\in N_\bR$ before taking the limit, but a priori this also changes the complement and the resulting valuations may no longer be lc places of a \emph{fixed} pair. To avoid these issues, we take a common log resolution of $(X,\Delta+M)$ (where $M$ is the complement) and (suitable compactification of) $T\times (X\!/\!/T)$ in family (i.e. over some parameter space of bounded complement) in the sense of Definition \ref{d-flogres}. The additional $T\times (X\!/\!/T)$ then takes care of the $\wt_\xi$ component of the lc places and ensures that it stays constant when moving along the family.

\begin{proof}[Proof of Theorem \ref{t-redqm}]
Let $N$ be the integer from Lemma \ref{l-weakly}. By Lemma \ref{l-twistinvariant} and Lemma \ref{l-weakly}, if $E$ is a $T$-invariant divisor over $X$ that is an lc place of an $N$-complement, then so is $E_{\xi}$ for any $\xi\in N_{\bQ}$. 

We first prove that there exists a sequence of $T$-invariant divisors $E_i$ over $X$, each of which is an lc place of an $N$-complement, such that $\ord_{E_i}\neq \wt_{\xi}$ for any $\xi\in N_{\bQ}$ and $\lim_{i\to \infty}\tdelta(E_i)=1$. In fact, if this fails, 
then by Lemma \ref{l-weakly}, there exists some constant $a>0$ such that for any divisorial valuation $v=\ord_E$ that is induced by a $T$-equivariant special test configuration $(\cX^s,\Delta_{\cX^s})$, we have $\tdelta(v) > 1+a$. Thus by the definition of $\tdelta$, there is a twist $\xi\in N_\bR$ such that
$$\beta_{X,\Delta}(v)\ge a\cdot S(v_\xi)\ge \frac{a}{n+1}\JNA(\cF_{v_\xi})\ge \frac{a}{n+1} \JNA_T(\cF_v)$$
where the first inequality follows from the definition of $\tdelta(v)$, the second inequality follows from \eqref{e-S>=J}, and the last inequality follows from Corollary \ref{cor:DNA & JNA after translation} and the fact that $\cF_{v_\xi}$ differs from $(\cF_v)_\xi$ by a translation. Since $\beta_{X,\Delta}(v)=\DNA(\cX^s,\Delta_{\cX^s})$ by \cite{Fuj19}*{Theorem 5.1}, it follows that
\[
\DNA(\cX^s,\Delta_{\cX^s})\ge \frac{a}{n+1} \JNA_T(\cX^s,\Delta_{\cX^s})
\]
for any $T$-equivariant special test configuration $(\cX^s,\Delta_{\cX^s})$ of $(X,\Delta)$. By \cite{Li19}*{\S 4} (which uses equivariant MMP and a similar argument as in \cite{LX14}), this implies
$$\DNA(\cX,\Delta_{\cX})\ge \frac{a}{n+1} \cdot  \JNA_T(\cX,\Delta_{\cX})$$
for any $T$-equivariant test configuration $(\cX,\Delta_{\cX})$, hence $(X,\Delta)$ is reduced uniformly K-stable and $\tdelta(X,\Delta)>1$, a contradiction. 

Fix a sequence $E_i$ ($i\in\bN$) with the aforementioned properties and a $T$-equivariant birational map $X\dashrightarrow \oT\times Z$ where $Z$ is proper and $\oT$ is a toric variety that compactifies $T$. Via \eqref{e-valuation} we get a (non-canonical) isomorphism $\Val(Z)\times N_\bR \cong \Val^T(X)$ sending $(\mu,\xi) \mapsto v_{\mu,\xi}$ and let $\pi\colon \Val^T(X)\to N_\bR$ be the induced projection. By Lemma \ref{l-twistinvariant}, we may replace each $E_i$ by a twist and assume that $\pi(\ord_{E_i})=0$. 

We now run a modified argument of \cite{BLX19}*{Proof of Theorem 4.5}. Consider the parameter space $B$ of $T$-invariant linear series $\cM_b\subseteq |-N(K_X+\Delta)|$ that give strict $N$-complement of $(X,\Delta)$, i.e. $\lct(X,\Delta;\cM_b)=\frac{1}{N}$. There exists a locally closed decomposition $B=\cup B_j$ and \'etale maps $B'_j\to B_j$ such that the $B'_j$'s are smooth and the universal family $(X\times B,\Delta\times B;\cM)$ together with $(\oT,\Gamma)\times Z\times B$ (where $\Gamma$ is the sum of torus invariant divisors on $\oT$) admits a simultaneous fiberwise $T$-equivariant log resolution over each $B'_j$.

For any $E_i$,  the linear system 
$$M_i:=\cF_{E_i}^{N\cdot A_{X,\Delta}(E_i)}H^0\big(\cO_X(-N(K_X+\Delta))\big)\subset H^0\big(\cO_X(-N(K_X+\Delta))\big)$$ is a $T$-invariant linear system which satisfies that $\lct(X,\Delta';M_i)=\frac{1}{N}$ and $E_i$ is an lc place of $(X,\Delta+\frac{1}{N}M_i)$. In particular, $M_i$ yields a point on $B$.  Passing to a subsequence of $E_i$ and restricting to some component $B'_j$, we may and do assume that $B$ is irreducible, simultaneous fiberwise $T$-equivariant log resolutions 
    \begin{equation*} 
        \xymatrix{
    &    \cY\ar[dl]_{g}\ar[dr]^{g'}  &\\
        (X\times B,\Delta\times B+\cM) & & (\oT,\Gamma)\times Z\times B
        }
    \end{equation*}
exists over $B$ and every $E_i$ is an lc center of $(X,\Delta+\frac{1}{N}\cM_{b_i})$ for some $b_i\in B$. In particular, there exists an lc place $\cE_i$ of $(X\times B,\Delta\times B+\frac{1}{N}\cM)$ that restricts to $E_i$ over $b_i$. 

Fix $b_0\in B$. 
Since $\pi((\cE_i)_b)=0$ if and only if the center of $(\cE_i)_b$ is not contained in any component of $g_b'^*(\Gamma\times Z)$ and the latter statement is independent of $b\in B$, we see that $\pi((\cE_i)_{b_0})=0$ as the same holds over $b_i$. By Lemma \ref{l-constructible}, we also have 
$$\delta_{X,\Delta,T}(E_i)=\delta_{X,\Delta,T}((\cE_i)_{b_0}).$$ Therefore, we may replace the sequence $E_i$ by $(\cE_i)_{b_0}$ and assume that the $E_i$'s are lc places of a fix lc pair $(X,\Delta+\frac{1}{N}\cM_{b_0})$.

By \cite{BLX19}*{Lemma 2.3}, we know that $v_i:=\frac{1}{A_{X,\Delta}(E_i)}(\ord_{E_i})$ converges to a $T$-invariant quasi-monomial valuation $v$ over $X$. Since $\pi(v_i)=0$ and $A_{X,\Delta}(v_i)=1$, we see that $\pi(v)=0$ and $A_{X,\Delta}(v)=1$ as well; in particular, $v\neq \wt_\xi$ for any $\xi\in N_\bR$. We will show for such $v$, $\tdelta(v)=1$.

After twisting by $\xi$, we also have $(v_i)_\xi \to v_\xi$ and $A_{X,\Delta}((v_i)_\xi)\to A_{X,\Delta}(v_\xi)$. By \cite{BLX19}*{Proposition 2.4}, we have $S(v_i)\to S(v)$ and therefore as 
$$S(v_\xi)=A_{X,\Delta}(v_\xi)-A_{X,\Delta}(v)+S(v)+\Fut(\xi) \qquad \mbox{ (by \cite{Li19}*{(130)}}),$$ we also have $S((v_i)_\xi)\to S(v_\xi)$ for all $\xi\in N_\bR$. It follows that for any $\xi \in N_\bR$, 
\[
\frac{A_{X,\Delta}(v_\xi)}{S(v_\xi)} = \lim_{i\to \infty} \frac{A_{X,\Delta}((v_i)_\xi)}{S((v_i)_\xi)} \le \lim_{i\to \infty} \tdelta(v_i) = 1
\]
and hence $\tdelta(v)\le 1$. Since we always have $\tdelta(v)\ge 1$ (Remark \ref{r-tdelta>=1}), thus $\tdelta(v)=1$. By Remark \ref{r-max}, we also know that $\tdelta(v)=\frac{A_{X,\Delta}(v_\xi)}{S(v_\xi)}=1$ for some $\xi\in N_\bR$. It follows that $\beta(v)=\beta(v_\xi)=0$ for all $\xi \in N_\bR$. In other words, $\frac{A_{X,\Delta}(v_\xi)}{S(v_\xi)}=1$ for all $\xi \in N_\bR$. 
\end{proof}

\begin{conj}\label{c-divi}
Let $(X,\Delta)$ be a K-semistable log Fano pair and $T$ a torus acting on $(X,\Delta)$. If $\tdelta(X,\Delta)=1$, then there exists a divisorial valuation $v$ not of the form $\wt_\xi$ such that
$$\frac{A_{X,\Delta}(v)}{S_{X,\Delta}(v)}=\delta_{X,\Delta,T}(v)=\delta_T(X,\Delta)=1.$$
\end{conj}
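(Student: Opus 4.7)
The plan is to follow the strategy of Theorem \ref{t-redqm}, strengthening the quasi-monomial conclusion to a divisorial one via a finite generation argument. By Theorem \ref{t-redqm}, there exists a $T$-invariant quasi-monomial valuation $v$, not of the form $\wt_\xi$, for which $A_{X,\Delta}(v_\xi)/S_{X,\Delta}(v_\xi) = 1$ holds for every $\xi\in N_\bR$; equivalently $\beta_{X,\Delta}(v)=0$ (using $\Fut(\xi)=0$ on a K-semistable pair, cf.\ Remark \ref{r-tdelta>=1}).

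The crucial step would be to establish finite generation of the graded ring $\mathrm{gr}_v R$. Granting this, Lemma \ref{l-comple} implies that $v$ is an lc place of a $\bQ$-complement $(X,\Delta+D)$, and Lemma \ref{l-weakly} upgrades this to an lc place of an $N$-complement for a uniform $N$ depending only on $\dim X$ and the coefficients of $\Delta$. One then proceeds exactly as in the last part of the proof of Theorem \ref{t-redqm}: take a simultaneous fiberwise $T$-equivariant log resolution of the universal $N$-complement over a parameter space $B$, together with $(\oT,\Gamma)\times Z$. Since $v$ is monomial with respect to such a resolution over the point $b_0\in B$ corresponding to $D$, it can be approximated by divisorial $T$-invariant valuations $E_i$ lying on rational rays of the relevant face of the dual complex. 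By Lemma \ref{l-constructible}, $\delta_{X,\Delta,T}$ takes the value $1$ on all such nearby $E_i$. The non-degeneracy of the projection $\pi\colon \Val^T(X)\to N_\bR$ near $v$ (using the product decomposition $\Val^T(X)\cong \Val(Z)\times N_\bR$) guarantees that for sufficiently fine approximations, $\ord_{E_i}$ is not of the form $\wt_\xi$ for any $\xi\in N_\bR$, as in the argument controlling $\pi((\cE_i)_{b_0})=0$ in the proof of Theorem \ref{t-redqm}.

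The main obstacle is the finite generation of $\mathrm{gr}_v R$ for the $T$-reduced minimizer. Even in the non-reduced setting (i.e.\ $T$ trivial), the analogous assertion---that every minimizer of $\delta(X,\Delta)$ has a finitely generated associated graded ring---is a substantial result requiring equivariant MMP techniques together with boundedness of complements. In the present setting one would need to run such an argument $T$-equivariantly, exploiting that $v$ is $T$-invariant so that all auxiliary ideals, linear series, and birational models involved can be chosen $T$-invariant; the $\xi$-twists then act as translations on the relevant graded pieces, which should be compatible with the finite generation assertion. A secondary technical point, already visible in the proof of Theorem \ref{t-redqm}, is that the approximating divisorial valuations must be kept away from the toric locus $\{\wt_\xi:\xi\in N_\bR\}$; this can be arranged by imposing a transversality condition on the log resolution with respect to the divisors pulled back from $\Gamma\times Z$, so that the center of $E_i$ is never contained in a component of $g'^*(\Gamma\times Z)$.
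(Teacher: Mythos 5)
The statement you were asked to establish is stated in the paper only as a \emph{conjecture} (Conjecture~\ref{c-divi}); the paper does not prove it, and the accompanying remark observes that it implies Conjecture~\ref{c-kpoly}, which is a central open problem. What the paper actually proves is the quasi-monomial analogue, Theorem~\ref{t-redqm}.

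Your proposal correctly identifies the crux: finite generation of $\mathrm{gr}_v R$ for the $T$-reduced minimizer $v$ produced by Theorem~\ref{t-redqm}. Granting that, the route through Lemma~\ref{l-comple} and Lemma~\ref{l-weakly} to realize $v$ as an lc place of a bounded complement, and then to perturb to nearby divisorial valuations on the rational face of the dual complex, is the natural strategy. But you do not prove the finite generation; you only flag it as "the main obstacle" and sketch why a $T$-equivariant version of the known non-reduced arguments "should" go through. That is precisely the step that is open in the paper, which is why Conjecture~\ref{c-divi} is labelled a conjecture rather than a theorem. As written, the proposal is a reduction, not a proof, and the gap sits at exactly the point where the field's understanding stops.

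A secondary (but also genuine) issue: even assuming finite generation, your perturbation step is not quite covered by Lemma~\ref{l-constructible} as you invoke it. Lemma~\ref{l-constructible} is a constancy statement of $\delta_{\cX_b,\cD_b,T}(\cE_b)$ as the base point $b\in B$ varies, for a \emph{fixed} toroidal divisor $\cE$. What you need is instead constancy (or affine-linearity) of $w\mapsto A_{X,\Delta}(v_w)-S_{X,\Delta}(v_w)$ over a face of the dual complex of a \emph{fixed} complement as the monomial weight $w$ varies—this is a different assertion and requires a separate argument (e.g.\ linearity of $A$ and of $S$ on the face, the latter again relying on the finite generation of the associated graded ring along the whole cone). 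In the proof of Theorem~\ref{t-redqm} the logic runs in the opposite direction (divisorial $\to$ limit quasi-monomial), which is why the constancy lemma there suffices; reversing the direction needs this extra linearity input, which you should state and justify explicitly rather than fold into the phrase "by Lemma~\ref{l-constructible}."
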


\begin{rem}
Theorem \ref{t-redqm} answers the expectation in \cite{Li19}*{Remark 3.25}. By \cite{BX19}*{Theorem 4.1}, Conjecture \ref{c-divi} implies Conjecture \ref{c-kpoly}.
\end{rem}

\begin{bibdiv}
\begin{biblist}

\bib{ABHX19}{article}{
    AUTHOR = {Alper, Jarod},
    AUTHOR={Blum, Harold},
    AUTHOR = {Halphern-Leistner, Daniel},
    AUTHOR = {Xu, Chenyang},
     TITLE = {Reductivity of the automorphism group of K-polystable Fano varieties},
     DATE  = {2019},
      NOTE = {To appear in Invent. Math. \href{https://arxiv.org/abs/1906.03122}{\textsf{arXiv:1906.03122}}},
}

\bib{ADL19}{article}{
    AUTHOR = {Ascher, Kenneth},
    AUTHOR = {DeVleming, Kristin},
    AUTHOR = {Liu, Yuchen},
     TITLE = {Wall crossing for K-moduli spaces of plane curves},
     DATE  = {2019},
      NOTE = {\href{https://arxiv.org/abs/1909.04576 }{\textsf{arXiv:1909.04576 }}},
}

\bib{AHLH18}{article}{
    AUTHOR = {Alper, Jarod},
    AUTHOR = {Halphern-Leistner, Daniel},
    AUTHOR = {Heinloth, Jochen},
     TITLE = {Existence of moduli spaces for algebraic stacks},
     DATE  = {2018},
      NOTE = {\href{https://arxiv.org/abs/1812.01128}{\textsf{arXiv:1812.01128}}},
}

\bib{BBJ15}{article}{
    AUTHOR={Berman, Robert}
    AUTHOR = {Boucksom, S\'{e}bastien},
    AUTHOR = {Jonsson, Mattias},
     TITLE = {A variational approach to the Yau-Tian-Donaldson conjecture},
     DATE  = {2015},
      NOTE = {\href{https://arxiv.org/abs/1509.04561}{\textsf{arXiv:1509.04561v2}}},
}

\bib{BC11}{article}{
   author={Boucksom, S\'{e}bastien},
   author={Chen, Huayi},
   title={Okounkov bodies of filtered linear series},
   journal={Compos. Math.},
   volume={147},
   date={2011},
   number={4},
   pages={1205--1229},
}

\bib{BCHM}{article}{
   author={Birkar, Caucher},
   author={Cascini, Paolo},
   author={Hacon, Christopher D.},
   author={McKernan, James},
   title={Existence of minimal models for varieties of log general type},
   journal={J. Amer. Math. Soc.},
   volume={23},
   date={2010},
   number={2},
   pages={405--468},
}

\bib{BDPP}{article}{
   author={Boucksom, S\'{e}bastien},
   author={Demailly, Jean-Pierre},
   author={P\u{a}un, Mihai},
   author={Peternell, Thomas},
   title={The pseudo-effective cone of a compact K\"{a}hler manifold and
   varieties of negative Kodaira dimension},
   journal={J. Algebraic Geom.},
   volume={22},
   date={2013},
   number={2},
   pages={201--248},
}

\bib{BFJ09}{article}{
   author={Boucksom, S\'{e}bastien},
   author={Favre, Charles},
   author={Jonsson, Mattias},
   title={Differentiability of volumes of divisors and a problem of
   Teissier},
   journal={J. Algebraic Geom.},
   volume={18},
   date={2009},
   number={2},
   pages={279--308},
}

\bib{BHJ17}{article}{
   author={Boucksom,  S\'{e}bastien},
   author={Hisamoto, Tomoyuki},
   author={Jonsson, Mattias},
   title={Uniform K-stability, Duistermaat-Heckman measures and
   singularities of pairs},
   journal={Ann. Inst. Fourier (Grenoble)},
   volume={67},
   date={2017},
   number={2},
   pages={743--841},
}

\bib{Bir19}{article}{
   author={Birkar, Caucher},
   title={Anti-pluricanonical systems on Fano varieties},
   journal={Ann. of Math. (2)},
   volume={190},
   date={2019},
   number={2},
   pages={345--463},
}
	
\bib{BJ17}{article}{
   author={Blum, Harold},
   author={Jonsson, Mattias},
   title={Thresholds, valuations, and K-stability},
   journal={Adv. Math.},
   volume={365},
   date={2020},
   pages={107062, 57},
}

\bib{BL18}{article}{
    AUTHOR = {Blum, Harold},
    AUTHOR = {Liu, Yuchen},
     TITLE = {Openness of uniform {K}-stability in families of $\mathbb{Q}$-{F}ano varieties},
     DATE  = {2018},
      NOTE = {To appear in Ann. Sci. \'Ec. Norm. Sup\'er. \href{https://arxiv.org/abs/1808.09070}{\textsf{arXiv:1808.09070}}},
}

\bib{BLX19}{article}{
    AUTHOR = {Blum, Harold},
    AUTHOR = {Liu, Yuchen},
    author = {Xu, Chenyang},
     TITLE = {Openness of K-semistability for Fano varieties},
     DATE  = {2019},
      NOTE = {\href{https://arxiv.org/abs/1907.02408}{\textsf{arXiv:1907.02408}}},
}

\bib{BX19}{article}{
   author={Blum, Harold},
   author={Xu, Chenyang},
   title={Uniqueness of K-polystable degenerations of Fano  varieties},
   journal={Ann. of Math. (2)},
   volume={190},
   date={2019},
   number={2},
   pages={609--656},
}

\bib{CDB13}{article}{
   author={Cacciola, Salvatore},
   author={Di Biagio, Lorenzo},
   title={Asymptotic base loci on singular varieties},
   journal={Math. Z.},
   volume={275},
   date={2013},
   number={1-2},
   pages={151--166},
}

\bib{CDS}{article}{
    AUTHOR = {Chen, Xiuxiong},
    AUTHOR={Donaldson, Simon}, 
    AUTHOR={Sun, Song},
     TITLE = {K\"ahler-{E}instein metrics on {F}ano manifolds. 
     },
   JOURNAL = {J. Amer. Math. Soc.},
    VOLUME = {28},
      YEAR = {2015},
    NUMBER = {1},
     PAGES = {183--197, 199--234, 235--278},
 }

\bib{Che10}{article}{
   author={Chen, Huayi},
   title={Convergence des polygones de Harder-Narasimhan},
   journal={M\'{e}m. Soc. Math. Fr. (N.S.)},
   number={120},
   date={2010},
   pages={116},
  }

\bib{Che18}{article}{
   author={Chen, Weichung},
   title={Boundedness of weak Fano pairs with alpha-invariants and volumes
   bounded below},
   journal={Publ. Res. Inst. Math. Sci.},
   volume={56},
   date={2020},
   number={3},
   pages={539--559},
} 

\bib{CP18}{article}{
    AUTHOR = {Codogni, Giulio},
    author={Patakfalvi, Zsolt},
     TITLE = {Positivity of the {CM} line bundle for families of {K}-stable klt {F}anos},
      DATE = {2018},
      NOTE = {To appear in Invent. Math., \href{https://arxiv.org/abs/1806.07180}{\textsf{arXiv:1806.07180v2}}}
}

\bib{Don01}{article}{
   author={Donaldson, Simon},
   title={Scalar curvature and stability of toric varieties},
   journal={J. Differential Geom.},
   volume={62},
   date={2002},
   number={2},
   pages={289--349},
}
	
\bib{Don18a}{article}{
   author={Donaldson, Simon},
   title={Some recent developments in K\"ahler geometry and exceptional holonomy},
   conference={
      title={Plenary lectures of the International Congress of Mathematicians--Rio de Janeiro 2018},
   },
   note={\href{https://www.youtube.com/watch?v=P8-MjtCmyzg}{\textsf{Plenary Lecture, 46:00}}}
   date={2018},
}

\bib{Don18}{article}{
   author={Donaldson, Simon},
   title={Stability of algebraic varieties and K\"{a}hler geometry},
   conference={
      title={Algebraic geometry: Salt Lake City 2015},
   },
   book={
      series={Proc. Sympos. Pure Math.},
      volume={97},
      publisher={Amer. Math. Soc., Providence, RI},
   },
   date={2018},
   pages={199--221},
}	

\bib{DS14}{article}{
   author={Donaldson, Simon},
   author={Sun, Song},
   title={Gromov-Hausdorff limits of K\"{a}hler manifolds and algebraic
   geometry},
   journal={Acta Math.},
   volume={213},
   date={2014},
   number={1},
   pages={63--106},
   issn={0001-5962},
 }

\bib{ELMNP09}{article}{
   author={Ein, Lawrence},
   author={Lazarsfeld, Robert},
   author={Musta\c{t}\u{a}, Mircea},
   author={Nakamaye, Michael},
   author={Popa, Mihnea},
   title={Restricted volumes and base loci of linear series},
   journal={Amer. J. Math.},
   volume={131},
   date={2009},
   number={3},
   pages={607--651},
}

\bib{FR06}{article}{
   author={Fine, Joel},
   author={Ross, Julius},
   title={A note on positivity of the CM line bundle},
   journal={Int. Math. Res. Not.},
   date={2006},
   pages={Art. ID 95875, 14},
 }

\bib{Fuj17}{article}{
   author={Fujino, Osamu},
   title={Notes on the weak positivity theorems},
   conference={
      title={Algebraic varieties and automorphism groups},
   },
   book={
      series={Adv. Stud. Pure Math.},
      volume={75},
      publisher={Math. Soc. Japan, Tokyo},
   },
   date={2017},
   pages={73--118},
}

\bib{Fuj18}{article}{
   author={Fujino, Osamu},
   title={Semipositivity theorems for moduli problems},
   journal={Ann. of Math. (2)},
   volume={187},
   date={2018},
   number={3},
   pages={639--665},
}

\bib{Fuj18b}{article}{
   author={Fujita, Kento},
   title={Optimal bounds for the volumes of K\"{a}hler-Einstein Fano manifolds},
   journal={Amer. J. Math.},
   volume={140},
   date={2018},
   number={2},
   pages={391--414},
 }

\bib{Fuj19}{article}{
   author={Fujita, Kento},
   title={A valuative criterion for uniform K-stability of $\mathbb {Q}$-Fano
   varieties},
   journal={J. Reine Angew. Math.},
   volume={751},
   date={2019},
   pages={309--338},
}


\bib{FO18}{article}{
   author={Fujita, Kento},
   author={Odaka, Yuji},
   title={On the K-stability of Fano varieties and anticanonical divisors},
   journal={Tohoku Math. J. (2)},
   volume={70},
   date={2018},
   number={4},
   pages={511--521},
}

\bib{FS90}{article}{
   author={Fujiki, Akira},
   author={Schumacher, Georg},
   title={The moduli space of extremal compact K\"{a}hler manifolds and
   generalized Weil-Petersson metrics},
   journal={Publ. Res. Inst. Math. Sci.},
   volume={26},
   date={1990},
   number={1},
   pages={101--183},
 }

\bib{Har77}{book}{
   author={Hartshorne, Robin},
   title={Algebraic geometry},
   note={Graduate Texts in Mathematics, No. 52},
   publisher={Springer-Verlag, New York-Heidelberg},
   date={1977},
   pages={xvi+496},
}

\bib{His16}{article}{
   author={Hisamoto, Tomoyuki},
   title={Stability and coercivity for toric polarizations},
   date={2016},
   NOTE ={\href{https://arxiv.org/abs/1610.07998}{\textsf{arXiv:1610.07998}}},
}

\bib{HK04}{article}{
   author={Hassett, Brendan},
   author={Kov\'{a}cs, S\'{a}ndor J.},
   title={Reflexive pull-backs and base extension},
   journal={J. Algebraic Geom.},
   volume={13},
   date={2004},
   number={2},
   pages={233--247},
}

\bib{HL00}{book}{
   author={Huybrechts, Daniel},
   author={Lehn, Manfred},
   title={The geometry of moduli spaces of sheaves},
   series={Cambridge Mathematical Library},
   edition={2},
   publisher={Cambridge University Press, Cambridge},
   date={2010},
   pages={xviii+325},
}

\bib{HMX13}{article}{
   author={Hacon, Christopher D.},
   author={McKernan, James},
   author={Xu, Chenyang},
   title={On the birational automorphisms of varieties of general type},
   journal={Ann. of Math. (2)},
   volume={177},
   date={2013},
   number={3},
   pages={1077--1111},
}

\bib{Jia17}{article}{
   author={Jiang, Chen},
   title={Boundedness of $\mathbb{Q}$-Fano varieties with degrees and alpha-invariants bounded from below},
   date={2017},
   NOTE = {To appear in Ann. Sci. \'{E}c. Norm. Sup\'{e}r. \href{https://arxiv.org/abs/1705.02740}{\textsf{arXiv:1705.02740}}},
}  

\bib{JM12}{article}{
   author={Jonsson, Mattias},
   author={Musta\c{t}\u{a}, Mustata},
   title={Valuations and asymptotic invariants for sequences of ideals},
   journal={Ann. Inst. Fourier (Grenoble)},
   volume={62},
   date={2012},
   number={6},
   pages={2145--2209 (2013)},
}

\bib{Kol90}{article}{
   author={Koll\'{a}r, J\'{a}nos},
   title={Projectivity of complete moduli},
   journal={J. Differential Geom.},
   volume={32},
   date={1990},
   number={1},
   pages={235--268},
}

\bib{Kol06}{article}{
   author={Koll\'{a}r, J\'{a}nos},
   title={Non-quasi-projective moduli spaces},
   journal={Ann. of Math. (2)},
   volume={164},
   date={2006},
   number={3},
   pages={1077--1096},
 }

\bib{Kol07}{article}{
   author={Koll\'{a}r, J\'{a}nos},
   title={Kodaira's canonical bundle formula and adjunction},
   conference={
      title={Flips for 3-folds and 4-folds},
   },
   book={
      series={Oxford Lecture Ser. Math. Appl.},
      volume={35},
      publisher={Oxford Univ. Press, Oxford},
   },
   date={2007},
   pages={134--162},
}

\bib{Kol08}{article}{
  author={Koll\'{a}r, J\'{a}nos},
   title={Hulls and husks},
   date={2008},
   NOTE = {\href{https://arxiv.org/pdf/0805.0576.pdf}{\textsf{arXiv:0805.0576}}},
}   

%

\bib{Kol17}{book}{
  author={Koll\'{a}r, J\'{a}nos},
   title={Families of varieties of general type},
   date={2017},
   book={to appear},
   NOTE = {\href{https://web.math.princeton.edu/~kollar/book/modbook20170720.pdf}{\textsf{moduli book}}},
}   

 \bib{Kol19}{article}{
  author={Koll\'{a}r, J\'{a}nos},
   title={Families of divisors},
   date={2019},
   NOTE = {\href{https://arxiv.org/pdf/1910.00937.pdf}{\textsf{arXiv:1910.00937}}},
}

 \bib{KM98}{book}{
   author={Koll\'{a}r, J\'{a}nos},
   author={Mori, Shigefumi},
   title={Birational geometry of algebraic varieties},
   series={Cambridge Tracts in Mathematics},
   volume={134},
   publisher={Cambridge University Press, Cambridge},
   date={1998},
   pages={viii+254},
}

\bib{Koi83}{article}{
   author={Koiso, Norihito},
   title={Einstein metrics and complex structures},
   journal={Invent. Math.},
   volume={73},
   date={1983},
   number={1},
   pages={71--106},
  }
 
 \bib{KP17}{article}{
   author={Kov\'{a}cs, S\'{a}ndor J.},
   author={Patakfalvi, Zsolt},
   title={Projectivity of the moduli space of stable log-varieties and subadditivity of log-Kodaira dimension},
   journal={J. Amer. Math. Soc.},
   volume={30},
   date={2017},
   number={4},
   pages={959--1021},
}

\bib{Li17}{article}{
   author={Li, Chi},
   title={K-semistability is equivariant volume minimization},
   journal={Duke Math. J.},
   volume={166},
   date={2017},
   number={16},
   pages={3147--3218},
}

\bib{Li19}{article}{
   author={Li, Chi},
   title={G-uniform stability and K\"{a}hler-Einstein metrics on Fano varieties},
   date={2019},
   NOTE = {\href{https://arxiv.org/abs/1907.09399}{\textsf{arXiv:1907.09399}}},
}

\bib{LLX18}{article}{
   author={Liu, Yuchen},
     author={Li, Chi},
        author={Xu, Chenyang},
  title={A Guided Tour to Normalized Volume},
   conference={
      title={Geometry Analysis, In Honor of Gang Tian's 60th Birthday},
   },
   book={
      series={Progr. Math.},
      volume={333},
      publisher={Birkh\"{a}user/Springer, Cham},
   },
   date={2020},
   pages={167-219},
}

\bib{LM09}{article}{
   author={Lazarsfeld, Robert},
   author={Musta\c{t}\u{a}, Mircea},
   title={Convex bodies associated to linear series},
   language={English, with English and French summaries},
   journal={Ann. Sci. \'{E}c. Norm. Sup\'{e}r. (4)},
   volume={42},
   date={2009},
   number={5},
   pages={783--835},
}

\bib{LTW19}{article}{
   author={Li, Chi},
   author={Tian, Gang},
   author={Wang, Feng},
   title={The uniform version of Yau-Tian-Donaldson conjecture for singular Fano varieties},
  NOTE = {\href{https://arxiv.org/abs/arXiv:1903.01215}{\textsf{arXiv:1903.01215}}},
  date={2019}
}

\bib{LWX18}{article}{
   author={Li, Chi},
   author={Wang, Xiaowei},
   author={Xu, Chenyang},
   title={Quasi-projectivity of the moduli space of smooth K\"{a}hler-Einstein Fano manifolds},
   journal={Ann. Sci. \'{E}c. Norm. Sup\'{e}r. (4)},
   volume={51},
   date={2018},
   number={3},
   pages={739--772},
}

\bib{LWX18b}{article}{
   author={Li, Chi},
   author={Wang, Xiaowei},
   author={Xu, Chenyang},
   title={Algebraicity of the Metric Tangent Cones and Equivariant K-stability},
  NOTE = {\href{https://arxiv.org/abs/arXiv:1805.03393}{\textsf{arXiv:1805.03393}}},
  date={2018}
}

\bib{LWX19}{article}{
   author={Li, Chi},
   author={Wang, Xiaowei},
   author={Xu, Chenyang},
  title={On the proper moduli spaces of smoothable K\"{a}hler-Einstein Fano varieties},
   journal={Duke Math. J.},
   volume={168},
   date={2019},
   number={8},
   pages={1387--1459},
}

\bib{LX14}{article}{
   author={Li, Chi},
   author={Xu, Chenyang},
   title={Special test configuration and K-stability of Fano varieties},
   journal={Ann. of Math. (2)},
   volume={180},
   date={2014},
   number={1},
   pages={197--232},
 }


\bib{MFK-GIT}{book}{
   author={Mumford, D.},
   author={Fogarty, J.},
   author={Kirwan, F.},
   title={Geometric invariant theory},
   series={Ergebnisse der Mathematik und ihrer Grenzgebiete (2) [Results in
   Mathematics and Related Areas (2)]},
   volume={34},
   edition={3},
   publisher={Springer-Verlag, Berlin},
   date={1994},
   pages={xiv+292},
}

\bib{New78}{book}{
   author={Newstead, P. E.},
   title={Introduction to moduli problems and orbit spaces},
   series={Tata Institute of Fundamental Research Lectures on Mathematics
   and Physics},
   volume={51},
   publisher={Tata Institute of Fundamental Research, Bombay; by the Narosa
   Publishing House, New Delhi},
   date={1978},
   pages={vi+183},
 }

\bib{Oda15}{article}{
   author={Odaka, Yuji},
   title={Compact moduli spaces of K\"{a}hler-Einstein Fano varieties},
   journal={Publ. Res. Inst. Math. Sci.},
   volume={51},
   date={2015},
   number={3},
   pages={549--565},
}

\bib{OX13}{article}{
   author={Odaka, Yuji},
   author={Xu, Chenyang},
   title={Log-canonical models of singular pairs and its applications},
   journal={Math. Res. Lett.},
   volume={19},
   date={2012},
   number={2},
   pages={325--334},
 }

\bib{Pos19}{article}{
   author={Posva, Quentin },
   title={Positivity of the CM line bundle for K-stable log Fanos},
   date={2019},
  NOTE = {\href{https://arxiv.org/abs/1910.12623}{\textsf{arXiv:1910.12623}}},
}   

\bib{PRS08}{article}{
   author={Phong, D. H.},
   author={Ross, Julius},
   author={Sturm, Jacob},
   title={Deligne pairings and the Knudsen-Mumford expansion},
   journal={J. Differential Geom.},
   volume={78},
   date={2008},
   number={3},
   pages={475--496},
   }

\bib{PT09}{article}{
   author={Paul, Sean},
   author={Tian, Gang},
   title={CM stability and the generalized Futaki invariant II},
   journal={Ast\'{e}risque},
   number={328},
   date={2009},
   pages={339--354 (2010)},
 }

\bib{PX17}{article}{
   author={Patakfalvi, Zsolt},
   author={Xu, Chenyang},
   title={Ampleness of the CM line bundle on the moduli space of canonically polarized varieties},
   journal={Algebr. Geom.},
   volume={4},
   date={2017},
   number={1},
   pages={29--39},
}

\bib{Schu83}{article}{
   author={Schumacher, Georg},
   title={Eine Anwendung des Satzes von Calabi-Yau auf Familien kompakter
   komplexer Mannigfaltigkeiten},
   language={German},
   journal={Invent. Math.},
   volume={71},
   date={1983},
   number={2},
   pages={295--307},
  }

\bib{SSY16}{article}{
   author={Spotti, Cristiano},
   author={Sun, Song},
   author={Yao, Chengjian},
   title={Existence and deformations of K\"{a}hler-Einstein metrics on
   smoothable $\mathbb{Q}$-Fano varieties},
   journal={Duke Math. J.},
   volume={165},
   date={2016},
   number={16},
   pages={3043--3083},
 }

\bib{Sun18}{article}{
   author={Sun, Song},
   title={Degenerations and moduli spaces in K\"{a}hler geometry},
   conference={
      title={Proceedings of the International Congress of
      Mathematicians---Rio de Janeiro 2018. Vol. II. Invited lectures},
   },
   book={
      publisher={World Sci. Publ., Hackensack, NJ},
   },
   date={2018},
   pages={993--1012},
}

\bib{Tian87}{article}{
   author={Tian, Gang},
   title={Smoothness of the universal deformation space of compact
   Calabi-Yau manifolds and its Petersson-Weil metric},
   conference={
      title={Mathematical aspects of string theory},
      address={San Diego, Calif.},
      date={1986},
   },
   book={
      series={Adv. Ser. Math. Phys.},
      volume={1},
      publisher={World Sci. Publishing, Singapore},
   },
   date={1987},
   pages={629--646},
}	
 
\bib{Tia97}{article}{
   author={Tian, Gang},
   title={K\"{a}hler-Einstein metrics with positive scalar curvature},
   journal={Invent. Math.},
   volume={130},
   date={1997},
   number={1},
   pages={1--37},
} 

\bib{Tia15}{article}{
    AUTHOR = {Tian, Gang},
     TITLE = {K-stability and {K}\"ahler-{E}instein metrics},
   JOURNAL = {Comm. Pure Appl. Math.},
    VOLUME = {68},
      YEAR = {2015},
    NUMBER = {7},
     PAGES = {1085--1156},
}

\bib{TW19}{article}{
   author={Tian, Gang},
   author={Wang, Feng},
   title={On the existence of conic K\"ahler-Einstein metrics},
   date={2019},
   NOTE = {\href{https://arxiv.org/abs/1903.12547}{\textsf{arXiv:1903.12547}}},
}   

\bib{Xu19}{article}{
   author={Xu, Chenyang},
   title={A minimizing valuation is quasi-monomial},
   journal={Ann. of Math. (2)},
   volume={191},
   date={2020},
   number={3},
   pages={1003--1030},
}

\bib{Zhu19}{article}{
   author={Zhuang, Ziquan},
   title={Product theorem for K-stability},
   journal={Adv. Math.},
   volume={371},
   date={2020},
   pages={107250, 18},
}

\end{biblist}
\end{bibdiv}

\end{document}